\documentclass[11pt,onecolumn,oneside]{amsart}
\usepackage[a4paper, hmargin={2.8cm, 2.8cm}, vmargin={2.5cm, 2.5cm}]{geometry}

\usepackage{graphicx, color}
\usepackage{enumitem}
\usepackage{amssymb}
\usepackage{amsmath}
\usepackage{amsthm}
\usepackage{mathrsfs}
\usepackage{caption, subcaption}
\usepackage{wrapfig}
\usepackage{mathtools}
\usepackage{bm}
\usepackage{advdate}
\usepackage{url}
\usepackage{accents}
\usepackage{hyperref}
\usepackage{cleveref}

\usepackage{tikz}
\usetikzlibrary{positioning,arrows,chains,matrix,scopes,fit,decorations.markings,decorations,decorations.pathreplacing}
\usetikzlibrary{cd}
\setcounter{tocdepth}{1}

\usepackage{ytableau}

	\newcommand{\N}{\ensuremath{\mathbb{N}}}
	\newcommand{\A}{\ensuremath{\mathbb{A}}}
	
	\newcommand{\Z}{\ensuremath{\mathbb{Z}}}

	\newcommand{\C}{\ensuremath{\mathbb{C}}}
    \newcommand{\bz}{\ensuremath{\mathbf{z}}}

	\newcommand{\Gm}{\ensuremath{\mathbb{G}_m}}
		
	\newcommand{\Hom}{\ensuremath{\operatorname{Hom}}}
    \newcommand{\sym}{\ensuremath{\operatorname{Sym}}}
    \newcommand{\Hilb}{\ensuremath{\operatorname{Hilb}}}
    \newcommand{\NHilb}{\ensuremath{\operatorname{NHilb}}}
    \newcommand{\naNHilb}{\ensuremath{\operatorname{naNHilb}}}
	\newcommand{\id}{\ensuremath{\operatorname{id}}}

	\newcommand{\spec}{\ensuremath{\operatorname{Spec}}}
		
        \newcommand{\sires}{\ensuremath{\operatorname{Res}}}

	\newcommand{\im}{\ensuremath{\operatorname{im}}}
		
    \newcommand{\oo}{\ensuremath{\mathcal{O}}}

\addtocounter{page}{0}
\theoremstyle{plain}
\newtheorem{theorem}{Theorem}[section]
\newtheorem{corollary}{Corollary}[section]
\newtheorem{lemma}{Lemma}[section]
\newtheorem{prop}{Proposition}[section]
\theoremstyle{definition}
\newtheorem{exmp}{Example}[section]
\newtheorem{defi}{Definition}[section]
\newtheorem{remark}{Remark}[section]

\newtheorem{set}{Setup}[section]
\newtheorem{warning}{Warning}[section]

\setlength\parindent{24pt}
\title{Virtual invariants from the non-associative Hilbert scheme}
\author{Gergely Bérczi and Felix Minddal}

\begin{document}
\maketitle
\begin{abstract}
We introduce a non-associative model for the Hilbert scheme of points in arbitrary dimension. We define a smooth ambient space, which we call the non-associative Hilbert scheme, containing the classical nested Hilbert scheme $\operatorname{NHilb}^{\underline{d}}(\A^n)$ as the associativity, cut out by an explicit section of an associativity bundle. This construction yields canonical perfect obstruction theories and virtual fundamental classes on $\operatorname{NHilb}^{\underline{d}}(\A^n)$ for all $(n,\underline d)$. Using virtual localization, we obtain closed formulas for these virtual classes as sums over admissible nested partitions. Over the punctual locus, we rewrite these as a single multivariable iterated residue formula governing all virtual integrals. Our construction works for all 
$n$, produces positive-dimensional virtual classes when 
$n$ is large compared to the number of points, and we expect that they extend the non-commutative matrix model and virtual class construction on Calabi-Yau threefolds.
\end{abstract}
\tableofcontents
\section*{Introduction}

Hilbert schemes of points are fundamental objects in algebraic and enumerative geometry. For a smooth surface $S$, the Hilbert scheme $\operatorname{Hilb}^d(S)$ of length-$d$ zero-dimensional subschemes is a smooth, irreducible variety of dimension $2d$ \cite{Fogarty}. The topology and intersection theory of $\Hilb^d(S)$ are very well understood, and they carry rich structures such as Nakajima’s Heisenberg algebra action \cite{NakajimaHilb}, quantum cohomology structures in the work of Okounkov-Pandharipande \cite{OkounkovPandharipandeQCohHilb}, and relations to curve counting and modular forms via K3 surfaces.

In higher dimension, the geometry quickly becomes more subtle. For a threefold $X$, the Hilbert scheme $\Hilb^d(X)$ is typically singular for $d>1$ but admits a symmetric perfect obstruction theory in the Calabi-Yau case. Behrend-Fantechi’s theory of intrinsic normal cones and perfect obstruction theories produces a virtual fundamental class $[\Hilb^d(X)]^{\mathrm{vir}}$ \cite{BehrendFantechi}, and Behrend’s constructible function describes Donaldson-Thomas invariants as weighted Euler characteristics \cite{BehrendDT}. Applied to Calabi-Yau threefolds, this yields degree-zero Donaldson-Thomas (DT) invariants as weighted Euler characterisitics on $\Hilb^d(X)$.

In the special important case when $X=\A^3$, $\Hilb^d(\A^3)$ parametrizes ideals of colength $d$ in $\C[x_1,x_2,x_3]$ and is highly singular. Nevertheless, it can be modeled as a critical locus of a function on a smooth space of non-commuting matrices: one considers triples of matrices $(A_1,A_2,A_3)\in\mathrm{End}(V)^3$ together with a vector $v\in V$ generating $V$ under the $A_i$, and imposes the commuting relations $[A_i,A_j]=0$ via a potential \cite{BehrendFantechiSymmetric,BryanSzendroi}. This non-commuting matrix model leads to explicit formulae for DT invariants of $\A^3$, recovering the MacMahon product formula for the generating series of zero-dimensional DT invariants \cite{MNOPI,MNOPII}. From our point of view, this is the first instance of a general philosophy: one embeds a singular Hilbert scheme into a smoother, more flexible moduli space (here of unconstrained or non‑commuting matrices) and then imposes the constraints (here of commutativity) as equations cutting out the original moduli space.

\subsection*{Moduli of sheaves.}
The special role of threefolds and fourfolds in enumerative geometry is by now well established. On a Calabi-Yau threefold $X$, moduli spaces of stable sheaves or stable pairs carry symmetric perfect obstruction theories and hence zero-dimensional virtual classes. Integrals over these classes define Donaldson-Thomas invariants, which may be viewed as algebraic counterparts of BPS state counts in string theory and are related to Gromov-Witten invariants via the DT/PT correspondence \cite{ThomasDT,PTStablePairs}.

For Calabi-Yau fourfolds, the picture is more delicate: the derived moduli stack of (semi)stable sheaves is expected to be $(-2)$-shifted symplectic in the sense of Pantev-Toën-Vaquié-Vezzosi. Using this structure, Borisov-Joyce showed that such a derived moduli space carries a real virtual fundamental class of half the expected complex dimension, constructed via derived differential geometry and Kuranishi atlases \cite{BorisovJoyce}. Their theory applies in particular to moduli spaces of stable sheaves on Calabi-Yau fourfolds (including Hilbert schemes of points), but the resulting cycle is a priori only defined in real homology and depends on subtle orientation data.

Building on this, Oh-Thomas constructed an algebraic virtual cycle for moduli spaces of sheaves on Calabi-Yau fourfolds, using a localized square‑root Euler class for orthogonal bundles and a theory of complex Kuranishi structures. They proved a localization formula and showed that their algebraic cycle agrees with the Borisov-Joyce cycle after inverting~2 \cite{OhThomasCY4I,OhThomasCY4II}. This provides a robust framework for defining DT‑type invariants on Calabi–Yau fourfolds and for comparing different constructions.

In parallel, Feyzbakhsh-Thomas have developed a powerful wall-crossing and stability-conditions approach to Donaldson-Thomas invariants on Calabi-Yau threefolds. They express general rank-$r$ DT invariants in terms of simpler rank 0 or rank 1 theories via Bridgeland stability and wall crossing \cite{FeyzbakhshThomasRank1,FeyzbakhshThomasRank0}.

Within this sheaf-theoretic framework, Hilbert schemes of points on Calabi-Yau fourfolds play the role of moduli of zero-dimensional sheaves. Let $X$ be a smooth projective Calabi-Yau fourfold and $\Hilb^d(X)$ its Hilbert scheme of length-$d$ subschemes. Using the Borisov-Joyce theory together with orientability results of Cao-Gross-Joyce for moduli of sheaves on Calabi-Yau fourfolds \cite{CaoGrossJoyceOrientability}, one obtains a virtual class on $\Hilb^d(X)$. Cao-Kool then defined zero-dimensional $DT_4$ invariants by integrating the Euler classes of tautological bundles against this virtual class, and proved deformation invariance and a number of structural properties \cite{CaoKoolDT4}. These invariants can be viewed as fourfold analogues of degree-zero DT invariants on Calabi-Yau threefolds.

More recently, Bojko has used the Gross-Joyce-Tanaka wall-crossing formalism to study topological information encoded by the virtual fundamental classes of $\Hilb^d(X)$ as $d$ varies. Assuming the conjectural wall-crossing framework, Bojko proves the Cao-Kool conjectures for zero-dimensional sheaf counting and shows that the collection of virtual classes on $\Hilb^d(X)$ is equivalent to the data of all descendent integrals. As a consequence, many generating series of invariants are expressed in terms of explicit universal power series and are related to Nekrasov’s “genus” for $\Hilb^d(\C^4)$ and to Segre-Verlinde correspondences involving Quot schemes on elliptic surfaces \cite{BojkoHilbCY4}.

On the local and toric side, Nekrasov \cite{NekrasovMagnificent4} introduced an equivariant K-theoretic “Magnificent Four” partition function, which can be interpreted as a new genus of Hilbert schemes of points on $\C^4$. Cao-Kool-Monavari \cite{CaoKoolMonavariDTPT4} extended this to curves and stable pairs, formulating a K‑theoretic DT/PT correspondence for toric Calabi–Yau fourfolds and verifying it via a vertex formalism.

In a different but related direction, Oberdieck et al \cite{CaoOberdieckTodaGV4,CaoOberdieckTodaStablePairs4}have developed curve-counting theories on holomorphic symplectic fourfolds, including holomorphic symplectic analogues of Gopakumar-Vafa invariants . These are often expressed in terms of invariants of Hilbert schemes of points on K3 surfaces and their higher-dimensional analogues and connect the 4‑dimensional Hilbert scheme geometry to modularity and Jacobi forms.

\subsection*{The non‑associative nested Hilbert scheme.}
For a smooth variety $X$ of dimension $n$ and a dimension vector $\underline{d}=(d_0,\dots,d_r)$, the nested Hilbert scheme

\[\NHilb^{\underline{d}}(X)
=\{\xi_1\subset\cdots\subset\xi_{r+1}\subset X \Big| \dim\xi_i=0,\operatorname{length}(\xi_i)=d_0+\cdots+d_{i-1}\}
\]
parametrizes chains of zero-dimensional subschemes of prescribed lengths. Even on surfaces, these nested schemes are typically singular with poorly understood component structure. Nonetheless, they have proven extremely useful in sheaf counting theories. Gholampour-Sheshmani-Yau \cite{GSYNestedHilbI} constructed virtual classes on nested Hilbert schemes of points and curves on surfaces and showed that they recover reduced stable pair and Seiberg-Witten theory. Gholampour-Thomas \cite{GholampourThomasI,GholampourThomasII} then expressed nested Hilbert schemes as virtual resolutions of degeneracy loci of maps of bundles on smooth ambient varieties, and used Thom-Porteous‑type formulas to compute the resulting virtual cycles and their integrals. This provides a powerful technique for expressing Vafa-Witten and local DT/PT invariants in terms of tautological classes on nested Hilbert schemes of surfaces.

In this work we use a different smooth ambient space for Hilbert scheme of points. We will reinterpret the work of B\'erczi-Szenes \cite{bsz,bercziGT} and Kazarian \cite{KazarianNAHilb} where they showed that a certain filtered part of the punctual Hilbert scheme embeds into a smooth non‑associative Hilbert scheme and used this to compute Thom polynomials of Morin singularity classes.

In this paper, we extend the non-associative Hilbert scheme model to the whole nested Hilbert schemes. Fix a dimension vector $\underline{d}=(d_0,\dots,d_r)\in\Z_{\ge0}^{r+1}$ and $n\ge0$. Consider the nested Hilbert scheme of $d=d_0+\cdots+d_r$ points in $\A^n$ given as the moduli space 
\begin{align*}\operatorname{NHilb}^{\underline{d}}(\A^n)&=\left\{\xi_{1}\subset \cdots \subset \xi_{r+1}\subset \A^n \: | \: \operatorname{dim}\xi_i=0, \: \operatorname{Length}\xi_i=d_0+\cdots +d_{i-1}\right\}\\
&=\left\{I_{r+1}\subset \cdots \subset I_1\subset I_0=\C[x_1,\dots,x_n] \: | \: \operatorname{dim}_{\C}I_i/I_{i+1}=d_i \right\}.\end{align*}
We define the non-associative nested Hilbert scheme of $d=d_0+\cdots+d_r $ points on $\A^n$ as the moduli space 
\[\operatorname{naNHilb}^{\underline{d}}(\A^n)=\left\{I_{r+1}\subset \cdots \subset I_1\subset I_0=\C\{x_1,\dots,x_n\}_c \: | \: \operatorname{dim}_{\C}I_i/I_{i+1}=d_i \right\}\]
where $\C\{x_1,\dots,x_n\}_c$ is the free unital, commutative, non-associative algebra on $n$-generators and the $I_i$'s are ideals. We prove that this is a smooth variety and that $\operatorname{NHilb}^{\underline{d}}(\A^n)$ embeds in $\operatorname{naNHilb}^{\underline{d}}(\A^n)$ as the locus where $I_1$ is contained in the associativity ideal or equivalently where the quotient algebras are associative. 

The key point is that associativity can be imposed by a section of a vector bundle over the smooth ambient space $\naNHilb^{\underline{d}}(\A^n)$. That is, the associator conditions are encoded in a vector bundle $E_{\operatorname{ass}}$ on $\naNHilb^{\underline{d}}(\A^n)$ and a global section
\[
s_{\operatorname{ass}}\in\Gamma(\naNHilb^{\underline{d}}(\A^n),E_{\operatorname{ass}})
\]
whose zero locus is precisely $\NHilb^{\underline{d}}(\A^n)$. Thus $\NHilb^{\underline{d}}(\A^n)$ has a canonical perfect obstruction theory as the zero locus of a section on a smooth variety, and hence a virtual fundamental class
\[
[\NHilb^{\underline{d}}(\A^n)]^{\mathrm{vir}}\in A_* (\NHilb^{\underline{d}}(\A^n)).
\]
This provides an elementary and concrete construction of a virtual class for nested Hilbert schemes of points in all dimensions.

Our approach should be contrasted with the non-commuting matrix models used in threefold DT theory. There, one replaces the commutative coordinate ring by a free associative (non-commutative) algebra of matrices, and the Hilbert scheme is realized as a critical locus of a function cutting out the commuting relations. In our non-associative model, we instead keep commutativity but drop associativity, realizing the Hilbert scheme as the zero locus of associativity equations inside a smooth commutative but non-associative ambient space. This shift turns out to be well adapted to the combinatorics of monomial ideals and yields ambient varieties that are themselves built as towers of Grassmannian fibrations, in the spirit of Kazarian’s work. Crucially, our construction produces virtual classes of positive dimension whenever the ambient dimension $n$ is sufficiently large compared to the number of points (or total length) $d=d_0+\cdots d_r$, and our iterated residue formula for virtual integration then provides a uniform way to define and compute these virtual classes for all pairs $(n,\underline{d})$.

\subsection*{Virtual localization and iterated residue formulas.}
The affine torus $T=(\C^*)^n$ acts on $\A^n$ by scaling each coordinate, and this action lifts to both $\naNHilb^{\underline{d}}(\A^n)$ and $\NHilb^{\underline{d}}(\A^n)$. Our construction of the obstruction theory is $T$‑equivariant, so we obtain an equivariant virtual class
\[\NHilb^{\underline{d}}(\A^n)]^{\mathrm{vir}}\in A_*^T(\NHilb^{\underline{d}}(\A^n)).
\]
Then we apply the Graber–Pandharipande virtual localization formula to compute the restriction of this class in the localized equivariant Chow group $A_*^T(\NHilb^{\underline{d}}(\A^n))_{\operatorname{loc}}$ as a sum over $T$‑fixed points. These fixed points correspond to chains of monomial ideals, i.e., to nested partitions. We obtain an explicit iterated residue formula expressing $[\NHilb^{\underline{d}}(\A^n)]^{\mathrm{vir}}$ as a weighted sum over admissible $(n-1)$‑dimensional nested partitions of length $\underline{d}$, with weights given by rational functions in the torus weights (Theorem~\ref{thm:maina}).

We also single out the nilpotently filtered locus $\NHilb^{\underline{d}}_{\operatorname{nil-fil}}(\A^n)$, consisting of nested ideals
\[
I_{r+1}\subset\cdots\subset I_1\subset I_0=\C[x_1,\dots,x_n]
\]
such that $I_1$ is the maximal ideal at the origin and $I_1\cdot I_i\subset I_{i+1}$ for all $i$. This locus lies inside the punctual nested Hilbert scheme and has a natural virtual class induced by our embedding into the non‑associative model. When $\underline{d}=(1,\dots,1)$ is a full flag, we show that $\NHilb^{\underline{d}}_{\operatorname{nil-fil}}(\A^n)$ coincides with the usual punctual nested Hilbert scheme, and more generally we study its geometry for arbitrary $\underline{d}$. When $r=1$, $d_1=d$, we identify
\[
\NHilb^{\underline{d}}_{\operatorname{nil-fil}}(\A^n)
=\{I\subset\C[x_1,\dots,x_n]\Big| \dim_\C\C[x]/I=d+1,\mathfrak{m}^2\subset I\subset\mathfrak{m}\},
\]
which is empty whenever $d>n$; this illustrates that the nilpotently filtered locus is much smaller than the full punctual Hilbert scheme.

For virtual integrals supported on the nilpotently filtered locus $\NHilb^{\underline{d}}_{\operatorname{nil-fil}}(\A^n)$, we refine the localization calculation to an iterated residue formula. When $d_1+\cdots+d_r\leq n$ we construct a partial resolution of $\NHilb^{\underline{d}}_{\operatorname{nil-fil}}(\A^n)$ fibering over a flag variety $\operatorname{Flag}_{\hat{d}}(\C^n)$ (where $\hat{d}=(d_1,\cdots,d_r)$ and reduce integrals to a fiber over a reference flag. Following \cite{bsz}, Atiyah-Bott localization on this resolved space can be transformed into a multi-variable rational form whose virtual integral is given by an iterated residue at infinity. A key feature of our analysis is that all but one of the candidate residues vanish; the sole contribution comes from the “Porteous” nested partition consisting only of unit vectors in the natural order. The resulting formula expresses virtual integrals on $\NHilb^{\underline{d}}_{\operatorname{nil-fil}}(\A^n)$ as a single explicit iterated residue (Theorem \ref{thm:mainb}). This generalizes earlier iterated residue techniques of \cite{bsz} in singularity theory and Thom polynomial computations to the context of nested Hilbert schemes of points and virtual integration. 

Finally, using an embedding
\[
\NHilb^{\underline{d}}_{\operatorname{nil-fil}}(\A^n)\hookrightarrow \NHilb^{\underline{d}}_{\operatorname{nil-fil}}(\A^N)
\]
for $N\gg n$ and comparing the corresponding virtual structures, we show that the same residue formula continues to hold even when $d_1+\cdots +d_r>n$.

\subsection*{Tautological versus virtual intersection theory.} The curvilinear Hilbert scheme, denoted by $\mathrm{CHilb}^d_0(\A^n)$, parametrizes infinitesimally collinear subschemes of a smooth variety and forms a small, distinguished component of the punctual Hilbert scheme. In \cite{bercziGT} it is shown that this curvilinear component forms a natural projective compactification of the jet moduli space $J_k(1,n)/\mathrm{Diff}_k$. This perspective led to a systematic theory of tautological intersection numbers on curvilinear and full Hilbert schemes of points. In a series of papers \cite{bercziGT,berczitau2,berczitau3,berczitau4}, tautological integrals were expressed in terms of explicit iterated residue formulas for all tautological integral on the main component of the Hilbert schemes of points in arbitrary dimension. These formulas recover and generalise classical surface results of Göttsche and Nakajima, and they apply to a wide range of enumerative problems, including counting curves and hypersurfaces with prescribed singularities and higher-dimensional analogues of Lehn’s conjecture and Segre–Verlinde type formulas. A striking structural property of Hilbert scheme of points proved in \cite{berczisvendsen} is that all monomial ideals (i.e.torus fixed points) lie on the curvilinear component, strongly suggesting that the curvilinear locus controls the topology of the entire Hilbert scheme.

The present work develops a parallel virtual intersection theory on the full Hilbert schemes of points, not just on thew main (smoothable) component. Crucially, our construction yields virtual classes of positive dimension whenever the ambient dimension $n$ is sufficiently large compared to the total length of the subscheme, and the virtual localization formulas can again be written as iterated residues, now incorporating the contributions of the obstruction theory. In this sense, the virtual intersection theory developed here is formally very close to the tautological intersection theory of \cite{bercziGT,berczitau2,berczitau3,berczitau4}: in both settings, universal intersection numbers on Hilbert schemes of points are governed by explicit multi-variable residue formulas over the same combinatorial data of monomial ideals and partitions. In forthcoming work \cite{MinddalBerczi-NAHilb-III}, this framework is used to reconstruct Donaldson–Thomas invariants of Calabi–Yau threefolds from the non-associative Hilbert scheme model, revealing a direct bridge between the tautological and virtual theories.

\subsection*{Outline of the paper.}
In Section~\ref{sec:NA-nested-Hilb} we define the non‑associative nested Hilbert scheme $\naNHilb^{\underline{d}}(\A^n)$ and prove its smoothness and basic geometric properties, including the embedding of $\NHilb^{\underline{d}}(\A^n)$ as the associativity locus. In ~\ref{sec:NA-punct-Hilb} we construct the punctual locus, prove that it can be described as a tower of Grassmannian fibrations and construct a non-associative Hilbert-Chow morphism. In Section~\ref{sec:virtual-class} we construct the induced perfect obstruction theory and virtual class on $\NHilb^{\underline{d}}(\A^n)$ and derive an explicit $T$‑equivariant localization formula, expressed as a sum over admissible nested partitions. Section~\ref{sec:nil-fil} is devoted to the nilpotently filtered locus: we define $\NHilb^{\underline{d}}_{\operatorname{nil-fil}}(\A^n)$, analyze its geometry, construct a partial resolution over a flag variety and prove the iterated residue formula for virtual integrals.

\subsection*{Acknowledgements.} We thank Sergej Monavari for many enlightening discussions and for suggesting several interesting directions for us to explore.

\section{The results}
Fix a dimension vector $\underline{d}=(d_0,\dots, d_{r})\in\Z_{\geq 0}^{r+1}$ and $n\in \Z_{\geq 0}$. The main object of interest in this paper is the nested Hilbert scheme of $d=d_0+\cdots+d_r$ points in $\A^n$ given as the moduli space 
\begin{align*}\operatorname{NHilb}^{\underline{d}}(\A^n)&=\left\{\xi_{1}\subset \cdots \subset \xi_{r+1}\subset \A^n \: | \: \operatorname{dim}\xi_i=0, \: \operatorname{Length}\xi_i=d_0+\cdots +d_{i-1}\right\}\\
&=\left\{I_{r+1}\subset \cdots \subset I_1\subset I_0=\C[x_1,\dots,x_n] \: | \: \operatorname{dim}_{\C}I_i/I_{i+1}=d_i \right\}.\end{align*}
We study it by defining a new smooth variety for which this (highly singular) scheme embeds. We use this to define virtual fundamental lass on the nested Hilbert scheme and develop localization and iterated residue formulas for efficiently computing virtual invariants.

\subsection*{The non-associative Hilbert scheme.}
Extending the work of \cite{KazarianNAHilb} we construct $\operatorname{naNHilb}^{\underline{d}}(\A^n)$ the non-associative nested Hilbert scheme of $d=d_0+\cdots+d_r $ points on $\A^n$. It is the moduli space 
\[\operatorname{naNHilb}^{\underline{d}}(\A^n)=\left\{I_{r+1}\subset \cdots \subset I_1\subset I_0=\C\{x_1,\dots,x_n\}_c \: | \: \operatorname{dim}_{\C}I_i/I_{i+1}=d_i \right\}\]
where $\C\{x_1,\dots,x_n\}_c$ is the free unital, commutative, non-associative algebra on $n$-generators and the $I_i$'s are ideals. We prove that this is a smooth variety and that $\operatorname{NHilb}^{\underline{d}}(\A^n)$ embeds in $\operatorname{naNHilb}^{\underline{d}}(\A^n)$ as the locus where $I_1$ is contained in the associator ideal or equivalently where the quotient algebras are associative. \newline 

In the case where $d_0=1$ we construct a punctual locus $\operatorname{naNHilb}^{\underline{d}}_0(\A^n)\subset \operatorname{naNHilb}^{\underline{d}}(\A^n)$ which intersects the Hilbert scheme in the nilpotently filtered locus
\[\operatorname{NHilb}^{\underline{d}}_{\operatorname{nil-fil}}(\A^n)=\left\{[I_{r+1}\subset \cdots \subset I_1\subset I_0]\in\operatorname{NHilb}^{\underline{d}}(\A^n) \: | \:  I_1=\mathfrak{m}, \: I_1\cdot I_i \subset I_{i+1} \right\}\]
where $\mathfrak{m}=(x_1,\dots, x_n)$. Using the methods of \cite{KazarianNAHilb} we prove $\operatorname{naNHilb}^{\underline{d}}_0(\A^n)$ admits the structure of a tower of Grassmannian fibrations. In the special case where $d_i=1$ for all $0\leq i\leq r$ we are further able to construct an extension
\[\operatorname{naNHilb}^{\underline{d}}(\A^n)\to (\A^n)^{r+1}\]
of the (ordered) Hilbert-Chow morphism on $\operatorname{NHilb}^{\underline{d}}(\A^n)$. We use this to prove that $\operatorname{NHilb}^{\underline{d}}_{\operatorname{nil-fil}}(\A^n)$ identifies with the punctual Hilbert scheme of points
\[\operatorname{NHilb}_0^{\underline{d}}(\A^n)=\left\{\xi_{1}\subset \cdots \subset \xi_{r+1}\subset \A^n \: | \: \operatorname{dim}\xi_i=0, \: \operatorname{Length}\xi_i=i, \: \operatorname{Supp}\xi_i=\{0\} \right\}.\]
We always have $\operatorname{NHilb}_{\operatorname{nil-fil}}^{\underline{d}}(\A^n)\subset\operatorname{NHilb}_0^{\underline{d}}(\A^n)$ but the equality only holds for the full dimension vector where $d_i=1$ for all $i$. On the other end of the spectrum where $r=1$, $d_1=d$ we can identify
\[\operatorname{NHilb}^{\underline{d}}_{\operatorname{nil-fil}}(\A^n)=\left\{I\subset \C[x_1,\dots, x_n] \: | \:\operatorname{dim}_{\C}\C[x_1,\dots, x_n]/I=d+1  ,\mathfrak{m}^2\subset I \subset \mathfrak{m} \right\}.\]
This is much smaller than the punctual locus and becomes empty whenever $d>n$.

\subsection*{Virtual fundamental class and localization.}
One can easily prove that the associativity locus is the zero locus of a section of a vector bundle on $\operatorname{naNHilb}^{\underline{d}}(\A^n)$. This equips $\operatorname{NHilb}^{\underline{d}}(\A^n)$ with a perfect obstruction theory and therefore also a virtual fundamental class. Setting $T=(\C^*)^n$ this procedure can be made $T$-equivariant and we are able to calculate the virtual class via a localization formula. We obtain the following result, where we use the notation
\[\widetilde{\prod} \gamma:=\prod_{\gamma \neq 0} \gamma \]
for non-zero products.
\begin{theorem}\label{thm:maina}
    Let $\underline{d}=(d_0,\dots,d_r)\in \Z^{r+1}_{\geq 0}$, $d=d_0+\cdots +d_r$, $n\in \Z_{\geq 0}$ and $T=(\C^*)^n$. Let $A_T^*(\operatorname{pt})=\Z[s_1,\dots, s_n]$ where $s_1,\dots, s_n$ are the Chern roots of $\C^n$ given the standard torus action. In the Chow group $A_*^T(\operatorname{NHilb}^{\underline{d}}(\A^n))_{\operatorname{loc}}$ localized in linear functions of $s_1,\dots, s_n$ the virtual fundamental class $[\operatorname{NHilb}^{\underline{d}}(\A^n)]^{\operatorname{vir}}$ equals
    \[\sum_{\lambda_\bullet} [\lambda_\bullet]\cap\frac{\prod_{\substack{1\leq i,j,k\leq d-1 \\ i<k}} \widetilde{\prod}_{\substack{0\leq m\leq d-1\\ w(k),w(j)\leq w(m)}}(\mathbf{u}_i+\mathbf{u}_j+\mathbf{u}_k-\mathbf{u}_m)(\underline{s})\widetilde{\prod}_{j=1}^{d-1}\prod_{\substack{0\leq k\leq d-1\\ w(j)\leq w(k)}}\mathbf{u}_j-\mathbf{u}_k(\underline{s})}{\prod_{i=1}^n\widetilde{\prod}_{k=0}^{d-1}(e_i-\mathbf{u}_k)(\underline{s})\prod_{1\leq i\leq j\leq d-1} \widetilde{\prod}_{\substack{0\leq k\leq d-1\\ w(j)\leq w(k)}}(\mathbf{u}_i+\mathbf{u}_j-\mathbf{u}_k)(\underline{s})}.\]
    Here the sum is over all admissible $(n-1)$-dimensional nested partitions of length $\underline{d}$ 
    \[\lambda_\bullet=(\lambda_1\subset \cdots \lambda_{r+1}\subset \Z_{\geq 0}^n)\]
    viewed as a fixed point $\lambda_\bullet\in \operatorname{NHilb}^{\underline{d}}(\A^n)^T$ and where we have a chosen enumeration 
    \[\lambda_{k+1}=\{\mathbf{u}_0,\dots, \mathbf{u}_{d_0+\cdots +d_{k-1}-1}\}\subset \Z_{\geq 0}^n \]
    for each $\lambda_\bullet$. For $\mathbf{u}=(u_1,\dots, u_n)\in \Z^n$ we define
    \[\mathbf{u}(\underline{s})=u_1s_1+\cdots +u_ns_n.\]
    Being admissible means that $\lambda_{r+1}$ contains no vectors with 3 or more non-zero coordinates and that the projection to a two-dimensional plane sits inside the partition given by figure 1 below.
\end{theorem}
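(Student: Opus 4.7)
The plan is to apply Graber--Pandharipande virtual localization to the perfect obstruction theory coming from the embedding $\NHilb^{\underline{d}}(\A^n) = Z(s_{\mathrm{ass}}) \hookrightarrow \naNHilb^{\underline{d}}(\A^n)$ constructed in Section~\ref{sec:virtual-class}. Since $\naNHilb^{\underline{d}}(\A^n)$ is smooth with a natural $T$-action and $s_{\mathrm{ass}}$ is a $T$-equivariant section of the $T$-equivariant associativity bundle $E_{\mathrm{ass}}$, the induced $T$-equivariant two-term perfect obstruction theory on $\NHilb^{\underline{d}}(\A^n)$ takes the form $[T_{\naNHilb}|_{\NHilb}\to E_{\mathrm{ass}}|_{\NHilb}]$. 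When the fixed locus is isolated, the virtual normal bundle at a fixed point $\lambda_\bullet$ has $T$-character equal to the difference of the tangent character of $\naNHilb^{\underline{d}}(\A^n)$ and the character of $E_{\mathrm{ass}}$ at $\lambda_\bullet$, so virtual localization yields
\[
[\NHilb^{\underline{d}}(\A^n)]^{\mathrm{vir}} = \sum_{\lambda_\bullet}[\lambda_\bullet]\cdot \frac{e^T(E_{\mathrm{ass}}|_{\lambda_\bullet})}{e^T(T_{\lambda_\bullet}\naNHilb^{\underline{d}}(\A^n))}
\]
in the localized equivariant Chow group.

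Next I would identify the fixed locus with the admissible nested partitions. Fixed chains in $\naNHilb^{\underline{d}}(\A^n)$ are chains of monomial ideals whose complementary monomial basis is obtained by iterated multiplication of generators in the non-associative free algebra; in the commutative non-associative setting this forces the admissibility shape stated in the theorem (no monomial with three or more non-zero coordinates and the 2D projection inside the Porteous diagram of Figure~1), because any other monomial would require an associator relation in order to be expressed in terms of the chosen basis. Each admissible $\lambda_\bullet$ comes with the canonical enumeration $\mathbf{u}_0,\dots,\mathbf{u}_{d-1}$ of its monomials and the filtration weight $w(k)$ recording the step $I_{w(k)}$ cutting $\mathbf{u}_k$, and the tower-of-Grassmannians description of Section~\ref{sec:NA-punct-Hilb} supplies the local model needed to read off the $T$-characters at $\lambda_\bullet$.

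Finally I would compute both Euler classes in these terms. The tangent character of $\naNHilb^{\underline{d}}(\A^n)$ at $\lambda_\bullet$ is read directly from the tower of Grassmannian fibrations: at each step, the deformations of the chosen subspace contribute weights $e_i - \mathbf{u}_k$ (coming from the $n$ linear generators of the ambient non-associative algebra) and $\mathbf{u}_i + \mathbf{u}_j - \mathbf{u}_k$ (coming from the pairwise products inside the appropriate graded piece), subject to the filtration constraint $w(j)\le w(k)$; this produces the denominator. The fibre $E_{\mathrm{ass}}|_{\lambda_\bullet}$ parametrizes associators $(\mathbf{u}_i\cdot\mathbf{u}_j)\cdot\mathbf{u}_k - \mathbf{u}_i\cdot(\mathbf{u}_j\cdot\mathbf{u}_k)$ in the quotient algebra, which contribute the triple weights $\mathbf{u}_i+\mathbf{u}_j+\mathbf{u}_k-\mathbf{u}_m$ under the filtration ordering $w(k),w(j)\le w(m)$; the remaining numerator factor $\widetilde{\prod}_{j=1}^{d-1}\prod_{w(j)\le w(k)}(\mathbf{u}_j-\mathbf{u}_k)$ accounts for the additional obstruction enforcing compatibility of the nested chain $I_{r+1}\subset\cdots\subset I_1$ with the multiplication. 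The $\widetilde{\prod}$ convention discards weights that vanish identically at $\lambda_\bullet$, corresponding to genuine overlaps between tangent and obstruction characters in the virtual normal bundle.

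The main obstacle will be the weight bookkeeping at each fixed point: matching the filtration inequalities $w(j)\le w(k)$ and $w(k),w(j)\le w(m)$ to exactly the stated index ranges, and verifying that the tilde-products correctly encode the non-trivial part of the virtual normal bundle once cancellations between tangent and obstruction contributions have been accounted for. A secondary subtlety is showing that admissibility in the Porteous sense is equivalent to the fixed-point locus being supported there, which amounts to checking that any monomial with three or more non-zero coordinates is forced out of the quotient by an associator relation, producing exactly the combinatorial shape of Figure~1.
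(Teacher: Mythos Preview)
Your overall strategy---apply virtual localization for the obstruction theory $[T_{\naNHilb}|_{\NHilb}\to E_{\mathrm{ass}}|_{\NHilb}]$---is correct and matches the paper. But there are two genuine misunderstandings in how you execute it.

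First, your description of admissibility is wrong. The $T$-fixed locus of $\NHilb^{\underline{d}}(\A^n)$ consists of \emph{all} chains of monomial ideals, i.e.\ all nested partitions, not just the admissible ones. There is no ``associator relation forcing a monomial out of the quotient'' at a fixed point: any monomial ideal in $\C[x_1,\dots,x_n]$ gives a perfectly good fixed point, regardless of the shape of its staircase. Admissibility is the condition that the \emph{fixed} parts of the tangent and obstruction representations have equal rank, so that $[\lambda_\bullet]^{\mathrm{vir}}=[\lambda_\bullet]$ rather than $0$. The paper proves this by computing $\operatorname{rk}T_{\lambda_\bullet}^{\mathrm{fix}}=\sum_m W_T(\mathbf{u}_m)$ and $\operatorname{rk}E_{\mathrm{ass},\lambda_\bullet}^{\mathrm{fix}}=\sum_m W_B(\mathbf{u}_m)$, showing the termwise inequality $W_T(\mathbf{u}_m)\le W_B(\mathbf{u}_m)$ via an explicit surjection of indexing sets, and then checking case by case for which vectors $\mathbf{u}_m$ equality holds. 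That case check is what produces the Figure~1 shape and the ``no three nonzero coordinates'' condition; it is combinatorics about trivial weights, not about which points are fixed.

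Second, two of your weight identifications are off. The tower of Grassmannians in Section~\ref{sec:NA-punct-Hilb} only describes the \emph{punctual} locus $\naNHilb^{\underline{d}}_0(\A^n)$; the tangent character of the full $\naNHilb^{\underline{d}}(\A^n)$ is computed in the paper from the quotient presentation $M_{n,\underline{d}}/P_{\underline{d}}$, giving $(t_1+\cdots+t_n+1)N_0^\lambda+\Hom^{\mathrm{fil}}(\sym^2 N_\bullet^\lambda,N_\bullet^\lambda)-2\operatorname{End}^{\mathrm{fil}}(N_\bullet^\lambda)$. In particular the numerator factor $\widetilde{\prod}_{j}\prod_{k}(\mathbf{u}_j-\mathbf{u}_k)$ is \emph{not} an obstruction term: it is the negative contribution $-\operatorname{End}^{\mathrm{fil}}(N_\bullet^\lambda)$ sitting inside the tangent $K$-class, which therefore appears in the numerator when you take the Euler class. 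The entire numerator is $e^T(E_{\mathrm{ass}}^{\mathrm{mov}})$ times the Euler class of this virtual correction, and the $\widetilde{\prod}$ simply removes the trivial-weight factors, which is precisely why one needs the ranks of the fixed parts to match for the contribution to survive.
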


\begin{figure}[ht]
    \centering
\begin{ytableau}
\none  & & \none& \none& \none& \none  \\
\none  & & \none& \none& \none& \none  \\
\none  & & & \none& \none& \none  \\
\none  & & & & \none& \none  \\
\none  & & & & & \\
\end{ytableau}
\caption{}

\end{figure}

\subsection*{The iterated residue formula.}
Similarly, when $d_0=1$ the nilpotently filtered Hilbert scheme $\operatorname{NHilb}^{\underline{d}}_{\operatorname{nil-fil}}(\A^n)$ admits a virtual fundamental class. When the flag is full, i.e., $d_i=1$ for all $i$ we are able to describe virtual integrals on $\operatorname{NHilb}^{\underline{d}}(\A^n)$ in terms of a virtual integral on $\operatorname{NHilb}^{\underline{d}}_{\operatorname{nil-fil}}(\A^n)$. When the flag is not full we can still perform this procedure, provided the integrand is supported on the nilpotently filtered locus. We spent the last section of the paper developing an iterated residue for virtual integrals over the nilpotently filtered locus. If we let $\underline{d}=(1,\hat{d})=(1,d_1,\dots,d_r)$ and assume
\[d=1+d_1+\cdots +d_r\leq n+1\]
we can construct a partial resolution of $\operatorname{NHilb}_{\operatorname{nil-fil}}^{\underline{d}}(\A^n)$ that fibers over the flag variety $\operatorname{Flag}_{\hat{d}}(\C^n)$. We can use this to derive a residue formula depending on 
\[\int_{[H_{\id}]^{\operatorname{vir}}}\alpha\]
where $H_{\id}$ is the fiber in the partial resolution over the reference flag
\[f_\bullet=(\operatorname{span}(e_1,\dots, e_{d_0})\subset \cdots \subset \operatorname{span}(e_1,\dots, e_{\hat{d}}))\in \operatorname{Flag}_{\hat{d}}(\C^n).\]
Calculating this integral via localization, we get a sum of iterated residues and we can prove that all residues vanish except for the one at the Porteous point i.e., the nested partition containing only unit vectors nested in the natural way. By considering the map
\[\operatorname{NHilb}_{\operatorname{nil-fil}}^{\underline{d}}(\A^n)\hookrightarrow \operatorname{NHilb}^{\underline{d}}_{\operatorname{nil-fil}}(\A^N)\]
for $n<N$ we can prove that this formula remains valid even when $d>n+1$. We obtain the following result.
\begin{theorem}\label{thm:mainb}
     Let $\underline{d}=(1,d_1\dots,d_r)\in \Z^{r+1}_{\geq 0}$, $d=1+d_1+\cdots +d_r$, $n\in \Z_{\geq 0}$, and $T=(\C^*)^n$. Let $A_T^*(\operatorname{pt})=\Z[s_1,\dots, s_n]$ where $s_1,\dots, s_n$ are the Chern roots of $\C^n$ given the standard torus action. Pick a $q$-dimensional $T$-representation $W$ with Chern roots $\theta_1,\dots, \theta_q$ and consider the $T$-equivariant rank $q$ bundle 
     \[E=W\otimes \A^n\]
     on $\A^n$. Let
    \[-\eta_0, -\eta_1,\dots, -\eta_{d-1}\]
    be the Chern roots of the tautological bundle $\oo_{\A^n}^{[\underline{d}]}$ so that
    \[E^{[\underline{d}]}\cong W\otimes \oo_{\A^n}^{[\underline{d}]}\]
    has Chern roots $\theta_i-\eta_j$. Let
    \[P=P(\theta_1,\dots, \theta_q,\eta_1,\dots, \eta_{d-1})\]
    be a polynomial in the Chern classes of $E^{[\underline{d}]}$. Then 
    \[\int_{[\operatorname{NHilb}^{\underline{d}}_{\operatorname{nil-fil}}(\A^n)]^{\operatorname{vir}}}P \]
    can be calculated via the iterated residue formula
    \[\underset{\underline{z}=\infty}{\operatorname{Res}}\frac{P(\theta_1,\dots,\theta_q,\underline{z})\prod_{1\leq m\neq l\leq d-1}(z_m-z_l)\prod_{\substack{1\leq i,j,k\leq d-1 \\ i<k}} \widetilde{\prod}_{\substack{0\leq m\leq d-1\\ w(k),w(j)\leq w(m)}}(z_i+z_j+z_k-z_m)d\underline{z}}{\prod_{\substack{1\leq l<m\leq d-1 \\ w(l)<w(m)}}(z_m-z_l)\prod_{i=1}^n \prod_{l=1}^{d-1}(s_i-z_l)\prod_{1\leq i\leq j\leq d-1} \widetilde{\prod}_{\substack{1\leq k\leq d-1\\ w(j)< w(k)}}(z_i+z_j-z_k)}\]
    Here $\underline{z}=(z_1,\dots, z_{d-1})$ and the iterated residue is defined as the coefficient of $(z_1\cdots z_{d-1})$ in the expansion of the rational expression in the domain $z_1\ll \cdots \ll z_{d-1}$.
\end{theorem}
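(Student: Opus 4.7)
The plan is to adapt the strategy of Bérczi--Szenes \cite{bsz} to the virtual setting. The starting point is the partial resolution $\pi\colon \widetilde{\operatorname{NHilb}}^{\underline{d}}_{\operatorname{nil-fil}}(\A^n) \to \operatorname{NHilb}^{\underline{d}}_{\operatorname{nil-fil}}(\A^n)$ fibering over $\operatorname{Flag}_{\hat d}(\C^n)$ constructed in the previous section. The first step is to verify that the perfect obstruction theory on $\operatorname{NHilb}^{\underline{d}}_{\operatorname{nil-fil}}(\A^n)$ pulls back to a compatible obstruction theory on the resolution, so that a virtual integral of a tautological class $P$ can be computed on the smooth total space of $\pi$ against the pulled-back class, modulo a standard birational push-forward argument.

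Second, I would apply Atiyah--Bott / virtual localization on the flag variety $\operatorname{Flag}_{\hat d}(\C^n)$. The reference flag $f_\bullet$ is the unique $T$-fixed flag for the standard action of $T=(\C^*)^n$, so push-forward to $\operatorname{Flag}_{\hat d}(\C^n)$ collapses to the fiber integral over $H_{\id}$ divided by the tangent Euler class of $\operatorname{Flag}_{\hat d}(\C^n)$ at $f_\bullet$. A second application of virtual localization on $H_{\id}$, which is built as a tower of Grassmannian fibrations inside $\naNHilb^{\underline d}_0$, then expresses this fiber integral as a sum over admissible nested partitions $\lambda_\bullet$ supported at $f_\bullet$, with summands given by specializing $z_i\mapsto \mathbf u_i(\underline s)$ in the rational form appearing inside the residue.

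The key step, and the technical heart of the proof, is the residue transformation itself. I would rewrite the localization sum as a single iterated residue at $\underline z=\infty$ of the displayed multi-variable rational form. For each non-Porteous fixed point I would exhibit a variable $z_j$ whose residue vanishes, because the numerator factor $\prod_{m\neq l}(z_m-z_l)$ together with the admissibility constraints encoded by Figure~1 introduce enough additional zeros in $z_j$ to overwhelm the pole order supplied by the denominator at that specialization. The sole surviving contribution comes from the Porteous enumeration $(\mathbf u_0,\dots,\mathbf u_{d-1})=(0,e_1,\dots,e_{d-1})$, and matching its iterated residue with the corresponding localization term yields the claimed formula. I expect this vanishing of all non-Porteous iterated residues to be the main obstacle, as it requires a delicate pole/zero count driven by the ordering $w(j)\le w(k)$ and the Figure~1 shape of admissible partitions.

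Finally, I would handle the regime $d>n+1$, where $\operatorname{Flag}_{\hat d}(\C^n)$ is empty and the above resolution is unavailable. For this, the closed embedding $\operatorname{NHilb}^{\underline{d}}_{\operatorname{nil-fil}}(\A^n)\hookrightarrow \operatorname{NHilb}^{\underline{d}}_{\operatorname{nil-fil}}(\A^N)$ for $N\gg d$, combined with the functoriality of the virtual fundamental class under this embedding and the compatibility of the tautological bundles $E^{[\underline d]}$, lets me deduce the formula on $\A^n$ from the formula already established on $\A^N$ by setting $s_{n+1}=\cdots=s_N=0$ in the rational function inside the residue; since that function is rational in the $s_i$, the specialization is legitimate and produces the claimed expression.
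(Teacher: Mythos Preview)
Your overall architecture matches the paper, but there are two genuine errors that would make the argument fail as written.

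\textbf{The flag variety does not have a unique fixed point.} You assert that the reference flag $f_\bullet$ is the unique $T$-fixed flag, so that localization on $\operatorname{Flag}_{\hat d}(\C^n)$ collapses to a single fiber integral over $H_{\id}$. This is false: the $T$-fixed locus of $\operatorname{Flag}_{\hat d}(\C^n)$ is the finite set indexed by $S_n/(S_{d_1}\times\cdots\times S_{d_r}\times S_{n-(d-1)})$, and localization yields a \emph{sum} over all cosets $\sigma$ of fiber integrals $\int_{[H_\sigma]^{\mathrm{vir}}}P$ divided by the tangent Euler class at $\sigma(f_\bullet)$. The paper then shows that $\int_{[H_\sigma]^{\mathrm{vir}}}P=Q(\theta,s_{\sigma(1)},\dots,s_{\sigma(d-1)})$ for a single polynomial $Q$, and it is precisely this Weyl-coset sum that is converted into a single iterated residue at infinity via a separate residue identity on the flag variety (Proposition~\ref{prop:weighted-residue}). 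In other words, the appearance of the iterated residue is \emph{not} a rewriting of the fixed-point sum on $H_{\id}$; it is what packages the sum over all fixed flags. Without this step your ``residue transformation'' has no source.

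\textbf{The reduction for $d>n+1$ is not a specialization $s_i\to 0$.} The paper does not set $s_{n+1}=\cdots=s_N=0$. Instead it uses that the closed immersion $\operatorname{naNHilb}_0^{\underline d}(\A^n)\hookrightarrow \operatorname{naNHilb}_0^{\underline d}(\A^N)$ is the zero locus of a section of $\mathcal{H}om(\mathcal{I},\mathcal V_1)$, so that
\[
\int_{[\operatorname{NHilb}^{\underline d}_{\operatorname{nil-fil}}(\A^n)]^{\mathrm{vir}}}P
=\int_{[\operatorname{NHilb}^{\underline d}_{\operatorname{nil-fil}}(\A^N)]^{\mathrm{vir}}}e^T(\mathcal E_{n,0})\cdot P,
\qquad e^T(\mathcal E_{n,0})=\prod_{i=n+1}^N\prod_{l=1}^{d-1}(s_i-\eta_l).
\]
Plugging this into the residue formula already proved for $\A^N$, the extra numerator factor $\prod_{i=n+1}^N\prod_l(s_i-z_l)$ cancels exactly the excess denominator factors, yielding the formula for $\A^n$. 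Specializing $s_i\to 0$ would instead introduce spurious poles in $\prod_l(s_i-z_l)$ and does not correspond to the geometric comparison of virtual classes.
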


In the follow-up paper \cite{MinddalBerczi-NAHilb-III}, we will further develop the theory by comparing the virtual invariants obtained in this paper with other known virtual invariants using a rational map from the non-associative to the non-commutative nested Hilbert scheme. We note that several virtual invariants of the Hilbert scheme of points can be constructed explicitly using the non-commutative Hilbert scheme. The degree 0 rank 1 DT-invariants on $\operatorname{Hilb}^{d}(\A^3)$ can be described using the critical locus description in the non-commutative Hilbert scheme, see \cite{BryanSzendroi, Okounkov2017KTheory}. The Hilbert scheme of points on $\A^4$ can be exhibited as a zero locus of an isotropic section of an orthogonal bundle on the non-commutative Hilbert scheme thus providing a virtual class in the sense of \cite{OhThomasCY4I}, see \cite{kool2025proofmagnificentconjecture}. In \cite{MinddalBerczi-NAHilb-II} a nested version of the non-commutative Hilbert scheme is constructed and it is shown that  $\operatorname{NHilb}^{\underline{d}}(\A^2)$ can be exhibited as a zero locus of a section of a vector bundle in a way that recovers the virtual class of \cite{GSYNestedHilbI}. \newline 

In more detail, the existence and smoothness of the nested non-commutative Hilbert scheme 
\[
\operatorname{ncNHilb}^{\underline{d}}(\A^n),
\]
is proven in \cite{MinddalBerczi-NAHilb-II}. It is the moduli space parameterizing $(N_\bullet, A_1,\dots, A_n, v)$, where $N_\bullet$ is a flag of vector spaces of dimension $\underline{d}$, each $A_i$ is a flag-preserving operator, and $v \in N_0$ is a cyclic vector, meaning that every element of $N_\bullet$ is a linear combination of $A_{i_1} A_{i_2} \cdots A_{i_m} v$. This construction specializes to the ordinary non-commutative Hilbert scheme in the case where $\underline{d}=(d)$. We consider the rational map
\[
\begin{tikzcd}
\operatorname{naNHilb}^{\underline{d}}(\A^n) \arrow[dashed]{r} & \operatorname{ncNHilb}^{\underline{d}}(\A^n) \\[-20pt]
[N_\bullet, \psi_1, \psi_2, v] \arrow[mapsto]{r} & \left[ N_\bullet, \psi_2(\psi_1(x_1),-), \dots, \psi_2(\psi_1(x_n),-), v \right]
\end{tikzcd}
\]
where $(\C^n)^\vee = \operatorname{span}(x_1,\dots,x_n)$. In other words, it is the same flag $N_\bullet$ with the same unit $v$, and the operators are defined by left multiplication with $\psi_1(x_i)$. The domain of definition is the Zariski open locus where $N_\bullet$ is spanned by $v$ and by elements obtained by iterative left multiplication by $\psi_1(x_1),\dots,\psi_1(x_n)$. The nested Hilbert scheme of points lies in the domain of this map and the map restricts to the identity map of the nested Hilbert scheme of points viewed either as the associativity or the commutativity locus.\newline 

We hope to use this map to construct relative perfect obstruction theories on the identity map of the (nested) Hilbert scheme of points compatible with our obstruction theory in the domain and well-known obstruction theories that can be constructed from the non-commutative (nested) Hilbert scheme in the codomain.

\begin{remark}
One might be tempted to think that the map is well defined everywhere, since 
$y_1=\psi_1(x_1), \dots, y_n=\psi_1(x_n)$ generate $N_\bullet$ as a non-associative algebra. 
This is, however, not the case. The essential issue is that elements of the form
\[
(y_i y_j)(y_k y_m)
\]
cannot \emph{a priori} be written as elements obtained by iterative left multiplication by the $y_\ell$.
\end{remark}

Similarly, if we simply denote the operators by
\[A_{i}:=\psi_2(\psi_1(x_i),-)\]
we can define a potential 
\begin{center}
     \begin{tikzcd}
         \operatorname{naHilb}^{d}(\A^3) \arrow{r}{g} & \A^1 \\[-20pt]
         [N_\bullet, \psi_1,\psi_2,v] \arrow[mapsto]{r} & \operatorname{Tr}A_1[A_2,A_3]
     \end{tikzcd}
\end{center}
which is well defined on the whole non-associative Hilbert scheme. On easily sees that

\begin{center}
    \begin{tikzcd}
       & \A^1 & \\
       \operatorname{naHilb}^{d}(\A^3)  \arrow[dashed]{rr} \arrow{ur}{g} & &  \operatorname{ncHilb}^{d}(\A^3) \arrow[swap]{ul}{f} \\
       & \operatorname{Hilb}^{d}(\A^3) \arrow[swap,hook]{lu} \arrow[hook]{ru} &
    \end{tikzcd}
\end{center}
commutes and that the nested Hilbert scheme lies in the domain of definition of the rational map. 

\begin{remark}
    Note that the critical locus $\operatorname{Crit}(g)$ is exactly the locus where the operators $\psi_2(\psi_1(x_i),-)$ commutes. This commutativity yields associativity conditions. Indeed, let $A$ be a commutative non-associative algebra and for $a\in A$ denote by $L_a$ the operator given by left multiplication of $a$. If $L_aL_b=L_bL_a$ then for all $c\in A$ we have
    \[(ac)b=b(ac)=L_bL_a(c)=L_aL_b(c)=a(bc)=a(cb)\]
    i.e. the associator $[a,c,b]$ vanishes for all $c\in A$.
\end{remark}
\begin{remark}
    One might be tempted to think that $\operatorname{NHilb}^{\underline{d}}(\A^3)=\operatorname{Crit}(g)$. We have $\operatorname{NHilb}^{\underline{d}}(\A^3)\subset\operatorname{Crit}(g)$ but the inclusion is in general strict, even if we restrict to the domain of definition of the rational map. Let us consider the following counter example. Let $N=\operatorname{span}(v_0,v_1,v_2)$. Let $v_0$ be the unit and define a multiplication rule by 
    \begin{align*}
        v_1v_1=v_2,\: v_1v_2=v_2v_1=v_2 \: \text{and} \: v_2v_2=0.
    \end{align*}
    Set $\psi_1(x_1)=\psi_1(x_2)=\psi_1(x_3)=v_1$. This is in the domain of definition of the rational map since everything is spanned by the unit $v_0$ and left multiplications of $v_1$ with itself. Clearly the operators $\psi_2(\psi_1(x_i),-)$ and $\psi_2(\psi_1(x_j),-)$ commutes since $\psi_1(x_i)=\psi_1(x_j)$. However, the algebra is not associative as 
    \[(v_1v_1)v_2=v_2v_2=0\neq v_2=v_1v_2=v_1(v_1v_2).\]
\end{remark}
The rational map defined in this section provides us a direct approach to express DT invariants as tautological virtual integrals on the positive dimensional virtual classes in higher dimensions. 

\section{Preliminaries}
\subsection{Non-associative algebras}
We provide the definition of a non-associative $\oo_S$-algebra and describe how to construct quotients of the free non-associative algebra.
 \begin{defi}
     Let $S$ be a scheme. A (quasi-coherent) non-associative $\oo_S$-algebra is by definition a pair $(\mathcal{F},\psi)$ of a quasi-coherent sheaf $\mathcal{F}\in \operatorname{QCoh}(S)$ equipped with a multiplication rule 
     \[\psi:\mathcal{F}\otimes_S\mathcal{F}\to \mathcal{F}.\]
     A (quasi-coherent) unital non-associative $\oo_S$ algebra is a triple $(\mathcal{F},\psi,s)$ where $(\mathcal{F},\psi)$ is a non-associative $\oo_S$ algebra and $s:\oo_S\to \mathcal{F}$ is a global section such that
     \[\mathcal{F}\simeq \oo_S\otimes_S\mathcal{F}\xrightarrow{s\oplus \operatorname{id}} \mathcal{F}\otimes_S\mathcal{F} \xrightarrow{\psi} \mathcal{F}\]
     and 
    \[\mathcal{F}\simeq \mathcal{F}\otimes_S \oo_S\xrightarrow{\operatorname{id}\oplus s} \mathcal{F}\otimes_S\mathcal{F} \xrightarrow{\psi} \mathcal{F}\]
    agrees with $\id_{\mathcal{F}}$. A non-associative $\oo_S$-algebra $(\mathcal{F},\psi)$ is said to be commutative if $\psi$ factors through 
    \[\sym^2_S\mathcal{F}=\mathcal{F}\otimes_S\mathcal{F}\big{/}(a\otimes b-b\otimes a). \]
    A morphism of non-associative $\oo_S$-algebras $f: (\mathcal{F},\psi)\to (\mathcal{G},\varphi)$ is a morphism of $\oo_S$-modules $f:\mathcal{F}\to \mathcal{G}$ such that
    \begin{center}
        \begin{tikzcd}
            \mathcal{F}\otimes_S \mathcal{F} \arrow{d}{f\otimes f} \arrow{r}{\psi} & \mathcal{F} \arrow{d}{f} \\
            \mathcal{G}\otimes_S \mathcal{G} \arrow{r}{\varphi} & \mathcal{G}
        \end{tikzcd}
    \end{center}
    commutes. A morphism of unital non-associative $\oo_S$-algebras $f: (\mathcal{F},\psi,s)\to (\mathcal{G},\varphi,t)$ is a morphism $f: \mathcal{F}\to \mathcal{G}$ of non-associative $\oo_S$-algebras such that $t=fs$. 
 \end{defi}

\begin{lemma} \label{unique algebra structure surjection}
    Let $S\in \operatorname{Sch}_{\C}$, let $(\mathcal{F},\psi)$ be a unital non-associative algebra and let $\mathcal{G}\in \operatorname{QCoh}(S)$. Let $\pi:\mathcal{F}\twoheadrightarrow \mathcal{G}$ be an epimorphism of $\oo_S$-modules with kernel $\mathcal{I}$. Then there exists a necessarily unique unital non-associative algebra structure $\varphi$ on $\mathcal{G}$ making $\pi$ a morphism of unital non-associative algebras if and only if $\mathcal{I}$ is a two-sided ideal. If $\psi$ is commutative resp. associative, then the same will hold true for $\varphi$.
\end{lemma}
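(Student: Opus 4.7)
The plan is to follow the standard quotient argument, adapted to the non-associative quasi-coherent setting. Since $\pi$ is an epimorphism in $\operatorname{QCoh}(S)$, right exactness of $\otimes_{\oo_S}$ implies that $\pi\otimes\pi:\mathcal{F}\otimes_S\mathcal{F}\to\mathcal{G}\otimes_S\mathcal{G}$ is also an epimorphism. Consequently any multiplication $\varphi$ on $\mathcal{G}$ making $\pi$ an algebra morphism must satisfy $\varphi\circ(\pi\otimes\pi)=\pi\circ\psi$, and this forces $\varphi$ to be unique as soon as it exists.

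For existence I would observe that such a $\varphi$ exists precisely when $\pi\circ\psi$ vanishes on $\ker(\pi\otimes\pi)$. Applying right exactness of $\otimes_{\oo_S}$ once in each factor, via the two right exact sequences $\mathcal{I}\otimes_S\mathcal{F}\to\mathcal{F}\otimes_S\mathcal{F}\to\mathcal{G}\otimes_S\mathcal{F}\to 0$ and $\mathcal{F}\otimes_S\mathcal{I}\to\mathcal{F}\otimes_S\mathcal{F}\to\mathcal{F}\otimes_S\mathcal{G}\to 0$, a short diagram chase identifies $\ker(\pi\otimes\pi)$ with the image of the natural map $\mathcal{I}\otimes_S\mathcal{F}\oplus\mathcal{F}\otimes_S\mathcal{I}\to\mathcal{F}\otimes_S\mathcal{F}$. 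The condition that $\pi\circ\psi$ kills this image then translates exactly into $\psi(\mathcal{I}\otimes\mathcal{F})\subset\mathcal{I}$ and $\psi(\mathcal{F}\otimes\mathcal{I})\subset\mathcal{I}$, i.e. $\mathcal{I}$ being two-sided. I would flag this identification of $\ker(\pi\otimes\pi)$ as the only mildly delicate point, since $\mathcal{I}\otimes_S\mathcal{F}$ need not inject into $\mathcal{F}\otimes_S\mathcal{F}$; once this is pinned down, everything else is formal.

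Once $\varphi$ is in hand, I would verify unitality with unit $\pi\circ s$ by postcomposing the unit axiom for $\psi$ with $\pi$ and rewriting the result using $\varphi\circ(\pi\otimes\pi)=\pi\circ\psi$, with the same argument on the right. For the inheritance of commutativity and associativity I would again exploit surjectivity of $\pi\otimes\pi$, respectively $\pi^{\otimes 3}$: if $\psi$ factors through $\sym^2_S\mathcal{F}$, then $\pi\circ\psi$ does as well, and the uniqueness of the induced factorization through $\pi\otimes\pi$ forces $\varphi$ to factor through $\sym^2_S\mathcal{G}$; similarly, the associator of $\varphi$ precomposed with $\pi^{\otimes 3}$ equals $\pi$ applied to the associator of $\psi$, which vanishes by assumption, and since $\pi^{\otimes 3}$ is surjective the associator of $\varphi$ itself must vanish.
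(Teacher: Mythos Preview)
Your proposal is correct and follows essentially the same approach as the paper: both use right exactness of the tensor product to descend $\psi$ along $\pi\otimes\pi$, and both check commutativity and associativity by precomposing with the appropriate surjection. The only cosmetic difference is that the paper avoids explicitly describing $\ker(\pi\otimes\pi)$ by factoring in two steps (first through $\mathcal{G}\otimes_S\mathcal{F}$ using $\mathcal{I}\otimes_S\mathcal{F}\to\mathcal{F}\otimes_S\mathcal{F}\to\mathcal{G}\otimes_S\mathcal{F}\to 0$, then through $\mathcal{G}\otimes_S\mathcal{G}$ using $\mathcal{G}\otimes_S\mathcal{I}\to\mathcal{G}\otimes_S\mathcal{F}\to\mathcal{G}\otimes_S\mathcal{G}\to 0$), which sidesteps the delicate point you flagged; but the content is the same.
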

\begin{proof}
Suppose first that $\varphi$ exists. Then 
\[\pi(\psi(\mathcal{I}\otimes_S \mathcal{F}))=\varphi(\pi(\mathcal{I})\otimes_S\pi(\mathcal{F}))=0\]
and
\[\pi(\psi(\mathcal{F}\otimes_S \mathcal{I}))=\varphi(\pi(\mathcal{F})\otimes_S\pi(\mathcal{I}))=0\]
proving that $\psi(\mathcal{I}\otimes_S \mathcal{F}), \psi(\mathcal{F}\otimes_S \mathcal{I})\subset \mathcal{I}$. Conversely, if $\mathcal{I}$ is a two-sided ideal. We have two exact sequences given by
\[\mathcal{I}\otimes_S \mathcal{F}\to \mathcal{F}\otimes_S \mathcal
F\to \mathcal{G}\otimes_S\mathcal{F}\to 0\]
and 
\[\mathcal{G}\otimes_S \mathcal{I}\to \mathcal{G}\otimes_S \mathcal{F}\to \mathcal{G}\otimes_S\mathcal
G\to 0.\]
Since $\pi(\psi(\mathcal{I}\otimes_S\mathcal{F}))=0$ it follows that $\pi\psi$ uniquely induce a map $\mathcal{F}\otimes_S\mathcal{G}\to \mathcal
G$. Since $\pi(\psi(\mathcal{F}\otimes_S\mathcal{I}))=0$ it follows that this induced map takes $\mathcal{G}\otimes_S \mathcal{I}$ to $0$ and hence induces a unique map
\[\varphi: \mathcal{G}\otimes_S\mathcal
G\to \mathcal{G}.\]
Let $\tau_{\mathcal{G}}$ be the swap map of $\mathcal
{G}\otimes_S \mathcal{G}$. Commutativity of $\varphi$ can the be checked by the vanishing of the map $\varphi-\varphi\tau_{\mathcal{G}}$, which by surjectivity of $\pi$ can be checked after precomposing with $\pi\otimes \pi$. This composition yields the map $\pi(\psi-\psi\tau_{\mathcal{F}})$ which vanishes if $\psi$ is commutative. Similarly, associativity can be checked by the vanishing of the map $\varphi(\operatorname{id}\otimes\varphi)-\varphi(\varphi\otimes \operatorname{id})$, which again by surjectivity of $\pi$ can be checked after precomposing with $\pi\otimes \pi \otimes\pi$. This composition gives the map $\pi(\psi(\operatorname{id}\otimes \psi)-\psi(\psi\otimes\operatorname{id}))$ which vanishes if $\psi$ is associative.
\end{proof}
\begin{remark}
    We note that if $(\mathcal{F},\psi)$ is also equipped with a unital structure $s$ then we similarly get a unique unital structure $t$ on $\mathcal{G}$ by setting $t=\pi s$. 
\end{remark}

\begin{lemma} \label{free non-associative}
Let $S\in \operatorname{Sch}_{\C}$ and let $\operatorname{Qlg}(S)$ be the category of unital non-associative $\oo_S$ algebras. The forgetful functor 
\[\operatorname{Qlg}(S)\to \operatorname{QCoh}(S)\]
admits a left adjoint $F_{Qlg}(S)$. 
\end{lemma}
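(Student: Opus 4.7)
The plan is to build $F_{Qlg}(\mathcal{G})$ explicitly as a direct sum of iterated tensor powers of $\mathcal{G}$ indexed by planar binary trees (equivalently, by Catalan bracketings of words in $\mathcal{G}$), and then verify the universal adjunction property by induction on the number of leaves. This mirrors the classical construction of the free magma / free non-associative algebra on a vector space, performed sheafwise.

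First I would construct the non-unital part. Set $M_1(\mathcal{G}) = \mathcal{G}$ and inductively for $n \geq 2$,
\[
M_n(\mathcal{G}) = \bigoplus_{\substack{i+j=n\\ i,j\geq 1}} M_i(\mathcal{G}) \otimes_S M_j(\mathcal{G}).
\]
Let $M(\mathcal{G}) = \bigoplus_{n \geq 1} M_n(\mathcal{G})$, equipped with the multiplication $\mu \colon M(\mathcal{G}) \otimes_S M(\mathcal{G}) \to M(\mathcal{G})$ sending $M_i \otimes_S M_j$ identically onto the $(i,j)$-summand of $M_{i+j}$. Now define
\[
F_{Qlg}(\mathcal{G}) = \oo_S \oplus M(\mathcal{G}),
\]
with unit $s \colon \oo_S \hookrightarrow F_{Qlg}(\mathcal{G})$ the inclusion, and multiplication extending $\mu$ by letting the $\oo_S$-summand act as the tautological $\oo_S$-module structure on $M(\mathcal{G})$. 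Functoriality in $\mathcal{G}$ is immediate from the functoriality of tensor product and the inductive definition.

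For the universal property, given $(\mathcal{F}, \psi, t) \in \operatorname{Qlg}(S)$ and a morphism $f \colon \mathcal{G} \to \mathcal{F}$ of quasi-coherent sheaves, I would define $\tilde{f} \colon F_{Qlg}(\mathcal{G}) \to \mathcal{F}$ by $\tilde{f}|_{\oo_S} = t$, $\tilde{f}|_{M_1} = f$, and recursively for each summand $M_i \otimes_S M_j \subset M_{i+j}$,
\[
\tilde{f}\big|_{M_i \otimes_S M_j} = \psi \circ \bigl(\tilde{f}|_{M_i} \otimes \tilde{f}|_{M_j}\bigr).
\]
A straightforward induction on degree verifies that $\tilde{f}$ is a morphism of unital non-associative $\oo_S$-algebras. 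Uniqueness is immediate, since $F_{Qlg}(\mathcal{G})$ is generated as a unital non-associative algebra by the submodule $M_1 = \mathcal{G}$ together with the unit, which forces the above recursive formula on any extension.

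I do not anticipate any substantial obstacle here: the construction is entirely formal, and the only care needed is combinatorial bookkeeping to ensure that the inductive definition of $\tilde{f}$ is well-posed on each summand of $M_n$ independently. Once this is in place, compatibility with $\psi$ and $s$ follows automatically from the universal property of the tensor product in $\operatorname{QCoh}(S)$.
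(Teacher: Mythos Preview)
Your proposal is correct and is essentially the same construction as the paper's: your recursively defined $M_n(\mathcal{G})$ unfolds to $\bigoplus_{x\in B(n)}\mathcal{G}^{\otimes n}$ where $B(n)$ is the set of planar binary bracketings, which is exactly the paper's indexing set, and both proofs verify the adjunction by the same induction on degree. The only cosmetic difference is that the paper makes the bracketing combinatorics explicit via the sets $B(k)$, whereas you package the same data recursively.
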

\begin{proof}
One could argue briefly using the adjoint functor theorem. Instead, we give concrete construction. We define for each $k\geq 1$ the set $B(k)$ corresponding to the set of ways to put bracket around a length $k$ expression. One can define this inductively setting $B(1)=\{\ast\}$ and 
\[B(k)=\coprod_{i+j=k}B(i)\times B(j).\]
For a $\mathcal{F}\in \operatorname{QCoh}(S)$ we then set 
\[F_{\operatorname{Qlg}}(\mathcal{F}):=\oo_S\oplus \bigoplus_{k=1}^\infty \bigoplus_{x\in B(k)} \mathcal{F}^{\otimes_{S}k}. \]
The first summand is the unit, and the multiplication rule 
\[F_{\operatorname{QLG}}(\mathcal{F})\otimes_S F_{\operatorname{QLG}}(\mathcal{F})\to F_{\operatorname{QLG}}(\mathcal{F}) \]
is given by first distributing the tensor product over the direct sums and then mapping $\mathcal{F}^{\otimes_Si}\otimes_S\mathcal{F}^{\otimes_Sj}$ corresponding to $x\in B(i)$ and $y\in B(j)$ to the summand corresponding to $(x,y)\in B(i+j)$. Given the datum of a map $f: \mathcal{F}\to \mathcal{G}$ in $\operatorname{QCoh}(S)$ and a unital non-associative structure $s\in H^0(S,\mathcal{G})$, $\psi:\mathcal{G}\otimes_S \mathcal{G}\to \mathcal{G}$ one can then easily via induction construct a necessarily unique map 
\[F_{\operatorname{Qlg}}(\mathcal{F})\to \mathcal{G}\]
of unital non-associative algebras such that the map agrees with $f$ when restricted to $\bigoplus_{x\in B(1)}\mathcal{F}\cong \mathcal{F}$. This proves that the construction is left adjoint to the forgetful functor. 
\end{proof}

\begin{lemma}
    Let $S\in \operatorname{Sch}_{\C}$  and let $\operatorname{Qlg}(S), \operatorname{QClg}(S), \operatorname{QAlg}(S)$ and $\operatorname{QCAlg}$ be the categories of respectively unital non-associative, unital commutative non-associative, unital associative and unital commutative associative $\oo_S$-algebras. The inclusion functors
\begin{center}
    \begin{tikzcd}
        & \operatorname{QClg}(S) \arrow{rd} & \\
        \operatorname{QCAlg}(S) \arrow{ru} \arrow{rd} & & \operatorname{Qlg}(S) \\ 
        & \operatorname{QAlg}(S) \arrow{ru}
    \end{tikzcd}
\end{center}
admits left adjoints. In particular, using \cref{free non-associative} we see that the forgetful functor from any of these categories to $\operatorname{QCoh}(S)$ admits left adjoints.
\end{lemma}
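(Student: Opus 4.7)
The plan is to construct each left adjoint by quotienting the source algebra by the two-sided ideal generated by the obstruction to the additional property (commutativity or associativity). The key technical ingredient is \cref{unique algebra structure surjection}: it both transfers the algebra structure to a quotient by a two-sided ideal and guarantees that commutativity and associativity of the source are inherited. First I would set up the notion of the two-sided ideal generated by a quasi-coherent subsheaf $\mathcal{J}\subset \mathcal{F}$ of a unital non-associative $\oo_S$-algebra $(\mathcal{F},\psi,s)$. Define inductively $\mathcal{J}_0=\mathcal{J}$ and
\[\mathcal{J}_{k+1}=\mathcal{J}_k+\psi(\mathcal{F}\otimes_S\mathcal{J}_k)+\psi(\mathcal{J}_k\otimes_S\mathcal{F}),\]
and let $\langle \mathcal{J}\rangle=\bigcup_{k\geq 0}\mathcal{J}_k$. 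This is a quasi-coherent two-sided ideal minimal among those containing $\mathcal{J}$. Since the kernel of any morphism in $\operatorname{Qlg}(S)$ is a two-sided ideal, any $f:(\mathcal{F},\psi,s)\to (\mathcal{G},\varphi,t)$ with $f|_{\mathcal{J}}=0$ factors uniquely through the quotient $\mathcal{F}/\langle \mathcal{J}\rangle$, which by \cref{unique algebra structure surjection} carries a unique unital non-associative algebra structure.

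For the inclusion $\operatorname{QClg}(S)\hookrightarrow \operatorname{Qlg}(S)$ I would take the obstruction subsheaf $\mathcal{J}_c=\im(\psi-\psi\circ \tau_{\mathcal{F}})\subset \mathcal{F}$, where $\tau_{\mathcal{F}}$ is the swap of $\mathcal{F}\otimes_S\mathcal{F}$, and send $(\mathcal{F},\psi,s)$ to $\mathcal{F}/\langle \mathcal{J}_c\rangle$. The commutativity argument used at the end of the proof of \cref{unique algebra structure surjection} shows this quotient is commutative, and any morphism to a commutative algebra kills $\mathcal{J}_c$ and hence factors uniquely through the quotient, giving the desired universal property. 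Replacing $\mathcal{J}_c$ with $\mathcal{J}_a=\im(\psi\circ(\id\otimes \psi)-\psi\circ(\psi\otimes\id))\subset \mathcal{F}$ yields the left adjoint to $\operatorname{QAlg}(S)\hookrightarrow \operatorname{Qlg}(S)$ by the same argument. For $\operatorname{QCAlg}(S)\hookrightarrow \operatorname{QClg}(S)$ and $\operatorname{QCAlg}(S)\hookrightarrow \operatorname{QAlg}(S)$ I would apply the same construction with $\mathcal{J}_a$ and $\mathcal{J}_c$ respectively, using that \cref{unique algebra structure surjection} ensures the quotient retains the ambient commutativity or associativity.

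The "in particular" statement follows since compositions of left adjoints are left adjoints: combining the adjoints constructed above with $F_{\operatorname{Qlg}}$ from \cref{free non-associative} yields free functors $\operatorname{QCoh}(S)\to \operatorname{QClg}(S),\operatorname{QAlg}(S),\operatorname{QCAlg}(S)$. The main subtlety, rather than a serious obstacle, is the non-associative definition of the ideal generated by a subsheaf: without associativity, iterated products of generators with ambient elements cannot be reduced to a single canonical form, so one must pass through the inductive construction above rather than use a closed formula. Once this bookkeeping is in place, the rest of the argument is a direct application of \cref{unique algebra structure surjection} together with the universal property of quotients.
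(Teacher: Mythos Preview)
Your proposal is correct and follows essentially the same approach as the paper: quotient by the two-sided ideal generated by the commutator (resp.\ associator) relations and invoke \cref{unique algebra structure surjection}. The paper is more terse, simply referring to ``the smallest two-sided ideal containing'' these relations, whereas you spell out the inductive construction of this ideal in the non-associative setting, but the argument is otherwise identical.
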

\begin{proof}
For $(\mathcal{F},\psi,s)\in \operatorname{Qlg}(S)$ we define the commutator ideal as the smallest two-sided ideal containing
\[\psi(x,y)-\psi(y,x)\]
for all $x,y\in \mathcal{F}$. Taking the quotient of $\mathcal{F}$ by this ideal yields the left adjoints of the upper right and lower left arrows of the diamond.  Similarly, we can define the associator ideal generated by
\[\psi(x,\psi(y,z))-\psi(\psi(x,y),z)\]
for $x,y,z\in \mathcal{F}$. Taking the quotient of $\mathcal{F}$ by this ideal yields the left adjoints of the lower right and upper left arrows of the diamond.
\end{proof}
\begin{remark}
    We will denote the left adjoints of the forgetful functors $\mathcal{C}\to \operatorname{QCoh}(S)$ by $F_{\mathcal{C}}$. Let $n\in \N$. We will throughout the paper use the following standard notation
    \begin{align*}
        &\oo_S\left\{x_1,\dots, x_n\right\}:=F_{\operatorname{Qlg}}\left((\oo_S^n)^\vee\right), \\
        &\oo_S\left\{x_1,\dots, x_n\right\}_c:=F_{\operatorname{QClg}}\left((\oo_S^n)^\vee\right), \\
        &\oo_S\langle x_1,\dots, x_n\rangle:=F_{\operatorname{QAlg}}\left((\oo_S^n)^\vee\right) \text{ and}\\
        &\oo_S[x_1,\dots, x_n]:=F_{\operatorname{QCAlg}}\left((\oo_S^n)^\vee\right)
    \end{align*}
for the various free algebras on $n$ generators. Notice that we take the dual to keep the convention of a covariant equivalence between vector bundles and locally free sheaves of finite rank.
\end{remark}

\begin{lemma}  \label{algebra determined from epi from free}
    Let $S\in \operatorname{Sch}_{\C}$ and let $\mathcal{F},\mathcal{G}$ be quasi-coherent sheaves. Let $\psi, s$ be the multiplication rule and unit of $F_{\operatorname{Qlg}}(\mathcal{F})$. The following data is equivalent. 
    \begin{enumerate}
        \item A map $f:\mathcal{F}\to \mathcal{G}$ and a unital non-associative algebra structure $(\varphi,t)$ on $\mathcal{G}$ such that the induced map $\pi: F_{\operatorname{Qlg}}(\mathcal{F})\to \mathcal{G}$ is an epimorphism.
        \item An epimorphism $\pi: F_{\operatorname{Qlg}}(\mathcal{F})\to \mathcal{G}$ of quasi-coherent sheaves such that the kernel is a two-sided ideal.
    \end{enumerate}
\end{lemma}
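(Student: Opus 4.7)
The plan is to chase the two directions through the adjunction of \cref{free non-associative} together with the quotient construction of \cref{unique algebra structure surjection}, and then verify that the two assignments are mutually inverse.

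For the implication $(1)\Rightarrow(2)$, I would start with $f\colon\mathcal{F}\to\mathcal{G}$ and a unital non-associative algebra structure $(\varphi,t)$ on $\mathcal{G}$. By \cref{free non-associative}, the pair $(f,(\varphi,t))$ determines a unique morphism of unital non-associative $\oo_S$-algebras $\pi\colon F_{\operatorname{Qlg}}(\mathcal{F})\to\mathcal{G}$, which by assumption is an epimorphism. It remains to check that $\mathcal{I}:=\ker\pi$ is a two-sided ideal. This is the standard fact: because $\pi$ is a morphism of non-associative algebras, for local sections $x\in\mathcal{I}$ and $y\in F_{\operatorname{Qlg}}(\mathcal{F})$ one has $\pi(\psi(x,y))=\varphi(\pi(x),\pi(y))=0$, so $\psi(\mathcal{I}\otimes_S F_{\operatorname{Qlg}}(\mathcal{F}))\subset\mathcal{I}$, and symmetrically on the other side. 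Concretely, the argument parallels the necessity direction in the proof of \cref{unique algebra structure surjection}.

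For $(2)\Rightarrow(1)$, I would start with an epimorphism $\pi\colon F_{\operatorname{Qlg}}(\mathcal{F})\to\mathcal{G}$ whose kernel is a two-sided ideal. By \cref{unique algebra structure surjection} there is a unique unital non-associative algebra structure $(\varphi,t)$ on $\mathcal{G}$ such that $\pi$ becomes a morphism of such algebras (with $t=\pi s$ as in the remark following that lemma). To recover a map out of $\mathcal{F}$, restrict $\pi$ to the summand indexed by $B(1)=\{\ast\}$ in the explicit construction $F_{\operatorname{Qlg}}(\mathcal{F})=\oo_S\oplus\bigoplus_{k\ge1}\bigoplus_{x\in B(k)}\mathcal{F}^{\otimes_S k}$; this gives the desired $f\colon\mathcal{F}\to\mathcal{G}$.

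Finally I would check that these two assignments are mutually inverse. Starting from $(f,(\varphi,t))$, applying $(1)\Rightarrow(2)$ gives $\pi$, and then $(2)\Rightarrow(1)$ returns $(\varphi,t)$ (by uniqueness in \cref{unique algebra structure surjection}) and a restriction which equals $f$ by the adjunction of \cref{free non-associative}. Conversely, starting from $\pi$, the adjoint of the pair $(f,(\varphi,t))$ produced in $(2)\Rightarrow(1)$ is a morphism of unital non-associative algebras agreeing with $\pi$ on the generators $\mathcal{F}$, hence equals $\pi$ by the uniqueness clause of \cref{free non-associative}. The only delicate point, and the one I expect to merit the most care, is verifying that the algebra structure $(\varphi,t)$ extracted in $(2)\Rightarrow(1)$ really coincides with the one one started with in $(1)$; this comes down to the uniqueness assertion of \cref{unique algebra structure surjection} applied to the epimorphism $\pi$, so no further work is needed beyond invoking the previous two lemmas.
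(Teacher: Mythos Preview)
Your proposal is correct and follows exactly the same approach as the paper: both directions are reduced to the adjunction of \cref{free non-associative} together with the quotient construction and uniqueness of \cref{unique algebra structure surjection}. The paper simply compresses this into a single sentence (``By adjunction and \cref{unique algebra structure surjection} both types of data exactly describe a surjection $\pi$ of unital non-associative algebras''), whereas you have spelled out the two directions and the inverse check explicitly.
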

\begin{proof}
    By adjunction and \cref{unique algebra structure surjection} both types of data exactly describes a surjection $\pi$ of unital non-associative algebras.
\end{proof}
\begin{lemma} \label{Ass or comm checked by factorization}
    Let $S\in \operatorname{Sch}_{\C}$ and let $\mathcal{F},\mathcal{G}$ be quasi-coherent sheaves. Let $(\varphi, t)$ be a unital non-associative algebra structure on $\mathcal{G}$ and let $\pi: F_{\operatorname{Qlg}}(\mathcal{F})\to \mathcal{G}$ be an epimorphism of unital non-associative algebras. Then $\mathcal{G}$ is associative (resp. commutative) if and only if $\pi$ factors through $F_{\operatorname{Qlg}}(\mathcal{F})\to F_{\operatorname{QAlg}}(\mathcal{F})$ (resp. $F_{\operatorname{Qlg}}(\mathcal{F})\to F_{\operatorname{QClg}}(\mathcal{F})$).
\end{lemma}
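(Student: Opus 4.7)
The lemma is essentially an adjunction statement, so the plan is to reduce both implications to the universal properties established in the preceding lemmas together with \Cref{unique algebra structure surjection}. I will treat the associative case in full, since the commutative case is word-for-word identical after replacing $F_{\operatorname{QAlg}}$ and the associator ideal by $F_{\operatorname{QClg}}$ and the commutator ideal.

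\textbf{The easy direction.} Suppose $\pi$ factors as $\pi = \widetilde{\pi}\circ p$ with $p\colon F_{\operatorname{Qlg}}(\mathcal{F})\to F_{\operatorname{QAlg}}(\mathcal{F})$ the canonical projection (the quotient by the associator ideal constructed in the preceding lemma) and $\widetilde{\pi}\colon F_{\operatorname{QAlg}}(\mathcal{F})\to \mathcal{G}$. Since $\pi$ is an epimorphism of $\oo_S$-modules, so is $\widetilde{\pi}$, and $F_{\operatorname{QAlg}}(\mathcal{F})$ is associative by construction. The "associative" assertion in \Cref{unique algebra structure surjection} then transfers associativity from $F_{\operatorname{QAlg}}(\mathcal{F})$ to its quotient $\mathcal{G}$.

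\textbf{The other direction.} Assume $\mathcal{G}$ is associative. Restricting $\pi$ to the summand $\bigoplus_{B(1)}\mathcal{F}\cong \mathcal{F}$ of $F_{\operatorname{Qlg}}(\mathcal{F})$ yields a morphism $f\colon \mathcal{F}\to \mathcal{G}$ of quasi-coherent sheaves. By the adjunction from the previous lemma (the composite left adjoint $F_{\operatorname{QAlg}}\colon \operatorname{QCoh}(S)\to \operatorname{QAlg}(S)$), $f$ extends uniquely to a morphism $\widetilde{\pi}\colon F_{\operatorname{QAlg}}(\mathcal{F})\to \mathcal{G}$ of unital associative $\oo_S$-algebras. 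Composing with $p$ gives a morphism $\widetilde{\pi}\circ p\colon F_{\operatorname{Qlg}}(\mathcal{F})\to \mathcal{G}$ of unital non-associative $\oo_S$-algebras whose restriction to $\mathcal{F}$ equals $f$. By the uniqueness clause in the adjunction defining $F_{\operatorname{Qlg}}$ (\Cref{free non-associative}), any two morphisms $F_{\operatorname{Qlg}}(\mathcal{F})\to \mathcal{G}$ of unital non-associative algebras agreeing on $\mathcal{F}$ must coincide; applying this to $\pi$ and $\widetilde{\pi}\circ p$, we conclude $\pi = \widetilde{\pi}\circ p$, which is the required factorization.

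\textbf{Anticipated obstacle.} There is nothing computationally difficult in the proof, but bookkeeping the three adjunctions in play (for $F_{\operatorname{Qlg}}$, for the relative left adjoint $\operatorname{Qlg}(S)\to \operatorname{QAlg}(S)$, and for their composite $F_{\operatorname{QAlg}}$) requires some care. The one place where I would double-check is the identification of $p$ with the unit of the relative adjunction and the compatibility of these units, so that "restricting to $\mathcal{F}$" genuinely commutes with composition by $p$; this is essentially built into the explicit construction of the free algebras as direct sums indexed by $B(k)$.
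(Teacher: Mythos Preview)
Your proof is correct and follows essentially the same approach as the paper: both directions are handled by the adjunctions for $F_{\operatorname{Qlg}}$ and $F_{\operatorname{QAlg}}$ together with \Cref{unique algebra structure surjection}. The paper's proof is terser---it simply invokes the fact that $F_{\operatorname{QAlg}}$ is the composite of $F_{\operatorname{Qlg}}$ with the relative left adjoint---whereas you unpack this adjunction argument explicitly, but the content is the same.
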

\begin{proof}
    If $\mathcal{G}$ is associative (resp. commutative) then we get such a factorization since $F_{\operatorname{QAlg}}$ (resp. $F_{\operatorname{QClg}}$) is the composite of $F_{\operatorname{Qlg}}$ with the left adjoint of the inclusion $\operatorname{QAlg}(S)\to \operatorname{Qlg}(S)$ (reps. $\operatorname{QClg}(S)\to \operatorname{Qlg}(S)$). Conversely, if such a factorization exists associativity or commutativity of $\mathcal{G}$ will then directly follow from \cref{unique algebra structure surjection}
\end{proof}
\subsection{Obstruction theories and virtual classes and pullbacks}
Let $X/\C$ be a scheme and let 
\[\mathbb{L}_{X}=\tau_{\geq -1}L_{X}^\bullet\in D^{[-1,0]}(X)\]
be the truncation of the full cotangent complex $L_{X}^\bullet$ as defined in \cite{Illusie1971}. A perfect obstruction theory of $X\to Y$ as defined in \cite{BehrendFantechi} is a map
\[\phi:\mathbb{E}\to \mathbb{L}_{X}\]
in $D^{[-1,0]}(X)$, where $\mathbb{E}$ is a perfect complex of perfect amplitude $[-1,0]$ and where $h^0(\phi)$ is an isomorphism and $h^{-1}(\phi)$ is surjective. If we have 
\[\mathbb{E}=[E^{-1}\to E^0]\]
then a perfect obstruction theory gives rise to a cone 
\[\mathfrak{C}\hookrightarrow E_1=(E^{-1})^\vee\]
which in term gives rise to a virtual fundamental class 
\[[X]^{\operatorname{vir}}=0^*[\mathfrak{C}]\in A_{\operatorname{vd}}(X)\]
where $0:X\to E_1$ is the zero section and $0^*$ its Gysin. Here, 
\[\operatorname{vd}=\chi(\mathbb{E})=\operatorname{rk}E^0-\operatorname{rk}E^{-1}.\]
Similarly, for $f: X\to Y$ a map of schemes a relative perfect obstruction theory is a map 
\[\phi: \mathbb{E}\to \mathbb{L}_{X/Y}=\tau_{\geq -1} L_{X/Y}^\bullet\]
in $D^{[-1,0]}(X)$ where $\mathbb{E}$ and $\phi$ satisfies the same definitions as above. It is shown in \cite{virtpull} that $\phi$ defines a virtual pullback  
\[f^*=f_{\mathbb{E}}^*: A_*(Y)\to A_{*+\chi(\mathbb{E})}(X).\]
If $Y=S$ is a smooth (or pure dimensional) one may define a virtual class by
\[[X]^{\operatorname{vir}}=f^*[S].\]
This recovers the usual definition for $S=\spec \C$. It is further shown that if we have a compatible triple i.e. a distinguished triangle 
\[f^*\mathbb{E}_{Y/S}\to \mathbb{E}_{X/S}\to \mathbb{E}_{X/Y}\to f^*\mathbb{E}_{Y/S}[1]\]
of perfect obstruction theories with a map to the canonical distinguished triangle 
\[f^*\mathbb{L}_{Y/S}\to \mathbb{L}_{X/S}\to \mathbb{L}_{X/Y}\to f^*\mathbb{L}_{Y/S}[1]\]
then the corresponding virtual pullbacks are functorial and so in particular
\[f^*[Y]^{\operatorname{vir}}=[X]^{\operatorname{vir}}.\]
In this paper, we are interested in the following simple case.
\begin{exmp} \label{Basic perfect obstruction theory example}
    Suppose $\pi: Y\to S$ is a smooth morphism over a smooth base scheme $S$. Then
    \[\mathbb{L}_{Y/S}\cong \Omega_{Y/S}\]
    and $\phi=\operatorname{id}_{\mathbb{L}_{X/Y}}$ defines a perfect obstruction theory with virtual pullback being the usual flat pullback $\pi^*$. Note that 
    \[[Y]^{\operatorname{vir}}=\pi^*[S]=[Y].\]
    If $i: X\hookrightarrow Y$ is a closed immersion with ideal sheaf $I$ then 
    \[\mathbb{L}_{X/Y}=[I/I^2\to 0].\]
    If we can find a vector bundle
    \[E=\spec_Y \sym^\bullet  \mathcal{E}^{\vee}\]
    with a section $s\in H^0(Y,\mathcal{E})$ such that $X=Z(s)$ then the map 
    \[\mathcal{E}^{\vee}|_X \xrightarrow{ s^{\vee}} I/I^2\]
    defines a relative perfect obstruction theory. Its virtual pullback is given by $0^!$ the refined Gysin map of the zero section $0:Y\to E$ along $s$. Similarly, we see that
    \[\mathbb{L}_{X/S}=[I/I^2\to \Omega_{Y/S}]\]
    and that the diagram
    \begin{center}
        \begin{tikzcd}
            \mathcal{E}^{\vee}|_X \arrow{d}{s^{\vee}} \arrow{r}{d\circ s^\vee} & \Omega_{Y/S}|_X \arrow[equal]{d} \\
            I/I^2 \arrow{r}{d} & \Omega_{Y/S}|_X
        \end{tikzcd}
    \end{center}
    defines a perfect obstruction theory on $X/S$. We can turn the above theories into a compatible triple and so the virtual pullback of $X/S$ is simply $0^!\pi^*$. In particular, 
    \[[X]^{\operatorname{vir}}=0^![Y].\]
\end{exmp}
\subsection{Torus equivariant obstruction theories} \label{sec::torus}
Let $T=(\C^*)^m$ be an algebraic torus and let $\hat{T}=\operatorname{Hom}(T,\C^*)\cong \Z^m$ its characters. Letting $T=\{(t_1,\dots, t_m)\}$ we get a ring isomorphism 
\begin{center}
    \begin{tikzcd}
        K_0^T(\operatorname{pt}) \arrow{r}{\operatorname{Tr}} &\Z[t^{\mu} \: |\: \mu\in \hat{T}]\cong \Z[t_1^{\pm 1}, \dots, t_m^{\pm 1}]
    \end{tikzcd}
\end{center}
sending a $T$-representation $V$ to the character $t\mapsto \operatorname{Tr}_V(t)$. We will therefore sometimes abuse notation and denote an element of $K_0^T(\operatorname{pt})$ simply by its trace. Similarly, for the equivariant Chow cohomology group we obtain a graded ring isomorphism
\begin{center}
    \begin{tikzcd}
      \sym_{\Z}^{\bullet} [\hat{T}] \arrow{r} &\Z[s_1,\cdots ,s_m] \cong A_T^*(\operatorname{pt})
    \end{tikzcd}
\end{center}
which maps a character to the first Chern class of the induced 1-dimensional representation. Here, $s_i$ is the first Chern class of the 1-dimensional representation of weight $t_i$. If $X\to Y$ is a $T$-equivariant map of $T$-schemes equipped with a $T$-equivariant perfect obstruction theory $\mathbb{E}\to \mathbb{L}_{X/Y}$, then the virtual pullback extends to a map of $T$-equivariant Chow groups. In particular, for $Y=\spec \C$ we obtain a $T$-equivariant virtual class
\[[X]^{\operatorname{vir}}\in A^T_{\chi(\mathbb{E})}(X).\]
The main way of calculating this class is through the virtual localization formula of \cite{localization}. Recall that a $T$-equivariant sheaf on a scheme with trivial $T$-action splits into eigensheaves and therefore also into a moving and fixed part.
\begin{prop}
    \cite{localization} \label{graber localization}Let $X/\C$ be a $T$-equivariant scheme with a $T$-equivariant perfect obstruction theory $\mathbb{E}$. Suppose $X$ admits a $T$-equivariant embedding into a smooth $T$-scheme $Y$. Let $Y^T=\bigcup_{i}Y_i$ be the irreducible components of the fixed locus and set $X_i=Y_i\cap X$. Then the compostion 
    \[\mathbb{E}|_{X_i}^{\operatorname{fix}}\to \mathbb{L}_{X}|_{X_i}^{\operatorname{fix}}\to \mathbb{L}_{X_i}\]
    is a perfect obstruction theory equipping $X_i$ with a virtual class. If we set
    \[N_{X_i/X}^{\operatorname{vir}}=(\mathbb{E}|_{X_i}^{\operatorname{mov}})^{\vee}=[(E^0_{\operatorname{mov}})^\vee \to (E^{-1}_{\operatorname{mov}})^\vee]\]
    then the identity
    \[[X]^{\operatorname{vir}}=\iota_* \sum_{i} \frac{[X_i]^{\operatorname{vir}}}{e^T(N_{X_i/X}^{\operatorname{vir}})}\]
    holds true in the localized Chow ring $A_*^T(X)_{\operatorname{loc}}$ where we have inverted linear functions of $s_1,\dots, s_m$. Here
    \[e^T(N_{X_i/X}^{\operatorname{vir}})=\frac{e^T((E^0_{\operatorname{mov}})^\vee)}{e^T((E^{-1}_{\operatorname{mov}})^\vee)}\]
    is a fraction of $T$-equivariant Euler classes.
\end{prop}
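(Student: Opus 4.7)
The plan is to follow the strategy of Graber--Pandharipande: first establish that $\mathbb{E}|_{X_i}^{\operatorname{fix}} \to \mathbb{L}_{X_i}$ really is a perfect obstruction theory, then use the ambient embedding $X \hookrightarrow Y$ to translate the problem into one about torus-equivariant cones inside a smooth ambient space, and finally apply Atiyah--Bott on $Y$ together with the functoriality of refined Gysin maps.

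First I would treat the fixed-part obstruction theory. Since $T$ acts trivially on each $X_i$, every $T$-equivariant coherent sheaf on $X_i$ splits canonically into weight eigensheaves, and the weight-zero part is exact in quasi-coherent sheaves. Hence taking the fixed part of the two-term complex $\mathbb{E}|_{X_i}$ preserves perfectness in amplitude $[-1,0]$, and the induced map on $h^0$ and $h^{-1}$ inherits the surjectivity/iso conditions from $\phi$ after noticing that $\mathbb{L}_{X}|_{X_i}^{\operatorname{fix}} \to \mathbb{L}_{X_i}$ is an isomorphism on $h^0$ and surjective on $h^{-1}$ (this requires some care: the map of cotangent complexes comes from $X_i \hookrightarrow X$ and one must analyze the conormal, which is entirely moving). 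This gives the virtual class $[X_i]^{\operatorname{vir}}$ on each $X_i$ and identifies $N_{X_i/X}^{\operatorname{vir}}$ as a well-defined K-theory class whose moving Euler class is a unit in $A_*^T(X_i)_{\operatorname{loc}}$ (all weights are nonzero).

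Next comes the actual localization identity. I would embed $X \hookrightarrow Y$ equivariantly and consider the intrinsic normal cone $\mathfrak{C}_X$ sitting inside the vector bundle stack $\mathfrak{E} = h^1/h^0(\mathbb{E}^\vee)$, so that $[X]^{\operatorname{vir}} = 0_{\mathfrak{E}}^![\mathfrak{C}_X]$. The key technical point is that the $T$-fixed locus of $\mathfrak{C}_X$ equals $\bigsqcup_i \mathfrak{C}_{X_i}$, viewed inside $\mathfrak{E}|_{X_i}^{\operatorname{fix}}$; this identification follows from the compatibility of normal cones with flat base change applied to the deformation to the normal cone of $Y_i \subset Y$, combined with the fact that the obstruction theory is itself $T$-equivariant so its weight decomposition is strict. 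Once this is established, Atiyah--Bott localization on the smooth ambient space $Y$ gives $[Y] = \sum_i \iota_{i*}[Y_i]/e^T(N_{Y_i/Y})$, and chasing this through the refined Gysin map $0^!_{\mathfrak{E}}$ yields the claimed sum.

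I would expect the main obstacle to be precisely this last compatibility: the virtual class is defined by a refined Gysin pullback, not by a naive Chern class, so one cannot simply apply Atiyah--Bott on $X$ directly. The argument must carefully track the splitting $\mathbb{E}|_{X_i} = \mathbb{E}|_{X_i}^{\operatorname{fix}} \oplus \mathbb{E}|_{X_i}^{\operatorname{mov}}$ through the formation of the cone, and verify that the moving part contributes exactly the Euler class factor $e^T(N_{X_i/X}^{\operatorname{vir}})$ in the denominator (as a ratio of Euler classes of $(E^0_{\operatorname{mov}})^\vee$ and $(E^{-1}_{\operatorname{mov}})^\vee$, both of which have no zero-weight contribution and hence are invertible after localization). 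Once this splitting is controlled, the formula falls out by applying the ambient Atiyah--Bott identity followed by the virtual pullback $0^!_{\mathfrak{E}}$, and noting that virtual pullback commutes with proper pushforward along the closed embeddings $\iota_i \colon X_i \hookrightarrow X$.
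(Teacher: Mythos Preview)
The paper does not prove this proposition at all: it is stated with the citation \cite{localization} and no proof is given, as it is the main theorem of Graber--Pandharipande and is simply quoted here as background. So there is nothing to compare your proposal against in this paper.

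That said, your sketch is a reasonable outline of the Graber--Pandharipande argument itself. The structure you describe---splitting into fixed and moving parts on $X_i$, identifying the fixed cone, and chasing through refined Gysin maps---is indeed how the original proof goes. If you were asked to supply a proof here, the honest thing to do is exactly what the paper does: cite \cite{localization} and move on.
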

\begin{remark}In the case where $X$ is non-proper with a proper fixed locus $X^T$ we will for $\alpha\in A_T^*(X)$ define virtual integrals via the localization formula above
\[\int_{[X]^{\operatorname{vir}}}\alpha:=\sum_i \int_{[X_i]^{\operatorname{vir}}}\frac{\alpha|_{X_i}}{e^T(N_{X_i/X}^{\operatorname{vir}})}.\]
\end{remark}
We will spend the rest of this section providing consequences and examples of virtual localization that we will need later. First we show that the localization formula is particularly easy to use if $X^T$ is zero-dimensional, reduced, and proper. Essentially $[x]^{\operatorname{vir}}$ for $x\in X^T$ is either $[x]$ or $0$ depending only on whether or not $\operatorname{rk}E^0_{\operatorname{fix}}=\operatorname{rk}E^{-1}_{\operatorname{fix}}$

\begin{lemma}
    Let $T=(\C^*)^n$ and let $\varphi: \mathbb{E}=[E^{-1}\to E^0]\to \mathbb{L}_{\operatorname{pt}}$ be a $T$-equivariant perfect obstruction on $\operatorname{pt}=\spec \C$. If $\mathbb{E}^{fix}=\mathbb{E}$ and 
    \[\operatorname{vd}=\operatorname{rk}E^0-\operatorname{rk}E^{-1}\neq0\]
    then $[\operatorname{pt}]^{\operatorname{vir}}=0$. If $\operatorname{vd}=0$ then $[\operatorname{pt}]^{\operatorname{vir}}=[\operatorname{pt}]$.
\end{lemma}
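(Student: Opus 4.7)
The plan is to unwind the Behrend--Fantechi definition of $[\operatorname{pt}]^{\operatorname{vir}}$ directly; on a point the intrinsic normal cone is trivial and everything reduces to a short equivariant Euler class computation. First I would note that $\mathbb{L}_{\operatorname{pt}}=0$, so the perfect obstruction theory conditions on $\varphi$ collapse to $h^0(\mathbb{E})=0$, i.e.\ the differential $d\colon E^{-1}\to E^0$ is surjective. In particular $\operatorname{rk}(E^0)\le\operatorname{rk}(E^{-1})$, so $\operatorname{vd}\le 0$ always; the case $\operatorname{vd}>0$ cannot occur under the POT hypotheses and is therefore vacuous in the statement.

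Next I would identify the virtual cone $\mathfrak{C}\subset E_1=(E^{-1})^\vee$ entering the Behrend--Fantechi construction. Writing $K=\ker(d)$, the short exact sequence $0\to K\to E^{-1}\to E^0\to 0$ dualises to $0\to (E^0)^\vee\to E_1\to K^\vee\to 0$, and since $d^\vee$ is injective the obstruction bundle stack $h^1/h^0(\mathbb{E}^\vee)$ reduces to the vector space $K^\vee$. The intrinsic normal cone of a point is the zero substack of $\mathfrak{N}_{\operatorname{pt}}=0$, and its image in $K^\vee$ under the closed immersion induced by $\varphi$ is the zero section. Pulling back along the tautological quotient $E_1\twoheadrightarrow K^\vee$ then identifies the lifted cone $\mathfrak{C}\subset E_1$ with the linear subspace $(E^0)^\vee$ embedded via $d^\vee$.

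Finally I would compute $[\operatorname{pt}]^{\operatorname{vir}}=0^*[\mathfrak{C}]$ using the standard identity $0^*[V]=e^T(W/V)\cdot[\operatorname{pt}]$ for a $T$-equivariant sub-representation $V\subset W$ over a point; applied to $V=(E^0)^\vee\subset W=E_1$ this yields $[\operatorname{pt}]^{\operatorname{vir}}=e^T(K^\vee)\cdot[\operatorname{pt}]$. The assumption $\mathbb{E}^{\operatorname{fix}}=\mathbb{E}$ forces $T$ to act trivially on $K^\vee$, so its equivariant Euler class coincides with its ordinary top Chern class on $\operatorname{pt}$, which vanishes as soon as $\operatorname{rk}(K^\vee)>0$ and equals $1$ when $K^\vee=0$. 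Since $\operatorname{rk}(K)=-\operatorname{vd}$, this gives $[\operatorname{pt}]^{\operatorname{vir}}=0$ for $\operatorname{vd}<0$ and $[\operatorname{pt}]^{\operatorname{vir}}=[\operatorname{pt}]$ for $\operatorname{vd}=0$, as claimed. The only delicate step is the identification of $\mathfrak{C}$ in the middle paragraph; once that is pinned down, the equivariant Euler class verdict is immediate.
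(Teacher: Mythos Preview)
Your proof is correct and follows essentially the same approach as the paper: both unwind the Behrend--Fantechi definition on a point and conclude via the (equivariant) Euler class of a trivial $T$-representation. The only difference is presentational: the paper first replaces $\mathbb{E}$ by the quasi-isomorphic complex $[V\to 0]$ with $V=\ker(E^{-1}\to E^0)$, so that the cone is just $\{0\}\subset V^\vee$ and $0^*[\operatorname{pt}]$ is read off immediately, whereas you track the lifted cone as $(E^0)^\vee\subset E_1$ and arrive at the same $e^T(K^\vee)$ via the self-intersection/excess computation.
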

\begin{proof}
Clearly $\mathbb{L}_{\operatorname{pt}}=0$ and so any $T$-equivariant perfect obstruction theory is of the form $\mathbb{E}=[V\to 0]$ for a $V$ a finite dimensional $T$-representation. If $\mathbb{E}=\mathbb{E}^{fix}$ then $V$ is a trivial representation. The intrinsic normal cone of $\operatorname{pt}$ is $\operatorname{pt}$ and so the virtual fundamental class is 
\[0^*[\operatorname{pt}]\]
where $0:\operatorname{pt}\to V$ is the zero section and $0^*$ its Gysin. Since 
\[\operatorname{vd}=-\operatorname{dim}V\]
the result now clearly follows.
\end{proof}
\begin{exmp} \label{fixed obstruction theory isolated points}
    Suppose we have a smooth $T$-scheme $Y$ with a $T$-equivariant vector bundle $E=\spec_Y\sym^\bullet \mathcal{E}^{\vee}$ and section $s\in H^0(Y,\mathcal
    {E})$. Then 
    \[X=Z(s)\]
    can be equipped with a $T$-equivariant perfect obstruction theory as in \cref{Basic perfect obstruction theory example}. In the case where $X^T$ is zero dimensional, reduced, and proper we then get
    \[[X]^{\operatorname{vir}}=\sum_{x}[x]\cap \frac{e^T(\mathcal{E}_x^{\operatorname{mov}})}{e^T(T_{Y,x}^{\operatorname{mov}})}\]
    where the sum is over all fixed points $x\in X$ such that $\mathcal{E}^{\operatorname{fix}}_x=T_{Y,x}^{\operatorname{fix}}$.
\end{exmp}
Next, we show a variation of the localization formula, allowing us to work more flexibly with different subschemes of the fixed-point locus.
\begin{prop}  \label{general localization}
    Let $X$ and $Y$ be $T$-schemes with $T$-equivariant perfect obstruction theories $\mathbb{E}_X$ and $\mathbb{E}_Y$. Assume we have a $T$-equivariant map 
    \[g:X\to Y\]
    equipped with a relative perfect obstruction theory $\mathbb{E}_g$ compatible with $\mathbb{E}_X$ and $\mathbb{E}_Y$. Let $Y^T=\bigcup_iY_i$ as in \cref{graber localization}. Let $j: g^{-1}(Y^T)\hookrightarrow X$ be the inclusion. Then
    \[[X]^{\operatorname{vir}}=j_*\sum_{i}\frac{g^![Y_i]^{\operatorname{vir}}}{e^T(g^*N_{Y_i/Y}^{\operatorname{vir}})} \]
    where $g^!: A_*^T(Y^T)\to A_*^T(X^T)$ is the (refined) virtual pullback associated to $\mathbb{E}_g$.
\end{prop}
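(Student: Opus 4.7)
The plan is to reduce to the Graber--Pandharipande localization formula stated in Proposition~\ref{graber localization} by applying it to $Y$ and then pulling the resulting identity back to $X$ via the virtual pullback $g^!$. Functoriality of virtual pullback for compatible triples, together with its base change compatibility with proper pushforward, will convert the sum over $T$-fixed components of $Y$ into the claimed sum over their preimages in $X$.

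First I would apply Proposition~\ref{graber localization} to $Y$ with its $T$-equivariant perfect obstruction theory $\mathbb{E}_Y$, obtaining
\[
[Y]^{\operatorname{vir}}=\iota_*\sum_i \frac{[Y_i]^{\operatorname{vir}}}{e^T(N_{Y_i/Y}^{\operatorname{vir}})}
\]
in $A_*^T(Y)_{\operatorname{loc}}$, where $\iota: Y^T\hookrightarrow Y$ is the inclusion. Next, since $(\mathbb{E}_g,\mathbb{E}_X,\mathbb{E}_Y)$ is a compatible triple, the functoriality result of \cite{virtpull} yields $g^![Y]^{\operatorname{vir}}=[X]^{\operatorname{vir}}$. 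Applying $g^!$ to the localization identity therefore gives
\[
[X]^{\operatorname{vir}}=g^!\iota_*\sum_i \frac{[Y_i]^{\operatorname{vir}}}{e^T(N_{Y_i/Y}^{\operatorname{vir}})}.
\]

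The core step is then to commute $g^!$ past $\iota_*$. I would use the base change compatibility of virtual pullback with proper pushforward from \cite{virtpull}: in the cartesian square
\[
\begin{tikzcd}
g^{-1}(Y^T) \arrow[hook]{r}{j}\arrow{d}{g'} & X \arrow{d}{g} \\
Y^T \arrow[hook]{r}{\iota} & Y
\end{tikzcd}
\]
the relative obstruction theory $\mathbb{E}_g$ pulls back to a relative obstruction theory for $g'$, and one has $g^!\iota_*=j_*(g')^!$ on equivariant Chow groups (after localization). Since the Euler class $e^T(N_{Y_i/Y}^{\operatorname{vir}})$ is invertible and is pulled back from $Y_i$, it commutes with the refined virtual pullback; pulling it back along $g$ produces $e^T(g^*N_{Y_i/Y}^{\operatorname{vir}})$ in the denominator. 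Combining these observations gives
\[
[X]^{\operatorname{vir}}=j_*\sum_i \frac{g^![Y_i]^{\operatorname{vir}}}{e^T(g^*N_{Y_i/Y}^{\operatorname{vir}})},
\]
as claimed, where here $g^!$ on the right-hand side is the refined virtual pullback to $g^{-1}(Y_i)$.

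The main obstacle I anticipate is verifying carefully that the base change identity $g^!\iota_*=j_*(g')^!$ applies in this $T$-equivariant, possibly non-proper setting, and that the refined virtual pullbacks of the individual classes $[Y_i]^{\operatorname{vir}}$ are well defined. This amounts to checking that $\mathbb{E}_g$ restricts compatibly to each $g^{-1}(Y_i)$ and that Manolache's commutation result extends to the localized equivariant setting; both are formal but require a careful reading of \cite{virtpull}. Everything else--Graber--Pandharipande on $Y$, functoriality of virtual pullback, and the projection formula for Euler classes--is immediate.
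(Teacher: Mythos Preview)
Your proposal is correct and follows essentially the same approach as the paper: apply Graber--Pandharipande localization to $Y$, use functoriality of virtual pullback for the compatible triple to get $[X]^{\operatorname{vir}}=g^![Y]^{\operatorname{vir}}$, and then invoke the base change identity $g^!\iota_*=j_*g^!$ from \cite{virtpull}. The paper's proof is more terse (three lines), omitting the cartesian diagram and the discussion of the Euler class, but the logical structure is identical.
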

\begin{proof}
    Compatibility ensures that 
    \[[X]^{\operatorname{vir}}=g^![Y].\]
    Virtual localization on $Y$ shows 
    \[[Y]=\iota_*\sum_i\frac{[Y_i]^{\operatorname{vir}}}{e^T(N_{Y_i/Y}^{\operatorname{vir}})} \]
    for $\iota: Y^T\hookrightarrow Y$. Since virtual pullback is compatible with pushforward, we see that 
    \[g^!\iota_*=j_*g^!\]
    thus, finishing the proof. 
\end{proof}
\begin{exmp}
    Suppose in the above that $Y$ is smooth and equipped with the perfect obstruction theory $\mathbb{E}_Y=\mathbb{L}_Y=\Omega_Y$. Then the formula above is simply
      \[[X]^{\operatorname{vir}}=j_*\sum_{i}\frac{g^![Y_i]}{e^T(g^*N_{Y_i/Y})}. \]
\end{exmp}
We end this section by showing how virtual integrals can in some cases be calculated by another virtual integral on a smaller locus. 
\begin{prop} \label{relative virtual integrals}
    Let $X$ be a $T$-scheme. Let $E$ be a $T$-equivariant vector bundle over $X$, $s\in H^0(X,E)^T$ and set 
    \[j:Z=Z(s)\hookrightarrow X.\]
    Suppose we have perfect obstruction theories $\mathbb{E}_Z$ and $\mathbb{E}_X$ compatible with the relative perfect obstruction theory $\mathbb{E}_{Z/X}$ induced by $E$. Let $j':Z^T\hookrightarrow X^T$ be the induced inclusion and suppose it is regular of codimension 0 (i.e., also open). Then for
    \[\alpha\in A^*_{Z,T}(X)\]
    a local Chow cohomology class supported on $Z$ we have
\[\int_{[X]^{\operatorname{vir}}}\alpha=\int_{[Z]^{\operatorname{vir}}} \frac{j^*\alpha}{e^T(j^*E)}\]
    If it further holds that $Z^T=X^T$
    then 
    \[[X]^{\operatorname{vir}}=\frac{j_*[Z]^{\operatorname{vir}}}{e^T(E)}\]
    in the localized Chow group of $X$. In particular, the above formula holds for all $\alpha\in A_T^*(X)$.
\end{prop}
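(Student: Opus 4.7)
The plan is to use the virtual localization formula of Proposition~\ref{graber localization} applied to both $[X]^{\operatorname{vir}}$ and $[Z]^{\operatorname{vir}}$, and to compare contributions componentwise on the fixed loci. First, I would analyze the fixed loci: since $Z\subset X$ is closed, $Z^T$ is closed in $X^T$; by hypothesis it is also open in $X^T$, so it is a clopen subscheme and hence a union of connected components. Writing $X^T=\bigsqcup_i X_i$ as in Proposition~\ref{graber localization}, we may index $I\subset\{i\}$ so that $Z^T=\bigsqcup_{i\in I}X_i$. Crucially, for $\alpha\in A^*_{Z,T}(X)$ and $i\notin I$ the component $X_i$ lies in $X\setminus Z$, so $\alpha|_{X_i}=0$; only the components $X_i$ with $i\in I$ contribute to $\int_{[X]^{\operatorname{vir}}}\alpha$.

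Next, I would exploit the compatibility of the triple $(\mathbb{E}_X,\mathbb{E}_Z,\mathbb{E}_{Z/X})$. On each $X_i$ with $i\in I$, restricting the distinguished triangle and decomposing into fixed and moving parts gives two distinguished triangles
\[
j^*\mathbb{E}_X|_{X_i}^{\operatorname{fix}}\to\mathbb{E}_Z|_{X_i}^{\operatorname{fix}}\to\mathbb{E}_{Z/X}|_{X_i}^{\operatorname{fix}},\qquad j^*\mathbb{E}_X|_{X_i}^{\operatorname{mov}}\to\mathbb{E}_Z|_{X_i}^{\operatorname{mov}}\to\mathbb{E}_{Z/X}|_{X_i}^{\operatorname{mov}}.
\]
The key geometric input is that, because $Z=Z(s)$ and $Z^T=Z(s^{\operatorname{fix}}|_{X^T})$, the assumption that $Z^T\hookrightarrow X^T$ is regular of codimension $0$ forces $E|_{X_i}^{\operatorname{fix}}=0$ for every $X_i\subset Z^T$. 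Using $\mathbb{E}_{Z/X}=E^\vee|_Z[1]$, the fixed triangle then degenerates to an isomorphism $j^*\mathbb{E}_X|_{X_i}^{\operatorname{fix}}\cong\mathbb{E}_Z|_{X_i}^{\operatorname{fix}}$, which identifies the two induced perfect obstruction theories on $X_i$, hence $[X_i]^{\operatorname{vir}}_X=[X_i]^{\operatorname{vir}}_Z$. Multiplicativity of equivariant Euler classes along the moving triangle yields
\[
e^T(N^{\operatorname{vir}}_{X_i/X})=e^T(N^{\operatorname{vir}}_{X_i/Z})\cdot e^T(j^*E|_{X_i}),
\]
where the second factor coincides with $e^T(j^*E|_{X_i})$ because $E|_{X_i}^{\operatorname{fix}}=0$.

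Assembling these comparisons gives
\[
\int_{[X]^{\operatorname{vir}}}\alpha=\sum_{i\in I}\int\frac{\alpha|_{X_i}\cap[X_i]^{\operatorname{vir}}_X}{e^T(N^{\operatorname{vir}}_{X_i/X})}=\sum_{i\in I}\int\frac{j^*\alpha|_{X_i}\cap[X_i]^{\operatorname{vir}}_Z}{e^T(N^{\operatorname{vir}}_{X_i/Z})\cdot e^T(j^*E|_{X_i})},
\]
and reading the right-hand side via virtual localization on $Z$ with fixed locus $Z^T=\bigsqcup_{i\in I}X_i$ recognizes it as $\int_{[Z]^{\operatorname{vir}}}\frac{j^*\alpha}{e^T(j^*E)}$. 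For the second assertion, when $Z^T=X^T$ every component contributes, the class $e^T(E)$ is invertible on $X^T$ (again by the vanishing $E|_{X_i}^{\operatorname{fix}}=0$), and the same componentwise identity together with the projection formula
\[
\iota_{i*}\!\left(\tfrac{[X_i]^{\operatorname{vir}}_Z}{e^T(N^{\operatorname{vir}}_{X_i/Z})\cdot e^T(j^*E|_{X_i})}\right)=\tfrac{1}{e^T(E)}\cap\iota_{i*}\!\left(\tfrac{[X_i]^{\operatorname{vir}}_Z}{e^T(N^{\operatorname{vir}}_{X_i/Z})}\right)
\]
upgrades the integral identity to the cycle identity $[X]^{\operatorname{vir}}=j_*[Z]^{\operatorname{vir}}/e^T(E)$ in $A_*^T(X)_{\operatorname{loc}}$.

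The main obstacle will be the step $E|_{X_i}^{\operatorname{fix}}=0$: extracting this vanishing cleanly from the phrase ``regular of codimension~$0$'' requires tracking the relative obstruction theory $\mathbb{E}_{Z/X}=E^\vee|_Z[1]$ through the $T$-fixed part and comparing it with $\mathbb{L}_{Z^T/X^T}$; once this is done, the remaining verifications are formal consequences of virtual localization and multiplicativity of equivariant Euler classes along distinguished triangles.
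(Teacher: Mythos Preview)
Your approach via componentwise virtual localization is genuinely different from the paper's. The paper does not decompose $X^T$ into components at all; instead it works directly with the localized classes $\gamma\in A_*^T(X^T)_{\operatorname{loc}}$ and $\gamma'\in A_*^T(Z^T)_{\operatorname{loc}}$ characterized by $\iota_*\gamma=[X]^{\operatorname{vir}}$ and $\iota'_*\gamma'=[Z]^{\operatorname{vir}}$. The heart of the argument is the excess intersection formula: since $j'$ is regular of codimension~$0$ while the virtual pullback $j^!$ is the refined Gysin of the zero section of $E$, the excess bundle is all of $E|_{Z^T}$ and one gets $j^!\gamma=e^T(E|_{Z^T})\cap j'^*\gamma$; compatibility of the triple gives $j^!\gamma=\gamma'$. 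The remainder is a formal manipulation using that $\alpha=j_*c$ for a bivariant class $c$ and the self-intersection formula $j'^*j'_*=\operatorname{id}$ (valid because $j'$ is clopen). What this buys is that the cancellation of $e^T(E|_{Z^T})$ happens in one stroke, without ever separating fixed and moving parts of $E$.

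The genuine gap in your argument is the step $E|_{X_i}^{\operatorname{fix}}=0$. This does \emph{not} follow from ``$j'$ regular of codimension~$0$'': take $X$ with trivial $T$-action, $E$ any nonzero bundle with trivial $T$-action, and $s=0$; then $Z=X$, $j'=\operatorname{id}$ is regular of codimension~$0$, yet $E^{\operatorname{fix}}=E\neq0$. What the hypothesis gives you is only that $s|_{X^T}$ (which automatically lands in $E^{\operatorname{fix}}$) vanishes on the clopen set $Z^T$; this constrains the section, not the rank of the bundle. Your route can be repaired: the extra term $(E^{\operatorname{fix}}|_{X_i})^\vee[1]$ in $\mathbb{E}_Z|_{X_i}^{\operatorname{fix}}$ makes $[X_i]^{\operatorname{vir}}_Z$ differ from $[X_i]^{\operatorname{vir}}_X$ by a cap with $e(E^{\operatorname{fix}}|_{X_i})$, and this factor cancels against the same factor in $e^T(E|_{X_i})=e(E^{\operatorname{fix}}|_{X_i})\cdot e^T(E^{\operatorname{mov}}|_{X_i})$. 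But carrying out that cancellation rigorously is essentially the excess intersection computation the paper does globally, so you do not gain simplicity by going componentwise.
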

\begin{proof}
Form the pullback diagram
\begin{center}
    \begin{tikzcd}
        Z^T \arrow[hook]{r}{j'} \arrow[hook]{d}{\iota'} & X^T \arrow[hook]{d}{\iota} \\
        Z \arrow[hook]{r}{j} & X
    \end{tikzcd}
\end{center}
and let $\gamma'\in A_*^T(Z^T)$ and $\gamma\in A^T_*(X^T)$ be the unique classes such that $\iota'_*\gamma'=[Z]^{\operatorname{vir}}$ and $\iota_*\gamma=[X]^{\operatorname{vir}}$. If we let
\[j^!:A_*^T(X^T)\to A_*^T(Z^T)\]
be the refined virtual pullback of $j$, we see from compatibility with pushforward that
\[j^!\gamma=\gamma'.\]
Note further that the virtual pullback is the refined Gysin of $0:X\to E$ and that $j'$ is regular with excess bundle $E|_{Z^T}$. It follows by the excess intersection formula that
\[j^!(\gamma)=e^T(E|_{Z^T})\cap j'^*\gamma.\]
Let now $p:X^T\to \operatorname{pt}$. By definition
\[\int_{[X]^{\operatorname{vir}}}\alpha =p_*(\iota^*\alpha\cap \gamma).\]
On the other hand 
\[\int_{[Z]^{\operatorname{vir}}} \frac{j^*\alpha}{e^T(j^*E)}=p_*j'_*\left(\frac{j'^*\iota^*\alpha}{e^T(E|_{Z^T})}\cap \gamma'\right)\]
and using that $\gamma'=j^!\gamma=e^T(E|_{Z^T})\cap j'^*\gamma$ this equals
\[p_*j'_*(j'^*\iota^*\alpha\cap j'^*\gamma)=p_*j'_*j'^*(\iota_*\alpha\cap \gamma).\]
By definition of local Chow cohomology we can find a bivariant class $c\in A^*(Z\hookrightarrow X)$ such that $\alpha=j_*c$ and in particular $\iota^*\alpha=\iota^* j_*c=j'_*\iota^*c$. The integral now equals
\[p_*j'_*j'^*(j'_*\iota^*c\cap \gamma)=p_*j'_*j'^*j'_*(\iota^* c\cdot \gamma)\]
and by the self-intersection formula we have $j'^*j'_*=\operatorname{id}$ so that we get
\[p_*j'_*(\iota^* c\cdot \gamma)=p_*(j'_*\iota^*c\cap \gamma)=p_*(\iota^*\alpha \cap \gamma)\]
as desired. If we assume $j'=\operatorname{id}$ we can find a unique $\sigma$ such that $j'_*\sigma=\gamma$. But then 
\[e^T(E|_{Z_T})\cap \sigma=j^!j'_*\sigma=j^!\gamma=\gamma'\]
showing that
\[\sigma=\frac{\gamma'}{e^T(E|_{Z_T})}\]
as desired.
\end{proof}
\begin{exmp} \label{double section zero locus}
   We are primarily interested in the following example. Let $Y$ be a smooth $T$-scheme with $T$-equivariant vector bundles $E,F$ and $s\in H^0(Y,E)^T$ and $u\in H^0(Y,F)^T$. Suppose that $W=Z(s)$ is smooth. Set $X=Z(u)$ and equip it with a perfect obstruction theory coming from $F$ and $Y$. Then $Z=W\cap X=Z(s|_X)$ is cut out of $X$ by $s|_X$ and so we have a relative perfect obstruction theory on $j:Z\hookrightarrow X$. Equipping $Z$ with a perfect obstruction theory coming from $F$ and $W$ we obtain a compatible triple. If we further assume that $X^T$ is zero dimensional, reduced, and proper and that $Z^T$ is non-empty then $j':Z^T\hookrightarrow X^T$ is regular of codimension 0.
\end{exmp}
\section{Construction of the non-associative Hilbert scheme}
\label{sec:NA-nested-Hilb}
In this section we construct a moduli functor and prove that it is representable by a smooth scheme $\operatorname{naNHilb}^{\underline{d}}(\A^n)$ which we call the non-associative nested Hilbert scheme of $\underline{d}$ points. The proof will be in two steps. First we exhibit the functor as the quotient stack of a smooth scheme by a linear algebraic group acting freely. This proves that the functor is representable by a smooth algebraic space. Next we explicitly construct a cover of Zariski open subfunctors which we prove are representable. This proves that the algebraic space must in fact be a scheme.
\subsection{The definition}
Fix a dimension vector $\underline{d}=(d_0,\dots, d_r)\in \N^{r+1}$, $r\geq 0$. We recall that the nested Hilbert scheme $\operatorname{NHilb}^{\underline{d}}(\A^n)$ parameterize nesting of closed subschemes
\[\xi_{1}\hookrightarrow \xi_2\hookrightarrow\cdots \hookrightarrow \xi_{r+1}\hookrightarrow \A^n\]
such that $Z_{k}$ is finite of length $d_0+d_1+\cdots+d_{k-1}$. Equivalently it parameterize flags of ideals
\[I_{r+1}\subset I_r \subset \cdots \subset I_1\subset \C[x_1,\dots, x_n]\]
such that $\C[x_1,\dots, x_n]/I_k$ has an underlying vector space of dimension $d_0+\cdots +d_{k-1}$. Equivalent to that, we see that it parameterize a surjective map of $\C$-algebras 
\[\C[x_1,\dots, x_n]\twoheadrightarrow \C[x_1,\dots, x_n]/I_{r+1}=:N\]
and a flag of ideals on the codomain 
\[N=N_0\supset N_1\supset \cdots \supset N_{r+1}=0\]
 satisfying that $N_{k}/N_{k+1}$ is a $d_k$ dimensional vector space. The idea behind the non-associative Hilbert scheme is simply to take this last description and relax the condition of associativity. That is, if we denote by $\C\left\{ x_1,\dots, x_n\right\}_c$ the free unital and commutative (but not associative) $\C$-algebra on $n$ generators, the non-associative Hilbert scheme parameterizes surjections of unital, commutative (but not necessarily associative) algebras
 \[\C\left\{x_1,\dots, x_n\right\}_c\twoheadrightarrow N\]
 and a flag on ideals 
 \[N=N_0\supset N_1\supset \cdots \supset N_{r+1}=0\]
 such that $N_k/N_{k+1}$ is $d_k$ dimensional as a $\C$-vector space. To construct this space, we simply specify its functor of points. We recall that for the Hilbert scheme going from a $\C$-valued point to an $S$-valued we simply replace $\C$ with $\oo_S$ and vector space of a given dimension with vector bundle of a given rank in the above definitions. Doing the same for the non-associative Hilbert scheme, we arrive at the following definition

 \begin{defi}
     Let $\underline{d}=(d_0,\dots, d_r)\in \N^{r+1}$ be a dimension vector. We define a functor 
     \[na\mathcal{H}ilb^{n,\underline{d}}:\operatorname{Sch}_{/\C}^{\operatorname{op}}\to \operatorname{Set}\]
     as follows. For $S\in \operatorname{Sch}_{/\C}$ we set 
     \[na\mathcal{H}ilb^{n,\underline{d}}(S)=\left\{\pi:\oo_S\left\{x_1,\dots, x_n\right\}_c\twoheadrightarrow (\mathcal{F}_\bullet,\psi,s)\right\}\big{/}\operatorname{iso}.\]
     Here 
     \[\mathcal{F}_\bullet =(\mathcal{F}_0\supset \mathcal{F}_1 \supset \cdots \supset  \mathcal{F}_{r+1}=0)\]
     is a flag of locally free sheaves on $S$ such that $\mathcal{F}_k/\mathcal{F}_{k+1}$ is locally free of rank $d_k$, $\mathcal{F}_0$ is equipped with a unital commutative non-associative $\oo_S$-algebra structure $(\psi,s)$ which satisfies \[\psi(\mathcal{F}_0\otimes_S \mathcal{F}_k)\subset \mathcal{F}_k.\]
     The surjection $\pi$ is a morphism of unital non-associative $\oo_S$-algebras. Two such 
     \[\left(\mathcal{F}_\bullet, \psi, s,\pi\right) \quad \text{and} \quad \left(\mathcal{G}_\bullet, \psi', s',\pi'\right) \] are considered isomorphic if there exist an isomorphism of unital non-associative $\oo_S$-algebras $\varphi:\mathcal{F}_0\xrightarrow{\sim}\mathcal{G}_0$ preserving the flags such that $\varphi\pi=\pi'$.
 \end{defi}

 \begin{remark}
     We note that by \cref{algebra determined from epi from free} a point $[\mathcal{F}_\bullet, \psi, s,\pi]$ is uniquely determined by either $[\mathcal{F}_\bullet, \pi]$ or $[\mathcal{F}_{\bullet},\psi,s,f]$ where $f$ is the restriction of $\pi$ to $(\oo_S^n)^\vee$. Depending on the situation, we will use either of the three to describe a point on the non-associative Hilbert scheme. When using the last formulation, we will usually use the notation $[\mathcal{F}_\bullet, \psi_2,s,\psi_1]$, i.e. where $\psi_2=\psi$ and $\psi_1=f$ which agrees with the notation of \cite{KazarianNAHilb}.
 \end{remark}

\subsection{Smoothness}
We now exhibit the above moduli functor as the functor of points of a quotient stack. The idea is to fix a flag of $\C$-vector spaces $N_\bullet$. We then construct a space parameterizing the different ways we can equip $N_\bullet$ with a unital commutative non-associative algebra structure and a surjection
\[\C\{x_1,\dots, x_n\}_c\twoheadrightarrow N_\bullet\]
of unital non-associative algebras. Then we quotient out the choice of flag, by taking the group quotient of the group of flag preserving automorphisms of $N_\bullet$. This data is completely determined by the non-associative algebra structure $(\psi_2,v)$ on $N_\bullet$ along with a map of $\C$-vector spaces 
\[\psi_1:(\C^n)^\vee \to N_\bullet.\]
The data we want to parameterize is therefore $(\psi_1,\psi_2,v)$, but we need to find a condition so that the associated map of algebras is surjective. This leads to the following stability condition, which we define globally
\begin{defi}
    Let $S\in \operatorname{Sch}_{/\C}$ and let $(\mathcal{F},\psi_2,s)$ be a unital non-associative $\oo_S$ algebra. Let 
    \[\psi_1: (\oo_S)^{\vee}\to \mathcal{F}\]
    be a map of quasi-coherent sheaves. We say that  $(\mathcal{F},\psi_1,\psi_2,s)$ is stable if the only non-associative $\oo_S$-subalgebra of $\mathcal{F}$ for which $\psi_1$ factors through is $\mathcal{F}$ itself. 
\end{defi}
\begin{lemma} \label{alternative stability}
Let $S\in \operatorname{Sch}_{/\C}$ and let $(\mathcal{F},\psi_2,s)$ be a unital non-associative $\oo_S$ algebra and let 
    \[\psi_1: (\oo_S)^{\vee}\to \mathcal{F}\]
    be a map of quasi-coherent sheaves. Then $(\mathcal{F},\psi_1,\psi_2,s)$ is stable if and only if the associated map 
    \[\oo_S\{x_1,\dots, x_n\}\to \mathcal{F}\]
    is surjecive.
\end{lemma}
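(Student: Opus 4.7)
The plan is to identify the image $\operatorname{im}(\pi) \subset \mathcal{F}$ of the induced map $\pi: \oo_S\{x_1,\dots,x_n\} \to \mathcal{F}$ with the smallest unital non-associative $\oo_S$-subalgebra of $\mathcal{F}$ through which $\psi_1$ factors; once this identification is in place, both directions of the biconditional fall out immediately.

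First I would observe that, since $\pi$ is a morphism of unital non-associative $\oo_S$-algebras, the quasi-coherent subsheaf $\operatorname{im}(\pi) \subset \mathcal{F}$ inherits a unital non-associative $\oo_S$-subalgebra structure from $(\mathcal{F},\psi_2,s)$: it is closed under $\psi_2$ and contains the image of the unit $s$. Moreover it contains $\operatorname{im}(\psi_1)$, since $\pi$ restricts to $\psi_1$ on the generators $(\oo_S^n)^\vee \hookrightarrow \oo_S\{x_1,\dots,x_n\}$. Thus $\operatorname{im}(\pi)$ is a particular unital non-associative subalgebra of $\mathcal{F}$ through which $\psi_1$ factors, and stability would immediately force $\operatorname{im}(\pi) = \mathcal{F}$, yielding the forward implication.

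For the converse, I would assume $\pi$ is surjective and take any unital non-associative subalgebra $\iota:\mathcal{G} \hookrightarrow \mathcal{F}$ through which $\psi_1$ factors, say $\psi_1 = \iota \circ \tilde\psi_1$ for some $\tilde\psi_1:(\oo_S^n)^\vee \to \mathcal{G}$. The key step is to apply the universal property of the free non-associative algebra established in \cref{free non-associative}: $\tilde\psi_1$ extends uniquely to a morphism of unital non-associative algebras $\tilde\pi: \oo_S\{x_1,\dots,x_n\} \to \mathcal{G}$. The composite $\iota \circ \tilde\pi$ is then a morphism of unital non-associative algebras whose restriction to the generators is $\psi_1$, so by the uniqueness clause of the universal property it must coincide with $\pi$. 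Surjectivity of $\pi$ then forces $\iota$ to be surjective, so $\mathcal{G} = \mathcal{F}$, establishing stability.

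There is no genuine obstacle; the argument is essentially a direct invocation of the universal property together with the closure of image sheaves under the algebra structure. The only point worth flagging is the convention that ``non-associative $\oo_S$-subalgebra'' is read in the unital sense (closed under $\psi_2$ and containing the image of $s$), which is the natural notion inside $\operatorname{Qlg}(S)$; with this convention the smallest such subalgebra through which $\psi_1$ factors is precisely $\operatorname{im}(\pi)$, and the lemma reduces to the tautology $\operatorname{im}(\pi)=\mathcal{F}$ if and only if $\pi$ is an epimorphism.
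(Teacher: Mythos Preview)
Your proof is correct and follows the same approach as the paper, which simply observes that the image of $\oo_S\{x_1,\dots,x_n\}\to\mathcal{F}$ is the smallest non-associative $\oo_S$-subalgebra through which $\psi_1$ factors; you have merely unpacked both directions explicitly via the universal property. Your remark about the unital convention is a fair caveat, but the paper's one-line proof takes exactly the same characterization for granted.
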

\begin{proof}
    This is clear as the image of the map 
    \[\oo_S\{x_1,\dots, x_n\}\to \mathcal{F}\]
    can be characterized as the smallest non-associative $\oo_S$-subalgebra containing $\psi_1$.
\end{proof}
We make the following definition. 
\begin{defi} \label{definition of prequotient space}
    Let $\underline{d}=(d_0,\cdots ,d_r)\in \N^{r+1}$ be a dimension vector. Fix a flag 
    \[N_0\supset N_1\supset \cdots \supset N_{r+1}=0\]
    of $\C$-vector spaces with $\operatorname{dim}_{\C}N_k/N_{k+1}=d_k$. We define
    \[M_{n,\underline{d}}\subset \Hom_{\C}((\C^n)^\vee, N_0)\oplus \Hom_{\C}(\sym^2N_0,N_0)\oplus N_0\]
    to be the smooth locally closed subscheme consisting of tuples $(\psi_1,\psi_2,v)$ satisfying the following properties.
    \begin{enumerate}
        \item For all $0\leq k\leq r+1$ it holds that $\psi_2(N_0\otimes_{\C} N_k)\subset N_k$.
        \item For all $x\in N_0$ we have $\psi_2(v,x)=x$. 
        \item The tuple $(N_0,\psi_1,\psi_2,v)$ is stable.
    \end{enumerate}
\end{defi}
We note that condition (1) restricts us to a subspace, condition (2) restricts us to an affine subspace and conditions (3) restrict us to a Zariski open set so that $M_{n,\underline{d}}$ is a smooth locally closed subscheme. Next, we introduce the group action. 
\begin{defi}
    Let $\underline{d}\in \N^{r+1}$ and let $N_\bullet$ be as in \cref{definition of prequotient space}. We let 
    \[P_{\underline{d}}\subset \operatorname{Aut}_{\C}(N_0)\]
    be the parabolic group of linear automorphisms of $N_0$ preserving the flag. We let $P_{\underline{d}}$ act on 
    \[\operatorname{Hom}_{\C}((\C^n)^\vee, N_0)\oplus \Hom( \sym^2 N_0,N_0)\oplus N_0\]
    by 
    \[g.(\psi_1,\psi_2,v)=(g\psi_1,\psi_2^{g},gv)\]
    where 
    \[\psi_2^{g}(x,y)=g\psi_2(g^{-1}x,g^{-1}y).\]
\end{defi}
\begin{lemma}
    The $P_{\underline{d}}$-action restricts to a free action on $M_{n,\underline{d}}$
\end{lemma}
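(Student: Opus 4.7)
The plan is to verify the standard criterion for a free action: for every $S$-point $(\psi_1,\psi_2,v) \in M_{n,\underline{d}}(S)$ the stabilizer of $(\psi_1,\psi_2,v)$ in $P_{\underline{d}}(S)$ is trivial. Writing $N_{0,S} := N_0 \otimes_\C \oo_S$ for brevity, suppose $g \in P_{\underline{d}}(S)$ satisfies
\[
g\psi_1 = \psi_1, \qquad \psi_2^{g} = \psi_2, \qquad gv = v.
\]
The middle identity $g\psi_2(g^{-1}x,g^{-1}y) = \psi_2(x,y)$ rearranges to $\psi_2(gx,gy) = g\psi_2(x,y)$, so $g$ is an automorphism of the unital non-associative $\oo_S$-algebra $(N_{0,S},\psi_2,v)$.

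The key observation is that the fixed subsheaf
\[
\mathcal{K} := \ker\bigl(g - \id_{N_{0,S}}\bigr) \subset N_{0,S}
\]
is then automatically a unital non-associative $\oo_S$-subalgebra of $N_{0,S}$ through which $\psi_1$ factors. Indeed, $\mathcal{K}$ is closed under $\psi_2$ because $g$ is an algebra automorphism, it contains the unit $v$ because $gv = v$, and $\psi_1$ factors through $\mathcal{K}$ because $g\psi_1 = \psi_1$. By the stability clause (3) in Definition~\ref{definition of prequotient space}, or equivalently by Lemma~\ref{alternative stability} (which says the induced map $\oo_S\{x_1,\dots,x_n\} \to N_{0,S}$ is surjective), the only such subsheaf of $N_{0,S}$ is $N_{0,S}$ itself. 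Hence $\mathcal{K} = N_{0,S}$ and $g = \id$.

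There is no real obstacle of substance in this argument: the whole proof is driven by the stability condition, and the passage from $\C$-points to arbitrary $S$-points is routine because $\mathcal{K}$ is a well-defined quasi-coherent subsheaf and every closure property that is used is an $\oo_S$-linear identity of sheaf maps. The one place one must be a little careful is in reading off that the middle equation makes $g$ a genuine algebra automorphism of $(N_{0,S},\psi_2)$; once that reformulation is in hand, stability immediately delivers the conclusion.
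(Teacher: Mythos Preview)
Your freeness argument is essentially identical to the paper's: both consider the fixed locus of $g$, show it is a unital subalgebra through which $\psi_1$ factors, and invoke stability to conclude $g=\id$. Your version is phrased for arbitrary $S$-points rather than $\C$-points, which is a harmless (and arguably more careful) generalisation.

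However, the lemma asserts two things: that the $P_{\underline{d}}$-action on the ambient space \emph{restricts} to $M_{n,\underline{d}}$, and that the restricted action is free. You have only addressed the second claim. The paper explicitly verifies that for $(\psi_1,\psi_2,v)\in M_{n,\underline{d}}$ and $g\in P_{\underline{d}}$, the triple $(g\psi_1,\psi_2^g,gv)$ again satisfies conditions (1)--(3) of Definition~\ref{definition of prequotient space}: flag-preservation of $\psi_2^g$ follows from $g$ being parabolic, the unit condition is a short computation, and stability of the translated tuple is checked by observing that if $M$ is a subalgebra containing $gv$ and $\im(g\psi_1)$ and closed under $\psi_2^g$, then $g^{-1}M$ has the corresponding properties for the original tuple. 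None of this is hard, but as written your proof is incomplete.
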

\begin{proof}
    Let $(\psi_1,\psi_2,v)\in M_{n,\underline{d}}$ and let $g\in P_{\underline{d}}$. We check that $(g\psi_1,\psi_2^g ,gv)$ still satisfies the three conditions.
    \begin{enumerate}
        \item Since $g$ is flag preserving it is clear that
        \[\psi_2^g(N_0\otimes_{\C}N_k)\subset N_k\]
    for all $0\leq k\leq r+1$.
    \item For $x\in N_0$ we see that
    \begin{align*}\psi_2^g(gv,x)&=g\psi_2(g^{-1}gv,g^{-1}x) \\
    &=g\psi_2(v,g^{-1}x)\\
    &=gg^{-1}x\\
    &=x.\end{align*}
    \item Suppose $M\subset N_0$ is a subspace containing $v$ and $\im g\psi_1$ and which is closed under $\psi_2^g$. Then $g^{-1}M$ contains $v$ and $\im \psi_1$. For $g^{-1}x,g^{-1}y\in g^{-1}M$ we see that 
    \begin{align*}
    \psi_2(g^{-1}x,g^{-1}y)&=g^{-1}g\psi_2(g^{-1}x,g^{-1}y) \\
        &=g^{-1}\psi_2^g(x,y)
    \end{align*}
    which is in $g^{-1}M$ since $\psi_2^g(x,y)\in M$. We conclude by stability of $(\psi_1,\psi_2,v)$ that $g^{-1}M=N_0$ and thus $M=N_0$.
    \end{enumerate}
This proves that the $P_{\underline{d}}$-action restricts to an action on $M_{n,\underline{d}}$. Next, we assume that $(\psi_1,\psi_2,v)\in M_{n,\underline{d}}$ and $g\in P_{\underline{d}}$ is chosen so that $(\psi_1,\psi_2,v)=(g\psi_1,\psi_2^g,gv)$. Let $M\subset N_0$ be the subspace of $x\in N_0$ where $gx=x$. By definition $v\in M$ and $\im \psi_1\in M$. If $x,y\in M$ then 
\begin{align*}
    g\psi_2(x,y)&=g\psi_2(g^{-1}x,g^{-1}y) \\
    &=\psi_2^g(x,y)\\
    &=\psi_2(x,y)
\end{align*}
so that $M$ is closed under $\psi_2$. We conclude by stability that $M=N_0$ so that $g=\operatorname{id}_{N_0}$. This shows that $P_{\underline{d}}$ acts freely on $M_{n,\underline{d}}$.
\end{proof}
\begin{defi}
    Let $\underline{d}\in \N^{r+1}$ and $n\in \N$. We define the non-associative nested Hilbert scheme of $\underline{d}$ points in $\A^n$ to be the quotient stack
    \[\operatorname{naNHilb}^{\underline{d}}(\A^n):=[M_{n,\underline{d}}/P_{\underline{d}}].\]
    In the case where $r=0$ so $d\in \N$ we use the notation
    \[\operatorname{naHilb}^{d}(\A^n):=[M_{n,d}/\operatorname{GL}_d]\]
    which we will refer to as the non-associative Hilbert scheme of $d$ points in $\A^n$.
\end{defi}
\begin{corollary}
The stack $\operatorname{naNHilb}^{\underline{d}}(\A^n)$ is a smooth algebraic space.    
\end{corollary}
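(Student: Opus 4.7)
The plan is to assemble three ingredients already at hand: (i) $M_{n,\underline{d}}$ is a smooth scheme, (ii) $P_{\underline{d}}$ is a smooth linear algebraic group, and (iii) the $P_{\underline{d}}$-action on $M_{n,\underline{d}}$ is free (the preceding lemma). From these it is essentially formal that the quotient stack $[M_{n,\underline{d}}/P_{\underline{d}}]$ is a smooth algebraic space, and the proposal is simply to spell out why.

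First I would recall smoothness of $M_{n,\underline{d}}$: inside the affine vector space $\Hom_{\C}((\C^n)^\vee,N_0)\oplus \Hom_{\C}(\sym^2 N_0,N_0)\oplus N_0$, condition (1) of \cref{definition of prequotient space} is a closed linear condition and condition (2) cuts out an affine linear subspace (fixing $v$ reduces (2) to a linear condition on $\psi_2$, and the total locus is an affine bundle over $N_0$ via $v$), while condition (3) is open. Hence $M_{n,\underline{d}}$ is smooth as a locally closed subscheme of affine space. Next, $P_{\underline{d}}\subset \operatorname{Aut}_{\C}(N_0)$ is a parabolic subgroup of $\operatorname{GL}(N_0)$ and is therefore a smooth connected linear algebraic group over $\C$.

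Now I would invoke the standard descent principle: for a smooth group scheme $G$ acting freely on a smooth scheme $X$ over $\C$, the quotient stack $[X/G]$ is a smooth algebraic space. For the algebraic space property, the diagonal of $[M_{n,\underline{d}}/P_{\underline{d}}]$ is computed by the stabilizer scheme of the action; since the previous lemma shows the stabilizers are trivial, the diagonal is a monomorphism, and by Artin's representability theorem (or directly, via the characterization of algebraic spaces as stacks with representable and unramified diagonal admitting an \'etale cover by a scheme) this forces the stack to be an algebraic space. For smoothness, the canonical atlas $M_{n,\underline{d}}\to [M_{n,\underline{d}}/P_{\underline{d}}]$ is a $P_{\underline{d}}$-torsor, hence a smooth surjection, so smoothness of $[M_{n,\underline{d}}/P_{\underline{d}}]$ follows from smoothness of its source.

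There is no serious obstacle here: the substantive geometric work (smoothness of $M_{n,\underline{d}}$ and freeness of the action) is already finished, and the corollary is an immediate application of general quotient-stack theory. The only point that requires minor care is the appeal to representability of the quotient as an algebraic space rather than only as a stack, and for this I would explicitly cite the theorem that $[X/G]$ is an algebraic space whenever $G$ acts freely on a scheme $X$ (e.g.\ Stacks Project, Tag 04M9/06PH). The subsequent subsection of the paper then strengthens this from algebraic space to scheme by constructing an explicit Zariski open cover.
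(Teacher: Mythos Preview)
Your proposal is correct and takes essentially the same approach as the paper, which simply states that the result follows since $\operatorname{naNHilb}^{\underline{d}}(\A^n)$ is the quotient of a smooth algebraic group acting freely on a smooth scheme. You have merely spelled out in more detail the ingredients (smoothness of $M_{n,\underline{d}}$, smoothness of $P_{\underline{d}}$, freeness of the action) that the paper takes as already established.
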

\begin{proof}
    This follows since it is the quotient of a smooth algebraic group acting freely on a smooth scheme.
\end{proof}
We are now ready to prove that $\operatorname{naNHilb}^{\underline{d}}(\A^n)$ represents the moduli functor $na\mathcal{H}ilb^{n,\underline{d}}$. We first introduce the functor which $M_{n,\underline{d}}$ represents.
\begin{defi}
    Let $\underline{d}\in \N^{r+1}$ and $n\in \N$ and fix a reference flag of $\C$-vector spaces
    \[N_\bullet=(N_0\supset \cdots \supset N_{r+1})\]
    as in \cref{definition of prequotient space}. We define
    \[\overline{na\mathcal{H}ilb}^{n,\underline{d}}: \operatorname{Sch}^{\operatorname{op}}_{/\C} \to \operatorname{Set}\]
    as follows. For $S\in \operatorname{Sch}_{\C}$ we set 
    \[\overline{na\mathcal{H}ilb}^{n,\underline{d}}(S):=\left\{(\mathcal{F}_\bullet,\psi, s,\pi ,\beta)\right\}\big{/}\operatorname{iso}\]
    where $(\mathcal{F}_\bullet,\psi,s,\pi)\in na\mathcal{H}ilb^{n,\underline{d}}(S) $ and 
    \[\beta: N_\bullet \otimes_{\C}\oo_S\to \mathcal{F}_{\bullet}\]
    is an isomorphism of flags. An isomorphism of such tuples $(\mathcal{F}_\bullet,\psi,s,\pi,\beta)$ and $(\mathcal{G}_\bullet, \psi',s',\pi',\beta')$ is an isomorphism of flags $\varphi:\mathcal{F}_\bullet \to \mathcal{G}_\bullet$ of non-associative unital algebras which preserves the flag structures and satisfies $\varphi\pi=\pi'$ and $\varphi\beta=\beta'$. There is a natural transformation
    \begin{center}
        \begin{tikzcd}
            \overline{na\mathcal{H}ilb}^{n,\underline{d}}(S)\arrow{r} & na\mathcal{H}ilb^{n,\underline{d}}(S) \\[-20pt]
            \left[\mathcal{F}_\bullet, \psi,\pi,\beta\right] \arrow[mapsto]{r} & \left[\mathcal{F}_\bullet, \psi,\pi\right]
        \end{tikzcd}
    \end{center}
    which forgets $\beta$.
\end{defi}
\begin{prop}
    The forgetful natural transformation 
    \[\overline{na\mathcal{H}ilb}^{n,\underline{d}}\to na\mathcal{H}ilb^{n,\underline{d}} \]
    identifies with the functor of points of the quotient map 
    \[M_{n,\underline{d}}\to [M_{n,\underline{d}}/P_{\underline{d}}]=\operatorname{naNHilb}^{\underline{d}}(\A^n).\]
\end{prop}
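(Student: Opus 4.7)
The plan is to prove this in two steps: first, represent the rigidified functor $\overline{na\mathcal{H}ilb}^{n,\underline{d}}$ by the scheme $M_{n,\underline{d}}$; then, identify the forgetful natural transformation as a $P_{\underline{d}}$-torsor so that, upon taking the stack quotient, we recover $na\mathcal{H}ilb^{n,\underline{d}}$.

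For the first step I would construct mutually inverse natural transformations between $\overline{na\mathcal{H}ilb}^{n,\underline{d}}(-)$ and $\Hom(-, M_{n,\underline{d}})$. Given $f: S \to M_{n,\underline{d}}$ classifying a triple $(\psi_1, \psi_2, v)$ over $\oo_S$, set $\mathcal{F}_\bullet := N_\bullet \otimes_\C \oo_S$, equip $\mathcal{F}_0$ with the multiplication $\psi_2$ and unit $v$, and extend $\psi_1$ to a morphism $\pi: \oo_S\{x_1,\dots,x_n\}_c \to \mathcal{F}_0$ of unital commutative non-associative $\oo_S$-algebras using the universal property of the free commutative non-associative algebra. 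The three axioms defining $M_{n,\underline{d}}$ translate precisely into flag-preservation of the induced action of $\mathcal{F}_0$, the unit axiom for $v$, and surjectivity of $\pi$ via \cref{alternative stability}. The tautological framing $\beta = \id$ completes the datum. Conversely, given $(\mathcal{F}_\bullet, \psi, s, \pi, \beta)$ one transports everything back along $\beta^{-1}$ to extract a unique $(\psi_1, \psi_2, v) \in M_{n,\underline{d}}(S)$ with $\psi_1 = \beta^{-1} \circ \pi|_{(\oo_S^n)^\vee}$, $\psi_2 = \beta^{-1}\circ \psi\circ(\beta\otimes\beta)$, and $v = \beta^{-1}(s(1))$. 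Functoriality and mutual inverseness of these assignments are immediate.

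For the second step I would observe that the $P_{\underline{d}}$-action defined in the excerpt is precisely the action by change of framing: replacing $\beta$ by $\beta \circ g^{-1}$ sends the extracted triple $(\psi_1, \psi_2, v)$ to $(g\psi_1, \psi_2^g, gv)$. Moreover, for any $[\mathcal{F}_\bullet, \psi, s, \pi] \in na\mathcal{H}ilb^{n,\underline{d}}(S)$ a framing $\beta$ exists Zariski-locally on $S$, since each successive quotient $\mathcal{F}_k/\mathcal{F}_{k+1}$ is locally free of rank $d_k$, and any two framings differ by a unique section of $P_{\underline{d}}$. Combined with the freeness of the $P_{\underline{d}}$-action on $M_{n,\underline{d}}$ already proved above, this shows that the forgetful transformation exhibits $M_{n,\underline{d}}$ as a Zariski-locally trivial $P_{\underline{d}}$-torsor over $na\mathcal{H}ilb^{n,\underline{d}}$, which identifies the latter with $[M_{n,\underline{d}}/P_{\underline{d}}]$ by descent.

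The main point requiring care, though more bookkeeping than genuine difficulty, is matching the isomorphism relation on $na\mathcal{H}ilb^{n,\underline{d}}$ with the $P_{\underline{d}}$-groupoid on $M_{n,\underline{d}}$: any isomorphism between two untrivialized tuples over the same base must, after pulling back by any choice of framings on both sides, be realized by a section of $P_{\underline{d}}$, and freeness of the action ensures that no spurious automorphisms of framed data survive. This compatibility is exactly what is needed to promote the pointwise bijections above to an isomorphism of groupoid-valued functors, after which the identification of the forgetful transformation with the quotient map is formal.
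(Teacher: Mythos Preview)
Your proposal is correct and follows essentially the same approach as the paper. Both arguments first represent $\overline{na\mathcal{H}ilb}^{n,\underline{d}}$ by $M_{n,\underline{d}}$ via transport along $\beta^{-1}$, then pass to the quotient; the only difference is that you phrase the second step abstractly as ``the forgetful map is a Zariski-locally trivial $P_{\underline{d}}$-torsor, hence the base is the quotient stack,'' whereas the paper explicitly builds the descended universal data $(\mathcal{V}_\bullet,\Psi_2,\mathfrak{s},\varpi)$ on $[M_{n,\underline{d}}/P_{\underline{d}}]$ and constructs the inverse by choosing local trivializations and gluing via the cocycle $g_{j,i}$.
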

\begin{proof}
     For $d=d_0+\cdots+d_r$, we let $v_0,\dots, v_{d}$ be a basis of $N$ such that
    \[N_i=\operatorname{span}(v_{d_0+\cdots+d_{i-1}+1},v_{d_0+\cdots+d_{i-1}+2},\dots, v_{d})\]
for all $i=0,\dots, r$.
Let $X$ be the affine scheme corresponding to the vector space 
\[\Hom_{\C}((\C^n)^\vee, N_0)\oplus \operatorname{Hom}_{\C}(\sym^2N_0,N_0)\oplus N_0.\]
Using the basis, we identify $X$ with
\[\spec \C\left[(x_l)_t, a_{i,j}^k,w_m \right]_{l,t,i,j,k,m}.\]
Here, for $(\psi_1,\psi_2,w)\in X$ we have $\psi_1(e_l^\vee)=x_l$, $w_i$ the coordinates of $w$ and
\[\psi_2(v_i,v_j)=\sum_{k=1}^{d}a_{i,j}^kv_k.\]
Consider the flag of locally free sheaves $V_\bullet$ on $X$ given by 
\[V_\bullet :=N_\bullet \otimes_{\C}\oo_{X}\]
with $\beta_0=\id_{V_\bullet}$.
We have a distinguished section $\overline{\mathfrak{s}}\in H^0(X,V_0)$ given by
\[\sum_{m=1}^{d}v_m\otimes w_m\]
and we can similarly construct
\begin{center}
    \begin{tikzcd}
        \overline{\Psi}_1: (\oo_X^n)^\vee \arrow{r} & V_0 \\[-20pt]
        (e_l)^{\vee} \arrow[mapsto]{r} & \sum_{t=1}^{d} v_t\otimes (x_l)_t
    \end{tikzcd}
\end{center}
and 
\begin{center}
    \begin{tikzcd}
        \overline{\Psi}_2: \sym^2 V_0 \arrow{r} & V_0\\[-20pt]
        (v_i\otimes 1)\cdot (v_j\otimes 1) \arrow[mapsto]{r} & \sum_{k=1}^{d}v_k\otimes a_{i,j}^k
    \end{tikzcd}
\end{center}
which are both maps of sheaves. We restrict the sheaves and maps to the locally closed subscheme $M_{n,\underline{d}}$. By condition 1., 2. in the definition of  $M_{n,\underline{d}}$, it is clear that $(V_\bullet,\overline{\Psi}_1,\overline{\mathfrak{s}})$ is a unital commutative non-associative $\oo_{M_{n,\underline{d}}}$-algebra which satisfies 
\[\overline{\Psi}_2(V_0\otimes_{M_{n,\underline{d}}}V_k)\subset V_k\]
By the universal property of the free unital commutative non-associative algebra we obtain from $\overline{\Psi}_1$ a map 
\[\overline{\varpi}: \oo_{M_{n,\underline{d}}}\left\{x_1,\dots, x_n\right\}_c\to V_0 \]
of unital non-associative algebras which is surjective by condition 4. in the definition of $M_{n,\underline{d}}$. It follows that the tuple $(V_\bullet, \overline{\Psi}_2,\overline{\mathfrak{s}},\overline{\varpi},\beta_0)$ defines a $M_{n,\underline{d}}$-valued point of $\overline{na\mathcal{H}ilb}^{n,\underline{d}}$ giving us a natural transformation
\[M_{n,\underline{d}}\to\overline{na\mathcal{H}ilb}^{n,\underline{d}}. \]
We prove it is an isomorphism. Let $S\in \operatorname{Sch}_{\C}$ and let $[\mathcal{F}_\bullet, \psi,s,\pi,\beta]\in \overline{na\mathcal{H}ilb}^{n,\underline{d}}(S)$. From this data we obtain a section
\[\beta^{-1}s:\oo_S\to N_\bullet\otimes_{\C} \oo_S.\]
Let $\psi_1$ be the restriction of $\pi$ to $(\oo_S^n)^{\vee}$. By adjunction of $\beta^{-1}\psi(\beta\otimes \beta)$ and $\beta^{-1}\psi_1$ we similarly obtain sections
\[\oo_S \to \Hom_{\C}(\sym_{\C}^2N_\bullet, N_\bullet) \otimes_{\C}\oo_S\]
and 
\[\oo_S\to \Hom_{\C}((\C^n)^\vee, N_\bullet)\otimes_{\C}\oo_S.\]
Combined these three sections provides a map $f:S\to X$ and from the different assumptions on $[\mathcal{F}_\bullet, \psi,s,\pi,\beta]$ we see that $f$ factors through $M_{n,\underline{d}}.$ We note that by construction
\[f^*(V_\bullet, \overline{\Psi}_2,\overline{\mathfrak{s}},\overline{\varpi},\beta_0)=(\mathcal{F}_\bullet, \psi,s,\pi,\beta)\]
and one can directly check that this procedure provides an inverse to 
\[M_{n,\underline{d}}\to\overline{na\mathcal{H}ilb}^{n,\underline{d}}. \]
We now equip $V_\bullet=N_\bullet \otimes_{\C}\oo_{M_{n,\underline{d}}}$ with $P_{\underline{d}}$ equivariant structure by letting $P_{\underline{d}}$ act naturally on $N_\bullet$ (being a subgroup of its linear automorphisms). One can easily check that $\overline{\Psi}_2$, $\overline{\mathfrak{s}}$ and $\overline{\varpi}$ are equivariant wrt. to this structure. Letting 
\[q:M_{n,\underline{d}}\to [M_{n,\underline{d}}/P_{\underline{d}}]\]
be the quotient map, it follows that we have an induced flag of locally free sheaves
\[\mathcal{V}_\bullet\in \operatorname{QCoh}^{P_{n,\underline{d}}}(M_{n,\underline{d}})=\operatorname{QCoh}([M_{n,\underline{d}}/P_{\underline{d}}])\]
equipped with a section $\mathfrak{s}\in H^0([M_{n,\underline{d}}/P_{\underline{d}}],\mathcal{V}_0)$ and maps
\[\Psi_2:\sym^2 \mathcal{V}_0\to \mathcal{V}_0,\]
\[\varpi: \oo_{[M_{n,\underline{d}}/P_{\underline{d}}]}\left\{x_1,\dots, x_n\right\}_c\to \mathcal{V}_0\]
such that
\[q^*(\mathcal{V}_\bullet, \Psi_2,\mathfrak{s},\varpi)=(V_\bullet, \overline{\Psi}_2,\overline{\mathfrak{s}},\overline{\varpi}).\]
This defines a $[M_{n,\underline{d}}/P_{\underline{d}}]$ valued point of $na\mathcal{H}ilb^{n,\underline{d}}$ giving us a natural transformation
\[[M_{n,\underline{d}}/P_{\underline{d}}]\to na\mathcal{H}ilb^{n,\underline{d}}.\]
We explicitly define the inverse. Let $[\mathcal{F}_\bullet, \psi,s,\pi]\in na\mathcal{H}ilb^{n,\underline{d}}(S)$ and pick a cover of $(S_i)_{i\in I}$ of $S$ trivializing $\mathcal{F}_\bullet$ by explicit choices
\[\beta_i: N_\bullet\otimes_{\C} \oo_{S_i}\to \mathcal{F}_\bullet|_{S_i}\]
of isomorphisms of flags. For each $i$ the data $(\mathcal{F}_\bullet|_{S_i},\psi_i,s_i,\pi_i,\beta_i)$ determines a map 
\[f_i: S_i\to M_{n,\underline{d}}.\]
On overlaps $S_{j,i}$ the change from $\beta_i$ to $\beta_j$ determines a map 
\[g_{j,i}: S_{j,i}\to P_{\underline{d}}\]
satisfying $g_{j,i}\cdot f_i=f_j$. This means that the maps $q f_i$ and $qf_j$ agrees on $S_{j,i}$ so by gluing we get a map 
\[f:S\to [M_{n,\underline{d}}/P_{\underline{d}}].\]
By construction this satisfies that
\[f^*(\mathcal{V}_\bullet, \Psi_2,\mathfrak{s},\varpi)\simeq (\mathcal{F}_\bullet, \psi,s,\pi)\]
and one can now easily check that this is inverse to the above construction. 
\end{proof}
\begin{remark}
    By the moduli description we have that $\operatorname{naNHilb}^{\underline{d}}(\A^n)$ comes equipped with a universal non-associative algebra, a universal quotient map and a universal flag of ideals. This corresponds exactly to $(\mathcal{V}_\bullet,\Psi_2,\mathfrak{s},\varpi)$ as in the proof of the proposition above. We will denote the restriction of $\varpi$ to $(\oo^n)^\vee$ by $\Psi_1$.
\end{remark}

\subsection{Representability}
We prove that the smooth algebraic space $\operatorname{naNHilb}^{\underline{d}}(\A^n)$ is representable by a scheme. It will suffice to find a Zariski cover of schemes. We consider the following opens.
\begin{defi}
  Fix $\underline{d}\in \N^{r+1}$, $n\in \N$ and a flag $N_\bullet$ as in \cref{definition of prequotient space}. For any morphism 
  \[\gamma: N_0 \to \C\left\{x_1,\dots, x_n\right\}_{c}\]
  we define $na\mathcal{H}ilb^{n,\underline{d}}_\gamma\subset na\mathcal{H}ilb^{n,\underline{d}}$ to be the subfunctor which has $S$-valued points corresponding to $[\mathcal{F}_\bullet, \psi, s,\pi]$ such that
  \[\pi(1_{\oo_S}\otimes \gamma): \oo_S\otimes_{\C} N_0 \to \mathcal{F}_{0}\]
  is an isomorphism of $\oo_S$-modules.
\end{defi}
\begin{lemma}
    The subfunctor $na\mathcal{H}ilb^{n,\underline{d}}_\gamma\subset na\mathcal{H}ilb^{n,\underline{d}}$ is Zariski open. When $\gamma$ varies over all $\Hom_{\C}(N_0,\C\left\{x_1,\dots, x_n\right\}_c)$ this provides a cover of $na\mathcal{H}ilb^{n,\underline{d}}$. 
\end{lemma}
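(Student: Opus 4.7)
The statement has two parts: each $na\mathcal{H}ilb^{n,\underline{d}}_\gamma$ is open, and as $\gamma$ varies they cover $na\mathcal{H}ilb^{n,\underline{d}}$. I will handle openness via non-degeneracy of a morphism of locally free sheaves of the same rank, and covering via the surjectivity of $\pi$ together with the fact that $\C\{x_1,\dots,x_n\}_c$ has an explicit $\C$-basis of non-associative monomials.

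\textbf{Openness.} Fix $\gamma\colon N_0\to\C\{x_1,\dots,x_n\}_c$. For any $[\mathcal{F}_\bullet,\psi,s,\pi]\in na\mathcal{H}ilb^{n,\underline{d}}(S)$, the composition
\[
\pi\circ(1_{\oo_S}\otimes\gamma)\colon \oo_S\otimes_\C N_0\longrightarrow \mathcal{F}_0
\]
is a morphism of locally free $\oo_S$-modules of the same rank $d=d_0+\cdots+d_r$, and so is an isomorphism precisely on the non-vanishing locus of its determinant, which is a section of the line bundle $\det(\mathcal{F}_0)\otimes\det(\oo_S\otimes N_0)^{\vee}$. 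This condition is Zariski open on $S$ and evidently functorial in $S$, which shows that $na\mathcal{H}ilb^{n,\underline{d}}_\gamma$ is an open subfunctor of $na\mathcal{H}ilb^{n,\underline{d}}$. Equivalently, applying the same construction to the universal epimorphism $\varpi$ on the ambient algebraic space $\operatorname{naNHilb}^{\underline{d}}(\A^n)$ exhibits $na\mathcal{H}ilb^{n,\underline{d}}_\gamma$ as the open substack cut out by the non-vanishing of $\det(\varpi\circ(1\otimes\gamma))$.

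\textbf{Covering.} To show the $na\mathcal{H}ilb^{n,\underline{d}}_\gamma$ cover, it suffices, by openness of each member and the fact that $\operatorname{naNHilb}^{\underline{d}}(\A^n)$ is a locally Noetherian algebraic space, to show that every closed point lies in some $na\mathcal{H}ilb^{n,\underline{d}}_\gamma$. A closed point is represented by a surjection $\pi_p\colon\C\{x_1,\dots,x_n\}_c\twoheadrightarrow\mathcal{F}_{0,p}$ of unital commutative non-associative $\C$-algebras, with $\mathcal{F}_{0,p}$ of dimension $d$. The construction of $F_{\operatorname{Qlg}}$ in Lemma \ref{free non-associative} (followed by the quotient by the commutator ideal) shows that $\C\{x_1,\dots,x_n\}_c$ has a canonical $\C$-basis consisting of non-associative (symmetrised) monomials in $x_1,\dots,x_n$. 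Hence surjectivity of $\pi_p$ lets us pick $d$ such monomials $m_1,\dots,m_d$ whose images form a $\C$-basis of $\mathcal{F}_{0,p}$. Choosing any $\C$-basis $v_1,\dots,v_d$ of $N_0$ and defining $\gamma$ by $\gamma(v_i)=m_i$ then makes $\pi_p\circ(1\otimes\gamma)$ an isomorphism at $p$, so $p\in na\mathcal{H}ilb^{n,\underline{d}}_\gamma$.

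\textbf{Expected main obstacle.} There is no serious obstacle here; the only step that requires any care is arguing the covering statement at the level of a moduli functor / algebraic space, for which openness plus verification at closed points is standard. The content of the lemma is essentially that the determinant locus of $\pi\circ(1\otimes\gamma)$ behaves as one expects, and that enough $\gamma$'s exist because the free commutative non-associative algebra is spanned by its monomials, so any surjection admits a linear section through a finite-dimensional $\C$-subspace.
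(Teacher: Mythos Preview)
Your proof is correct and follows essentially the same approach as the paper's. The paper's argument is terser: for openness it simply observes that the locus where $\pi(1_{\oo_S}\otimes\gamma)$ is surjective is Zariski open, and for covering it states that ``the map $\pi$ admits a section $\gamma$ Zariski locally on $S$''. Your version makes both steps more explicit---using the determinant for openness, and reducing to closed $\C$-points plus the monomial basis of $\C\{x_1,\dots,x_n\}_c$ for covering---but the underlying ideas are identical.
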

\begin{proof}
    For an $S$-valued point classified by $[\mathcal{F}_\bullet, \psi,s,\pi]$ the pullback along the inclusion of the subfunctor corresponds to the locus of $S$ where 
    \[\pi(1_{\oo_S}\otimes \gamma)\]
    is surjective. This is clearly a Zariski open locus. Furthermore, the map $\pi$ admits a section $\gamma$ Zariski locally on $S$, proving that the subfunctors provides a cover for varying $\gamma$.
\end{proof}
\begin{lemma}
    The functor $na\mathcal{H}ilb^{n,\underline{d}}_\gamma$ is representable by a scheme.
\end{lemma}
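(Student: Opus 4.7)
The plan is to use $\gamma$ to rigidify the functor and exhibit it as a locally closed subscheme of a flag variety times an affine space. Given an $S$-point $[\mathcal{F}_\bullet, \psi, s, \pi] \in na\mathcal{H}ilb^{n,\underline{d}}_\gamma(S)$, the defining condition of the open subfunctor says that $\beta_\gamma := \pi \circ (\operatorname{id}_{\oo_S} \otimes \gamma)\colon N_0 \otimes_{\C} \oo_S \xrightarrow{\sim} \mathcal{F}_0$ is an isomorphism of $\oo_S$-modules, giving a canonical trivialization of the underlying module of $\mathcal{F}_0$. I would transport all data along $\beta_\gamma^{-1}$: set $L_\bullet := \beta_\gamma^{-1}(\mathcal{F}_\bullet)$, $\bar\psi := \beta_\gamma^{-1} \circ \psi \circ (\beta_\gamma \otimes \beta_\gamma)$, $\bar s := \beta_\gamma^{-1}\circ s$ and $\bar\pi := \beta_\gamma^{-1} \circ \pi$. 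The resulting data consists of (i) a flag $L_\bullet$ of locally free subsheaves of $N_0 \otimes \oo_S$ with quotients of ranks $\underline{d}$, equivalently a morphism $S \to \operatorname{Flag}_{\underline{d}}(N_0)$; (ii) a flag-preserving unital commutative non-associative algebra structure $(\bar\psi, \bar s)$ on $N_0 \otimes \oo_S$; and (iii) a linear map $\bar\psi_1 := \bar\pi|_{(\oo_S^n)^\vee}$. By construction, the transported quotient map satisfies the normalization $\bar\pi \circ (\operatorname{id}_{\oo_S} \otimes \gamma) = \operatorname{id}_{N_0 \otimes \oo_S}$.

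Next I would define the candidate representing scheme $Y_\gamma$ as the subfunctor of the smooth scheme
\[
\operatorname{Flag}_{\underline{d}}(N_0) \times \Hom_{\C}((\C^n)^\vee, N_0) \times \Hom_{\C}(\sym^2 N_0, N_0) \times N_0
\]
cut out by: the closed conditions that $\bar\psi$ preserves the tautological flag $L_\bullet$ and that $\bar s$ is a two-sided unit; the closed normalization $\bar\pi \circ \gamma = \operatorname{id}_{N_0}$, which is a finite system of linear equations in the algebra coordinates by the universal property of the free algebra applied to a $\C$-basis of $N_0$ and the finite portion of $\C\{x_1,\dots,x_n\}_c$ in the image of $\gamma$; and the open condition that $(N_0 \otimes \oo_S, \bar\psi_1, \bar\psi, \bar s)$ is stable, which by \Cref{alternative stability} is equivalent to surjectivity of the induced $\bar\pi$. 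This realizes $Y_\gamma$ as a locally closed subscheme of a smooth scheme, hence as a scheme.

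The last step is to verify that $Y_\gamma$ represents $na\mathcal{H}ilb^{n,\underline{d}}_\gamma$. The construction above yields a natural transformation $na\mathcal{H}ilb^{n,\underline{d}}_\gamma \to \Hom(-, Y_\gamma)$, while the inverse reads off $(\mathcal{F}_\bullet, \psi, s, \pi) := (L_\bullet, \bar\psi, \bar s, \bar\pi)$ from the tautological data on $Y_\gamma$ with $\beta_\gamma = \operatorname{id}$; the normalization condition ensures that the $\pi \circ \gamma$ is the identity, so the resulting tuple indeed lies in $na\mathcal{H}ilb^{n,\underline{d}}_\gamma(Y_\gamma)$. The hard part will be checking that these natural transformations are mutually inverse \emph{on isomorphism classes}: any morphism $\varphi\colon(\mathcal{F}_\bullet,\psi,s,\pi) \to (\mathcal{G}_\bullet,\psi',s',\pi')$ in $na\mathcal{H}ilb^{n,\underline{d}}_\gamma(S)$ must by definition satisfy $\varphi\circ\pi = \pi'$, hence $\varphi \circ \beta_\gamma = \beta'_\gamma$, and is therefore uniquely determined; once both tuples are transported to $N_0\otimes\oo_S$ the morphism becomes the identity, collapsing the isomorphism class onto a single element of $Y_\gamma(S)$. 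This rigidification, which is the essential reason for passing to the open cover $\{na\mathcal{H}ilb^{n,\underline{d}}_\gamma\}$ in the first place, is the main technical point of the argument.
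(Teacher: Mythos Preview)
Your proof is correct and takes essentially the same route as the paper: use $\gamma$ to canonically trivialize $\mathcal{F}_0$ and then realize the functor as a locally closed subscheme of $\operatorname{Flag}_{\underline{d}}(N_0)$ times an affine parameter space for the algebra data. The paper packages the target more compactly as $\operatorname{Flag}_{\underline{d}}(N_0)\times M_{n,d}$ (reusing the unnested prequotient scheme already constructed) and shows the natural map is a closed immersion, but the content is identical to yours. One minor correction: the normalization $\bar\pi\circ\gamma=\operatorname{id}_{N_0}$ gives \emph{polynomial}, not linear, equations in the coordinates $(\bar\psi_1,\bar\psi,\bar s)$, since evaluating $\bar\pi$ on a word in the free algebra involves iterated applications of $\bar\psi$; this does not affect closedness.
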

\begin{proof}
We let $\operatorname{Flag}_{\underline{d}}(N_0)$ be the flag variety of type $\underline{d}$ on $N_0$. It is a scheme having $S$-valued points corresponding to flags of locally free sheaves
\[0=\mathcal{G}_{r+1}\subset \cdots \subset \mathcal
{G}_0=N_0\otimes_{\C} \oo_S\]
such that $\mathcal{G}_{k}/\mathcal{G}_{k+1}$ is locally free of rank $d_k$. Let $d=d_0+\cdots d_r\in \N$. We consider the map 
\[na\mathcal{H}ilb_{\gamma}^{n,\underline{d}}\to \operatorname{Flag}_{\underline{d}}(N_0)\times \overline{na\mathcal{H}ilb}^{n,d}\cong \operatorname{Flag}_{\underline{d}}(N_0)\times M_{n,d}\]
which on $S$-valued points is given by 
\[[\mathcal{F}_{\bullet},\psi,s,\pi]\mapsto ([\mathcal{F}_\bullet],[\mathcal{F}_0,\psi,s,\pi,\pi(1_{\oo_S}\otimes \gamma)]).\]
We claim that it is a closed immersion thus finishing the proof. Indeed, let $([\mathcal{G}_\bullet],[\mathcal{F}_0, \psi, s,\pi,\beta])$ be an $S$-valued point of $\operatorname{Flag}_{\underline{d}}(N_0)\times M_{n,d}$. The pullback along the above map identifies with the locus of $S$ where $\pi(1_{\oo_S}\otimes \gamma)=\beta$ and where the map induced by $\psi$ and $\beta$ on $\mathcal{G}_0=N_0\otimes_{\C}\oo_S$ satisfies 
\[\psi(\mathcal{G}_0\otimes_S \mathcal{G}_{k})\subset \mathcal
G_{k}.\]
Both of these conditions are Zariski closed, as they can be described as the vanishing of sections of locally free sheaves.
\end{proof}

\begin{corollary}
    The algebraic space $\operatorname{naNHilb}^{\underline{d}}(\A^n)$ is representable by a scheme.
\end{corollary}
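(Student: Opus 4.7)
The plan is to deduce representability directly from the two preceding lemmas together with the general principle that an algebraic space admitting a Zariski open cover by schemes is itself a scheme. Since the functor $na\mathcal{H}ilb^{n,\underline{d}}$ is already known to be represented by the algebraic space $\operatorname{naNHilb}^{\underline{d}}(\A^n)$, I can pass freely between the functorial and geometric descriptions.

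First I would observe that each Zariski open subfunctor $na\mathcal{H}ilb^{n,\underline{d}}_\gamma$ corresponds to an open algebraic subspace $U_\gamma \hookrightarrow \operatorname{naNHilb}^{\underline{d}}(\A^n)$, since open immersions of algebraic spaces are by definition representable by Zariski open subfunctors on the category of schemes. The second preceding lemma then says that each $U_\gamma$ is itself representable by a scheme, and the covering statement of the first preceding lemma says that as $\gamma$ varies over $\operatorname{Hom}_{\C}(N_0, \C\{x_1,\dots,x_n\}_c)$ the family $\{U_\gamma\}$ forms a Zariski open cover of $\operatorname{naNHilb}^{\underline{d}}(\A^n)$.

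The final step is to invoke the standard fact that an algebraic space admitting a Zariski open cover by schemes is itself a scheme: one glues the $U_\gamma$ along the overlaps $U_\gamma \times_{\operatorname{naNHilb}^{\underline{d}}(\A^n)} U_{\gamma'}$, which are open subschemes of each factor, and since gluing schemes along open subschemes produces a scheme one recovers $\operatorname{naNHilb}^{\underline{d}}(\A^n)$ as a scheme representing $na\mathcal{H}ilb^{n,\underline{d}}$. There is no genuine obstacle at this stage: all the substantive work has already been done in the construction of $M_{n,\underline{d}}$ and in the two preceding lemmas, and the present corollary is a bookkeeping step that assembles them via Zariski descent for schemes.
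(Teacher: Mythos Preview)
Your proposal is correct and follows exactly the same approach as the paper, which simply says ``This follows from the above since it admits a Zariski cover of open subschemes.'' You have spelled out the standard Zariski descent argument in more detail than the paper does, but the logical content is identical.
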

\begin{proof}
    This follows from the above since it admits a Zariski cover of open subschemes.
\end{proof}
\subsection{Embedding of the Hilbert scheme}
In this section we construct a map from the nested Hilbert scheme of points to the non-associative nested Hilbert scheme. We prove that the map is a closed immersion and identifies the Hilbert scheme of points with the zero locus of a section of a vector bundle on the non-associative Hilbert scheme. This equips the nested Hilbert scheme of points with a perfect obstruction theory. 
\begin{defi}
    We define 
    \[\iota: \operatorname{NHilb}^{\underline{d}}(\A^n)\to \operatorname{naNHilb}^{\underline{d}}(\A^n)\]
    via the functor of points. An $S$-valued point
    \[\Big{[}\mathcal{I}_{r+1}\subset \cdots \mathcal{I}_1\subset \oo_S[x_1,\dots, x_n] \Big{]}\]
    is mapped to $[\mathcal{F}_\bullet, \psi, s,\pi]$ where $(\mathcal{F}_0, \psi, s)$ is the commutative unital $\oo_S$-algebra given by $\oo_S[x_1,\dots, x_n]/\mathcal{I}_{r+1}$, the flag of ideals is defined by $\mathcal{F}_{k}=\mathcal{I}_k/\mathcal{I}_{r+1}$ and the quotient map is given by
    \[\pi: \oo_S \left\{x_1,\dots, x_n\right\}_c \twoheadrightarrow\oo_S [x_1,\dots, x_n] \twoheadrightarrow \mathcal{F}_0.\]
\end{defi}
\begin{defi}
    The associativity locus of $\operatorname{naHilb}^{\underline{d}}(\A^n)$ is the subfunctor which has $S$-valued points given by $[\mathcal{F}_\bullet, \psi, s,\pi]$ where $\psi$ is associative. By \cref{Ass or comm checked by factorization} it is the same as the locus where $\pi$ factors through $\oo_S[x_1,\dots, x_n]$. It follows that the locus is Zariski closed.
\end{defi}
 
\begin{lemma}
    The map $\iota$ is a closed immersion identifying $\operatorname{NHilb}^{\underline{d}}(\A^n)$ with the associativity locus.
\end{lemma}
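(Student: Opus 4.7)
The plan is to realize $\iota$ as an isomorphism of functors of points between $\operatorname{NHilb}^{\underline{d}}(\A^n)$ and the associativity subfunctor; since the latter is Zariski closed in $\operatorname{naNHilb}^{\underline{d}}(\A^n)$ by the preceding definition, the closed immersion statement follows at once.

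First I would observe that $\iota$ factors through the associativity locus essentially by construction: for an $S$-valued point $[\mathcal{I}_\bullet]$ of $\operatorname{NHilb}^{\underline{d}}(\A^n)$, the associated quotient map $\pi\colon \oo_S\{x_1,\dots,x_n\}_c\twoheadrightarrow \mathcal{F}_0=\oo_S[x_1,\dots,x_n]/\mathcal{I}_{r+1}$ tautologically factors through the free commutative associative algebra, so by \cref{Ass or comm checked by factorization} the multiplication $\psi$ on $\mathcal{F}_0$ is associative.

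Next I would construct an inverse natural transformation from the associativity subfunctor to $\operatorname{NHilb}^{\underline{d}}(\A^n)$. Given $[\mathcal{F}_\bullet,\psi,s,\pi]$ in the associativity locus, \cref{Ass or comm checked by factorization} produces a (necessarily unique) factorization
\[
\widetilde{\pi}\colon \oo_S[x_1,\dots,x_n]\twoheadrightarrow \mathcal{F}_0
\]
of $\pi$ as a surjection of unital commutative associative $\oo_S$-algebras. Setting $\mathcal{I}_k:=\widetilde{\pi}^{-1}(\mathcal{F}_k)$ gives a descending chain of sub-$\oo_S$-modules with $\mathcal{I}_{r+1}=\ker\widetilde{\pi}$; the condition $\psi(\mathcal{F}_0\otimes_S\mathcal{F}_k)\subset\mathcal{F}_k$ baked into the moduli definition of $\operatorname{naNHilb}^{\underline{d}}(\A^n)$ translates, via $\widetilde{\pi}$, into the statement that each $\mathcal{I}_k$ is an ideal of $\oo_S[x_1,\dots,x_n]$. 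Since $\widetilde{\pi}$ induces isomorphisms $\oo_S[x_1,\dots,x_n]/\mathcal{I}_k\cong \mathcal{F}_0/\mathcal{F}_k$ and the quotients $\mathcal{F}_k/\mathcal{F}_{k+1}$ are locally free of rank $d_k$ by hypothesis, $[\mathcal{I}_\bullet]$ is a well-defined $S$-valued point of $\operatorname{NHilb}^{\underline{d}}(\A^n)$.

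Finally I would verify that the two constructions are mutually inverse natural transformations: in one direction, $\mathcal{I}_k=\widetilde{\pi}^{-1}(\mathcal{I}_k/\mathcal{I}_{r+1})$ recovers the original chain of ideals; in the other, $\mathcal{F}_k=\widetilde{\pi}(\mathcal{I}_k)$ recovers the original flag, and the algebra datum $(\psi,s)$ is forced by \cref{unique algebra structure surjection}. This exhibits $\iota$ as a monomorphism onto the associativity subfunctor, and since that subfunctor is cut out as a Zariski closed subscheme by an explicit section encoding the associator, $\iota$ is a closed immersion. I do not expect a serious obstacle here: the only point requiring care is that the ideal conditions in the associative and non-associative worlds really match up under $\widetilde{\pi}$, and this is supplied uniformly by the surjectivity of $\widetilde{\pi}$ together with \cref{unique algebra structure surjection}.
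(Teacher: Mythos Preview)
Your proposal is correct and follows essentially the same approach as the paper: both construct the inverse on the functor of points by using the factorization $\widetilde{\pi}\colon \oo_S[x_1,\dots,x_n]\twoheadrightarrow \mathcal{F}_0$ and setting $\mathcal{I}_k:=\widetilde{\pi}^{-1}(\mathcal{F}_k)$. Your version is in fact more detailed than the paper's, which simply asserts that ``it is easy to see that this operation is inverse to that of $\iota$'' without spelling out the ideal and rank checks.
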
 

\begin{proof}
   It is clear by definition that $\iota$ maps the nested Hilbert scheme to this locus. Conversely, given an $S$-valued point $[\mathcal{F}_\bullet, \psi,s,\pi]$ of the associativity locus we let 
    \[\widetilde{\pi}: \oo_{S}[x_1,\dots, x_n]\twoheadrightarrow \mathcal{F}_0\]
    be the induced map and define an $S$-valued point on the nested Hilbert scheme by setting
    \[\mathcal{I}_k:=\widetilde{\pi}^{-1}(\mathcal{F}_k).\]
    It is easy to see that this operation is inverse to that of $\iota$.
\end{proof}
\subsection{The induced perfect obstruction theory}
We wish to prove that this locus can also be described as the zero locus of a section of a vector bundle. We note that $\operatorname{naHilb}^{\underline{d}}$ comes equipped with a universal flag $\mathcal{V}_\bullet$ of locally free sheaves and a universal multiplication rule $\Psi_2$. We note that the associativity conditions can now directly be checked by the vanishing of the associator section
\[\Psi_2(\operatorname{id}\otimes \Psi_2)-\Psi_2(\Psi_2\otimes\operatorname{id})\]
of $\mathcal{H}om(\mathcal{V}_0\otimes \mathcal{V}_0\otimes \mathcal{V}_0,\mathcal{V}_0)$. We notice that we can reduce the rank of the sheaf a bit. Firstly, we note that the symmetry of $\Psi_2$ implies that
\[\Psi_2(\Psi_2(x,y),x)-\Psi_2(x,\Psi_2(y,x))=\Psi_2(x,\Psi_2(y,x))-\Psi_2(x,\Psi_2(y,x))=0\]
so that the above section is alternating in the outer coordinates. Secondly, we see that
\[\Psi_2(\mathcal{V}_i\otimes V_j)\subset \mathcal{V}_{\operatorname{max}\{i,j\}}\]
so that the above trilinear map takes $\mathcal{V}_{i}\otimes \mathcal{V}_j\otimes \mathcal{V}_k$ to $\mathcal{V}_{\operatorname{max}\{i,j,k\}}$. At last, if we let $e=\mathfrak{s}(1)$ be the unit we see that
\begin{align*}
    \Psi_2(\Psi_2(e,y),z)-\Psi_2(e,\Psi_2(y,z))&=\Psi_2(y,z)-\Psi_2(y,z)=0 \\
    \Psi_2(\Psi_2(x,e),z)-\Psi_2(x,\Psi_2(e,z))&=\Psi_2(x,z)-\Psi_2(x,z)=0 \\
    \Psi_2(\Psi_2(x,y),e)-\Psi_2(x,\Psi_2(y,e))&=\Psi_2(x,y)-\Psi_2(x,y)=0
\end{align*}
so that the above trilinear map takes 
\[(\mathfrak{s}\otimes \operatorname{id}\otimes \operatorname{id})\oplus(\operatorname{id}\otimes\mathfrak{s} \otimes \operatorname{id})\oplus ( \operatorname{id}\otimes \operatorname{id}\otimes \mathfrak{s}) \]
to zero. With this in mind, we make the following definition.
\begin{defi}
    Let $(\mathcal{V}_\bullet,\Psi_2,\mathfrak{s})$ be the universal unital non-associative algebra of $\operatorname{naNHilb}^{\underline{d}}(\A^n)$. We define the associativity sheaf 
    \[\mathcal{E}_{\operatorname{ass}}\subset \mathcal{H}om((\mathcal{V}_0/\mathfrak{s})\otimes (\mathcal{V}_0/\mathfrak{s})\otimes (\mathcal{V}_0/\mathfrak{s}),\mathcal{V}_0)\]
    as the sub-sheaf of trilinear maps which satisfies the following properties.
    \begin{enumerate}
        \item They are alternating in the outer coordinates.
        \item They take $\mathcal{V}_{i}\otimes \mathcal{V}_j\otimes \mathcal{V}_k$ to $\mathcal{V}_{\operatorname{max}\{i,j,k\}}$.
    \end{enumerate}
    It comes equipped with a canonical associator section $s_{\operatorname{ass}}$ given by 
    \[\Psi_2(\Psi_2\otimes\operatorname{id})-\Psi_2(\operatorname{id}\otimes \Psi_2). \]
\end{defi}
\begin{remark}
Note that in the special case where $d_0=1$, the inclusion $\mathcal{V}_1\subset \mathcal{V}_0$ induces an isomorphism $\mathcal{V}_1\cong \mathcal{V}_0/\mathfrak{s}$.
\end{remark}
Since we can scheme theoretically identify $\operatorname{NHilb}^{\underline{d}}(\A^n)$ with the associativity locus which can again be identified with $Z(s_{ass})$ we obtain the following result.
\begin{corollary}
    The closed embedding $\iota: \operatorname{NHilb}^{\underline{d}}(\A^n)\to \operatorname{naNHilb}^{\underline{d}}(\A^n)$ identifies with the closed immersion $Z(s_{ass})$. In particular, it comes equipped with a perfect obstruction theory given by
    \begin{center}
        \begin{tikzcd}
            \mathbb{E}_{ass} \arrow{d}&[-30pt]:=&[-35pt] \Big{[}&[-35pt] (\mathcal{E}^{ass})^\vee\big{|}_{\operatorname{NHilb}^{\underline{d}}(\A^n)} \arrow{d}{s_{ass}}\arrow{r}{d\circ s_{ass}} & \Omega_{\operatorname{naNHilb}^{\underline{d}}(\A^n)}\big{|}_{\operatorname{NHilb}^{\underline{d}}(\A^n)}\arrow[equal]{d} \Big{]} \\
            \mathbb{L}_{\operatorname{NHilb}^{\underline{d}}(\A^n)}&[-30pt]\cong&[-35pt] \Big{[} &[-35pt] \mathcal{C}_{\operatorname{NHilb}^{\underline{d}}(\A^n)/\operatorname{naNHilb}^{\underline{d}}(\A^n)} \arrow{r}{d} & \Omega_{\operatorname{naNHilb}^{\underline{d}}(\A^n)}\big{|}_{\operatorname{NHilb}^{\underline{d}}(\A^n)} \Big{]}
        \end{tikzcd}
    \end{center}
    where $\mathcal{C}_{\operatorname{NHilb}^{\underline{d}}(\A^n)/\operatorname{naNHilb}^{\underline{d}}(\A^n)}$ is the conormal sheaf of the closed immersion $\iota$. This equips it with a virtual class 
    \[\left[\operatorname{NHilb}^{\underline{d}}(\A^n)\right]^{\operatorname{vir}}\in A_{\ast}(\operatorname{NHilb}^{\underline{d}}(\A^n)) \]
    which can also be described as the refined Gysin map of the zero section of $\mathcal{E}$ applied to the fundamental class of $\operatorname{naNHilb}^{\underline{d}}(\A^n)$. 
\end{corollary}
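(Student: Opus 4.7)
The plan is to combine the earlier identification of $\iota(\operatorname{NHilb}^{\underline{d}}(\A^n))$ with the associativity locus with a scheme-theoretic matching of that locus to $Z(s_{\mathrm{ass}})$, and then invoke Example~\ref{Basic perfect obstruction theory example} verbatim on the smooth ambient space $\operatorname{naNHilb}^{\underline{d}}(\A^n)$. The only nontrivial point is showing that cutting down the full associator to the sub-sheaf $\mathcal{E}_{\mathrm{ass}}$ does not lose information as a scheme-theoretic condition.

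First I would verify that the canonical section
\[
\sigma := \Psi_2(\Psi_2\otimes \operatorname{id}) - \Psi_2(\operatorname{id}\otimes \Psi_2) \in \Gamma\bigl(\operatorname{naNHilb}^{\underline{d}}(\A^n),\,\mathcal{H}om(\mathcal{V}_0^{\otimes 3},\mathcal{V}_0)\bigr)
\]
factors through the sub-bundle $\mathcal{E}_{\mathrm{ass}}$. This is precisely the content of the three pre-definition computations in the text: commutativity of $\Psi_2$ forces $\sigma$ to be alternating in its outer arguments; the filtration property $\Psi_2(\mathcal{V}_0 \otimes \mathcal{V}_k) \subset \mathcal{V}_k$ forces $\sigma(\mathcal{V}_i \otimes \mathcal{V}_j \otimes \mathcal{V}_k) \subset \mathcal{V}_{\max\{i,j,k\}}$; and the unit axioms $\Psi_2(\mathfrak{s},-) = \Psi_2(-,\mathfrak{s}) = \operatorname{id}$ force $\sigma$ to vanish whenever any input equals $\mathfrak{s}$. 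These are exactly the conditions defining $\mathcal{E}_{\mathrm{ass}}$, so $\sigma = s_{\mathrm{ass}}$.

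Next I would show that $Z(s_{\mathrm{ass}})$ coincides scheme-theoretically with the associativity locus. The inclusion $Z(s_{\mathrm{ass}}) \supset \{\text{associative}\}$ is formal from the factorization. Conversely, since the inclusion $\mathcal{E}_{\mathrm{ass}} \hookrightarrow \mathcal{H}om(\mathcal{V}_0^{\otimes 3},\mathcal{V}_0)$ admits a canonical projection (alternate, project onto the filtered piece, and restrict to trilinear maps killing $\mathfrak{s}$ in any slot), and since by the bullet-point calculation above the full associator automatically lies in the image of $\mathcal{E}_{\mathrm{ass}}$, the vanishing of $s_{\mathrm{ass}}$ is equivalent to the vanishing of the unrestricted associator of $\Psi_2$. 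By Lemma~\ref{Ass or comm checked by factorization} applied to the universal quotient $\varpi$, this vanishing is equivalent to $\varpi$ factoring through $F_{\operatorname{QCAlg}}((\oo^n)^{\vee}) = \oo\{x_1,\dots,x_n\}_c/(\text{associators}) = \oo[x_1,\dots,x_n]$, which by the previous lemma is exactly the locus $\iota(\operatorname{NHilb}^{\underline{d}}(\A^n))$. Thus $\iota$ is identified with $Z(s_{\mathrm{ass}})$ as a closed subscheme.

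Finally, the perfect obstruction theory and virtual class are immediate from Example~\ref{Basic perfect obstruction theory example}, applied with $Y = \operatorname{naNHilb}^{\underline{d}}(\A^n)$ (smooth, by the previous corollary), $S = \operatorname{Spec}\C$, $\mathcal{E} = \mathcal{E}_{\mathrm{ass}}$, and $s = s_{\mathrm{ass}}$. The perfect obstruction theory is the two-term complex $\mathbb{E}_{\mathrm{ass}}$ displayed in the statement, with its canonical map to $\mathbb{L}_{\operatorname{NHilb}^{\underline{d}}(\A^n)}$ induced by $s_{\mathrm{ass}}^{\vee}$ as in the example; the virtual fundamental class is $[\operatorname{NHilb}^{\underline{d}}(\A^n)]^{\mathrm{vir}} = 0^{!}[\operatorname{naNHilb}^{\underline{d}}(\A^n)]$, where $0 \colon \operatorname{naNHilb}^{\underline{d}}(\A^n) \to \mathcal{E}_{\mathrm{ass}}$ is the zero section and $0^{!}$ is its refined Gysin homomorphism along $s_{\mathrm{ass}}$.

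The main (and really the only) obstacle is the middle paragraph: one must be careful that the reduction from $\mathcal{H}om(\mathcal{V}_0^{\otimes 3},\mathcal{V}_0)$ to the smaller bundle $\mathcal{E}_{\mathrm{ass}}$ does not weaken the scheme-theoretic defining equations. Once it is checked that the full associator lies tautologically in $\mathcal{E}_{\mathrm{ass}}$ (not merely up to the quotient), the two zero loci coincide with their natural scheme structures and the rest of the argument is formal.
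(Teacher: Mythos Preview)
Your proposal is correct and follows essentially the same approach as the paper. The paper's own justification is the single sentence preceding the corollary (``Since we can scheme theoretically identify $\operatorname{NHilb}^{\underline{d}}(\A^n)$ with the associativity locus which can again be identified with $Z(s_{ass})$ we obtain the following result''), together with the pre-definition computations showing the associator section lands in $\mathcal{E}_{\operatorname{ass}}$ and the standard construction of Example~\ref{Basic perfect obstruction theory example}; your write-up simply fills in those steps explicitly, including the scheme-theoretic point that since the full associator already lies in the sub-bundle $\mathcal{E}_{\operatorname{ass}}$ (rather than merely mapping to a quotient), no defining equations are lost.
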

\section{The punctual locus in the full flag case}
\label{sec:NA-punct-Hilb}
In this section we will restrict ourselves to the full flag case, i.e., a dimension vector $\underline{d}=(1,1,\dots , 1)\in \N^{r+1}$. We will denote by
\[\operatorname{naNHilb}^{r+1}(\A^n):=\operatorname{naNHilb}^{(1,1,\dots , 1)}(\A^n)\]
and
\[\operatorname{NHilb}^{r+1}(\A^n):=\operatorname{NHilb}^{(1,1,\dots , 1)}(\A^n)\]
to ease the notation. We will prove two nice properties enjoyed by the non-associative Hilbert scheme with a full flag. Firstly, in this context we are able to extend the nested Hilbert-Chow morphism to a non-associative nested Hilbert-Chow morphism. Secondly, we can construct a punctual locus of the non-associative Hilbert scheme which we prove admits the structure of a tower of projective fibrations.
\subsection{The non-associative Hilbert-Chow morphism}
Recall that for the nested Hilbert scheme of points with full dimension vector, we have an ordered Hilbert-Chow morphism
\begin{center}
    \begin{tikzcd}
        \operatorname{NHilb}^{(1,\dots, 1)}(\A^n) \arrow{r}{HC} & (\A^n)^{r+1} \\[-20pt]
        (I_{r+1}\subset I_r \subset \cdots \subset I_0=\C[x_1,\dots, x_n]) \arrow[mapsto]{r} & \left(\operatorname{Supp}(I_0/I_1),\operatorname{Supp}(I_1/I_2), \dots, \operatorname{Supp}(I_r/I_{r+1})\right). 
    \end{tikzcd}
\end{center}
Since 
\[I_{i}/I_{i+1}=(I_i/I_{r+1})\big{/}(I_{i+1}/I_{r+1})=N_i/N_{i+1}\] this suggest what to do in the non-associative case. 
We note that if $(N_{\bullet},\psi_1,s,\psi_2)$ is a $\C$-valued point of the non-associative Hilbert scheme, we cannot quite talk about 
\[\operatorname{Supp}(N_i/N_{i+1})\]
as we only have a $\C$-module structure and not a $\C[x_1,\dots, x_n]$-module structure. However, promoting it to a $\C[x_1,\dots, x_n]$-module is the same as a choice of $n$ commuting endomorphisms of $N_i/N_{i+1}$. By definition $\psi_2$ induces a map
\[\psi_2: N_0\otimes_{\C} (N_{i}/N_{i+1})\to N_{i}/N_{i+1}.\]
We can then simply take the $n$-elements
\[\psi_1=\textstyle{\bigoplus}_{i=1}^n \iota_i: (\C^n)^\vee \to M_i\]
and define the endomorphisms by $\psi_2(\iota_i,-)$. Without associativity these might not commute. However, this is true for dimension reasons once we restrict to the one-dimensional space $N_i/N_{i+1}$. Having this picture in mind, we make the following construction.

\begin{prop}
    There exist a map 
    \[\operatorname{naHilb}^r(\A^n)\xrightarrow{naHC}(\A^n)^{r+1}\]
    which we will call \textit{the non-associative Hilbert-Chow morphism}. The composite
    \[\operatorname{NHilb}^{r+1}(\A^n)\xrightarrow{\iota}\operatorname{naNHilb}^{r+1}(\A^n) \xrightarrow{naHC} (\A^n)^{r+1}\]
    agrees with the ordinary nested Hilbert-Chow morphism $HC$.
\end{prop}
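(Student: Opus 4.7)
The plan is to define $naHC$ functorially on $S$-valued points, exploiting the fact that in the full flag case every graded piece $\mathcal{F}_i/\mathcal{F}_{i+1}$ of the universal flag is a line bundle on $S$. Given an $S$-point $[\mathcal{F}_\bullet,\psi,s,\pi]$ of $\operatorname{naNHilb}^{r+1}(\A^n)$, the ideal condition $\psi(\mathcal{F}_0 \otimes_S \mathcal{F}_{i+1}) \subset \mathcal{F}_{i+1}$ guarantees that $\psi$ descends to a map $\bar\psi_i : \mathcal{F}_0 \otimes_S (\mathcal{F}_i/\mathcal{F}_{i+1}) \to \mathcal{F}_i/\mathcal{F}_{i+1}$. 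Since $\mathcal{F}_i/\mathcal{F}_{i+1}$ is a line bundle, its sheaf of $\oo_S$-linear endomorphisms is canonically $\oo_S$, so $\bar\psi_i$ corresponds to an $\oo_S$-linear map $\chi_i : \mathcal{F}_0 \to \oo_S$. Composing with the restriction $\Psi_1 : (\oo_S^n)^\vee \to \mathcal{F}_0$ of $\pi$ to the degree-one generators yields $\chi_i \circ \Psi_1 \in \operatorname{Hom}_{\oo_S}((\oo_S^n)^\vee,\oo_S) = \Gamma(S,\oo_S^n)$, which is precisely the data of an $S$-point of $\A^n$.

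Assembling these coordinates for $i=0,1,\dots,r$ gives a natural transformation $\operatorname{naNHilb}^{r+1}(\A^n)(S)\to (\A^n)^{r+1}(S)$, and hence by Yoneda the desired morphism $naHC$. Naturality in $S$ is automatic because every step (the descent of $\psi$, the canonical identification $\mathcal{E}nd(\mathcal{F}_i/\mathcal{F}_{i+1})\cong\oo_S$, and the composition with $\Psi_1$) is preserved by base change. Invariance under an isomorphism $\varphi:\mathcal{F}_\bullet\xrightarrow{\sim}\mathcal{G}_\bullet$ of moduli data is likewise clear: the induced isomorphism of line bundles $\mathcal{F}_i/\mathcal{F}_{i+1}\xrightarrow{\sim}\mathcal{G}_i/\mathcal{G}_{i+1}$ intertwines $\bar\psi_i$ and $\bar\psi_i'$, and the corresponding scalar endomorphisms coincide.

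For the compatibility with $HC$, consider an $S$-point $[I_{r+1}\subset\cdots\subset I_0=\oo_S[x_1,\dots,x_n]]$ of $\operatorname{NHilb}^{r+1}(\A^n)$. Its image under $\iota$ has $\mathcal{F}_0=\oo_S[x_1,\dots,x_n]/I_{r+1}$, $\mathcal{F}_k=I_k/I_{r+1}$, $\psi$ inherited from multiplication, and $\Psi_1(e_j^\vee)=x_j\bmod I_{r+1}$. In this case $\bar\psi_i$ is the $\oo_S[x]/I_{r+1}$-action on the length-one quotient $I_i/I_{i+1}$, and $\chi_i\circ\Psi_1$ records the tuple $(\lambda_1^{(i)},\dots,\lambda_n^{(i)})\in\A^n(S)$ of scalars by which $x_1,\dots,x_n$ act. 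By the standard description of the ordered Hilbert-Chow morphism, this tuple is exactly $\operatorname{Supp}(I_i/I_{i+1})$, matching the $i$-th coordinate of $HC$.

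The main subtle point is making sure the construction is intrinsically defined on the moduli groupoid rather than depending on a choice of trivialization of $\mathcal{F}_i/\mathcal{F}_{i+1}$; this is what the canonical identification $\mathcal{E}nd_{\oo_S}(\mathcal{L})\cong\oo_S$ for a line bundle $\mathcal{L}$ is designed to supply, and it is the crucial use of the full flag assumption. Everything else is functoriality bookkeeping that follows directly from the universal property of $\operatorname{naNHilb}^{r+1}(\A^n)$ established in the previous section.
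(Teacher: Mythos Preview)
Your proposal is correct and follows essentially the same approach as the paper: both use that each graded piece $\mathcal{F}_i/\mathcal{F}_{i+1}$ is a line bundle, so the induced multiplication action on it is by scalars, and restricting to the generators $x_1,\dots,x_n$ gives a map to $\A^n$. The only cosmetic difference is that the paper works directly with the universal object $(\mathcal{V}_\bullet,\Psi_2,\mathfrak{s},\Psi_1)$ on $\operatorname{naNHilb}^{r+1}(\A^n)$ and the identification $\mathcal{L}_i\otimes\mathcal{L}_i^{-1}\cong\oo$, whereas you phrase the same construction on $S$-points via Yoneda and $\mathcal{E}nd_{\oo_S}(\mathcal{L})\cong\oo_S$; the compatibility check with $HC$ is identical in both.
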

\begin{proof}
    We construct the morphism directly. Let $(\mathcal{V}_\bullet, \Psi_2,\mathfrak{s},\Psi_1)$ be the universal flag and maps of $\operatorname{naNHilb}^{r+1}(\A^n)$. Let $\mathcal{L}_i=\mathcal{V}_i/\mathcal{V}_{i+1}$ which is a line bundle. Then the maps $\Psi_2$ and $\Psi_1$ induces a map 
    \[\Psi_2(\Psi_1\otimes \operatorname{id}): (\oo^{n})^{\vee} \otimes \mathcal{L}_i \to \mathcal{L}_i\]
    and so, we obtain a map 
    \[(\oo^n)^\vee \to \mathcal{L}_i\otimes \mathcal{L}_i^{-1}\simeq \oo\]
    which induces a map $\operatorname{naNHilb}^{r+1}(\A^n)\to \A^n$. Doing this for $i=0,\dots, r$ we obtain the desired map 
    \[\operatorname{naNHilb}^{r+1}(\A^n)\to (\A^n)^{r+1}.\]
    Going through the definitions of $\iota$ and $HC$ on the functor of points one easily sees that $naHC\circ \iota=HC$.
\end{proof}
\subsection{The punctual locus}
We construct a closed locus $\operatorname{naNHilb}_0^{r+1}(\A^n)\hookrightarrow \operatorname{naNHilb}^{r+1}(\A^n)$ in such a way that it intersects with the associativity locus exactly in the punctual nested Hilbert scheme. The first idea would be to simply take the preimage of the origin under the Hilbert-Chow morphism. However, it turns out that choosing a slightly smaller locus yields better formal properties. To motivate this locus recall that the first point of the ordered Hilbert Chow morphism is given by $\operatorname{Supp}(I_0/I_1)$. For this to be zero we need multiplication with $x_i$ to vanish on $I_0/I_1$ which is the same as requiring $I_1=(x_1,\dots, x_n)$. Now for $\operatorname{Supp}(I_k/I_{k+1})$ to be zero we need $x_i\cdot I_k\subset I_{k+1}$ for $i=1,\dots, n$, but since $I_1=(x_1,\dots, x_n)$ this is the same as requiring 
\[(I_1/I_{r+1})\cdot(I_k/I_{r+1})\subset (I_{k+1}/I_{r+1}).\]
Translating these conditions to the non-associative Hilbert scheme, we arrive at the following definition
\begin{defi}
    Consider $\operatorname{naNHilb}^{r+1}(\A^n)$ with the universal flag and maps $(\mathcal{V}_\bullet,\Psi_2,s,\Psi_1)$. The punctual nested non-associative Hilbert scheme is the closed subscheme \[\operatorname{naNHilb}^{r+1}_0(\A^n)\subset \operatorname{naNHilb}^{r+1}(\A^n)\]
    where $\Psi_1$ factors through $\mathcal{V}_1$ and where $\Psi_2$ maps $\mathcal{V}_1\otimes \mathcal{V}_k$ to $\mathcal{V}_{k+1}$ for all $k=1,\dots, r$. If we let $\mathcal{Q}$ be the quotient of $\mathcal{H}om(\sym^2 \mathcal{V}_1,\mathcal{V}_1)$ by the subsheaf of maps taking $\mathcal{V}_1\otimes \mathcal{V}_k$ to $\mathcal{V}_{k+1}$ then we can similarly describe the punctual locus as the zero locus of the section of
    \[\mathcal{H}om\left((\oo^n)^\vee,\mathcal{V}_0/\mathcal{V}_1\right)\oplus\mathcal{Q}\]
    given by $\Psi_1\oplus \Psi_2$.
\end{defi}
We prove that this actually is an extension of the punctual locus 
\[\operatorname{NHilb}_0^{r+1}(\A^n)=HC^{-1}(\mathbf{0})\subset \operatorname{NHilb}^{r+1}(\A^n)\]
of the nested Hilbert scheme of points.
\begin{lemma}
It holds that
\[\operatorname{naHC}(\operatorname{naNHilb}_0^{r+1}(\A^n))\subset \mathbf{0}.\]
\end{lemma}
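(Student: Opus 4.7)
The plan is to check the claim componentwise: by construction, the non-associative Hilbert-Chow morphism $naHC$ is the product of $r+1$ maps $naHC_i \colon \operatorname{naNHilb}^{r+1}(\A^n)\to \A^n$, one for each $i=0,\dots,r$, where $naHC_i$ is obtained from the composite
\[
(\oo^n)^{\vee}\otimes \mathcal{L}_i \;\xrightarrow{\;\Psi_1\otimes \operatorname{id}\;}\; \mathcal{V}_0\otimes \mathcal{L}_i \;\xrightarrow{\;\overline{\Psi_2}\;}\; \mathcal{L}_i,
\]
where $\mathcal{L}_i=\mathcal{V}_i/\mathcal{V}_{i+1}$ and $\overline{\Psi_2}$ is the induced map on the quotient (well defined because $\Psi_2(\mathcal{V}_0\otimes \mathcal{V}_{i+1})\subset \mathcal{V}_{i+1}$). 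It therefore suffices to show that, after restriction to the punctual locus $\operatorname{naNHilb}_0^{r+1}(\A^n)$, each of these composites is the zero morphism; this then forces the induced map to $\A^n\cong \underline{\operatorname{Hom}}((\oo^n)^{\vee},\mathcal{L}_i\otimes \mathcal{L}_i^{-1})$ to land in the origin.

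For $i=0$, the defining condition of the punctual locus forces $\Psi_1$ to factor through $\mathcal{V}_1\subset \mathcal{V}_0$, so the composite above factors through $\mathcal{V}_1\otimes \mathcal{L}_0 \to \mathcal{L}_0$. But $\Psi_2(\mathcal{V}_1\otimes \mathcal{V}_0)\subset \mathcal{V}_1$, which is already built into every point of $\operatorname{naNHilb}^{r+1}(\A^n)$ (from condition (1) in the definition of $M_{n,\underline{d}}$), so the induced map lands in $\mathcal{V}_1/\mathcal{V}_1=0$ inside $\mathcal{L}_0=\mathcal{V}_0/\mathcal{V}_1$. Hence $naHC_0$ vanishes on the punctual locus.

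For $i\geq 1$, the same factorization of $\Psi_1$ through $\mathcal{V}_1$ together with the second defining condition of the punctual locus, namely $\Psi_2(\mathcal{V}_1\otimes \mathcal{V}_k)\subset \mathcal{V}_{k+1}$ for all $k=1,\dots,r$, shows that the composite $(\oo^n)^{\vee}\otimes \mathcal{V}_i\to \mathcal{V}_i$ factors through $\mathcal{V}_{i+1}\subset \mathcal{V}_i$, so becomes zero after projecting to $\mathcal{L}_i=\mathcal{V}_i/\mathcal{V}_{i+1}$. Thus every $naHC_i$ vanishes on $\operatorname{naNHilb}_0^{r+1}(\A^n)$, which gives the desired set-theoretic containment $naHC(\operatorname{naNHilb}_0^{r+1}(\A^n))\subset \mathbf{0}$. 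There is no real obstacle here; the only point to be careful about is to phrase the two bullet points of the definition of the punctual locus at the level of the universal flag, and to remember that the $i=0$ component uses only the global condition $\Psi_2(\mathcal{V}_0\otimes \mathcal{V}_1)\subset \mathcal{V}_1$ rather than the more restrictive nilpotency condition that is imposed for $i\geq 1$.
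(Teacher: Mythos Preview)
Your proof is correct and follows essentially the same approach as the paper: for $i=0$ you use that $\Psi_1$ factors through $\mathcal{V}_1$ together with the ideal condition $\Psi_2(\mathcal{V}_0\otimes\mathcal{V}_1)\subset\mathcal{V}_1$, and for $i\geq 1$ you use the punctual nilpotency condition $\Psi_2(\mathcal{V}_1\otimes\mathcal{V}_i)\subset\mathcal{V}_{i+1}$. The paper's argument is slightly terser but relies on exactly the same two observations.
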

\begin{proof}
  Let again $\mathcal{L}_i=\mathcal{V}_i/\mathcal{V}_{i+1}$. Since $\Psi_1$ factors through $\mathcal{V}_1$ on the punctual locus it follows that the map 
  \[\Psi_2(\Psi_1\otimes \operatorname{id}):(\oo^n)^\vee \otimes \mathcal{L}_0\to \mathcal{L}_0\]
  vanishes. Similarly, we can factor
  \[\Psi_2(\Psi_1\otimes \operatorname{id}):(\oo^n)^\vee \otimes \mathcal{L}_k\to \mathcal{V}_1\otimes \mathcal{L}_k\xrightarrow{\Psi_2} \mathcal{L}_k\]
  and use that $\Psi_2(\mathcal
  V_1\otimes \mathcal{V}_k)\subset \mathcal{V}_{k+1}$ to see that this map also vanishes.
\end{proof}
\begin{lemma}
There exists a closed immersion $\iota_0: \operatorname{NHilb}_0^{r+1}(\A^n) \to \operatorname{naNHilb}_0^{r+1}(\A^n) $ such that the diagram
\begin{center}
    \begin{tikzcd}
        \operatorname{NHilb}_0^{r+1}(\A^n) \arrow{r}{\iota_0} \arrow{d} & \operatorname{naNHilb}_0^{r+1}(\A^n) \arrow{d} \\
        \operatorname{NHilb}^{r+1}(\A^n) \arrow{r}{\iota} & \operatorname{naNHilb}^{r+1}(\A^n)  
    \end{tikzcd}
\end{center}
is a pullback. 
\end{lemma}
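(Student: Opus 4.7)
The plan is to construct $\iota_0$ as the restriction of $\iota$ and verify the Cartesian property by comparing two closed subschemes of $\operatorname{NHilb}^{r+1}(\A^n)$, namely $\iota^{-1}(\operatorname{naNHilb}_0^{r+1}(\A^n))$ and $\operatorname{NHilb}_0^{r+1}(\A^n)=HC^{-1}(\mathbf{0})$. The key inputs are the commuting triangle $HC=naHC\circ\iota$ and the containment $\operatorname{naNHilb}_0^{r+1}(\A^n)\subset naHC^{-1}(\mathbf{0})$ supplied by the previous lemma. The main obstacle will be upgrading a containment $(x_1,\dots,x_n)\oo_S[x]\subset I_1$ to an equality on the subscheme cut out by $\Psi_1=0$.

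Unwinding definitions, $\operatorname{naNHilb}_0^{r+1}(\A^n)$ is cut out by (a) the vanishing of $\Psi_1\colon(\oo^n)^\vee\to\mathcal{L}_0$ together with (b) the vanishing of the induced map $\mathcal{V}_1\otimes\mathcal{V}_k\to\mathcal{L}_k$ for each $k=1,\dots,r$, while $naHC^{-1}(\mathbf{0})$ is cut out by the vanishing of $\Psi_2(\Psi_1\otimes\operatorname{id})\colon(\oo^n)^\vee\otimes\mathcal{L}_i\to\mathcal{L}_i$ for $i=0,\dots,r$. Pulled back along $\iota$, condition (a) becomes $x_j\in I_1$ for all $j$, condition (b) becomes $I_1\cdot I_k\subset I_{k+1}$, the $i=0$ component of the $naHC$-condition again becomes $x_j\in I_1$, and the $i\ge 1$ components become (b') $x_j\cdot I_k\subset I_{k+1}$. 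Combining the commuting triangle with the previous lemma yields the scheme-theoretic inclusions
\[
\iota^{-1}(\operatorname{naNHilb}_0^{r+1}(\A^n))\;\subset\;\iota^{-1}(naHC^{-1}(\mathbf{0}))\;=\;HC^{-1}(\mathbf{0})\;=\;\operatorname{NHilb}_0^{r+1}(\A^n).
\]

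For the reverse inclusion, I would observe that on the closed subscheme $Z\subset\operatorname{NHilb}^{r+1}(\A^n)$ cut out by (a), the containment $(x_1,\dots,x_n)\oo_Z[x]\subset I_1|_Z$ induces a surjection
\[
\oo_Z\;=\;\oo_Z[x]/(x_1,\dots,x_n)\;\twoheadrightarrow\;\oo_Z[x]/I_1|_Z
\]
of locally free $\oo_Z$-modules of rank one, which must therefore be an isomorphism; hence $I_1|_Z=(x_1,\dots,x_n)\oo_Z[x]$. Consequently $I_1\cdot I_k=\sum_j x_j\cdot I_k$ on $Z$, showing that conditions (b) and (b') cut out the same subscheme of $Z$. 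Together with the first inclusion, this produces the scheme-theoretic equality $\iota^{-1}(\operatorname{naNHilb}_0^{r+1}(\A^n))=\operatorname{NHilb}_0^{r+1}(\A^n)$, which is precisely the Cartesian property; the map $\iota_0$ is then a closed immersion because $\iota$ is.
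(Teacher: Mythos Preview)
Your proof is correct and follows essentially the same approach as the paper: both establish one inclusion via the commuting triangle $HC=naHC\circ\iota$ together with the previous lemma, and the reverse inclusion by showing that the condition $x_j\in\mathcal{I}_1$ forces $\mathcal{I}_1=(x_1,\dots,x_n)$, whence $I_1\cdot I_k\subset I_{k+1}$ and $x_j\cdot I_k\subset I_{k+1}$ become equivalent. Your version is slightly more careful scheme-theoretically---you justify $I_1|_Z=(x_1,\dots,x_n)$ via a surjection of rank-one locally free modules being an isomorphism, whereas the paper simply asserts this on $S$-points---but the content is the same.
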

\begin{proof}
    We prove the statement on the functor of points. Given $S\in \operatorname{Sch}_{\C}$ and 
    \[\Big{[}\mathcal{I}_{r+1}\subset \cdots \subset \mathcal{I}_{1}\subset \mathcal{I}_0= \oo_S[x_1,\dots, x_n]\Big{]}\]
    of $\operatorname{NHilb}^{r+1}(\A^n)$ we should prove that it factors through $\operatorname{NHilb}_0^{r+1}(\A^n)$ if and only if it factors through $\operatorname{naNHilb}_0^{r+1}(\A^n)$ after composing it with $\iota$. The "if" part follows from the above, since $\operatorname{naHC}(\operatorname{naNHilb}_0^{r+1}(\A^n))\subset \mathbf{0}$ and $naHC\circ \iota=HC$. Conversely, if the point is punctual, then $x_i\mathcal{I}_k\subset \mathcal{I}_{k+1}$ for all $i=1, \dots ,n$ and all $k=0,\dots, r$. For $k=0$ this means that $x_i\in \mathcal{I}_1$ for all $i$ hence $\mathcal{I}_1=(x_1,\dots, x_n)$. Since $x_i\cdot \mathcal{I}_k\subset \mathcal{I}_{k+1}$ we conclude that 
    \[(\mathcal{I}_1/\mathcal{I}_{r+1})\cdot(\mathcal{I}_{k}/\mathcal{I}_{r+1})\subset (\mathcal{I}_{k+1}/\mathcal{I}_{r+1})\]
    thus, proving that its image under $\iota$ is in $\operatorname{naNHilb}_0^{r+1}(\A^n)$.
\end{proof}
\subsection{Projective tower structure}
We prove that the punctual non-associative Hilbert scheme can be exhibited as an iterated tower of projective fibrations. The proof is essentially the same as in \cite[Proposition 6.3]{KazarianNAHilb} with a slightly more general space. Before proceeding, we show how the nested non-associative Hilbert scheme generally maps to a nested non-associative Hilbert scheme with a truncated dimension vector. 
\begin{defi}
    Let $\underline{d}=(d_0,\dots, d_r)$ and consider $\operatorname{naNHilb}^{\underline{d}}(\A^n)$ with universal flag and maps $(\mathcal{V}_\bullet, \Psi_2,\mathfrak{s},\varpi)$. Define the flag $\mathcal{F}_\bullet$ with dimension vector $(d_0,\dots, d_{r-1})$ by setting
    \[\mathcal{F}_k:=\mathcal{V}_k/\mathcal{V}_r.\]
    We get an induced unital non-associative algebra structure from $\Psi_2,\mathfrak{s}$ exhibiting $\mathcal{F}_\bullet$ as a quotient algebra of $\mathcal{V}_\bullet$, so that we also get an induced surjection via $\varpi$. This data determines a map 
    \[\operatorname{naNHilb}^{(d_0,\dots, d_r)}(\A^n)\to\operatorname{naNHilb}^{(d_0,\dots, d_{r-1})}(\A^n). \]
\end{defi}

\begin{remark}
    Since a quotient of an associative algebra is again associative this map preserves the associativity locus. One can easily see that on the nested Hilbert scheme of points this is simply the map 
    \[\Big{[} I_{r+1}\subset I_r \subset \cdots \subset I_1\subset \C[x_1,\dots, x_n]\Big{]}\mapsto \Big{[}  I_r \subset \cdots \subset I_1\subset \C[x_1,\dots, x_n]\Big{]}  \]
    which forgets the largest subscheme of the nesting. 
\end{remark}

\begin{lemma}
    The natural map 
    \[\operatorname{naNHilb}^{r+1}(\A^n)\to \operatorname{naNHilb}^{r}(\A^n)\]
    restricts to the punctual locus i.e., to a map
    \[\operatorname{naNHilb}^{r+1}_0(\A^n)\to \operatorname{naNHilb}^{r}_0(\A^n).\]
\end{lemma}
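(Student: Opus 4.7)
The plan is to verify the statement directly on the functor of points, using the explicit description of the truncation map and of the punctual locus. Let $S\in\operatorname{Sch}_{\C}$ and let $[\mathcal{V}_\bullet,\Psi_2,\mathfrak{s},\varpi]\in na\mathcal{H}ilb^{n,(1,\dots,1)}(S)$ be an $S$-point of $\operatorname{naNHilb}_0^{r+1}(\A^n)$; denote by $\Psi_1$ the restriction of $\varpi$ to $(\oo_S^n)^\vee$. By definition of the punctual locus we have that $\Psi_1$ factors through $\mathcal{V}_1\hookrightarrow\mathcal{V}_0$, and that $\Psi_2(\mathcal{V}_1\otimes_S\mathcal{V}_k)\subset\mathcal{V}_{k+1}$ for every $1\le k\le r$. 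The image of this point under the truncation map is $[\mathcal{F}_\bullet,\Psi_2^{new},\mathfrak{s}^{new},\varpi^{new}]$ where $\mathcal{F}_k=\mathcal{V}_k/\mathcal{V}_r$ for $0\le k\le r$, and the structure maps are the ones induced from $(\Psi_2,\mathfrak{s},\varpi)$ by passing to the quotient $\mathcal{V}_0\twoheadrightarrow\mathcal{V}_0/\mathcal{V}_r=\mathcal{F}_0$. This is well-defined because $\mathcal{V}_r$ is a two-sided ideal (it lies in the flag preserved by $\Psi_2$), so Lemma \ref{unique algebra structure surjection} applies.

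Now I would check the two conditions defining the punctual locus of $\operatorname{naNHilb}^{r}(\A^n)$ one by one. First, $\Psi_1^{new}$ is by construction the composition of $\Psi_1$ with the projection $\mathcal{V}_0\twoheadrightarrow\mathcal{F}_0$; since $\Psi_1$ factors through $\mathcal{V}_1$ and the projection sends $\mathcal{V}_1$ into $\mathcal{F}_1=\mathcal{V}_1/\mathcal{V}_r$, it follows that $\Psi_1^{new}$ factors through $\mathcal{F}_1\hookrightarrow\mathcal{F}_0$. Second, for $1\le k\le r-1$ the map $\Psi_2^{new}:\mathcal{F}_1\otimes_S\mathcal{F}_k\to\mathcal{F}_0$ is, by the construction in Lemma \ref{unique algebra structure surjection}, induced by $\Psi_2$ after taking quotients; the hypothesis $\Psi_2(\mathcal{V}_1\otimes_S\mathcal{V}_k)\subset\mathcal{V}_{k+1}$ then gives $\Psi_2^{new}(\mathcal{F}_1\otimes_S\mathcal{F}_k)\subset\mathcal{V}_{k+1}/\mathcal{V}_r=\mathcal{F}_{k+1}$, as required.

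This shows that the image lies in $\operatorname{naNHilb}_0^{r}(\A^n)$. The argument is functorial in $S$, so we conclude that the truncation map factors through $\operatorname{naNHilb}_0^{r}(\A^n)\hookrightarrow\operatorname{naNHilb}^{r}(\A^n)$, yielding the desired restricted map. There is no real obstacle here: the only mildly subtle point is to check that the quotient description of $\Psi_2^{new}$ is compatible with the sub-flag containment, but this is immediate from the naturality of the induced multiplication rule furnished by Lemma \ref{unique algebra structure surjection}.
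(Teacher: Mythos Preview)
Your proof is correct and follows essentially the same approach as the paper: both verify directly that the two defining conditions of the punctual locus (factorization of $\Psi_1$ through $\mathcal{V}_1$, and $\Psi_2(\mathcal{V}_1\otimes\mathcal{V}_k)\subset\mathcal{V}_{k+1}$) pass to the quotient by $\mathcal{V}_r$. Your version is more explicit about the functor-of-points setup and invokes Lemma~\ref{unique algebra structure surjection}, while the paper's proof is a terse two-sentence check of the same facts.
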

\begin{proof}
    If $\Psi_1$ factors through $\mathcal{V}_1\subset \mathcal{V}_0$ then the induced map factors through $\mathcal{V}_1/\mathcal{V}_r\subset \mathcal{V}_0/\mathcal{V}_r$. Similarly, if $\Psi_2$ maps $\mathcal{V}_1\otimes \mathcal{V}_k$ to $\mathcal{V}_{k+1}$ then the induced map maps $(\mathcal{V}_1/\mathcal{V}_r)\otimes (\mathcal{V}_k/\mathcal{V}_r)$ to $(\mathcal{V}_{k+1}/\mathcal{V}_r)$.
\end{proof}
Before proving the projective tower statement, we show that the unit does not contain any additional information in the punctual case.
\begin{defi}
     Let $S\in \operatorname{Sch}_{S}$ and let $\oo_S\{x_1,\dots x_n\}_c^{+}\subset \oo_S\{x_1,\dots x_n\}_c$ be the ideal spanned by $x_1,\dots, x_n$. We view it as the free non-unital non-associative algebra of $(\oo_S^n)^\vee$.
\end{defi}
\begin{lemma}
    Let $S\in \operatorname{Sch}_{S}$. The map 
    \begin{center}
        \begin{tikzcd}
           \left( \operatorname{naNHilb}_0^{r+1}(\A^n) \right)(S)\arrow{r} & \left\{(\mathcal{G}_\bullet,\psi_2,\psi_1)\right\}\big{/}\text{iso} \\[-20pt]
            \left[\mathcal{F}_\bullet,\psi_2,s,\psi_1\right] \arrow[mapsto]{r} & \left[(\mathcal{F}_1)_\bullet,\psi_2,\psi_1\right]
        \end{tikzcd}
    \end{center}
    which forgets $\mathcal{F}_0$ and the unit $s$ is a bijection. Here the tuples in the right-hand side set consists of a flag of sheaves 
    \[\mathcal{G}_1\supset \mathcal{G}_2 \supset \cdots \mathcal{G}_{r+1}=0\]
    such that $\mathcal{G}_k/\mathcal{G}_{k+1}$ is an invertible sheaf for all $k=1,\dots r$, a non-unital commutative non-associative algebra structure $\psi_2:\sym^2\mathcal{G}_1\to \mathcal{G}_1$ such that $\psi_2(\mathcal{G}_1\otimes_S\mathcal{G}_k) \subset \mathcal{G}_{k+1}$ and a map $\psi_1: (\oo_S^n)^\vee \to \mathcal{G}$ such that the induced map $\oo_S\left\{x_1,\dots, x_n\right\}_c^{+}\to \mathcal{G}$ is an epimorphism. 
\end{lemma}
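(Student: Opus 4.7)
The plan is to exhibit an explicit inverse to the forgetful map, using that the unit $s$ canonically splits the short exact sequence $0 \to \mathcal{F}_1 \to \mathcal{F}_0 \to \mathcal{F}_0/\mathcal{F}_1 \to 0$ precisely on the punctual locus. First I would check that the forgetful map is well-defined: if $[\mathcal{F}_\bullet,\psi_2,s,\psi_1]$ lies in $\operatorname{naNHilb}_0^{r+1}(\A^n)(S)$, then by definition $\psi_1$ factors through $\mathcal{F}_1$ and $\psi_2(\mathcal{F}_1\otimes_S\mathcal{F}_k)\subset \mathcal{F}_{k+1}$, so in particular $\psi_2(\mathcal{F}_1\otimes_S\mathcal{F}_1)\subset\mathcal{F}_2\subset\mathcal{F}_1$, and thus $\psi_2$ restricts to a non-unital multiplication on $\mathcal{F}_1$ with the required flag compatibility. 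The fact that the induced map $\oo_S\{x_1,\dots,x_n\}_c^{+}\to \mathcal{F}_1$ is surjective will fall out of the argument below.

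Next I would construct the inverse. Given a tuple $(\mathcal{G}_\bullet,\psi_2,\psi_1)$ from the right-hand side, set $\mathcal{F}_0:=\oo_S\oplus \mathcal{G}_1$, $\mathcal{F}_k:=\mathcal{G}_k$ for $k\geq 1$, let $s:\oo_S\to\mathcal{F}_0$ be the inclusion of the first summand, let $\widetilde{\psi}_1:(\oo_S^n)^\vee\to\mathcal{F}_0$ be the composition of $\psi_1$ with the inclusion of the second summand, and extend $\psi_2$ to
\[
\widetilde{\psi}_2\bigl((a,x),(b,y)\bigr)=\bigl(ab,\;ay+bx+\psi_2(x,y)\bigr).
\]
A direct check confirms unitality of $s$, the flag compatibility $\widetilde{\psi}_2(\mathcal{F}_0\otimes_S\mathcal{F}_k)\subset\mathcal{F}_k$, and the punctual conditions. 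Stability of $(\mathcal{F}_0,\widetilde{\psi}_2,s,\widetilde{\psi}_1)$ follows from surjectivity of $\oo_S\{x_1,\dots,x_n\}_c^{+}\to \mathcal{G}_1$ because $\oo_S\{x_1,\dots,x_n\}_c=\oo_S\oplus \oo_S\{x_1,\dots,x_n\}_c^{+}$ is sent onto $\oo_S\oplus \mathcal{G}_1=\mathcal{F}_0$. This assignment is clearly functorial and one composition is the identity by construction.

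For the other composition, given $[\mathcal{F}_\bullet,\psi_2,s,\psi_1]$ on the punctual locus, I must show that $s$ together with the inclusion $\mathcal{F}_1\hookrightarrow\mathcal{F}_0$ identifies $\mathcal{F}_0$ with $\oo_S\oplus\mathcal{F}_1$. The main obstacle is showing that the composite $\bar{s}:\oo_S\xrightarrow{s}\mathcal{F}_0\twoheadrightarrow\mathcal{F}_0/\mathcal{F}_1$ is an isomorphism; once this is established, $s$ provides a splitting of the short exact sequence $0\to\mathcal{F}_1\to\mathcal{F}_0\to\mathcal{F}_0/\mathcal{F}_1\to 0$, and unitality of $s$ together with bilinearity of $\psi_2$ forces the multiplication to agree with $\widetilde{\psi}_2$ as above. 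To prove $\bar{s}$ is an isomorphism, I would use the punctual condition: since $\psi_1$ factors through $\mathcal{F}_1$ and $\psi_2(\mathcal{F}_1\otimes_S\mathcal{F}_k)\subset\mathcal{F}_{k+1}\subset\mathcal{F}_1$, every iterated product of elements in $\operatorname{im}\psi_1$ lies in $\mathcal{F}_1$. Consequently, the image of the associated surjection $\oo_S\{x_1,\dots,x_n\}_c\twoheadrightarrow\mathcal{F}_0$ lies in $s(\oo_S)+\mathcal{F}_1$, so $s(\oo_S)+\mathcal{F}_1=\mathcal{F}_0$, which means $\bar{s}$ is surjective. But $d_0=1$, so $\mathcal{F}_0/\mathcal{F}_1$ is invertible, and a surjection of invertible sheaves is automatically an isomorphism (locally it is multiplication by a unit, since after tensoring with any residue field it is a surjection of one-dimensional vector spaces). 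This also yields the surjectivity statement needed above for the forgetful map to be well-defined.
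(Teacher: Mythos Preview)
Your proof is correct and follows essentially the same approach as the paper: construct the inverse by freely adjoining a unit, $\mathcal{F}_0:=\oo_S\oplus\mathcal{G}_1$, and verify the two composites are the identity. The paper phrases the well-definedness step via a five-lemma diagram (asserting that the right vertical $\oo_S\to\mathcal{F}_0/\mathcal{F}_1$ is an isomorphism and concluding that $\oo_S\{x_1,\dots,x_n\}_c^+\to\mathcal{F}_1$ is surjective), whereas you obtain the same conclusion by first arguing directly that $\bar s$ is surjective from punctuality and stability, then using that a surjection of invertible sheaves is an isomorphism; your route is slightly more explicit on this point but otherwise identical.
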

\begin{proof}
    We first prove that the map is well defined. Indeed, since we are in the punctual locus we have $\psi_2(\mathcal{F}_1\otimes_S \mathcal{F}_k)\subset \mathcal{F}_{k+1}$ and $\psi_1$ factors through $\mathcal{F}_1$. We have a commutative diagram
    \begin{center}
        \begin{tikzcd}
            0\arrow{r} & \oo_S\{x_1,\dots, x_n\}_c^{+} \arrow{r} \arrow{d} & \oo_S\{x_1,\dots, x_n\}_c \arrow{r} \arrow{d} & \oo_S \arrow{r} \arrow{d}{s} & 0 \\ 
            0 \arrow{r} & \mathcal{F}_1 \arrow{r} & \mathcal{F}_0 \arrow{r} & \mathcal{F}_0/\mathcal{F}_1 \arrow{r} & 0
        \end{tikzcd}
    \end{center}
    with exact rows. The right vertical map is an isomorphism, and the middle vertical map is an epimorphism, so the 5-lemma implies that the left verical map is also an epimorphism. This shows that the map is well defined. Conversely given a point $[\mathcal{G}_\bullet,\psi_2,\psi_1]$ we define $\mathcal{F}_\bullet$ by setting
    \[\mathcal{F}_0=\oo_S\oplus \mathcal{G}_1\]
    and $\mathcal{F}_k=\mathcal{G}_k$ for $k=1,\dots, r$. We note that 
    \[\sym^2\mathcal{F}_0\cong \oo_S \oplus \mathcal{G}_1\oplus \sym^2\mathcal{G}_1 \cong \mathcal{F}_0\oplus \sym^2 \mathcal{G}_1\]
    and extend $\psi_2$ to a map 
    \[\sym^2 \mathcal{F}_0\to \mathcal{F}_0\]
    by letting it act by the identity on the first summand and by $\psi_2$ on the second. We let $s$ be the inclusion 
    \[\oo_S\hookrightarrow \oo_S\oplus \mathcal{G}_1=\mathcal{F}_0\]
    and let $\psi_1$ be the composite
    \[(\oo_S^n)^\vee \xrightarrow{\psi_1} \mathcal{G}_1\hookrightarrow \oo_S\oplus \mathcal{G}_1=\mathcal{F}_0.\]
    We have exactly the same commutative diagram as above. Since the left vertical map is an epimorphism and the right vertical map is an isormophism it follows that the middle vertical map is an epimorphism. This proves that the map is well defined. One easily checks that these maps are inverse to each other. 
\end{proof}

\begin{lemma} \label{Kazarian stability}
    Let $\underline{d}=(1,d_1,\dots, d_r)\in \N^r$, let $S\in \operatorname{Sch}_{\C}$ and $(\mathcal{F}_\bullet, \psi_2,s,\psi_1)$ defined as follows.
    \begin{enumerate}
        \item $\mathcal{F}_\bullet$ is a flag of sheaves 
    \[\mathcal{F}_0\supset \mathcal{F}_1\supset \cdots \mathcal{F}_{r+1}=0\]
    with $\mathcal{F}_0/\mathcal{F}_{1}$ invertible and $\mathcal{F}_{k}/\mathcal{F}_{k+1}$ locally free of rank $d_k$.
    \item $(\psi_2,s)$ is a unital non-associative algebra structure on $\mathcal{F}_0$ such that $\psi_2(\mathcal{F}_1\otimes_S\mathcal{F}_{k})\subset \mathcal{F}_{k+1}$ and $s\notin \mathcal{F}_1$. 
    \item $\psi_1:(\oo_S^n)^\vee\to \mathcal{F}_1$ is a map of sheaves.
    \end{enumerate}
Then the induced map 
\[\oo_S\{x_1,\dots x_n\}_c \to \mathcal{F}_0\]
is surjective if and only if the map
\[\psi_1\oplus \psi_2: (\oo_S^n)^\vee \oplus \sym^2\mathcal{F}_1\to \mathcal{F}_1\]
is surjective.
\end{lemma}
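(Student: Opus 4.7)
The plan is to identify the image of the free algebra map as $\oo_S \cdot s \oplus W_1$, where $W_1 \subseteq \mathcal{F}_1$ is the smallest $\oo_S$-subsheaf containing $\mathrm{image}(\psi_1)$ that is closed under $\psi_2$, and then reduce the claim to showing $W_1 = \mathcal{F}_1$ iff $\psi_1 \oplus \psi_2$ surjects. Since the cokernels of both maps are coherent quotients of $\mathcal{F}_0$ and $\mathcal{F}_1$ respectively, surjectivity can be checked fiberwise by Nakayama; I use this freely to move between sheaf statements and their fiberwise incarnations.

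Since $\mathcal{F}_0/\mathcal{F}_1$ is invertible and $s \notin \mathcal{F}_1$, the section $s$ trivializes $\mathcal{F}_0/\mathcal{F}_1$, giving $\mathcal{F}_0 = \oo_S \cdot s \oplus \mathcal{F}_1$ locally on $S$. Unitality makes any $\psi_2$-monomial involving $s$ collapse to the product of the remaining factors, so the image of the free algebra map is exactly $\oo_S \cdot s \oplus W_1$, with $W_1 = \bigcup_{m \geq 0} W_1^{(m)}$ defined by $W_1^{(0)} = \mathrm{image}(\psi_1)$ and $W_1^{(m+1)} = W_1^{(m)} + \psi_2(W_1^{(m)} \otimes W_1^{(m)})$. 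Thus the free algebra map is surjective iff $W_1 = \mathcal{F}_1$, and the lemma reduces to proving $W_1 = \mathcal{F}_1$ iff $\psi_1 \oplus \psi_2 : (\oo_S^n)^\vee \oplus \sym^2\mathcal{F}_1 \to \mathcal{F}_1$ is surjective.

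For $(\Rightarrow)$, induction on $m$ shows $W_1^{(m)} \subseteq \mathrm{image}(\psi_1) + \psi_2(\sym^2\mathcal{F}_1) = \mathrm{image}(\psi_1 \oplus \psi_2)$, since any $\psi_2$-product of elements of $W_1^{(m)} \subseteq \mathcal{F}_1$ already lies in $\psi_2(\sym^2\mathcal{F}_1)$; hence $\mathcal{F}_1 = W_1 \subseteq \mathrm{image}(\psi_1 \oplus \psi_2)$.

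For $(\Leftarrow)$ I induct on $k$ to prove $W_1 + \mathcal{F}_k = \mathcal{F}_1$. The base $k = 2$ holds because $\psi_2(\sym^2\mathcal{F}_1) \subseteq \mathcal{F}_2$ forces $\psi_1$ to surject modulo $\mathcal{F}_2$. For the inductive step, assume $W_1 + \mathcal{F}_k = \mathcal{F}_1$ and take $b \in \mathcal{F}_k$; by surjectivity write $b = \psi_1(y) + \sum_i \psi_2(c_i, d_i)$ with $c_i, d_i \in \mathcal{F}_1$, then decompose $c_i = w_i + f_i$ and $d_i = w_i' + f_i'$ with $w_i, w_i' \in W_1$ and $f_i, f_i' \in \mathcal{F}_k$ via the inductive hypothesis. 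Bilinear expansion yields $\psi_2(c_i, d_i) = \psi_2(w_i, w_i') + \psi_2(w_i, f_i') + \psi_2(f_i, w_i') + \psi_2(f_i, f_i')$; the last three cross terms each pair an element of $\mathcal{F}_1$ with one in $\mathcal{F}_k$, hence lie in $\mathcal{F}_{k+1}$ by the filtration condition $\psi_2(\mathcal{F}_1 \otimes \mathcal{F}_k) \subseteq \mathcal{F}_{k+1}$, while $\psi_2(w_i, w_i') \in W_1$ by closure. Combined with $\psi_1(y) \in \mathrm{image}(\psi_1) \subseteq W_1$, this shows $b \in W_1 + \mathcal{F}_{k+1}$, closing the induction; setting $k = r+1$ with $\mathcal{F}_{r+1} = 0$ yields $W_1 = \mathcal{F}_1$. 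This inductive step is the main technical obstacle: one must coordinate the global surjectivity hypothesis with the filtration shift of $\psi_2$ and the inductive hypothesis, with the bilinear expansion providing the bridge that kills all cross terms modulo $\mathcal{F}_{k+1}$.
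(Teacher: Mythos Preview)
Your proof is correct and rests on the same key mechanism as the paper's: the nilpotency condition $\psi_2(\mathcal{F}_1\otimes\mathcal{F}_k)\subset\mathcal{F}_{k+1}$ lets one push the ``error'' down the filtration step by step. The organizational difference is that the paper inducts on the length $r$ of the flag (applying the inductive hypothesis to the truncated algebra $\mathcal{F}_1/\mathcal{F}_r$ and then decomposing $\sym^2(\oo_S\{x_1,\dots,x_n\}_c^+\oplus\mathcal{F}_r)$), whereas you fix the flag and induct on the filtration index $k$ via $W_1+\mathcal{F}_k=\mathcal{F}_1$. Your version is slightly more direct: it avoids reintroducing the free algebra at each step and works entirely with the concrete subsheaf $W_1$, making the bilinear expansion the visibly central step; the paper's version has the mild advantage of packaging the induction so that the statement itself is the inductive hypothesis.
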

\begin{proof}
    We first note that since $s:\oo_S\to \mathcal{F}_0/\mathcal{F}_1$ is an isomorphism (since $\mathcal{F}_0/\mathcal{F}_1$ is an invertible sheaf) it follows that the map 
    \[\oo_S\{x_1,\dots x_n\}_c \to \mathcal{F}_0\]
    is surjective if and only if 
    \[\oo_S\{x_1,\dots x_n\}_c^{+} \to \mathcal{F}_1\]
    is surjective. Clearly this implies that $\psi_1\oplus \psi_2$ is surjective since any element of $\mathcal{F}_1$ is a linear combination of products of elements in the image of $\psi_1$ i.e., elements either in the image $\psi_1$ or in the image of $\psi_2$. To prove the converse, we proceed by induction on $r$. For $r=1$ we see that the multiplication map takes $\mathcal{F}_1\otimes_S \mathcal{F}_1$ to $\mathcal{F}_2=0$ so that $\psi_2=0$. Therefore, if $\psi_1\oplus \psi_2$ is surjective then $\psi_1$ is surjective so that
   \[\oo_S\{x_1,\dots,x_n\}_c^{+} \to \mathcal{F}_1\]
    is also surjective. Let now $\mathcal{F}_\bullet, \psi_1,\psi_2$ be given with $\psi_1\oplus \psi_2$ surjective. By the induction hypothesis the composite
\[\oo_S\{x_1,\dots,x_n\}_c^{+} \to \mathcal{F}_1\to \mathcal{F}_1/\mathcal{F}_r\]
is surjective so that the map 
\[\oo_S\{x_1,\dots,x_n\}_c^{+}\oplus \mathcal{F}_r\to \mathcal{F}_1\]
given by the quotient and the inclusion is surjective. In particular, the map
\[(\oo_S^n)^\vee\oplus \sym^2(\oo_S\{x_1,\dots,x_n\}_c^{+}\oplus \mathcal{F}_r)\to (\oo_S^n)^\vee \oplus \sym^2\mathcal{F}_1\to \mathcal{F}_1\]
is surjective. We have
\[\sym^2(\oo_S\{x_1,\dots,x_n\}_c^{+}\oplus \mathcal{F}_r)\cong \sym^2(\oo_S\{x_1,\dots,x_n\}_c^{+})\oplus (\oo_S\{x_1,\dots,x_n\}_c^{+}\otimes \mathcal{F}_r)\oplus \sym^2 \mathcal{F}_r\]
and since $\psi_2(\mathcal{F}_1\otimes_S\mathcal{F}_r)\subset \mathcal{F}_{r+1}=0$ it follows that the above map vanishes on the last two summands. We conclude that the map 
\[(\oo_S^n)^{\vee}\oplus\sym^2(\oo_S\{x_1,\dots,x_n\}_c^{+})\to \mathcal{F}_1 \]
is surjective. The image of $\oo_S\{x_1,\dots,x_n\}_c^{+}$ is closed under $\psi_2$ so that it contains the image of the above map. We conclude that it is the whole of $\mathcal{F}_1$.
\end{proof}
\begin{corollary}
    An $S$-valued point of $\operatorname{naNHilb}_0^{r+1}(\A^n)$ is the same as a tuple $(\mathcal{F}_\bullet,\psi_2,\psi_1)$ where 
    \begin{enumerate}
        \item $\mathcal{F}_\bullet$ is a flag in $\operatorname{QCoh}(S)$ 
        \[\mathcal{F}_1\supset \mathcal{F}_2 \supset \cdots \supset \mathcal{F}_{r+1}=0\]
        such that $\mathcal{F}_k/\mathcal{F}_{k+1}$ is invertible.
        \item $\psi_2$ is a non-associative algebra structure on $\mathcal{F}_1$ such that $\psi_2(\mathcal{F}_1\otimes_S\mathcal{F}_k)\subset \mathcal{F}_{k+1}$.
        \item $\psi_1:(\oo_S^n)^\vee\to \mathcal{F}_1$ is a map of sheaves such that $\psi_1\oplus \psi_2$ is surjective.
    \end{enumerate}
    It follows that $\operatorname{naHilb}_0^{r+1}(\A^n)$ comes equipped with universal such structure, which we will denote by $(\mathcal{N}_\bullet,\Psi_2,\Psi_1)$. 
\end{corollary}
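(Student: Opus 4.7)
The proposal is to combine the two preceding lemmas directly: the forgetful bijection already supplied (which strips the unit $s$ and the top piece $\mathcal{F}_0$ off any punctual point) reduces the data to a triple of the flavour $(\mathcal{G}_\bullet,\psi_2,\psi_1)$ as in the right-hand side of that lemma, and \cref{Kazarian stability} then re-expresses the surjectivity condition in the form demanded by the corollary.

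More concretely, first I would invoke the forgetful bijection to identify an $S$-valued point $[\mathcal{F}_\bullet,\psi_2,s,\psi_1]$ of $\operatorname{naNHilb}_0^{r+1}(\A^n)$ with a tuple $(\mathcal{G}_\bullet,\psi_2,\psi_1)$ satisfying conditions (1) and (2) of the corollary together with the surjectivity of
\[
\oo_S\{x_1,\dots,x_n\}_c^{+}\longrightarrow \mathcal{G}_1
\]
induced by $\psi_1$ and $\psi_2$. The renaming $\mathcal{G}_\bullet=\mathcal{F}_\bullet$ is cosmetic, so only the stability conditions need to be matched.

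Second, I would apply \cref{Kazarian stability} to the enlarged flag $\widetilde{\mathcal{F}}_0=\oo_S\oplus \mathcal{F}_1$, $\widetilde{\mathcal{F}}_k=\mathcal{F}_k$ for $k\geq 1$, equipped with the unit $s\colon\oo_S\hookrightarrow\widetilde{\mathcal{F}}_0$ and the extension of $\psi_2$ by $1$ on the $\oo_S$-summand (exactly the reconstruction carried out in the proof of the forgetful bijection). The hypotheses of \cref{Kazarian stability} are satisfied: $\widetilde{\mathcal{F}}_0/\widetilde{\mathcal{F}}_1\cong\oo_S$ is invertible, $s\notin\widetilde{\mathcal{F}}_1$, $\psi_2(\widetilde{\mathcal{F}}_1\otimes_S\widetilde{\mathcal{F}}_k)\subset\widetilde{\mathcal{F}}_{k+1}$ by assumption, and $\psi_1$ still lands in $\widetilde{\mathcal{F}}_1$. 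The lemma therefore tells us that surjectivity of $\oo_S\{x_1,\dots,x_n\}_c\to\widetilde{\mathcal{F}}_0$ is equivalent to surjectivity of $\psi_1\oplus\psi_2\colon(\oo_S^n)^{\vee}\oplus\sym^2\mathcal{F}_1\to\mathcal{F}_1$. Since surjectivity onto $\widetilde{\mathcal{F}}_0$ is the same as surjectivity of the non-unital variant onto $\mathcal{F}_1$ (the quotient by the unit line being handled by $s$ itself), this is precisely the translation needed to pass from condition (3) of the forgetful lemma to condition (3) of the corollary.

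The only mildly delicate point, and the one I would write out carefully, is verifying that the two surjectivity statements really do coincide under the above identifications; once that is checked, the bijection on $S$-points is tautological and functorial in $S$, yielding the universal tuple $(\mathcal{N}_\bullet,\Psi_2,\Psi_1)$ on $\operatorname{naNHilb}_0^{r+1}(\A^n)$ by applying the bijection to the identity map of the moduli space itself.
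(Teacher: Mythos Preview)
Your proposal is correct and is precisely the argument the paper intends: the corollary is stated without proof immediately after the forgetful-bijection lemma and \cref{Kazarian stability}, and your combination of these two results (using the first to strip the unit and the second to translate the free-algebra surjectivity into surjectivity of $\psi_1\oplus\psi_2$) is exactly the implicit derivation. The only remark is that the equivalence between surjectivity onto $\widetilde{\mathcal{F}}_0$ and surjectivity of $\oo_S\{x_1,\dots,x_n\}_c^{+}\to\mathcal{F}_1$ is already the first sentence of the proof of \cref{Kazarian stability}, so you need not re-verify it separately.
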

\begin{prop} \label{projective tower structure}
    Consider $\operatorname{naNHilb}^{r}(\A^n)$ with universal structure $(\mathcal{N}_\bullet,\Psi_2,\Psi_1)$. Then the natural map 
    \[\operatorname{naNHilb}_0^{r+1}(\A^n)\to \operatorname{naNHilb}_0^{r}(\A^n)\]
    identifies with the structure map
    \[\mathbb{P}_{\operatorname{naNHilb}^{r}(\A^n)}\left((\operatorname{ker}\Psi_1\oplus \Psi_2)^\vee \right)\to \operatorname{naNHilb}^{r}(\A^n).\]
\end{prop}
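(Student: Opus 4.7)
I would prove the identification by showing that, for every $S \in \operatorname{Sch}_{\C}$ and every $S$-point $(\mathcal{N}'_\bullet, \Psi'_2, \Psi'_1)$ of $\operatorname{naNHilb}_0^{r}(\A^n)$, the set of lifts to $\operatorname{naNHilb}_0^{r+1}(\A^n)(S)$ is in natural bijection with the set of line bundle quotients of $\mathcal{K}' := \ker(\Psi'_1 \oplus \Psi'_2)$, i.e.\ $S$-points of $\mathbb{P}_S((\mathcal{K}')^\vee)$. Here $\sigma' := \Psi'_1 \oplus \Psi'_2 : (\oo_S^n)^\vee \oplus \sym^2 \mathcal{N}'_1 \twoheadrightarrow \mathcal{N}'_1$ is the stability surjection furnished by \cref{Kazarian stability}.

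The first step extracts a line bundle quotient from a lift. Given $(\mathcal{N}_\bullet, \Psi_2, \Psi_1)$ mapping to the fixed base point, the identifications $\mathcal{N}_k/\mathcal{N}_r = \mathcal{N}'_k$ determine the intermediate flag pieces $\mathcal{N}_k$ ($1 < k < r$) as preimages of $\mathcal{N}'_k$ in $\mathcal{N}_1$, so the only genuine new datum is the short exact sequence $0 \to \mathcal{N}_r \to \mathcal{N}_1 \to \mathcal{N}'_1 \to 0$ together with the extended maps. The punctual condition $\Psi_2(\mathcal{N}_1 \otimes \mathcal{N}_r) \subset \mathcal{N}_{r+1} = 0$ forces $\Psi_2$ to descend along $\sym^2 \mathcal{N}_1 \twoheadrightarrow \sym^2 \mathcal{N}'_1$ to a map $\widetilde{\Psi}_2 : \sym^2 \mathcal{N}'_1 \to \mathcal{N}_1$, and $\widetilde{\sigma} := \Psi_1 \oplus \widetilde{\Psi}_2 : (\oo_S^n)^\vee \oplus \sym^2 \mathcal{N}'_1 \twoheadrightarrow \mathcal{N}_1$ is surjective by another application of \cref{Kazarian stability} to the lift. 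Composing with $\mathcal{N}_1 \twoheadrightarrow \mathcal{N}'_1$ recovers $\sigma'$, and the snake lemma then produces the desired surjection $q : \mathcal{K}' \twoheadrightarrow \mathcal{N}_r$ onto an invertible sheaf.

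For the inverse construction, given $q : \mathcal{K}' \twoheadrightarrow \mathcal{L}$, form the pushout of $\mathcal{K}' \hookrightarrow (\oo_S^n)^\vee \oplus \sym^2 \mathcal{N}'_1$ along $q$ to obtain $\mathcal{N}_1$ together with a surjection $\widetilde{\sigma} : (\oo_S^n)^\vee \oplus \sym^2 \mathcal{N}'_1 \twoheadrightarrow \mathcal{N}_1$ and a short exact sequence $0 \to \mathcal{L} \to \mathcal{N}_1 \to \mathcal{N}'_1 \to 0$. Set $\mathcal{N}_r := \mathcal{L}$, define $\mathcal{N}_k$ for $1 < k < r$ as the preimage of $\mathcal{N}'_k$, set $\Psi_1 := \widetilde{\sigma}|_{(\oo_S^n)^\vee}$, and define $\Psi_2$ as the composite $\sym^2 \mathcal{N}_1 \twoheadrightarrow \sym^2 \mathcal{N}'_1 \xrightarrow{\widetilde{\sigma}} \mathcal{N}_1$. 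One then verifies the three requirements for a point of $\operatorname{naNHilb}_0^{r+1}(\A^n)$: (i) invertibility of the successive quotients $\mathcal{N}_k/\mathcal{N}_{k+1}$, which reduces to the analogous statement on the base together with invertibility of $\mathcal{L}$; (ii) the ideal containment $\Psi_2(\mathcal{N}_1 \otimes \mathcal{N}_k) \subset \mathcal{N}_{k+1}$, built in for $k = r$ and obtained for $k < r$ by reducing modulo $\mathcal{N}_r$ to the corresponding condition on the base; and (iii) stability, which is immediate from surjectivity of $\widetilde{\sigma}$ and \cref{Kazarian stability}.

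The main obstacle will be verifying that these two constructions are mutually inverse and sufficiently functorial in $S$ to glue into an isomorphism of schemes over $\operatorname{naNHilb}_0^{r}(\A^n)$. This reduces to a diagram chase leveraging the universal property of the pushout together with the observation that once $\mathcal{N}_r$ and the surjection $\widetilde{\sigma}$ are fixed the entire lift is canonically reconstructed. Combined with the universal data on $\operatorname{naNHilb}_0^{r}(\A^n)$ from the preceding corollary, this natural bijection on $S$-points promotes to the claimed isomorphism $\operatorname{naNHilb}_0^{r+1}(\A^n) \cong \mathbb{P}_{\operatorname{naNHilb}_0^{r}(\A^n)}\bigl((\ker(\Psi_1 \oplus \Psi_2))^\vee\bigr)$ over the base.
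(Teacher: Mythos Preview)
Your proposal is correct and follows essentially the same route as the paper: both directions of the bijection are obtained exactly as you describe (snake lemma from the punctual condition $\Psi_2(\mathcal{N}_1\otimes\mathcal{N}_r)=0$ to extract $q$, pushout along $q$ to reconstruct the lift, flag pieces as preimages, $\Psi_2$ defined via the composite through $\sym^2\mathcal{N}'_1$), and the paper's verification that the constructions are mutually inverse is precisely the diagram chase you anticipate, comparing kernels via another application of the snake lemma to confirm the relevant square is a pushout.
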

\begin{proof}
We should prove that given $S\in \operatorname{Sch}_{\C}$ and a map $S\to \operatorname{naNHilb}^{r}(\A^n)$ classified by $[\mathcal{F}_\bullet, \psi_2,\psi_1]$ the set of lifts
\begin{center}
    \begin{tikzcd}
        & \operatorname{naNHilb}_0^{r+1}(\A^n) \arrow{d}\\
        S \arrow[dashed]{ru} \arrow{r} & \operatorname{naNHilb}_0^{r}(\A^n)
    \end{tikzcd}
\end{center}
is in bijection with the set of $(\mathcal{L},q)$ where $\mathcal{L}$ is an invertible sheaf on $S$ and 
\[q: \operatorname{ker}\psi_1\oplus \psi_2\to \mathcal{L} \]
is a surjective map. We recall that a lift is classified by $[\widetilde{\mathcal{F}}_\bullet, \widetilde{\psi}_2,\widetilde{\psi}_1]$ such that 
\[\widetilde{\mathcal{F}}_k/\widetilde{\mathcal{F}}_{r}\simeq \mathcal{F}_k\]
with the maps induced by $\widetilde{\psi}_2,  \widetilde{\psi}_1$ agreeing with $\psi_1,\psi_2$. If we have a lift then we consider the commutative diagram
\begin{center}
    \begin{tikzcd}
        & \widetilde{\mathcal{F}}_1\otimes_S \widetilde{\mathcal{F}}_{r} \arrow{r} \arrow{d}{\widetilde{\psi}_2} & (\oo_S^n)^\vee \oplus \sym^2\widetilde{\mathcal{F}}_1 \arrow{r} \arrow{d}{\widetilde{\psi}_1\oplus \widetilde{\psi}_2}& (\oo^n_S)^\vee \oplus \sym^2\mathcal{F}_1 \arrow{d}{\psi_1\oplus \psi_2} \arrow{r} & 0 \\
        0 \arrow{r} & \widetilde{\mathcal{F}}_r \arrow{r} & \widetilde{\mathcal{F}}_1 \arrow{r} & \mathcal{F}_1 \arrow{r} & 0
    \end{tikzcd}
\end{center}
with exact rows. The middle and right vertical maps are surjective by assumption and since $\widetilde{\psi}_2(\widetilde{\mathcal{F}}_1\otimes_S \widetilde{\mathcal{F}}_{r})\subset \widetilde{\mathcal{F}}_{r+1}=0$ we conclude that the left vertical map is zero. If we set $\mathcal{L}:=\widetilde{\mathcal{F}}_{r}$ the snake lemma the provides the desired map 
\[q:\operatorname{ker}(\psi_1\oplus \psi_2)\to \widetilde{\mathcal{F}}_r.\]
We note that we in fact have a map 
\[(\oo^n_S)^\vee \oplus \sym^2\mathcal{F}_1 \to \widetilde{\mathcal{F}_1}\]
which factors as $q$ when restricted to $\operatorname{ker}(\psi_1\oplus \psi_2)$.
Conversely, if we are given $(\mathcal{L},q)$ we let $\widetilde{\mathcal{F}}_1$ be the pushout of $q$ along the inclusion of the kernel into $(\oo^n_S)^\vee \oplus \sym^2\mathcal{F}_1$. The left and outer square of the diagram
\begin{center}
    \begin{tikzcd}
        \operatorname{ker}(\psi_1\oplus \psi_2) \arrow[hook]{d} \arrow[two heads]{r}{q} & \mathcal{L} \arrow[hook]{d} \arrow{r} & 0 \arrow{d} \\
        (\oo^n_S)^\vee \oplus \sym^2\mathcal{F}_1 \arrow[two heads]{r} & \widetilde{\mathcal{F}}_1 \arrow[two heads]{r} & \mathcal{F}_1
    \end{tikzcd}
\end{center}
are pushouts hence so is the right square by the pasting lemma. This proves that $\widetilde{\mathcal{F}}_1$ is an extension of $\mathcal{F}_1$ by $\mathcal{L}$. We define $\widetilde{\mathcal{F}}_k\subset \widetilde{\mathcal{F}}_1$ to be the preimage of $\mathcal{F}_k\subset \mathcal{F}_1$ for $k=1,\dots r-1$ and $\widetilde{\mathcal{F}}_{r}=\mathcal{L}$. We define $\widetilde{\psi}_1\oplus \widetilde{\psi}_2$ as the composite
\[ (\oo^n_S)^\vee \oplus \sym^2\widetilde{\mathcal{F}}_1\twoheadrightarrow (\oo^n_S)^\vee \oplus \sym^2\mathcal{F}_1 \twoheadrightarrow\widetilde{\mathcal{F}}_1 \]
where the second map is from the pushout and the first map is the identity on the first factor and the quotient on the second. Since $\widetilde{\mathcal{F}}_1\otimes_S\widetilde{\mathcal{F}}_r$ is mapped to $0$ by the quotient map it is mapped to to $0=\widetilde{\mathcal{F}}_{r+1}$ by $\psi_2$. Since the composite
\[\sym^2 \mathcal{F}_1\to \widetilde{\mathcal{F}}_1 \to \mathcal{F}_1\]
agrees with $\psi_2$ we also see from the definition of $\widetilde{\mathcal{F}}_k$ that $\widetilde{\psi}_2(\widetilde{\mathcal{F}}_1\otimes_S\widetilde{\mathcal{F}}_k)\subset \widetilde{\mathcal{F}}_{k+1}$ thus proving that we have a well-defined $S$-valued point on $\operatorname{naNHilb}_0^{r+1}(\A^n) $. We prove these constructions are inverse to each other. Indeed, in the above we see that $\widetilde{\mathcal{F}}_r=\mathcal{L}$ and that $q$ agrees with the restriction of $(\oo^n_S)^\vee \oplus \sym^2\mathcal{F}_1 \to \widetilde{\mathcal{F}}_1$ to $\operatorname{ker}(\psi_1\oplus \psi_2)$ so that we regain $(\mathcal{L},q)$ when applying both constructions. Conversely, if we start with $[\widetilde{\mathcal{F}}_\bullet, \widetilde{\psi}_1,\widetilde{\psi}_2]$ we have a commutative diagram
\begin{center}
    \begin{tikzcd}
        \operatorname{ker}(\psi_1\oplus\psi_2) \arrow[two heads]{r} \arrow[hook]{d} & \widetilde{\mathcal{F}}_r \arrow[hook]{d} \\
        (\oo^n_S)^\vee \oplus \sym^2\mathcal{F}_1  \arrow[two heads]{r} & \widetilde{\mathcal{F}}_1
    \end{tikzcd}
\end{center}
which we need to prove is a pushout. Indeed, let \[\mathcal{K}=\operatorname{ker}\left( (\oo^n_S)^\vee \oplus \sym^2\mathcal{F}_1\to \widetilde{\mathcal{F}}_1\right)\]
and 
\[\mathcal{K}'=\operatorname{ker}\left(\operatorname{ker}(\psi_1\oplus \psi_2)\to \widetilde{\mathcal{F}}_r\right).\]
We have a commutative diagram 
\begin{center}
    \begin{tikzcd}
        0\arrow{r} & \operatorname{ker}(\psi_1\oplus \psi_2) \arrow{r} \arrow{d} &  (\oo^n_S)^\vee \oplus \sym^2\mathcal{F}_1 \arrow{d} \arrow{r} & \mathcal{F}_1 \arrow{r} \arrow[equal]{d} & 0 \\
        0\arrow{r} & \widetilde{\mathcal{F}}_r \arrow{r} & \widetilde{\mathcal{F}}_1  \arrow{r} & \mathcal{F}_1 \arrow{r} & 0
    \end{tikzcd}
\end{center}
with exact rows so that $\mathcal{K}'\to \mathcal{K}$ is an isomorphism by the snake lemma. Given any $\mathcal{G}$ and a commuting diagram 
\begin{center}
    \begin{tikzcd}
        \operatorname{ker}(\psi_1\oplus\psi_2) \arrow[two heads]{r} \arrow[hook]{d} & \widetilde{\mathcal{F}}_r \arrow{d} \\
        (\oo^n_S)^\vee \oplus \sym^2\mathcal{F}_1  \arrow{r} & \mathcal{G}
    \end{tikzcd}
\end{center}
it then follows that $\mathcal{K}'\to \mathcal{K}\to \mathcal{G}$ is zero and hence $\mathcal{K}\to \mathcal{G}$ is zero proving that the lower horizontal map factors through a unique map $\widetilde{\mathcal{F}}_1\to \mathcal{G}$. This proves that the diagram is a pushout thus finishing the proof.
\end{proof}
\begin{exmp}
Let us calculate this tower explicitly for small $r$. We first note that for $r=0$ we have $\operatorname{naNHilb}^{1}_0(\A^n)\cong \spec \C$ with universal bundle $\mathcal{N}_1=0$ and $\Psi_1\oplus \Psi_2: (\C^n)^\vee \oplus 0\to 0$. This has kernel $(\C^n)^\vee$ so that
\[\operatorname{naNHilb}^{2}_0(\A^n)\cong \mathbb{P}(\C^n)\cong \mathbb{P}^{n-1}.\]
The above proof gives the recipe for building the universal bundle. We simply define 
\[0=\mathcal{N}_2\subset \mathcal{N}_1=\oo(1)\]
with multiplication map
\[\Psi_2:\sym^2 \mathcal{N}_1\to0= \mathcal{N}_2\]
and 
\[\Psi_1: (\oo_{\mathbb{P}^{n-1}}^n)^\vee\to \oo(1)\]
dual to the universal sub-linebundle
\[\oo(-1)\to \oo_{\mathbb{P}^{n-1}}^n.\]
Continuing this procedure we get
\[\operatorname{naNHilb}_0^3(\A^n)\cong \mathbb{P}_{\mathbb{P}^{n-1}}(\operatorname{ker}(\Psi_1\oplus \Psi_2)^\vee)\cong  \mathbb{P}_{\mathbb{P}^{n-1}}\left(\oo_{\mathbb{P}^{n-1}}^n/\oo_{\mathbb{P}^{n-1}}(-1)\oplus \sym^2 \oo_{\mathbb{P}^{n-1}}(-1)\right).\]
The universal bundle is then constructed as follows. Denote by $\oo_{\mathbb{P}^{n-1}}(-1)$ and $\mathcal{N}_1$ the pullback of the bundles to $\operatorname{naNHilb}_0^3(\A^n)$ and let $\oo(-1)$ be the universal line bundle on $\operatorname{naNHilb}_0^3(\A^n)$. We define the new universal bundle $\widetilde{\mathcal{N}}_1$ as the pushout
\begin{center}
    \begin{tikzcd}
        (\oo^n/\oo_{\mathbb{P}^{n-1}}(-1))^\vee \oplus \sym^2(\mathcal{N}_1) \arrow{d} \arrow{r} & \oo(1) \arrow{d} \\
        (\oo^n)^{\vee} \oplus \sym^2 \mathcal{N}_1 \arrow{r} & \widetilde{\mathcal{N}}_1
    \end{tikzcd}
\end{center}
where the top horizontal map is the dual of the inclusion of the universal line bundle. We set $\widetilde{\mathcal{N}_2}=\oo(1)$, so that $\widetilde{\mathcal{N}}_1/\widetilde{\mathcal{N}}_2\simeq \mathcal{N}_1$. The universal maps are given by 
\[ (\oo^n)^{\vee} \oplus \sym^2 \widetilde{\mathcal{N}}_1\to  (\oo^n)^{\vee} \oplus \sym^2 \mathcal{N}_1 \to \widetilde{\mathcal{N}}_1\]
where the first map is the identity and quotient and the second map is the structural map from the pushout.
\end{exmp}
\begin{corollary}
    The scheme $\operatorname{naNHilb}^{r+1}_0(\A^n)$ is smooth and projective.
\end{corollary}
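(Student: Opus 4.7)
The plan is to argue by induction on $r$, leveraging the projective tower structure established in Proposition~\ref{projective tower structure}. For the base case $r=0$, the dimension vector reduces to $(1)$, and the worked example immediately following that proposition verifies $\operatorname{naNHilb}^1_0(\A^n) \cong \spec\C$, which is trivially smooth and projective.

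For the inductive step, assume $\operatorname{naNHilb}^r_0(\A^n)$ is smooth and projective. Proposition~\ref{projective tower structure} identifies
\[
\operatorname{naNHilb}^{r+1}_0(\A^n) \;\cong\; \mathbb{P}_{\operatorname{naNHilb}^r_0(\A^n)}\!\left((\ker(\Psi_1\oplus\Psi_2))^\vee\right).
\]
To conclude that this is smooth and projective over a smooth projective base, it suffices to check that $\ker(\Psi_1\oplus\Psi_2)$ is a locally free sheaf of finite constant rank on $\operatorname{naNHilb}^r_0(\A^n)$. The source $(\oo^n)^\vee \oplus \sym^2 \mathcal{N}_1$ is manifestly locally free, and the target $\mathcal{N}_1$ is locally free of constant rank (being an iterated extension of invertible sheaves, as built in the inductive construction of Proposition~\ref{projective tower structure}). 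By the stability condition recorded in the corollary following Lemma~\ref{Kazarian stability}, the map $\Psi_1\oplus\Psi_2$ is surjective onto $\mathcal{N}_1$. Hence the kernel is a vector bundle of constant rank, its projectivization is a smooth projective morphism, and composing with the smooth projective base shows that $\operatorname{naNHilb}^{r+1}_0(\A^n)$ is itself smooth and projective.

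The argument presents no real obstacle; the only mildly delicate point is verifying that $\ker(\Psi_1\oplus\Psi_2)$ is a genuine vector bundle (rather than merely coherent), which, as noted, follows immediately from the surjectivity of $\Psi_1\oplus\Psi_2$ onto a locally free sheaf combined with the local freeness of the source. Everything else is a formal consequence of the tower description.
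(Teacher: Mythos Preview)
Your proof is correct and follows the same inductive approach as the paper, which simply states that the result follows from the projective tower description together with the base case $\operatorname{naNHilb}^{1}_0(\A^n)\cong\spec\C$. You have spelled out the one point the paper leaves implicit, namely that $\ker(\Psi_1\oplus\Psi_2)$ is locally free because it is the kernel of a surjection between locally free sheaves.
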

\begin{proof}
    This follows directly from the projective tower description since \[\operatorname{naNHilb}^{1}_0(\A^n)\cong \spec \C.\]
\end{proof}

\subsection{Other dimension vectors and relations to the work of Kazarian}
We prove a slight generalization of the above where we allow different dimension vectors. We construct the moduli functor of non-unital non-associative algebras with a flag structure satisfying a nilpotency condition. We prove that it embeds into the non-associative Hilbert scheme by freely adjoining a unit. We prove that its intersection with the associativity locus is contained in the punctual Hilbert scheme but will in general not be the whole punctual Hilbert scheme. A proof completely similar to the section above shows that the locus admits a description as a tower of Grassmannian fibrations. At last, we show that it contains a closed locus of algebras satisfying a stronger nilpotency condition which agrees with the non-associative Hilbert schemes appearing in the work of Kazarian \cite{KazarianNAHilb}. 
\begin{defi} \label{non-unital moduli functor}
    Let $\underline{d}=(1,d_1,\cdots, d_r)\in \N^{r+1}$ be a dimension vector beginning with $1$. We define a moduli functor $\operatorname{naNHilb}^{\underline{d}}_0(\A^n)$ by
\[S\in \operatorname{Sch}_{\C}\mapsto \{(\mathcal{F}_\bullet, \psi_1,\psi_2)\}\big{/}\text{iso}.\]
The tuples of the right-hand set consists of the following data.
\begin{enumerate}
    \item  A flag of locally free sheaves 
\[\mathcal{F}_\bullet=(\mathcal{F}_1\supset \cdots \supset \mathcal{F}_{r+1}=0)\]
with $\mathcal{F}_k/\mathcal{F}_{k+1}$ locally free of rank $d_k$
    \item A non-unital non-associative algebra structure $\psi_2$ on $\mathcal{F}_1$ such that $\psi_2(\mathcal{F}_1\otimes_S \mathcal{F}_k)\subset \mathcal{F}_{k+1}$
    \item A map of sheaves $\psi_1:(\oo_S^n)^\vee \to \mathcal{F}_1$ such that $\psi_1\oplus \psi_2$ is surjective. 
\end{enumerate}
\end{defi} 
\begin{lemma}
    Let $\underline{d}=(1,d_1,\cdots, d_n)\in \N^{r+1}$ and let $(\mathcal{V}_\bullet,\Psi_2,s,\Psi_1)$ be the universal structure on $\operatorname{naNHilb}^{\underline{d}}(\A^n)$. There exists a closed immersion 
    \[\operatorname{naNHilb}^{\underline{d}}_0(\A^n)\to \operatorname{naNHilb}^{\underline{d}}(\A^n)\]
    identifying the domain with the locus where $\Psi_1$ factors through $\mathcal{V}_1$ and where $\Psi_2(\mathcal{V}_1\otimes \mathcal{V}_k)\subset \mathcal{V}_{k+1}$.
\end{lemma}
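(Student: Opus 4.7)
The plan is to exhibit the locus $Z \subset \operatorname{naNHilb}^{\underline{d}}(\A^n)$ defined by the two conditions as a closed subscheme, and to construct a natural isomorphism of functors $\operatorname{naNHilb}^{\underline{d}}_0(\A^n) \simeq Z$ obtained by freely adjoining a unit. The subscheme $Z$ is closed because both conditions amount to the vanishing of a section of a vector bundle: ``$\Psi_1$ factors through $\mathcal{V}_1$'' is the vanishing of the composite $(\oo^n)^\vee \xrightarrow{\Psi_1} \mathcal{V}_0 \twoheadrightarrow \mathcal{V}_0/\mathcal{V}_1$, and ``$\Psi_2(\mathcal{V}_1 \otimes \mathcal{V}_k) \subset \mathcal{V}_{k+1}$'' is the vanishing of the induced composite $\mathcal{V}_1 \otimes \mathcal{V}_k \hookrightarrow \mathcal{V}_0 \otimes \mathcal{V}_0 \xrightarrow{\Psi_2} \mathcal{V}_0 \twoheadrightarrow \mathcal{V}_0/\mathcal{V}_{k+1}$ for $k=0,\dots,r$.

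Next I define the forward natural transformation on $S$-valued points. Given $(\mathcal{F}_\bullet,\psi_1,\psi_2) \in \operatorname{naNHilb}^{\underline{d}}_0(\A^n)(S)$, I set
\[
\widetilde{\mathcal{F}}_0 := \oo_S \oplus \mathcal{F}_1, \qquad \widetilde{\mathcal{F}}_k := \mathcal{F}_k \text{ for } k \geq 1,
\]
with unit $\widetilde{s}:\oo_S\hookrightarrow\widetilde{\mathcal{F}}_0$ the first inclusion and $\widetilde{\psi}_1$ the composite of $\psi_1$ with the inclusion $\mathcal{F}_1\hookrightarrow \widetilde{\mathcal{F}}_0$. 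Using the canonical splitting $\sym^2 \widetilde{\mathcal{F}}_0 \cong \oo_S \oplus \mathcal{F}_1 \oplus \sym^2 \mathcal{F}_1$, I extend $\psi_2$ to a map $\widetilde{\psi}_2$ that acts as the identity on the first two summands and as $\psi_2$ on the third. The unit axiom and flag compatibility are immediate, and surjectivity of the induced algebra map $\oo_S\{x_1,\dots,x_n\}_c \twoheadrightarrow \widetilde{\mathcal{F}}_0$ is exactly the content of \cref{Kazarian stability} applied to this data, using surjectivity of $\psi_1 \oplus \psi_2$. By construction the resulting $S$-point lies in $Z$.

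For the inverse direction, I start from $[\mathcal{V}_\bullet, \Psi_2, s, \Psi_1] \in Z(S)$ and must show the main technical point: the unit $s$ descends to an isomorphism $\oo_S \xrightarrow{\sim} \mathcal{V}_0/\mathcal{V}_1$. Since $d_0 = 1$ makes $\mathcal{V}_0/\mathcal{V}_1$ a line bundle, this is a fiberwise statement. The $k=0$ condition gives $\Psi_2(\mathcal{V}_1 \otimes \mathcal{V}_0) \subset \mathcal{V}_1$, so if $s$ landed in $\mathcal{V}_1$ at some point, the unit axiom $\Psi_2(s \otimes x) = x$ would force $\mathcal{V}_0$ and $\mathcal{V}_1$ to coincide in that fiber, contradicting $d_0 = 1$. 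This gives a canonical splitting $\mathcal{V}_0 \cong \oo_S \oplus \mathcal{V}_1$, into which I set $\mathcal{F}_k := \mathcal{V}_k$ for $k \geq 1$, take $\psi_2 := \Psi_2|_{\sym^2 \mathcal{V}_1}$ (which lands in $\mathcal{V}_1$ by the case $k=1$ of the second condition), and let $\psi_1$ be the factorization of $\Psi_1$ through $\mathcal{V}_1$. Surjectivity of $\psi_1 \oplus \psi_2$ again follows from \cref{Kazarian stability}. The two constructions are mutually inverse by direct inspection, so the natural map is an isomorphism of functors onto the closed subfunctor $Z$, hence a closed immersion identifying the domain with the claimed locus. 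The only real obstacle is the splitting step, which reduces to the elementary observation that a unit cannot sit inside an ideal of positive codimension.
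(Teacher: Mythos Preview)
Your proof is correct and follows essentially the same approach as the paper: both construct the forward map by freely adjoining a unit, invoke \cref{Kazarian stability} for the stability/surjectivity condition, and obtain the inverse by forgetting the unit and the $0$th piece of the filtration. The paper's proof is much terser (it simply says ``one checks that these constructions are inverse to each other''), whereas you spell out the key splitting step $\mathcal{V}_0 \cong \oo_S \oplus \mathcal{V}_1$; the paper effectively relies on the analogous argument already given in the full-flag case. One small remark: what you call ``the $k=0$ condition'' ($\Psi_2(\mathcal{V}_1\otimes\mathcal{V}_0)\subset\mathcal{V}_1$) is not part of the defining conditions of $Z$ but rather follows from the ambient $\operatorname{naNHilb}$ structure (each $\mathcal{V}_k$ is an ideal), so your argument that $s\notin\mathcal{V}_1$ is valid but the justification should cite the ideal property rather than a condition of $Z$.
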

\begin{proof}
    On the functor of points, we simply map $(\mathcal{F}_\bullet, \psi_2,\psi_1)$ to $(\mathcal{G}_\bullet, \psi_2,s,\psi_1)$ where $\mathcal{G}_0=\oo_S\oplus \mathcal{F}_0$ and $\mathcal{G}_k=\mathcal{F}_k$ for $k=1,\dots, r$ with induced algebra structure from adjoining a unit. From \cref{Kazarian stability} we see that this is indeed a point on $\operatorname{naNHilb}^{\underline{d}}(\A^n)$. Conversely, if we have an $S$-valued point on the described locus we can simply forget the unit and the 0'th piece of the filtration to obtain an $S$-valued point on $\operatorname{naNHilb}^{\underline{d}}_0(\A^n)$. One checks that these constructions are inverse to each other. 
\end{proof}
\begin{remark}
    From the moduli description we see that the scheme $\operatorname{naNHilb}^{\underline{d}}_0$ comes equipped with a universal non-unital non-associative algebra. We denote this universal data by $(\mathcal{N}_\bullet, \psi_1,\psi_2)$.
\end{remark}
\begin{lemma}
    Let $\underline{d}=(1,d_1.,\dots, d_r)\in \N^{r+1}$. The natural map 
    \[\operatorname{naNHilb}^{(1,d_1,\dots,d_r)}(\A^n)\to \operatorname{naNHilb}^{(1,d_1,\dots, d_{r-1})}(\A^n)\]
    restricts to a map 
    \[\operatorname{naNHilb}_0^{(1,d_1,\dots,d_r)}(\A^n)\to \operatorname{naNHilb}_0^{(1,d_1,\dots, d_{r-1})}(\A^n).\]
\end{lemma}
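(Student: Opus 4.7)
The plan is to use the closed immersion characterisation of $\operatorname{naNHilb}_0^{\underline{d}}(\A^n)$ inside $\operatorname{naNHilb}^{\underline{d}}(\A^n)$ established in the preceding lemma: a point lies in the punctual locus precisely when the two conditions
\begin{enumerate}
    \item[(a)] $\Psi_1$ factors through $\mathcal{V}_1 \subset \mathcal{V}_0$, and
    \item[(b)] $\Psi_2(\mathcal{V}_1 \otimes \mathcal{V}_k) \subset \mathcal{V}_{k+1}$ for all $k=1,\dots,r$
\end{enumerate}
hold for the universal data. Since both sides of the forgetful map already carry this closed subfunctor description, it suffices to verify that (a) and (b) are preserved under the map on universal structures; the restricted morphism is then induced automatically.

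First I would spell out the forgetful map at the level of the functor of points. Given an $S$-valued point $[\mathcal{V}_\bullet,\Psi_2,s,\Psi_1]$ of $\operatorname{naNHilb}^{(1,d_1,\dots,d_r)}(\A^n)$, its image is $[\widetilde{\mathcal{V}}_\bullet,\widetilde{\Psi}_2,\widetilde{s},\widetilde{\Psi}_1]$ with $\widetilde{\mathcal{V}}_k := \mathcal{V}_k/\mathcal{V}_r$ for $k=0,\dots,r-1$, and all maps obtained by quotienting; the non-associative algebra structure descends because $\mathcal{V}_r$ is a two-sided ideal of $\mathcal{V}_0$ by the condition $\Psi_2(\mathcal{V}_0\otimes\mathcal{V}_r)\subset\mathcal{V}_r$ built into the definition of $\operatorname{naNHilb}^{\underline{d}}(\A^n)$.

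Next I would verify conditions (a) and (b) for $[\widetilde{\mathcal{V}}_\bullet,\widetilde{\Psi}_2,\widetilde{s},\widetilde{\Psi}_1]$. For (a): the composite $(\oo_S^n)^\vee \xrightarrow{\Psi_1} \mathcal{V}_1 \hookrightarrow \mathcal{V}_0 \twoheadrightarrow \widetilde{\mathcal{V}}_0$ factors through the image of $\mathcal{V}_1$ in $\widetilde{\mathcal{V}}_0$, namely $\widetilde{\mathcal{V}}_1 = \mathcal{V}_1/\mathcal{V}_r$. For (b): given $k \in \{1,\dots,r-1\}$, the containment $\Psi_2(\mathcal{V}_1\otimes\mathcal{V}_k)\subset\mathcal{V}_{k+1}$ on the source passes through the quotient $\mathcal{V}_\bullet \twoheadrightarrow \widetilde{\mathcal{V}}_\bullet$ to yield $\widetilde{\Psi}_2(\widetilde{\mathcal{V}}_1\otimes\widetilde{\mathcal{V}}_k)\subset\widetilde{\mathcal{V}}_{k+1}$ (the source condition at $k=r$ is discarded as $\widetilde{\mathcal{V}}_{r}=0$). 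Both conditions are therefore preserved.

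Since the punctual locus on the target was characterised by precisely (a) and (b), it follows formally (by the universal property of the closed immersion $\operatorname{naNHilb}_0^{(1,d_1,\dots,d_{r-1})}(\A^n) \hookrightarrow \operatorname{naNHilb}^{(1,d_1,\dots,d_{r-1})}(\A^n)$) that the composite $\operatorname{naNHilb}_0^{(1,d_1,\dots,d_r)}(\A^n) \hookrightarrow \operatorname{naNHilb}^{(1,d_1,\dots,d_r)}(\A^n) \to \operatorname{naNHilb}^{(1,d_1,\dots,d_{r-1})}(\A^n)$ factors uniquely through $\operatorname{naNHilb}_0^{(1,d_1,\dots,d_{r-1})}(\A^n)$, giving the desired restricted map. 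There is no real obstacle here; the only point worth being careful about is that the forgetful map is well-defined on the non-unital model (obtained by forgetting both the unit $s$ and $\mathcal{V}_0$), which amounts to checking that truncating $(\mathcal{F}_1\supset\cdots\supset\mathcal{F}_{r+1}=0)$ to $(\mathcal{F}_1/\mathcal{F}_r\supset\cdots\supset\mathcal{F}_{r-1}/\mathcal{F}_r\supset 0)$ together with the induced $(\widetilde{\psi}_1,\widetilde{\psi}_2)$ still satisfies the surjectivity condition of \cref{non-unital moduli functor}.(3), which is immediate since surjectivity is preserved under quotients.
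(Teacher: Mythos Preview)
Your proposal is correct and follows essentially the same approach as the paper: verify that condition (a) on $\Psi_1$ and condition (b) on $\Psi_2$ pass to the quotient by $\mathcal{V}_r$, which is exactly what the paper does in two sentences. Your additional remark about surjectivity in the non-unital model is not needed for the argument as stated (the closed-locus characterisation via (a) and (b) already suffices), but it is harmless and correct.
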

\begin{proof}
    If $\Psi_1$ factors through $\mathcal{V}_1$ then the induced map factors through $\mathcal{V}_1/\mathcal{V}_r$. If $\Psi_2(\mathcal{V}_1\otimes \mathcal{V}_k)\subset \mathcal{V}_{k+1}$ then the induced map takes $(\mathcal{V}_1/\mathcal{V}_r)\otimes (\mathcal{V}_k/\mathcal{V}_r)$ to $\mathcal{V}_{k+1}/\mathcal{V}_r$.
\end{proof}
\begin{prop}
    Let $(\mathcal{N}_\bullet, \Psi_2,\Psi_1)$ be the universal structure on $\operatorname{naNHilb}_0^{(1,d_1,\dots, d_{r-1})}(\A^n)$. The natural map
    \[\operatorname{naNHilb}_0^{(1,d_1,\dots,d_r)}(\A^n)\to \operatorname{naNHilb}_0^{(1,d_1,\dots, d_{r-1})}(\A^n)\]
    identifies with the structure map 
    \[\operatorname{Gr}_{\operatorname{\operatorname{naNHilb}_0^{(1,d_1,\dots, d_{r-1})}(\A^n)}}(d_r,\operatorname{ker}(\Psi_1\oplus \Psi_2)^\vee)\to \operatorname{naNHilb}_0^{(1,d_1,\dots, d_{r-1})}(\A^n).\]
\end{prop}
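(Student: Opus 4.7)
The plan is to adapt the argument of Proposition~\ref{projective tower structure} essentially verbatim, with the only change that the newly added bottom piece $\widetilde{\mathcal{F}}_r$ now has rank $d_r$ rather than rank one. Throughout I would work at the level of the moduli functor described in Definition~\ref{non-unital moduli functor}. Given $S \in \operatorname{Sch}_\C$ and a map $S \to \operatorname{naNHilb}_0^{(1,d_1,\dots,d_{r-1})}(\A^n)$ classified by $[\mathcal{F}_\bullet,\psi_2,\psi_1]$, a lift to $\operatorname{naNHilb}_0^{(1,d_1,\dots,d_r)}(\A^n)$ is a tuple $[\widetilde{\mathcal{F}}_\bullet,\widetilde{\psi}_2,\widetilde{\psi}_1]$ with $\widetilde{\mathcal{F}}_k/\widetilde{\mathcal{F}}_r \simeq \mathcal{F}_k$ compatibly with the multiplication and the map from $(\oo_S^n)^\vee$. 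My goal is to exhibit a natural bijection between such lifts and pairs $(\mathcal{L},q)$, where $\mathcal{L}$ is a locally free sheaf of rank $d_r$ on $S$ and $q\colon \ker(\psi_1\oplus\psi_2) \twoheadrightarrow \mathcal{L}$ is a surjection; this is precisely the datum of a rank-$d_r$ subbundle of $\ker(\psi_1\oplus\psi_2)^\vee$.

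For the forward direction, given a lift I would apply the snake lemma to the diagram
\begin{center}
\begin{tikzcd}
& \widetilde{\mathcal{F}}_1\otimes_S \widetilde{\mathcal{F}}_r \arrow{r} \arrow{d}{\widetilde{\psi}_2} & (\oo_S^n)^\vee\oplus \sym^2\widetilde{\mathcal{F}}_1 \arrow{r} \arrow{d}{\widetilde{\psi}_1\oplus\widetilde{\psi}_2} & (\oo_S^n)^\vee\oplus \sym^2\mathcal{F}_1 \arrow{r} \arrow{d}{\psi_1\oplus\psi_2} & 0 \\
0 \arrow{r} & \widetilde{\mathcal{F}}_r \arrow{r} & \widetilde{\mathcal{F}}_1 \arrow{r} & \mathcal{F}_1 \arrow{r} & 0
\end{tikzcd}
\end{center}
with exact rows. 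Setting $\mathcal{L}:=\widetilde{\mathcal{F}}_r$, the nilpotency condition $\widetilde{\psi}_2(\widetilde{\mathcal{F}}_1\otimes_S\widetilde{\mathcal{F}}_r)\subset \widetilde{\mathcal{F}}_{r+1}=0$ forces the leftmost vertical map to vanish, and the snake lemma then produces the desired surjection $q\colon \ker(\psi_1\oplus\psi_2)\twoheadrightarrow \mathcal{L}$.

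Conversely, given $(\mathcal{L},q)$, I would construct $\widetilde{\mathcal{F}}_1$ as the pushout of $q$ along the inclusion $\ker(\psi_1\oplus\psi_2)\hookrightarrow (\oo_S^n)^\vee\oplus \sym^2\mathcal{F}_1$, define $\widetilde{\mathcal{F}}_r:=\mathcal{L}$, and let $\widetilde{\mathcal{F}}_k$ for $1\le k<r$ be the preimage of $\mathcal{F}_k$ under the natural surjection $\widetilde{\mathcal{F}}_1\twoheadrightarrow\mathcal{F}_1$ induced by the pushout. The maps $\widetilde{\psi}_1\oplus\widetilde{\psi}_2$ are defined via the composition $(\oo_S^n)^\vee\oplus \sym^2\widetilde{\mathcal{F}}_1 \twoheadrightarrow (\oo_S^n)^\vee\oplus \sym^2\mathcal{F}_1 \twoheadrightarrow \widetilde{\mathcal{F}}_1$, exactly as in the $d_r=1$ case. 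The nilpotency check $\widetilde{\psi}_2(\widetilde{\mathcal{F}}_1\otimes_S\widetilde{\mathcal{F}}_k)\subset \widetilde{\mathcal{F}}_{k+1}$ and the verification that the two constructions are mutually inverse (via the universal property of the pushout combined with a further snake-lemma comparison) then go through word for word as in the proof of Proposition~\ref{projective tower structure}.

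The main obstacle I anticipate is the local freeness and rank bookkeeping. In the full-flag case the added piece $\mathcal{L}$ is automatically a line bundle, so one easily reads off that the extension has the right rank at each step; here one must instead verify that, for a locally free $\mathcal{L}$ of rank $d_r$ and any surjection $q$, the resulting pushout $\widetilde{\mathcal{F}}_1$ is locally free of rank equal to $\operatorname{rk}\mathcal{F}_1+d_r$ and that the induced filtration has successive quotients locally free of ranks $d_k$. This follows from the fact that the pushout fits into a short exact sequence $0\to\mathcal{L}\to\widetilde{\mathcal{F}}_1\to\mathcal{F}_1\to 0$, which splits Zariski-locally on $S$. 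Once this is in place, the equivalence ``surjection onto a rank-$d_r$ locally free quotient of $\ker(\psi_1\oplus\psi_2)$'' $\leftrightarrow$ ``rank-$d_r$ subbundle of $\ker(\psi_1\oplus\psi_2)^\vee$'' identifies the space of lifts with $\operatorname{Gr}_S(d_r,\ker(\Psi_1\oplus\Psi_2)^\vee)$, giving the stated identification.
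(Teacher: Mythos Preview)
Your proposal is correct and matches the paper's approach exactly: the paper's own proof simply states that the argument is the same as that of Proposition~\ref{projective tower structure}, with the rank-$1$ quotient replaced by a rank-$d_r$ locally free quotient. Your additional remarks on local freeness of the pushout are a harmless (and arguably welcome) elaboration of a point the paper leaves implicit.
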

\begin{proof}
We should prove that given $S\in \operatorname{Sch}_{\C}$ and a map $S\to \operatorname{naNHilb}^{(1,d_1,\dots, d_{r-1})}(\A^n)$ classified by $[\mathcal{F}_\bullet, \psi_2,\psi_1]$ the set of lifts
\begin{center}
    \begin{tikzcd}
        & \operatorname{naNHilb}_0^{(1,d_1,\dots, d_{r})}(\A^n) \arrow{d}\\
        S \arrow[dashed]{ru} \arrow{r} & \operatorname{naNHilb}_0^{(1,d_1,\dots,d_{r-1})}(\A^n)
    \end{tikzcd}
\end{center}
is in bijection with the set of $(\mathcal{G},q)$ where $\mathcal{G}$ is a locally free sheaf of rank $d_r$ on $S$ and 
\[q: \operatorname{ker}\psi_1\oplus \psi_2\to \mathcal{G} \]
is a surjective map. The proof is exactly the same as that of \cref{projective tower structure}.
\end{proof}
\begin{corollary}
    Let $\underline{d}=(1,d_1,\dots, d_r)$. The scheme $\operatorname{naNHilb}_0^{\underline{d}}(\A^n)$ is smooth and projective.
\end{corollary}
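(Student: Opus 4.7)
The plan is a straightforward induction on $r$, bootstrapping from the preceding proposition which exhibits the forgetful map
\[
\operatorname{naNHilb}_0^{(1,d_1,\dots,d_r)}(\A^n)\to \operatorname{naNHilb}_0^{(1,d_1,\dots,d_{r-1})}(\A^n)
\]
as the structure map of a Grassmannian bundle. For the base case $r=0$, the flag $\mathcal{F}_1\supset\mathcal{F}_2=0$ forces $\mathcal{F}_1=0$, so $\psi_2$ and $\psi_1$ are both zero and the stability condition is vacuous; hence $\operatorname{naNHilb}_0^{(1)}(\A^n)\cong \spec\C$, trivially smooth and projective. (This matches the computation in the full-flag example, where the tower is seen to start at $\spec\C$.)

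For the inductive step, I would assume that $X_{r-1}:=\operatorname{naNHilb}_0^{(1,d_1,\dots,d_{r-1})}(\A^n)$ is smooth and projective and verify that the Grassmannian bundle description produces a smooth projective total space. The key input is that $\operatorname{ker}(\Psi_1\oplus\Psi_2)$ is locally free on $X_{r-1}$: the source $(\oo^n)^\vee\oplus \sym^2\mathcal{N}_1$ and target $\mathcal{N}_1$ of $\Psi_1\oplus\Psi_2$ are locally free of constant rank, and surjectivity is built into condition~(3) of the definition of $\operatorname{naNHilb}_0^{\underline{d}}(\A^n)$, so the kernel is automatically locally free. Granting this,
\[
\operatorname{naNHilb}_0^{(1,d_1,\dots,d_r)}(\A^n)\;\cong\;\operatorname{Gr}_{X_{r-1}}\!\bigl(d_r,\operatorname{ker}(\Psi_1\oplus\Psi_2)^\vee\bigr)
\]
is a Grassmannian bundle over a smooth projective base, hence itself smooth and projective (if the generic fiber is empty because the rank of the kernel drops below $d_r$, the scheme is empty and the conclusion is vacuously true).

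There is essentially no hard step: everything reduces to the two standard facts that the kernel of a surjection of locally free sheaves is locally free, and that Grassmannian bundles preserve smoothness and projectivity. The only thing worth double-checking is that the stability surjectivity of $\Psi_1\oplus\Psi_2$ truly holds at the level of the \emph{universal} map on $X_{r-1}$ (not just pointwise), but this is immediate from the moduli description since the universal object is pulled back from an $S$-valued point with $S=X_{r-1}$ and the condition is imposed pointwise in $S$.
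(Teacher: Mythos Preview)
Your argument is correct and is exactly the approach the paper takes: induct on $r$ using the Grassmannian tower description, with base case $\operatorname{naNHilb}_0^{(1)}(\A^n)\cong\spec\C$. You have simply made explicit the verification that $\ker(\Psi_1\oplus\Psi_2)$ is locally free, which the paper leaves implicit.
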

\begin{proof}
    This follows inductively from the above since $\operatorname{naNHilb}^{1}_0(\A^n)\cong \spec \C$.
\end{proof}
\begin{lemma}
    Let $\underline{d}=(1,d_1,\dots, d_r)$. Let $S\in \operatorname{Sch}_{\C}$ and  let $S\to \operatorname{NHilb}^{\underline{d}}(\A^n)$ be a map of schemes. If 
    \[S\to \operatorname{NHilb}^{\underline{d}}(\A^n) \xrightarrow{\iota} \operatorname{naNHilb}^{\underline{d}}(\A^n)\]
    factors through $\operatorname{naNHilb}^{\underline{d}}_0(\A^n)$, then the first map factors through $\operatorname{NHilb}_0^{\underline{d}}(\A^n)$.
    \end{lemma}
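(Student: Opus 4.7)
The plan is to unpack the factorization through $\operatorname{naNHilb}_0^{\underline{d}}(\A^n)$ into concrete conditions on the flag of ideals $(I_\bullet)$ classified by the given map $S\to \operatorname{NHilb}^{\underline{d}}(\A^n)$, and then deduce that the associated nested family is supported at the origin. By the definition of $\iota$, the composition sends the $S$-point to $(\mathcal{F}_\bullet,\psi,s,\pi)$ with $\mathcal{F}_0=\oo_S[x_1,\dots,x_n]/I_{r+1}$, $\mathcal{F}_k=I_k/I_{r+1}$, and the universal maps $\Psi_1,\Psi_2$ identified with $\pi|_{(\oo_S^n)^\vee}$ and the associative commutative multiplication on $\mathcal{F}_0$.

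I would then invoke the characterization of $\operatorname{naNHilb}_0^{\underline{d}}(\A^n)$ established earlier in the section as the closed subscheme cut out by the conditions that $\Psi_1$ factors through $\mathcal{V}_1$ and that $\Psi_2(\mathcal{V}_1\otimes \mathcal{V}_k)\subset \mathcal{V}_{k+1}$ for all $k$. Pulled back along the given map, these translate into $x_i\in I_1$ for every $i$, i.e.\ $\mathfrak{m}:=(x_1,\dots,x_n)\subset I_1$, and $I_1\cdot I_k\subset I_{k+1}$ for all $k$. Since $d_0=1$, both $\oo_S[x_1,\dots,x_n]/I_1$ and $\oo_S[x_1,\dots,x_n]/\mathfrak{m}\cong \oo_S$ are invertible $\oo_S$-modules, so the surjection between them induced by $\mathfrak{m}\subset I_1$ must be an isomorphism, forcing $I_1=\mathfrak{m}$.

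Combining $I_1=\mathfrak{m}$ with $I_1\cdot I_k\subset I_{k+1}$ gives $\mathfrak{m}\cdot I_k\subset I_{k+1}$, and induction on $k$ starting from $I_1=\mathfrak{m}$ yields $\mathfrak{m}^{k+1}\subset I_{k+1}$ for every $k$, in particular $\mathfrak{m}^{r+1}\subset I_{r+1}$. Consequently $V(I_{r+1})\subset \A^n\times S$ is set-theoretically contained in the zero section, so every nested subscheme $\xi_i\subset V(I_{r+1})$ in the family is supported at the origin; this is the defining moduli condition for $\operatorname{NHilb}_0^{\underline{d}}(\A^n)$, so the given map factors through it. The argument is essentially bookkeeping and has no genuine obstacle beyond the careful translation between the non-associative moduli data and concrete ideals. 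In fact, the same computation shows slightly more, namely that the map factors through the smaller nilpotently filtered locus $\operatorname{NHilb}_{\operatorname{nil-fil}}^{\underline{d}}(\A^n)\subset \operatorname{NHilb}_0^{\underline{d}}(\A^n)$, which is consistent with the description of the intersection of the associativity and punctual loci given in the introduction.
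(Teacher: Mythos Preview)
Your proof is correct and follows essentially the same approach as the paper: translate the punctual conditions on $\operatorname{naNHilb}_0^{\underline{d}}(\A^n)$ into the concrete ideal-theoretic statements $\mathfrak{m}\subset I_1$ and $I_1\cdot I_k\subset I_{k+1}$, use $d_0=1$ to force $I_1=\mathfrak{m}$, and then conclude $\mathfrak{m}^k\subset I_k$ by induction. Your closing remark that the map in fact lands in the nilpotently filtered locus is also correct and matches the paper's subsequent discussion.
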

    \begin{proof}
        Let the map be classified by 
        \[\mathcal{I}_{r+1}\subset \mathcal{I}_r\subset \cdots \subset\mathcal{I}_1\subset\mathcal{I}_0= \oo_S[x_1,\dots, x_n].\]
        We set
        \[\mathcal{F}_k=\mathcal{I}_k/\mathcal{I}_{r+1}\]
        equipped with $\psi_2$ corresponding to the multiplication map of the algebra, $s$ its unit and $\psi_1$ the restriction of the quotient map to the linear functions. Since $s$ does not factor through $\mathcal{F}_1$ while $\psi_1$ does it follows that we have a map
        \[\oo_S[x_1,\dots, x_n]/(x_1,\dots, x_n) \to \mathcal{F}_0/\mathcal{F}_1\cong \oo_S[x_1,\dots, x_n]/\mathcal{I}_1\]
        which is an isomorphism by rank reasons. This means that 
        \[\mathcal{I}_1\cong \mathfrak{m}:=(x_1,\dots, x_n)\subset \oo_S[x_1,\dots, x_n]. \]
        Since
        \[\psi_2(\mathcal{F}_1,\mathcal{F}_k)\subset \mathcal{F}_{k+1}\]
        this means
        \[(\mathcal{I}_1/\mathcal{I}_{r+1})\cdot (\mathcal{I}_k/\mathcal{I}_{r+1})\subset (\mathcal{I}_{k+1}/\mathcal{I}_{r+1})\]
        i.e.
        \[\mathcal{I}_1\cdot \mathcal{I}_{k}\subset \mathcal{I}_{k+1}.\]
        We conclude by induction that
        \[\mathfrak{m}^{k}=\mathcal{I}_1^k\subset \mathcal{I}_k\subset \mathcal{I}_1=\mathfrak{m}\]
        which shows that $\mathcal{I}_k\subset \oo_S[x_1,\dots, x_n]$ is punctual.
    \end{proof}
    \begin{corollary}
         Let $\underline{d}=(1,d_1,\dots, d_r)$. We have an inclusion of schemes
        \[\operatorname{naNHilb}_0^{\underline{d}}(\A^n)\cap \operatorname{NHilb}^{\underline{d}}(\A^n)\subset \operatorname{NHilb}_0^{\underline{d}}(\A^n).\]
    \end{corollary}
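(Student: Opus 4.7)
The corollary is a formal consequence of the preceding lemma via the Yoneda lemma, so the plan is essentially to identify $S$-valued points of the intersection and apply that lemma universally.

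First I would fix the scheme-theoretic meaning of the intersection: $\operatorname{naNHilb}_0^{\underline{d}}(\A^n)\cap \operatorname{NHilb}^{\underline{d}}(\A^n)$ is the fiber product
\[
\operatorname{naNHilb}_0^{\underline{d}}(\A^n)\times_{\operatorname{naNHilb}^{\underline{d}}(\A^n)}\operatorname{NHilb}^{\underline{d}}(\A^n),
\]
taken inside $\operatorname{naNHilb}^{\underline{d}}(\A^n)$ via the closed immersions $\operatorname{naNHilb}_0^{\underline{d}}(\A^n)\hookrightarrow \operatorname{naNHilb}^{\underline{d}}(\A^n)$ and $\iota:\operatorname{NHilb}^{\underline{d}}(\A^n)\hookrightarrow \operatorname{naNHilb}^{\underline{d}}(\A^n)$. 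By the universal property of the fiber product, an $S$-valued point of this intersection is the same as an $S$-valued point $f\colon S\to \operatorname{NHilb}^{\underline{d}}(\A^n)$ such that the composite $\iota\circ f$ factors through $\operatorname{naNHilb}_0^{\underline{d}}(\A^n)$.

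Next I would invoke the preceding lemma directly: its hypothesis is exactly that the composite $S\xrightarrow{f}\operatorname{NHilb}^{\underline{d}}(\A^n)\xrightarrow{\iota}\operatorname{naNHilb}^{\underline{d}}(\A^n)$ factors through $\operatorname{naNHilb}_0^{\underline{d}}(\A^n)$, and its conclusion is that $f$ itself factors through the punctual nested Hilbert scheme $\operatorname{NHilb}_0^{\underline{d}}(\A^n)\subset \operatorname{NHilb}^{\underline{d}}(\A^n)$. Applying this to every $S$ and every such $f$, the morphism from the intersection to $\operatorname{NHilb}^{\underline{d}}(\A^n)$ factors functorially through $\operatorname{NHilb}_0^{\underline{d}}(\A^n)$. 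Since $\operatorname{NHilb}_0^{\underline{d}}(\A^n)\hookrightarrow \operatorname{NHilb}^{\underline{d}}(\A^n)$ is a monomorphism (indeed a closed immersion, being the preimage of the origin under the Hilbert--Chow morphism), Yoneda produces a unique morphism of schemes
\[
\operatorname{naNHilb}_0^{\underline{d}}(\A^n)\cap \operatorname{NHilb}^{\underline{d}}(\A^n)\longrightarrow \operatorname{NHilb}_0^{\underline{d}}(\A^n)
\]
whose composition with the inclusion into $\operatorname{NHilb}^{\underline{d}}(\A^n)$ agrees with the natural inclusion of the intersection. Being the restriction of a monomorphism, this morphism is itself a closed immersion, yielding the desired inclusion of schemes.

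There is really no obstacle here; the only subtle point is to verify that the intersection inherits the correct scheme structure so that the universal argument is legitimate, but this is automatic from the fact that all maps involved are closed immersions and the lemma was stated for arbitrary $S\in\operatorname{Sch}_\C$.
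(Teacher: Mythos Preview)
Your proposal is correct and is exactly the argument the paper intends: the corollary is stated without proof immediately after the lemma because it is the direct Yoneda-type consequence you describe, namely that an $S$-point of the fiber product is a map $f\colon S\to \operatorname{NHilb}^{\underline{d}}(\A^n)$ with $\iota\circ f$ factoring through $\operatorname{naNHilb}_0^{\underline{d}}(\A^n)$, whence the lemma forces $f$ to factor through $\operatorname{NHilb}_0^{\underline{d}}(\A^n)$.
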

\begin{warning}
In general (unless $\underline{d}=(1,1,\dots,1)$) we do not have an equality in the inclusion above. For a $\C$-point $I_{r+1}\subset \cdots I_{1}\subset \C[x_1,\dots, x_n]$ to factor through $\operatorname{naNHilb}_0^{\underline{d}}(\A^n)$ means exactly that $\mathfrak{m}\subset I_1$ (and thus $\mathfrak{m}=I_1$ since they have the same colength) and that $I_1\cdot I_k\subset I_{k+1}$ for all $k$. An easy example of a punctual point not satisfying this is
\[(x^3)\subset (x)\subset \C[x]\]
in $\operatorname{NHilb}_0^{(1,2)}(\A^1)$, as $(x)\cdot(x)=(x^2)\not\subset (x^3)$. 
\end{warning}

\begin{remark} \textbf{(Relation to \cite{KazarianNAHilb})}
    One can consider a version of \cref{non-unital moduli functor} where we require $\psi_2(\mathcal{F}_{i}\otimes_S \mathcal{F}_j)\subset \mathcal{F}_{i+j}$ for all $1\leq i,j\leq r$. We denote this locus by $\operatorname{naFHilb}_0^{\underline{d}}(\A^n)$ and again, note that we get a closed immersion 
    \[\operatorname{naFHilb}_0^{\underline{d}}(\A^n)\subset \operatorname{naNHilb}^{\underline{d}}(\A^n)\]
    by adjoining a unit and that the projection map on the latter restricts to a map 
    \[\operatorname{naFHilb}_0^{(1,d_1,\dots, d_r)}(\A^n)\to \operatorname{naFHilb}_0^{(1,d_1,\dots, d_{r-1})}(\A^n).\]
    Let $(\mathcal{N}_\bullet, \Psi_2,\Psi_1)$ be the universal structure for $\operatorname{naFHilb}^{(1,\dots, d_{r-1})}$. Set $S_{r}\subset \sym^2\mathcal{N}_1$ spanned by $\mathcal{N}_i\otimes \mathcal{N}_j$ where $i+j>r$. We get an induced maps 
    \[\overline{\Psi}_2: \sym^2 \mathcal{N}_1/S_r\to \mathcal{N}_1\]
    and one can again using the same arguments as the section above show that
    \[\operatorname{naFHilb}_0^{(1,d_1,\dots, d_{r})}(\A^n)\cong \operatorname{Gr}_{\operatorname{naFHilb}_0^{(1,d_1,\dots, d_{r-1})}(\A^n)}(d_r, \operatorname{ker}(\Psi_1\oplus \overline{\Psi_2})^\vee)\]
    and that the above projection is simply the structure map. In fact, this is exactly the proof of theorem 6.3 of \cite{KazarianNAHilb} and $\operatorname{naFHilb}^{\underline{d}}(\A^n)$ agrees with the notion of the non-associative Hilbert scheme appearing in \cite{KazarianNAHilb}. Its intersection with the associative locus yields what we refer to as \textit{the filtered Hilbert scheme}, which we informally describe as the following closed locus of the punctual nested Hilbert scheme
    \[\operatorname{FHilb}_0^{\underline{d}}(\A^n)=\left\{I_{r+1}\subset I_{r}\subset\cdots \subset I_1=\mathfrak{m} \: | \: I_i\cdot I_j\subset I_{i+j}, \: 1\leq i,j\leq r \right\}\]
    where $\mathfrak{m}=(x_1,\dots, x_n)\subset \C[x_1,\dots, x_n]$.
\end{remark}

\section{Virtual localization}
\label{sec:virtual-class}
In this section we extend the standard torus action of $T=\Gm^n$ on $\operatorname{NHilb}^{\underline{d}}(\A^n)$ to the non-associative Hilbert scheme. We prove that $\mathcal{E}_{\operatorname{ass}}$ can be equipped with $T$-equivariant structure making the section $s_{\operatorname{ass}}$ $T$-equivariant. It follows that our perfect obstruction theory admits a $T$-equivariant enhancement. We then apply virtual localization and compute the local contributions for each fixed point.

\subsection{The torus action}
\begin{set}
In this section we set $T=(\C^*)^n$ an $n$-dimensional torus. We follow the notation of \cref{sec::torus} so that 
\[K_T^0(\operatorname{pt})\cong \Z[t^{\mu} \: | \: \mu \in \hat{T}] \cong  \Z[t_1^{\pm 1},\dots, t_n^{\pm 1}]\]
and 
\[A_T^*(T)\cong \operatorname{Sym}_{\Z}^\bullet(\hat{T})\cong \Z[s_1,\dots, s_n].\]
Explicitly, $t_i$ is the class of a one-dimensional $T$-representation of weight $e_i$ while $s_i$ is its first Chern class.
\end{set}

\begin{defi}
     We define a torus action on $M_{n,\underline{d}}$ by 
    \[\mathbf{t}.(\psi_1,\psi_2,v)=(\psi_1^{\mathbf{t}},\psi_2,v)\]
   where $\psi_1^{\mathbf{t}}$ denotes the map given by 
   \[(\C^n)^{\vee}\xrightarrow{\mathbf{t}^{-1}.}(\C^n)^\vee \xrightarrow{\psi_1} N_0\]
   where $T$ acts on $(\C^n)^\vee$ in the standard way. Clearly this commutes with the $P_{\overline{d}}$-action and thus induces a $T$-action on the quotient 
   \[[M_{n,\underline{d}}/P_{\underline{d}}]=\operatorname{naNHilb}^{\underline{d}}(\A^n).\]
\end{defi}

\begin{lemma}
    The universal flag bundle $\mathcal{V}_\bullet$ on $\operatorname{naNHilb}^{\underline{d}}(\A^n)$  carries natural $T$-equivariant structure such that the universal maps $\Psi_1, \Psi_2, \mathfrak{s}$ are equivariant.
\end{lemma}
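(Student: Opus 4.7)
The plan is to lift all four pieces of universal data to $T \times P_{\underline{d}}$-equivariant objects on the smooth cover $M_{n,\underline{d}} \to [M_{n,\underline{d}}/P_{\underline{d}}] = \operatorname{naNHilb}^{\underline{d}}(\A^n)$ and then descend them through the quotient. First I would promote the $P_{\underline{d}}$-equivariant flag $V_\bullet = N_\bullet \otimes_{\C} \oo_{M_{n,\underline{d}}}$ to a $T \times P_{\underline{d}}$-equivariant flag by letting $T$ act trivially on the reference flag $N_\bullet$. Because the torus action $\mathbf{t}.(\psi_1,\psi_2,v) = (\psi_1 \circ \mathbf{t}^{-1},\psi_2,v)$ leaves the affine coordinates $w_m$ and $a_{i,j}^k$ on $M_{n,\underline{d}}$ invariant, the universal section $\overline{\mathfrak{s}} = \sum_m v_m \otimes w_m$ and multiplication rule $\overline{\Psi}_2$ are tautologically $T$-equivariant for the trivial $T$-structures on source and target, while their $P_{\underline{d}}$-equivariance is the one already used in the construction of $(\mathcal{V}_\bullet,\Psi_2,\mathfrak{s},\Psi_1)$.

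Next I would handle $\overline{\Psi}_1 : (\oo_{M_{n,\underline{d}}}^n)^\vee \to V_0$, which is the only piece requiring a nontrivial $T$-action. I equip the source $(\oo_{M_{n,\underline{d}}}^n)^\vee \cong (\C^n)^\vee \otimes_\C \oo_{M_{n,\underline{d}}}$ with the $T$-structure dual to the standard representation of $T$ on $\C^n$, and with the trivial $P_{\underline{d}}$-action. Equivariance of $\overline{\Psi}_1(e_l^\vee) = \sum_t v_t \otimes (x_l)_t$ is then a direct weight check: the coordinate $(x_l)_t$ records the $t$-th component of $\psi_1(e_l^\vee)$, so the definition of the $T$-action on $M_{n,\underline{d}}$ forces $\mathbf{t}^{\ast}(x_l)_t = t_l (x_l)_t$, placing $(x_l)_t$ in the $T$-representation of weight $t_l^{-1}$; this is precisely the weight of $e_l^\vee \in (\C^n)^\vee$, so both sides of the formula sit in the same weight space of $V_0$. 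The $P_{\underline{d}}$-equivariance of $\overline{\Psi}_1$ is built into the fact that the $P_{\underline{d}}$-action on $M_{n,\underline{d}}$ sends $\psi_1$ to $g\psi_1$, and the two actions commute on $M_{n,\underline{d}}$ by construction.

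With $T \times P_{\underline{d}}$-equivariant data in hand, the descent is formal: the equivalence $\operatorname{QCoh}^{P_{\underline{d}}}(M_{n,\underline{d}}) \simeq \operatorname{QCoh}([M_{n,\underline{d}}/P_{\underline{d}}])$ used to construct $(\mathcal{V}_\bullet,\Psi_2,\mathfrak{s},\Psi_1)$ in the previous proposition upgrades verbatim to a $T$-equivariant equivalence, so the equivariant quadruple descends as required. I do not expect any serious obstacle; the argument is essentially bookkeeping, and the only genuinely delicate point is the sign convention for $T$-weights in the check for $\overline{\Psi}_1$. As a consistency test, at a $T$-fixed point $[\lambda_\bullet]$ of the quotient stack the associated stabilizer homomorphism $T \to P_{\underline{d}}$ produces exactly the expected $T$-weight decomposition of the monomial quotient algebra $\C[x_1,\dots,x_n]/I_{\lambda_{r+1}}$ on the fibre of $\mathcal{V}_0$, as will be needed for the virtual localization computation.
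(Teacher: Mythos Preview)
Your proposal is correct and follows essentially the same approach as the paper: equip $V_\bullet = N_\bullet \otimes_\C \oo_{M_{n,\underline d}}$ with the fiberwise trivial $T$-structure, observe that this is compatible with the existing $P_{\underline d}$-structure, and descend through $\operatorname{QCoh}^{P_{\underline d}}(M_{n,\underline d}) \simeq \operatorname{QCoh}([M_{n,\underline d}/P_{\underline d}])$. The paper's proof is a terse two sentences that omit the explicit equivariance checks for $\overline{\Psi}_1,\overline{\Psi}_2,\overline{\mathfrak{s}}$; you supply those checks, and your weight computation for $\overline{\Psi}_1$ is exactly the verification that the paper later uses implicitly when computing $\rho(t)v_{e_i}=t_i v_{e_i}$ at fixed points.
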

\begin{proof}
    We simply define $V_\bullet= N_\bullet\otimes  \oo_{M_{n,\underline{d}}}$ on $M_{n,\underline{d}}$.  It admits a $P_{\underline{d}}$-equivariant structure identifying it with $\mathcal{V}_\bullet$ under 
    \[\operatorname{QCoh}([M_{n,\underline{d}}//P_{\underline{d}}])\simeq \operatorname{QCoh}^{P_{\underline{d}}}(M_{n,\underline{d}}).\]
    Equipping $V_\bullet$ with the fiberwise trivial $T$-equivariant structure induces the desired $T$-equivariant structure on $\mathcal{V}_\bullet$.
\end{proof}
\begin{corollary}
    The perfect obstruction theory $\mathbb{E}_{ass}\to \mathbb{L}_{\operatorname{NHilb}^{\underline{d}}(\A^n)}$ admits a $T$-equivariant refinement.
\end{corollary}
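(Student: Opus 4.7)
The plan is to verify that every ingredient entering the construction of $\mathbb{E}_{ass}$ is already $T$-equivariant, so that the perfect obstruction theory assembles tautologically in the equivariant category. In light of the preceding lemma, the universal flag $\mathcal{V}_\bullet$, the unit $\mathfrak{s}$, and the multiplication $\Psi_2$ are $T$-equivariant on $\operatorname{naNHilb}^{\underline{d}}(\A^n)$. The ambient sheaf $\mathcal{H}om((\mathcal{V}_0/\mathfrak{s})^{\otimes 3},\mathcal{V}_0)$ is therefore $T$-equivariant, and the two conditions defining $\mathcal{E}_{\operatorname{ass}}$ as a subsheaf (alternation in the outer arguments and compatibility with the flag filtration) are cut out by $T$-equivariant projection operators, so $\mathcal{E}_{\operatorname{ass}}$ is naturally a $T$-equivariant subbundle. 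The associator section $s_{\operatorname{ass}}=\Psi_2(\Psi_2\otimes \operatorname{id})-\Psi_2(\operatorname{id}\otimes \Psi_2)$ is then a $T$-equivariant global section of $\mathcal{E}_{\operatorname{ass}}$.

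Next I would check that the closed immersion $\iota: \operatorname{NHilb}^{\underline{d}}(\A^n) \hookrightarrow \operatorname{naNHilb}^{\underline{d}}(\A^n)$ is $T$-equivariant. This follows because $\iota$ identifies the domain with $Z(s_{\operatorname{ass}})$, and the zero locus of a $T$-equivariant section of a $T$-equivariant vector bundle on a $T$-scheme is automatically $T$-invariant. Unwinding the definition of the $T$-action on $\operatorname{naNHilb}^{\underline{d}}(\A^n)$ (which acts by $\mathbf{t}^{-1}$ on the linear generators $(\C^n)^\vee$ and trivially on $\Psi_2,\mathfrak{s}$) one sees that on the functor of points of $\operatorname{NHilb}^{\underline{d}}(\A^n)$ this restricts to the standard action $\mathbf{t}.\mathcal{I}_\bullet = \mathbf{t}^{-1}.\mathcal{I}_\bullet\subset \oo_S[x_1,\dots,x_n]$ induced by scaling on $\A^n$.

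With these pieces in place, the construction of $\mathbb{E}_{\operatorname{ass}}$ as recalled in the basic example of the preliminaries section goes through verbatim $T$-equivariantly: one forms the two-term complex
\[
\bigl[(\mathcal{E}_{\operatorname{ass}})^\vee\big|_{\operatorname{NHilb}^{\underline{d}}(\A^n)}\xrightarrow{d\circ s_{\operatorname{ass}}^\vee} \Omega_{\operatorname{naNHilb}^{\underline{d}}(\A^n)}\big|_{\operatorname{NHilb}^{\underline{d}}(\A^n)}\bigr],
\]
in which both terms are $T$-equivariant (the sheaf $\Omega_{\operatorname{naNHilb}^{\underline{d}}(\A^n)}$ is $T$-equivariant as the cotangent sheaf of a smooth $T$-scheme) and the differential $d\circ s_{\operatorname{ass}}^\vee$ is $T$-equivariant since $s_{\operatorname{ass}}$ is. Similarly, $\mathbb{L}_{\operatorname{NHilb}^{\underline{d}}(\A^n)}\cong[\mathcal{C}_{\operatorname{NHilb}^{\underline{d}}(\A^n)/\operatorname{naNHilb}^{\underline{d}}(\A^n)}\to \Omega_{\operatorname{naNHilb}^{\underline{d}}(\A^n)}|_{\operatorname{NHilb}^{\underline{d}}(\A^n)}]$ comes from the $T$-equivariant conormal sequence of the $T$-equivariant closed immersion $\iota$, and the comparison map $\mathbb{E}_{\operatorname{ass}}\to \mathbb{L}_{\operatorname{NHilb}^{\underline{d}}(\A^n)}$, which on the left term is the surjection $(\mathcal{E}_{\operatorname{ass}})^\vee|_{\operatorname{NHilb}^{\underline{d}}(\A^n)}\twoheadrightarrow \mathcal{C}_{\operatorname{NHilb}^{\underline{d}}(\A^n)/\operatorname{naNHilb}^{\underline{d}}(\A^n)}$ induced by $s_{\operatorname{ass}}^\vee$ and on the right term the identity, is a morphism of $T$-equivariant complexes. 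No real obstacle arises; the essential content is bookkeeping that each map in the construction respects the equivariant structures provided by the preceding lemma.
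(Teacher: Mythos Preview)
Your proof is correct and follows the same approach as the paper, simply with more detail: the paper's proof is a two-sentence sketch observing that the $T$-equivariant structure on $\mathcal{V}_\bullet$ induces one on $\mathcal{E}_{\operatorname{ass}}$ and that $s_{\operatorname{ass}}$ is $T$-equivariant since it is built from $\Psi_2$, while you carefully unpack each step of the construction (the equivariance of the subbundle conditions, of $\iota$, of the conormal sequence, and of the comparison map).
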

\begin{proof}
    This follows since the $T$-equvariant structure on $\mathcal{V}_\bullet$ naturally equips $\mathcal{E}_{\operatorname{ass}}$ with $T$-equivariant structure. The section $s_{\operatorname{ass}}$ is then $T$-equivariant since it is built from $\Psi_2$.
\end{proof}

\subsection{Weight calculations}
We will calculate
\[T_{\operatorname{naNHilb}^{\underline{d}}(\A^n),p}\]
and 
\[\mathcal{E}_{\operatorname{ass},p}\]
as elements of $K_0^T(\operatorname{pt})$ for $p\in \operatorname{NHilb}^{\underline{d}}(\A^n)^T$ a fixed point. We recall that the fixed locus $ \operatorname{NHilb}^{\underline{d}}(\A^n)^T$ is 0-dimensional, reduced, and proper. Its points are in bijection with flags of monomial ideals, which is again in an order reversing bijection with flags of $(n+1)$-dimensional partitions. 
\begin{set}
Let $\lambda_\bullet=(\lambda_1\subset \lambda_2\subset \cdots \subset \lambda_{r+1})$ be a flag of $(n+1)$-dimensional partitions (i.e. $\lambda_i\subset (\Z_{\geq 0})^{n}$) with $|\lambda_i|-|\lambda_{i-1}|=d_{i-1}$. Let 
\[I_{\lambda_{r+1}}\subset \cdots \subset I_{\lambda_1}\subset \C[x_1,\dots, x_n]\]
be the associated flag of monomial ideals. By abuse of notation, we will denote by
\[\lambda_\bullet \in \operatorname{NHilb}^{\underline{d}}( \A^n)^T\hookrightarrow \operatorname{naNHilb}^{\underline{d}}(\A^n)^T\]
the associated $T$-fixed point. We denote by 
$N^{\lambda}_i\in K_T^0(\operatorname{pt})$
the $K$-theory class of $I_{\lambda_{i}}/I_{\lambda_{r+1}}$where we set $I_{\lambda_{0}}=\C[x_1,\dots, x_n]$. We note that
\[N_0^{\lambda}=\oo_{\A^n, I_{\lambda_\bullet}}^{[\underline{d}]}\]
where the right-hand side is the stalk of the tautological bundle of the structure sheaf.
\end{set}
\subsubsection{The tangent space}
\begin{prop} \label{K-theory of tanget space}
Let $\underline{d}\in (\Z_{\geq 0})^{r+1}$, $n\in \N$ and let $\lambda_\bullet \in \operatorname{NHilb}^{\underline{d}}(\A^n)^T$. We have an equality
\[T_{\operatorname{naNHilb}^{\underline{d}}(\A^n),\lambda_\bullet}=(t_1+t_2+\cdots +t_n+1)N^{\lambda}_0+\operatorname{Hom}^{\operatorname{fil}}(\sym^2 N_{\bullet}^{\lambda},N_{\bullet}^\lambda)-2\operatorname{End}^{\operatorname{fil}}(N^{\lambda}_\bullet)\]
in $K_T^0(\operatorname{pt})$. Here, the superscript $\operatorname{fil}$ denotes filtration preserving maps and the filtration structure on $\sym^2 N_\bullet^{\lambda}$ is given by letting the $i$'th part be the image of $N_0^{\lambda}\otimes N_i^{\lambda}$.
\end{prop}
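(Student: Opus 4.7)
The plan is to exploit the presentation $\naNHilb^{\underline{d}}(\A^n) = [M_{n,\underline{d}}/P_{\underline{d}}]$ with its free $P_{\underline{d}}$-action. Because the quotient map is a principal bundle, any lift $p = (\psi_1, \psi_2, v) \in M_{n,\underline{d}}$ of $\lambda_\bullet$ gives the $T$-equivariant $K$-theoretic identity
\[
[T_{\lambda_\bullet}\naNHilb^{\underline{d}}(\A^n)] = [T_p M_{n,\underline{d}}] - [\operatorname{Lie}(P_{\underline{d}})].
\]
I would pick the canonical lift: take $N_\bullet^\lambda$ to be $\C[x_1,\dots,x_n]/I_{\lambda_{r+1}}$ with its monomial filtration $N_k^\lambda = I_{\lambda_k}/I_{\lambda_{r+1}}$, let $v = 1$ and let $\psi_2$ be algebra multiplication, with $\psi_1$ the restriction of the quotient map to linear forms. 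Under this choice $\operatorname{Lie}(P_{\underline{d}}) = \operatorname{End}^{\operatorname{fil}}(N_\bullet^\lambda)$ as $T$-modules, supplying one of the two copies in the claimed formula.

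To compute $T_p M_{n,\underline{d}}$, I would realise $M_{n,\underline{d}}$ as a locally closed subscheme of the ambient $T$-representation
\[
V = \Hom((\C^n)^\vee, N_0) \oplus \Hom(\sym^2 N_0, N_0) \oplus N_0
\]
cut out by the three conditions of Definition 3.1.3. The stability condition is open and invisible to tangent computations; the linear condition $\psi_2(N_0 \otimes N_k) \subset N_k$ simply replaces the middle summand with $\Hom^{\operatorname{fil}}(\sym^2 N_\bullet^\lambda, N_\bullet^\lambda)$ for the filtration specified in the statement. The unital condition $\psi_2(v, x) = x$ is affine, and linearising it at $p$ (using that $\psi_2(v, -) = \operatorname{id}$ there) yields the exact sequence
\[
0 \to T_p M_{n,\underline{d}} \to \Hom((\C^n)^\vee, N_0^\lambda) \oplus \Hom^{\operatorname{fil}}(\sym^2 N_\bullet^\lambda, N_\bullet^\lambda) \oplus N_0^\lambda \xrightarrow{L} \operatorname{End}^{\operatorname{fil}}(N_\bullet^\lambda) \to 0,
\]
with $L(\psi_1', \psi_2', v') = \psi_2'(v, -) + \psi_2(v', -)$.

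The only nontrivial step is verifying that $L$ is surjective, which is also what confirms the smoothness of $M_{n,\underline{d}}$ at $p$. I would do so by explicit construction: given any $L_0 \in \operatorname{End}^{\operatorname{fil}}(N_\bullet^\lambda)$, take $v' = 0$ and choose a $T$-equivariant complement $W$ to $\C v$ in $N_0^\lambda$, for instance using the monomial basis so that $N_k^\lambda \subset W$ for all $k \geq 1$. Then define a symmetric $\psi_2' \colon \sym^2 N_0^\lambda \to N_0^\lambda$ by $\psi_2'(v, w) := L_0(w)$ for $w \in W$, $\psi_2'(v, v) := L_0(v)$, and $\psi_2' \equiv 0$ on $\sym^2 W$; the filtration condition $\psi_2'(N_0 \otimes N_k) \subset N_k$ reduces to filtration preservation of $L_0$. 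Assembling the two $K$-theory identities and identifying $\Hom((\C^n)^\vee, N_0^\lambda) = (t_1 + \cdots + t_n) N_0^\lambda$ using the standard $T$-weights on $(\C^n)^\vee$ absorbs the trailing $N_0^\lambda$ summand --- contributed by the unit variable $v'$ --- into the coefficient $(t_1 + \cdots + t_n + 1)$, yielding the claimed formula.
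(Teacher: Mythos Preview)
Your proposal is correct and follows essentially the same route as the paper: reduce to $T_p M_{n,\underline{d}} - \operatorname{Lie}(P_{\underline{d}})$ via the principal bundle, realise $M_{n,\underline{d}}$ as an open in the affine subspace cut out by the unitality constraint inside the filtration-preserving linear piece, and read off the second $\operatorname{End}^{\operatorname{fil}}$ term as the tangent to that constraint. The paper simply asserts that the map $(\psi_1,\psi_2,v)\mapsto \psi_2(v,-)$ is an equivariant submersion where you give the explicit preimage construction, but conversely it is more careful about how the $T$-structure arises, computing the twisting character $\rho$ determined by $\rho(t).p = t.p$ and checking $\rho(t)v_u = t^u v_u$ to justify the identification of $N_\bullet$ with $N_\bullet^\lambda$ as $T$-modules, a point you leave implicit.
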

\begin{proof}
    Since $M_{n,\underline{d}}\to \operatorname{naNHilb}^{\underline{d}}(\A^n)$ is a $T$-equivariant principal $P_{\underline{d}}$-bundle we have an equality 
    \[T_{\operatorname{naNHilb}^{\underline{d}}(\A^n),\lambda_\bullet}=T_{M_{n,\underline{d}},p}-\mathfrak{p}_{\underline{d}}\]
    for a choice of lift $p\in M_{n,\underline{d}}$ of $\lambda_{\bullet}$. The $T$-structure on $T_{M_{n,\underline{d}},p}$ is constructed as follows. We let
    \[\rho: T\to G\]
    uniquely defined by $\rho(t).p=t.p$. Twisting the $T$-action on $M_{n,\underline{d}}$ with $(\rho)^{-1}$ yields a $T$-action for which $p$ is fixed and thus a $T$-space structure on the tangent space of $p$. The $T$-space structure on $\mathfrak{p}_{\underline{d}}$ is given by the adjoint action precomposed with $(\rho)^{-1}$. We choose the lift $p$ as follows. We can choose a basis $(v_u)$ of $N_0$ indexed by the $u\in \lambda_{r+1}$ such that $v_u\in N_{i}$ if and only if $u\notin \lambda_i$. We define
    \[\psi_2(v_{u}\otimes v_{u'})=v_{u+u'},\]
     \[v=v_{0}\]
     and
    \[\psi_1(x_i)=\left\{\begin{array}{cl}
        v_{e_i} & \text{if }e_i\in \lambda_{r+1} \\
        0 & \text{else} 
    \end{array}\right.\]
    This defines the lift $p$. Next, we calculate $\rho$. From the identity $\rho(t).p=t.p$ we immediately see that
    \[\rho(t).v_0=v_0\]
    and for $e_i\in \lambda_{r+1}$ we have
    \[\rho(t)v_{e_i}=\rho(t)\psi_1(x_i)=\psi_1^t(x_i)=\psi_1(t_ix_i)=t_iv_{e_i}.\]
    Using that $\psi_2^{\rho(t)}=\psi_2$ we further see that
    \begin{align*}
        \rho(t)v_{u+u'}&=\rho(t)\psi_2(v_u\otimes v_{u'}) \\
        &=\rho(t)\psi_2\left((\rho(t)^{-1}\rho(t)v_u)\otimes (\rho(t)^{-1}\rho(t)v_{u'})\right)\\
        &=\psi_2^{\rho(t)}\left((\rho(t)v_u)\otimes (\rho(t)v_{u'})\right)\\
        &=\psi_2\left((\rho(t)v_u)\otimes (\rho(t)v_{u'})\right).
    \end{align*}
    Combining these facts an induction argument shows that $\rho(t)v_u=t^{u}v_u$ i.e., the $T$-space structure on $N_i$ induced from $\rho^{-1}$ is isomorphic to $N_i^{\lambda}$. From this we immediately get that 
    \[\mathfrak{p}_{\underline{d}}=\operatorname{End}^{\operatorname{fil}}(N_\bullet^\lambda)\in K_T^{0}(\operatorname{pt})\]
    when $\mathfrak{p}_{\underline{d}}$ is equipped with the adjoint action composed with $\rho^{-1}$. The variety $M_{n,\underline{d}}$ is a Zariski open of the affine sub-space
    \[W=f^{-1}(\operatorname{id}_{N_0})\subset V:= \Hom((\C^n)^\vee, N_0)\oplus \operatorname{Hom}^{\operatorname{fil}}(\sym^2N_0,N_0)\oplus N_0\]
    where 
    \begin{center}
        \begin{tikzcd}
            f: V \arrow{r} & \operatorname{End}^{\operatorname{fil}}(N_\bullet) \\[-20pt] 
            (\psi_1,\psi_2,v) \arrow[mapsto]{r} & (x\mapsto \psi_2(v,x)).
        \end{tikzcd}
    \end{center}
This map is an equivariant submersion when we identify $N_\bullet$ with $N_{\bullet}^{\lambda}$. It follows that
\[T_{M_{n,\underline{d}},p}=T_{W,p}=T_{V,p}-T_{\operatorname{End}^{\operatorname{fil}}(N_\bullet),\operatorname{id}}=T_{V,p}-\operatorname{End}^{\operatorname{fil}}(N_\bullet^{\lambda})\]
from which the statement now immediately follows since 
\[T_{V,p}=(t_1+t_2+\cdots +t_n)N^{\lambda}_0+\operatorname{Hom}^{\operatorname{fil}}(\sym^2 N_{\bullet}^{\lambda},N_{\bullet}^\lambda)+N_0^{\lambda}.\]
\end{proof}
To further compute this, we introduce notation to keep track of the equivariant weights.
\begin{set}
    Let \label{Partition setup}
    \[\lambda_\bullet=(\lambda_1\subset \lambda_2\subset \cdots \subset \lambda_{r+1}\subset (\Z_{\geq 0})^n)\]
    be an $n$-dimensional nested partition of length $\underline{d}=(d_0,\dots, d_r)$ and set $d=d_0+\cdots +d_r$. In the following we choose an enumeration
    \[\mathbf{u}_0,\mathbf{u}_2,\dots ,\mathbf{u}_{d-1}\in (\Z_{\geq0}^n)\]
    of the partition $\lambda_{r+1}$ in such a way that 
    \[\{\mathbf{u}_0,\dots, \mathbf{u}_k\}\]
    is a partition for all $0\leq k\leq d-1$ and that
    \[\{\mathbf{u}_0,\dots, \mathbf{u}_{d_0+\cdots +d_{i-1}-1}\}=\lambda_i.\]
    For $0\leq k\leq d-1$ we define $w(k)=i$ if 
    \[d_0+\cdots +d_{i-2}-1<k\leq d_0+\cdots +d_{i-1}-1,\]
    i.e. 
    \[(w(0),w(1),\dots, w(d-1))=(\underbrace{0,0,\dots,0}_{d_0\text{ times}},\underbrace{1,1,\dots,1}_{d_1 \text{ times}},\dots,\underbrace{r,r\cdots, r}_{d_r\text{ times}}).\]
    It follows that $x^{\mathbf{u}_k}\in N_\bullet^{\lambda}$ is in $N_i^{\lambda}$ if and only if $w(k)\geq i$. Note also that
    \[N_0=\operatorname{span}(x^{\mathbf{u}_k} \:| \: 0\leq k\leq d-1)\]
    with $x^{\mathbf{u}_k}$ a $T$-eigenvector of weight $t^{-\mathbf{u}_k}$.
\end{set}
\begin{lemma}
    Let $\lambda_\bullet$ be an $n$-dimensional nested partition with enumeration $\mathbf{u}_0,\dots, \mathbf{u}_{d-1}$ as in \cref{Partition setup}. Then the trace of $T_{\operatorname{naNHilb}^{\underline{d}}(\A^n),\lambda_\bullet}$ is given by
    \[(t_1+t_2+\cdots+t_n)\sum_{k=0}^{d-1} t^{-\mathbf{u}_k}+\sum_{1\leq i\leq j\leq d-1} \sum_{\substack{0\leq k\leq d-1\\ w(j)\leq w(k)}}t^{\mathbf{u}_i+\mathbf{u}_j-\mathbf{u}_k}-\sum_{j=1}^{d-1}\sum_{\substack{0\leq k\leq d-1\\ w(j)\leq w(k)}}t^{\mathbf{u}_j-\mathbf{u}_k}.\]
\end{lemma}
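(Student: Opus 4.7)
The plan is to apply \cref{K-theory of tanget space} and take the trace of each summand in the eigenvector basis $x^{\mathbf{u}_0},\dots,x^{\mathbf{u}_{d-1}}$ of $N_0^\lambda$ introduced in the setup. Since $x^{\mathbf{u}_k}$ has weight $t^{-\mathbf{u}_k}$ and $\mathbf{u}_0=\mathbf{0}$ (the only downward closed subset of $\Z_{\geq0}^n$ of size one is $\{\mathbf{0}\}$, forced by the assumption that $\{\mathbf{u}_0\}$ be a partition), one reads off directly
\[N_0^\lambda=\sum_{k=0}^{d-1}t^{-\mathbf{u}_k}\]
in $K_T^0(\operatorname{pt})$, so that the contribution $(t_1+\cdots+t_n+1)N_0^\lambda$ is immediate.

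To evaluate $\operatorname{End}^{\operatorname{fil}}(N_\bullet^\lambda)$ I use that $N_i^\lambda$ has weight basis $\{x^{\mathbf{u}_k}:w(k)\geq i\}$, so that the conditions $f(N_i^\lambda)\subset N_i^\lambda$ collectively amount to requiring the matrix coefficient $x^{\mathbf{u}_j}\mapsto x^{\mathbf{u}_k}$ to vanish unless $w(k)\geq w(j)$. This gives
\[\operatorname{End}^{\operatorname{fil}}(N_\bullet^\lambda)=\sum_{j=0}^{d-1}\sum_{\substack{0\leq k\leq d-1\\ w(j)\leq w(k)}}t^{\mathbf{u}_j-\mathbf{u}_k}.\]
For $\operatorname{Hom}^{\operatorname{fil}}(\sym^2N_\bullet^\lambda,N_\bullet^\lambda)$ the monotonicity $w(0)\leq w(1)\leq\cdots\leq w(d-1)$ forces $\max\{w(i),w(j)\}=w(j)$ whenever $i\leq j$; hence the smallest filtration piece of $\sym^2N_\bullet^\lambda$ containing $x^{\mathbf{u}_i}x^{\mathbf{u}_j}$ is the one indexed by $w(j)$, and the image of $x^{\mathbf{u}_i}x^{\mathbf{u}_j}$ under a filtration preserving map must lie in the span of the $x^{\mathbf{u}_k}$ with $w(k)\geq w(j)$. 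This yields
\[\operatorname{Hom}^{\operatorname{fil}}(\sym^2N_\bullet^\lambda,N_\bullet^\lambda)=\sum_{0\leq i\leq j\leq d-1}\sum_{\substack{0\leq k\leq d-1\\ w(j)\leq w(k)}}t^{\mathbf{u}_i+\mathbf{u}_j-\mathbf{u}_k}.\]

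Assembling the pieces, the $i=0$ slice of the symmetric power sum equals $\operatorname{End}^{\operatorname{fil}}(N_\bullet^\lambda)$ (since $\mathbf{u}_0=\mathbf{0}$), cancelling one of the two copies of $\operatorname{End}^{\operatorname{fil}}$; the $j=0$ slice of the remaining $\operatorname{End}^{\operatorname{fil}}$ contributes exactly $N_0^\lambda$, and $-N_0^\lambda$ then cancels the $+1\cdot N_0^\lambda$ summand inside $(t_1+\cdots+t_n+1)N_0^\lambda$. What survives is precisely the formula in the statement. The argument is pure bookkeeping; the only substantive point is the monotonicity of $w$, which is what guarantees that the filtration index of a product of basis vectors is governed by the larger of its two factor indices, and which is what allows the $i=0$ slice of the symmetric power sum to be identified with $\operatorname{End}^{\operatorname{fil}}(N_\bullet^\lambda)$.
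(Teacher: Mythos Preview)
Your proof is correct and follows essentially the same approach as the paper's own proof: both start from the $K$-theory identity of \cref{K-theory of tanget space}, expand each of the three summands in the eigenvector basis $x^{\mathbf{u}_k}$, and then perform the two cancellations (the $i=0$ slice of the symmetric-power sum against one copy of $\operatorname{End}^{\operatorname{fil}}$, and the $j=0$ slice of the remaining $\operatorname{End}^{\operatorname{fil}}$ against the ``$+1$'' part of $(t_1+\cdots+t_n+1)N_0^\lambda$). Your version is slightly more explicit in justifying $\mathbf{u}_0=\mathbf{0}$ and in spelling out the role of the monotonicity of $w$, but the argument is otherwise identical.
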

\begin{proof}
     We have 
     \[T_{\operatorname{naNHilb}^{\underline{d}}(\A^n),\lambda_\bullet}=(t_1+t_2+\cdots +t_n+1)N^{\lambda}_0+\operatorname{Hom}^{\operatorname{fil}}(\sym^2 N_{\bullet}^{\lambda},N_{\bullet}^\lambda)-2\operatorname{End}^{\operatorname{fil}}(N^{\lambda}_\bullet)\]
     so, using the $T$-eigenvector basis $x^{\mathbf{u}_1},\dots, x^{\mathbf{u}_{d-1}}$ we see that the trace identifies with 
    \[(t_1+t_2+\cdots+t_n+1)\sum_{k=0}^{d-1} t^{-\mathbf{u}_k}+\sum_{0\leq i\leq j\leq d-1} \sum_{\substack{0\leq k\leq d-1\\ w(j)\leq w(k)}}t^{\mathbf{u}_i+\mathbf{u}_j-\mathbf{u}_k}-2\sum_{\substack{0\leq j,k\leq d-1\\ w(j)\leq w(k)}}t^{\mathbf{u}_j-\mathbf{u}_k}.\]
    We see some immediate cancellation. The "$i=0$" terms of the second sum cancels with one copy of the third sum. The last copy of the first sum corresponding to the coefficient 1 cancels with the "$j=0$" terms of the third sum. This leaves us with 
\[(t_1+t_2+\cdots+t_n)\sum_{k=0}^{d-1} t^{-\mathbf{u}_k}+\sum_{1\leq i\leq j\leq d-1} \sum_{\substack{0\leq k\leq d-1\\ w(j)\leq w(k)}}t^{\mathbf{u}_i+\mathbf{u}_j-\mathbf{u}_k}-\sum_{j=1}^{d-1}\sum_{\substack{0\leq k\leq d-1\\ w(j)\leq w(k)}}t^{\mathbf{u}_j-\mathbf{u}_k}.\]
\end{proof}
\begin{remark}
From here on the cancellation gets trickier. A term of the form $t^{\mathbf{u}_j-\mathbf{u}_k}$ cancels with either a term in the first or second sum depending on $\mathbf{u}_j\in (\Z_{\geq 0})^n$. If $\mathbf{u}_j=e_i$ is a unit vector, it will cancel with $t_it^{-\mathbf{u}_k}$ from the first sum. If it is not a unit vector, we can rewrite $\mathbf{u}_j=\mathbf{u}_{i_1}+\mathbf{u}_{i_2}$ for $1\leq i_1\leq i_2\leq j$. Since $i_2\leq j$ we have $w(i_2)\leq w(j)\leq w(k)$ and so the term cancels with $t^{\mathbf{u}_{i_1}+\mathbf{u}_{i_2}-\mathbf{u}_k}$ from the second sum. Another way to keep track of only the positive weights is via the following multiset definition.
\end{remark}
\begin{lemma} \label{recursive tangent weights}
    For a nested partition 
    \[\lambda_\bullet=(\lambda_1\subset \lambda_2\subset \cdots \subset \lambda_{r+1})\]
    of length $\underline{d}$ we recursively define multisets  
    \[S_m^T(\lambda_\bullet)\]
    for $-1\leq m\leq r$as follows. Choose an enumeration $\mathbf{u}_0,\dots, \mathbf{u}_{d-1}$ as in \cref{Partition setup}. We set 
    \[S_{-1}^T(\lambda_{\bullet})=\{e_1,\dots, e_n\}\]
    and recursively define 
    \[S_{m}^T(\lambda_{\bullet})=\left(S_{k-1}(\lambda_\bullet)\cup\{\mathbf{u}_i+\mathbf{u}_j \: |\: 1\leq i\leq j, \: w(j)=m\}\right)\big{\backslash} \{\mathbf{u}_j \: | \: 1\leq j,\: w(j)=m\}.\]
    Then the trace of $T_{\operatorname{naNHilb}^{\underline{d}}(\A^n),\lambda_\bullet}$ is given by
    \[\sum_{m=0}^r \sum_{\mathbf{v}\in \lambda_{m+1}\backslash \lambda_m} \sum_{\mathbf{u}\in S_m^T(\lambda_\bullet)} t^{\mathbf{u}-\mathbf{v}}.\]
\end{lemma}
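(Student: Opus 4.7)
The plan is to repackage the closed formula of the preceding lemma by reversing the order of summation. First I would pull out $t^{-\mathbf{u}_k}$ in all three sums; since the constraints $w(j)\leq w(k)$ depend on $k$ only through $m:=w(k)$, the coefficient of $t^{-\mathbf{u}_k}$ is the generating series
\[
F_m(t)\;=\;\sum_{i=1}^{n} t^{e_i}\;+\;\sum_{\substack{1\leq i\leq j\leq d-1\\ w(j)\leq m}} t^{\mathbf{u}_i+\mathbf{u}_j}\;-\;\sum_{\substack{1\leq j\leq d-1\\ w(j)\leq m}} t^{\mathbf{u}_j},
\]
which is precisely the generating series of the signed multiset
\[
\widehat{S}_m\;=\;\{e_1,\dots,e_n\}\cup\{\mathbf{u}_i+\mathbf{u}_j\mid 1\leq i\leq j,\ w(j)\leq m\}\setminus\{\mathbf{u}_j\mid 1\leq j,\ w(j)\leq m\}.
\]

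Next I would group the outer sum over $k$ by the value of $w(k)$. By the construction of the enumeration in \cref{Partition setup} (with the convention $\lambda_0=\emptyset$), the map $k\mapsto\mathbf{u}_k$ identifies $\{k\mid w(k)=m\}$ with $\lambda_{m+1}\setminus\lambda_m$. Substituting $\mathbf{v}=\mathbf{u}_k$ rewrites the trace as
\[
T_{\operatorname{naNHilb}^{\underline{d}}(\A^n),\lambda_\bullet}\;=\;\sum_{m=0}^{r}\,\sum_{\mathbf{v}\in\lambda_{m+1}\setminus\lambda_m}\,\sum_{\mathbf{u}\in\widehat{S}_m} t^{\mathbf{u}-\mathbf{v}},
\]
so everything reduces to checking $\widehat{S}_m=S_m^T(\lambda_\bullet)$.

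This last identification is a one-line induction on $m$: writing the recursion as $S_m^T=S_{m-1}^T+A_m-B_m$ with $A_m=\{\mathbf{u}_i+\mathbf{u}_j\mid 1\leq i\leq j,\ w(j)=m\}$ and $B_m=\{\mathbf{u}_j\mid 1\leq j,\ w(j)=m\}$, telescoping from the base case $S_{-1}^T=\{e_1,\dots,e_n\}$ gives the closed form $\widehat{S}_m$, because partitioning by the value of $w(j)$ turns the sum of $A_{m'}$ for $m'\leq m$ into $\{\mathbf{u}_i+\mathbf{u}_j\mid 1\leq i\leq j,\ w(j)\leq m\}$ and analogously for $B$.

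The only point that needs clarification, and the one I would spell out, is the multiset convention: $\cup$ and $\setminus$ in the definition of $S_m^T(\lambda_\bullet)$ must be read as addition and subtraction of multiplicities (equivalently, as sums of monomials in $K_T^0(\operatorname{pt})=\Z[t_1^{\pm 1},\dots,t_n^{\pm 1}]$). With this signed reading the recursion is order-independent and the induction above is immediate; there is no genuine combinatorial obstacle beyond the bookkeeping.
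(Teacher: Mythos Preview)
Your proposal is correct and follows essentially the same approach as the paper: both arguments amount to (i) unrolling the recursion by induction/telescoping to obtain the closed form $S_m^T=\{e_1,\dots,e_n\}\cup\{\mathbf{u}_i+\mathbf{u}_j\mid w(j)\le m\}\setminus\{\mathbf{u}_j\mid w(j)\le m\}$, and (ii) regrouping the preceding lemma's trace formula over $k$ by the value $m=w(k)$ using the bijection $\{k:w(k)=m\}\leftrightarrow\lambda_{m+1}\setminus\lambda_m$. The only difference is the order in which you present these two steps, and your explicit remark on the signed-multiset reading of $\cup,\setminus$ is a useful clarification that the paper leaves implicit.
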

\begin{proof}
    One easily sees by induction that 
    \[S_k^T(\lambda_\bullet)=\left(\{e_1,\dots, e_n\}\cup\{\mathbf{u}_i+\mathbf{u}_j \: |\: 1\leq i\leq j, \: w(j)\leq k\}\right)\big{\backslash} \{\mathbf{u}_j \: | \: w(j)\leq k\}\]
    and since $\mathbf{u}_k\in \lambda_{m+1}\backslash \lambda_m$ if and only if $w(k)=m$ the result now easily follows from the above calculation.
\end{proof}
Lastly, we compute the rank of the fixed part.
\begin{corollary}
    Let $\lambda_\bullet$ be an $n$-dimensional nested partition with enumeration $\mathbf{u}_0,\dots, \mathbf{u}_{d-1}$ as in \cref{Partition setup}.
    Define
    \[W_T(\mathbf{u}_k)=\left\{\begin{array}{cl}
       0  & \mathbf{u}_k=e_i \text{ for some }i=1,\dots,n \\
       \#\{(i,j)\: | \: 1\leq i \leq j\leq d-1, \mathbf{u}_i+\mathbf{u}_j=\mathbf{u}_k\}-1  & \text{else}
    \end{array}\right.\]
    for $k=1,\cdots d-1$ and $e_i$ the $i$'th unit vector. Then the trace of $T_{\operatorname{naNHilb}^{\underline{d}}(\A^n),\lambda_\bullet}^{\operatorname{fix}}$ is
    \[\sum_{k=1}^{d-1} W_T(\mathbf{u}_k).\]
\end{corollary}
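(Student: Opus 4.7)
The plan is to read off the coefficient of $t^0$ directly from the explicit tangent-weight trace of $T_{\operatorname{naNHilb}^{\underline{d}}(\A^n),\lambda_\bullet}$ established in the preceding lemma, since the fixed part is by definition the sum of the terms of weight zero. I will count the $t^0$ contributions of each of the three sums separately, with sign, and then regroup them by the index $k$.

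The first piece $(t_1+\cdots+t_n)\sum_{k=0}^{d-1}t^{-\mathbf{u}_k}$ produces $t^0$ exactly when $\mathbf{u}_k=e_i$ for some unit vector $e_i$, contributing $+1$ for each $k\in\{1,\dots,d-1\}$ with $\mathbf{u}_k$ a unit vector (and nothing for $k=0$ since $\mathbf{u}_0=0$). The third piece $-\sum_{j=1}^{d-1}\sum_{w(j)\le w(k)}t^{\mathbf{u}_j-\mathbf{u}_k}$ produces $t^0$ exactly at $j=k$ (so automatically $w(j)\le w(k)$), contributing $-1$ for each $j=k\in\{1,\dots,d-1\}$, i.e.\ $-(d-1)$ in total. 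The second piece $\sum_{1\le i\le j\le d-1}\sum_{w(j)\le w(k)}t^{\mathbf{u}_i+\mathbf{u}_j-\mathbf{u}_k}$ produces $t^0$ for each triple $(i,j,k)$ with $1\le i\le j\le d-1$, $w(j)\le w(k)$ and $\mathbf{u}_i+\mathbf{u}_j=\mathbf{u}_k$.

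The one step requiring actual argument — and what I expect to be the main (though mild) obstacle — is verifying that the filtration constraint $w(j)\le w(k)$ in the second sum is automatic whenever the weight equation $\mathbf{u}_i+\mathbf{u}_j=\mathbf{u}_k$ holds. This follows because the identity $x^{\mathbf{u}_k}=x^{\mathbf{u}_i}\cdot x^{\mathbf{u}_j}$ exhibits $x^{\mathbf{u}_k}$ as an element of the principal monomial ideal $(x^{\mathbf{u}_j})\subset I_{\lambda_{w(j)}}$, hence $\mathbf{u}_k\notin \lambda_{w(j)}$, which by the characterization $\mathbf{u}_k\in N_i^\lambda \iff w(k)\ge i$ forces $w(k)\ge w(j)$. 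With this redundancy removed, the second-sum contribution to $t^0$ becomes $\sum_{k=1}^{d-1}\#\{(i,j):1\le i\le j\le d-1,\ \mathbf{u}_i+\mathbf{u}_j=\mathbf{u}_k\}$.

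Finally I will collect contributions index-by-index. If $\mathbf{u}_k$ (with $k\ge 1$) is a unit vector, the first sum contributes $+1$, the second sum contributes $0$ because $\mathbf{u}_i+\mathbf{u}_j$ has coordinate sum at least $2$, and the third contributes $-1$, giving a net $0=W_T(\mathbf{u}_k)$. If $\mathbf{u}_k$ is not a unit vector, the first sum contributes $0$, the third still contributes $-1$, and the second contributes exactly the counting integer above, giving the net $W_T(\mathbf{u}_k)$ by definition. Summing over $k\in\{1,\dots,d-1\}$ produces the stated formula for the trace of $T_{\operatorname{naNHilb}^{\underline{d}}(\A^n),\lambda_\bullet}^{\operatorname{fix}}$.
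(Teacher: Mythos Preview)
Your proof is correct and follows essentially the same approach as the paper: both extract the $t^0$ coefficient term-by-term from the three sums in the preceding lemma, verify that the constraint $w(j)\le w(k)$ is redundant when $\mathbf{u}_i+\mathbf{u}_j=\mathbf{u}_k$, and regroup by the index $k$. The only cosmetic difference is in justifying that redundancy: the paper observes that $\mathbf{u}_j\le\mathbf{u}_k$ forces $j\le k$ via the partition-compatibility of the enumeration, while you argue directly via the ideal inclusion $x^{\mathbf{u}_k}\in(x^{\mathbf{u}_j})\subset I_{\lambda_{w(j)}}$; both are valid one-liners. (Minor typo: where you write ``$\mathbf{u}_k\in N_i^\lambda$'' you mean ``$x^{\mathbf{u}_k}\in N_i^\lambda$''.)
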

\begin{proof}
The positive trivial weights of 
\[(t_1+t_2+\cdots+t_n)\sum_{k=0}^{d-1} t^{-\mathbf{u}_k}+\sum_{1\leq i\leq j\leq d-1} \sum_{\substack{0\leq k\leq d-1\\ w(j)\leq w(k)}}t^{\mathbf{u}_i+\mathbf{u}_j-\mathbf{u}_k}-\sum_{j=1}^{d-1}\sum_{\substack{0\leq k\leq d-1\\ w(j)\leq w(k)}}t^{\mathbf{u}_j-\mathbf{u}_k}.\]
     are exactly of the form $t_it^{-\mathbf{u}_k}$ for $\mathbf{u}_k=e_i$ or $t^{\mathbf{u}_i+\mathbf{u}_j-\mathbf{u}_k}$ for $\mathbf{u}_i+\mathbf{u}_j=\mathbf{u}_k$, $i\leq j$ and $w(j)\leq w(k)$. We note that since $\mathbf{u}_j\leq \mathbf{u}_k$ we have $j\leq k$ so the condition $w(j)\leq w(k)$ is redundant. For each $1\leq k$ we have exactly one negative trivial weight of the form $-t^{\mathbf{u}_k-\mathbf{u}_k}$ from the third sum. It follows that the trace of $T_{\operatorname{naNHilb}^{\underline{d}}(\A^n),\lambda_\bullet}^{\operatorname{fix}}$ is given by 
     \[\sum_{k=1}^{d-1} Q_k(t)\]
     where 
     \[Q_k(t)=t_it^{-\mathbf{u}_k}-t^{\mathbf{u}_k-\mathbf{u}_k}=0\]
     if $\mathbf{u}_k=e_i$ for some $i=1,\dots, n$ and 
     \[Q_k(t)=\sum_{\substack{1\leq i\leq j\leq d-1\\ \mathbf{u}_i+\mathbf{u}_j=\mathbf{u}_k} }t^{\mathbf{u}_i+\mathbf{u}_j-\mathbf{u}_k}-t^{\mathbf{u}_k-\mathbf{u}_k}\]
     else. It is clear that $Q_k(t)=W_T(\mathbf{u}_k)$ from which the statement follows directly
\end{proof}

\begin{corollary}
    The fixed part of the tangent space $T_{\operatorname{naNHilb}^{\underline{d}}(\A^n),\lambda_\bullet}^{\operatorname{fix}}$ depends only on $\lambda_{r+1}$ and not on the chosen nesting.
\end{corollary}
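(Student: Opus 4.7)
The plan is to start from the formula established in the previous corollary,
\[
T_{\operatorname{naNHilb}^{\underline{d}}(\A^n),\lambda_\bullet}^{\operatorname{fix}}
=\sum_{k=1}^{d-1} W_T(\mathbf{u}_k),
\]
and to rewrite the right-hand side in terms that only involve the top partition $\lambda_{r+1}$, with no reference to the intermediate pieces of the nesting or to the particular enumeration $\mathbf{u}_0,\dots,\mathbf{u}_{d-1}$.

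First I would observe that any admissible enumeration forces $\mathbf{u}_0=\mathbf{0}$, because $\{\mathbf{u}_0\}$ is required to be a partition of $(\Z_{\ge 0})^n$ and any such partition must contain the zero vector. Consequently the indexing set $\{1,\dots,d-1\}$ is in canonical bijection with $\lambda_{r+1}\setminus\{\mathbf{0}\}$ via $k\mapsto \mathbf{u}_k$, and under this bijection the combinatorial quantity
\[
\#\bigl\{(i,j)\;\big|\;1\leq i\leq j\leq d-1,\ \mathbf{u}_i+\mathbf{u}_j=\mathbf{u}_k\bigr\}
\]
becomes the number of unordered pairs $\{\mathbf{v},\mathbf{w}\}\subset \lambda_{r+1}\setminus\{\mathbf{0}\}$ (with $\mathbf{v}=\mathbf{w}$ allowed and counted once) satisfying $\mathbf{v}+\mathbf{w}=\mathbf{u}_k$. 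This count manifestly depends only on the set $\lambda_{r+1}$ and on the element $\mathbf{u}_k\in \lambda_{r+1}\setminus\{\mathbf{0}\}$, not on how we ordered the monomials or on the intermediate partitions $\lambda_1,\dots,\lambda_r$.

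Thus I would define, for any $\mathbf{u}\in \lambda_{r+1}\setminus\{\mathbf{0}\}$,
\[
W_T'(\mathbf{u})=
\begin{cases}
0 & \text{if }\mathbf{u}=e_i\text{ for some }1\le i\le n,\\
\#\bigl\{\{\mathbf{v},\mathbf{w}\}\subset \lambda_{r+1}\setminus\{\mathbf{0}\} \,\big|\, \mathbf{v}+\mathbf{w}=\mathbf{u}\bigr\}-1 & \text{otherwise,}
\end{cases}
\]
note that $W_T(\mathbf{u}_k)=W_T'(\mathbf{u}_k)$ by the bijection above, and conclude
\[
T_{\operatorname{naNHilb}^{\underline{d}}(\A^n),\lambda_\bullet}^{\operatorname{fix}}
=\sum_{\mathbf{u}\in \lambda_{r+1}\setminus\{\mathbf{0}\}} W_T'(\mathbf{u}),
\]
whose right-hand side depends only on $\lambda_{r+1}$. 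There is no genuine obstacle: the content of the claim is that the apparently enumeration-dependent weight function $W_T$ can be repackaged into a purely set-theoretic count of additive decompositions inside $\lambda_{r+1}$, and no constraint from the filtration indices $w(j)\le w(k)$ survives once one passes to the fixed (trivial-weight) part.
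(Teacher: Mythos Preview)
Your proposal is correct and follows essentially the same approach as the paper: both arguments start from the formula $\sum_{k=1}^{d-1} W_T(\mathbf{u}_k)$ and observe that the count of ordered pairs $(i,j)$ with $\mathbf{u}_i+\mathbf{u}_j=\mathbf{u}_k$ can be rewritten as a count of unordered pairs $\{\mathbf{v},\mathbf{w}\}$ in $\lambda_{r+1}$ summing to $\mathbf{u}_k$, which is manifestly enumeration-independent. Your version is in fact slightly more careful, making explicit that $\mathbf{u}_0=\mathbf{0}$ and restricting the pairs to $\lambda_{r+1}\setminus\{\mathbf{0}\}$.
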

\begin{proof}
    The above formula only depends on a choice of enumeration $\mathbf{u}_0,\dots, \mathbf{u}_{d-1}$. However, the allowable choices of enumerations does depend on the nesting, so we should check that the formula is independent of the enumeration. Clearly the number of unit vectors is independent of the enumeration and for $u=\mathbf{u}_k\in \lambda_{r+1}$ not a unit vector, we see that
    \[\#\{(i,j)\: | \: 1\leq i \leq j\leq d-1, \mathbf{u}_i+\mathbf{u}_j=\mathbf{u}_k\}=\#\{\{v,w\}\: | \: v,w\in \lambda_{r+1}, v+w=u\}\]
    which is clearly independent of the choice of enumeration.
\end{proof}

\subsubsection{The associativity bundle}
We calculate the equivariant K-theory of $\mathcal{E}_{\operatorname{ass},\lambda_\bullet}$ and $\mathcal{E}_{\operatorname{ass},\lambda_\bullet}^{\operatorname{fix}}$. 
\begin{lemma}
    Let $\underline{d}\in (\Z_{\geq 0})^{r+1}$, $n\in \N$ and let $\lambda_\bullet \in \operatorname{NHilb}^{\underline{d}}(\A^n)^T$. Let $\mathcal{V}_\bullet$ be the universal flag on $\operatorname{naNHilb}^{\underline{d}}(\A^n)$. We have an equality
    \[\mathcal{V}_{\bullet,\lambda_\bullet}=N_\bullet^{\lambda}\]
    in $K_T^{0}(\operatorname{pt})$.
\end{lemma}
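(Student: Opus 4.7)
The plan is to recycle the computation of $\rho$ carried out in the proof of Proposition \ref{K-theory of tanget space}: once we know how $T$ acts on the chosen lift of $\lambda_\bullet$ to $M_{n,\underline{d}}$, reading off the weights of $V_\bullet=N_\bullet\otimes\oo_{M_{n,\underline{d}}}$ at that point gives the identification we want.

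First I would choose the lift $p=(\psi_1,\psi_2,v)\in M_{n,\underline{d}}$ of $\lambda_\bullet$ exactly as in the proof of Proposition \ref{K-theory of tanget space}: fix a basis $(v_u)_{u\in\lambda_{r+1}}$ of $N_0$ indexed so that $v_u\in N_i$ iff $u\notin \lambda_i$, and set $\psi_2(v_u\otimes v_{u'})=v_{u+u'}$, $v=v_0$, and $\psi_1(x_i)=v_{e_i}$ when $e_i\in\lambda_{r+1}$ and zero otherwise. Since $M_{n,\underline{d}}\to \operatorname{naNHilb}^{\underline{d}}(\A^n)$ is a $T$-equivariant principal $P_{\underline{d}}$-bundle, the fiber $\mathcal{V}_{\bullet,\lambda_\bullet}$ is obtained from the fiber $V_\bullet|_p=N_\bullet$ by twisting its (trivial) $T$-equivariant structure by $\rho^{-1}$, where $\rho\colon T\to P_{\underline{d}}$ is the unique homomorphism such that $\rho(t).p=t.p$.

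Next I would compute $\rho$ on the basis $(v_u)$. The identity $\rho(t).p=t.p$ forces $\rho(t)v_0=v_0$, and $\rho(t)\psi_1(x_i)=\psi_1^t(x_i)=t_i\psi_1(x_i)$ gives $\rho(t)v_{e_i}=t_iv_{e_i}$ whenever $e_i\in\lambda_{r+1}$. The relation $\psi_2^{\rho(t)}=\psi_2$ then forces
\[
\rho(t)v_{u+u'}=\psi_2\bigl(\rho(t)v_u\otimes \rho(t)v_{u'}\bigr),
\]
and induction on $|u|$ yields $\rho(t)v_u=t^u v_u$ for every $u\in\lambda_{r+1}$ (this is the same computation already carried out in Proposition \ref{K-theory of tanget space}). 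Twisting by $\rho^{-1}$ thus endows $v_u$ with weight $-u$, i.e.\ character $t^{-u}$, and $N_i$ with $T$-character $\sum_{u\in\lambda_{r+1}\setminus\lambda_i}t^{-u}$.

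Finally, on the other side, $N_i^\lambda=[I_{\lambda_i}/I_{\lambda_{r+1}}]$ has monomial $\C$-basis $\{x^u : u\in\lambda_{r+1}\setminus\lambda_i\}$, and with our convention the coordinate $x_j$ transforms by $t_j^{-1}$, so $x^u$ has weight $-u$. Hence the characters of $\mathcal{V}_{i,\lambda_\bullet}$ and $N_i^\lambda$ coincide for every $i$, proving the claimed equality in $K_T^0(\operatorname{pt})$. There is no real obstacle here: the only thing to keep track of is the sign convention identifying the weight of $x^u$ with $-u$, and the core content (the formula $\rho(t)v_u=t^u v_u$) is already established.
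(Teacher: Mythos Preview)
Your proposal is correct and follows essentially the same route as the paper: choose the same lift $p\in M_{n,\underline{d}}$, invoke the homomorphism $\rho$ computed in Proposition~\ref{K-theory of tanget space}, and identify the $T$-action on the fiber $\mathcal{V}_{\bullet,\lambda_\bullet}\cong N_\bullet$ as the $P_{\underline{d}}$-action precomposed with $\rho^{-1}$, which was already shown there to yield $N_\bullet^\lambda$. The paper's proof simply cites that computation rather than reproducing it, whereas you spell out the character matching explicitly; the content is the same.
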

\begin{proof}
    Let $U\subset M_{n,\underline{d}}$ be the preimage of $\lambda_\bullet$ under the quotient map. Using the lift $p\in U$ as in the proof of \cref{K-theory of tanget space} we get a $T$-equivariant isomorphism $U\cong P_{\underline{d}}$ where $P_{\underline{d}}$ is equipped with a $T$-structure coming from $\rho$. It follows that the induced $T$-representation on $\mathcal{V}_{\bullet,\lambda}\cong N_{\bullet}$ is the one coming from the $P_{\underline{d}}$ action precomposed with $\rho^{-1}$. This again naturally identifies with $N_\bullet^{\lambda}$, see the proof of \cref{K-theory of tanget space}.
\end{proof}

\begin{corollary}
    Let $\underline{d}\in (\Z_{\geq 0})^{r+1}$, $n\in \N$ and let $\lambda_\bullet \in \operatorname{NHilb}^{\underline{d}}(\A^n)^T$. Let 
    \[N_\bullet^{+,\lambda}=N_{\bullet}^{\lambda}/(1\cdot \C)\]
    for $1\in N_{\bullet}^{\lambda}$ the image of $1\in \C[x_1,\dots, x_n]$. We have an identification 
    \[\mathcal{E}_{\operatorname{ass},\lambda_\bullet}=\Hom^{\operatorname{fil},\operatorname{alt}}(N_\bullet^{+,\lambda}\otimes N_\bullet^{+,\lambda}\otimes N_\bullet^{+,\lambda},N_\bullet^{\lambda}). \]
    where the right-hand side is trilinear maps which are alternating in the outer coordinates and takes $N_i\otimes N_j\otimes N_k$ to $N_{\operatorname{max}\{i,j,k\}}$. In particular, if we choose an enumeration $\mathbf{u}_0,\dots, \mathbf{u}_{d-1}$ as in \cref{Partition setup}, then the trace of $\mathcal{E}_{\operatorname{ass},\lambda_\bullet}$ is given by
    \[\sum_{\substack{1\leq i,j,k\leq d-1 \\ i<k}} \sum_{\substack{0\leq m\leq d-1\\ w(k),w(j)\leq w(m)}}t^{\mathbf{u}_i+\mathbf{u}_j+\mathbf{u}_k-\mathbf{u}_m}.\]
\end{corollary}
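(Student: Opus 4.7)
The identification of $\mathcal{E}_{\operatorname{ass},\lambda_\bullet}$ is essentially a fibrewise restriction of the global definition. Since $\mathcal{V}_{\bullet,\lambda_\bullet}=N_\bullet^{\lambda}$ by the preceding lemma, and the universal section $\mathfrak{s}$ specialises at $\lambda_\bullet$ to the image of $1\in\C[x_1,\dots,x_n]$ in $N_0^{\lambda}$, we get $(\mathcal{V}_0/\mathfrak{s})|_{\lambda_\bullet}=N^{+,\lambda}_\bullet$ with its inherited filtration. The three defining conditions on $\mathcal{E}_{\operatorname{ass}}$ (trilinear with source $(\mathcal{V}_0/\mathfrak{s})^{\otimes 3}$, alternating in the outer entries, and sending $\mathcal{V}_i\otimes \mathcal{V}_j\otimes \mathcal{V}_k$ into $\mathcal{V}_{\max\{i,j,k\}}$) descend verbatim to the fibre, giving the stated identification with $\Hom^{\operatorname{fil},\operatorname{alt}}(N^{+,\lambda}_\bullet{}^{\otimes 3},N_\bullet^{\lambda})$.

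For the character, I would take the monomial basis $\{x^{\mathbf{u}_k}:0\leq k\leq d-1\}$ of $N_0^{\lambda}$, whose image $\{x^{\mathbf{u}_k}:1\leq k\leq d-1\}$ is a basis of $N^{+,\lambda}_\bullet$; by construction $x^{\mathbf{u}_k}$ is a $T$-eigenvector of weight $-\mathbf{u}_k$, and the flag position of $x^{\mathbf{u}_k}$ is exactly $w(k)$. A dual basis element of $\Hom(N^{+,\lambda}_\bullet{}^{\otimes 3},N_0^{\lambda})$ sending $x^{\mathbf{u}_i}\otimes x^{\mathbf{u}_j}\otimes x^{\mathbf{u}_k}$ to $x^{\mathbf{u}_m}$ (and all other monomial triples to zero) has $T$-weight $\mathbf{u}_i+\mathbf{u}_j+\mathbf{u}_k-\mathbf{u}_m$. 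The task is then to enumerate which of these dual basis elements lie in the subspace cut out by the two remaining constraints.

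The alternating condition in the outer variables identifies the dual basis element with index $(i,j,k,m)$ with minus the one with index $(k,j,i,m)$ and forces the diagonal $i=k$ contributions to vanish, so a spanning set is parametrised by $1\leq i<k\leq d-1$ and $1\leq j\leq d-1$. The filtration requirement reads $w(m)\geq \max\{w(i),w(j),w(k)\}$; since the enumeration of $\lambda_{r+1}$ was chosen so that $w$ is nondecreasing, $i<k$ automatically gives $w(i)\leq w(k)$, so the max collapses to $\max\{w(j),w(k)\}$, and the condition becomes $w(k),w(j)\leq w(m)$. Summing the weights $t^{\mathbf{u}_i+\mathbf{u}_j+\mathbf{u}_k-\mathbf{u}_m}$ over all admissible quadruples $(i,j,k,m)$ yields exactly the stated formula.

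The step I expect to require the most care is reconciling the two bookkeeping devices in the final sum — the alternating parametrisation $i<k$ and the filtration bound $\max\{w(i),w(j),w(k)\}\leq w(m)$ — without double counting. The key simplification, and the only non-routine point, is that the monotonicity of $w$ inherent in the enumeration setup makes the $w(i)$ constraint redundant precisely because the alternating condition has already forced $i<k$; this is what produces the clean asymmetric formula with only $w(k),w(j)\leq w(m)$ in the indexing set.
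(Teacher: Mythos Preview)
Your proof is correct and matches the paper's intended argument; the paper offers no explicit proof for this corollary, treating it as an immediate consequence of the preceding identification $\mathcal{V}_{\bullet,\lambda_\bullet}=N_\bullet^{\lambda}$ together with the definition of $\mathcal{E}_{\operatorname{ass}}$, and your write-up is exactly the natural unpacking of that. The one point you single out as delicate --- that monotonicity of $w$ under the chosen enumeration makes the $w(i)$ bound redundant once $i<k$ is imposed --- is indeed the only thing beyond straight bookkeeping, and you handle it correctly.
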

Again, it will be convenient for later notation to have a definition similar to \cref{recursive tangent weights}. 
\begin{lemma} \label{recursive bundle weights}
    We define for $0\leq m\leq r$ the set $S_m^{\operatorname{ass}}(\lambda_\bullet)$ as follows. Picking an enumeration $\mathbf{u}_0,\dots, \mathbf{u}_{d-1}$ we define 
    \[S^{\operatorname{ass}}_m(\lambda_{\bullet})=\{\mathbf{u}_i+\mathbf{u}_j+\mathbf{u}_k\: |\: 1\leq i,j,k\leq d-1, \: i<k, \: w(k),w(j)\leq w(m)\}.\]
    Then the trace of $\mathcal{E}_{\operatorname{ass},\lambda_\bullet}$ is given by 
    \[\sum_{m=0}^r \sum_{\mathbf{v}\in \lambda_{m+1}\backslash \lambda_m} \sum_{\mathbf{u}\in S_m^{\operatorname{ass}}(\lambda_\bullet)} t^{\mathbf{u}-\mathbf{v}}.\]
\end{lemma}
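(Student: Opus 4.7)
The plan is a straightforward reindexing of the trace formula from the preceding corollary. The starting point is
\[\operatorname{Tr}(\mathcal{E}_{\operatorname{ass},\lambda_\bullet}) = \sum_{\substack{1\leq i,j,k\leq d-1 \\ i<k}} \sum_{\substack{0\leq m'\leq d-1\\ w(k),w(j)\leq w(m')}} t^{\mathbf{u}_i+\mathbf{u}_j+\mathbf{u}_k-\mathbf{u}_{m'}}.\]
First I would swap the order of summation so that the outer variable is $m'$, and then partition $\{0,\dots,d-1\}$ into the level sets of the weight function $w$. By the definition of $w$ in Setup~\ref{Partition setup}, the level set $\{m' : w(m') = m\}$ is in bijection with $\lambda_{m+1}\setminus\lambda_m$ via $m'\mapsto \mathbf{u}_{m'}$, with cardinality $d_m$. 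Substituting $\mathbf{v}=\mathbf{u}_{m'}$ converts the outer sum into a nested double sum over $0\leq m\leq r$ and $\mathbf{v}\in\lambda_{m+1}\setminus\lambda_m$, leaving the triple sum over $(i,j,k)$ with $1\leq i,j,k\leq d-1$, $i<k$, and $w(k),w(j)\leq m$ inside.

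Next I would observe that this remaining triple sum is, by the very definition of $S^{\operatorname{ass}}_m(\lambda_\bullet)$ as a multiset, exactly $\sum_{\mathbf{u}\in S^{\operatorname{ass}}_m(\lambda_\bullet)} t^{\mathbf{u}-\mathbf{v}}$. Inserting this into the previous step yields the claimed formula
\[\sum_{m=0}^r \sum_{\mathbf{v}\in \lambda_{m+1}\setminus \lambda_m} \sum_{\mathbf{u}\in S_m^{\operatorname{ass}}(\lambda_\bullet)} t^{\mathbf{u}-\mathbf{v}}.\]

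There is no substantive obstacle beyond careful bookkeeping. The single point I would emphasize in the write-up is that $S^{\operatorname{ass}}_m(\lambda_\bullet)$ must be read as a multiset rather than a set of lattice points: distinct admissible triples $(i,j,k)$ may yield the same vector $\mathbf{u}_i+\mathbf{u}_j+\mathbf{u}_k\in\Z^n$, and each such coincidence contributes its own copy of the weight $t^{\mathbf{u}-\mathbf{v}}$ to the trace. With this convention the reindexing is a tautology, entirely parallel to the corresponding passage from the explicit weight sum to the recursive description $S_m^T(\lambda_\bullet)$ in Lemma~\ref{recursive tangent weights}.
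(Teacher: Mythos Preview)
Your proposal is correct and is exactly the straightforward reindexing argument the paper has in mind; in fact the paper states this lemma without proof, treating it as an immediate consequence of the preceding trace formula, so your write-up actually supplies the omitted details. Your emphasis on reading $S^{\operatorname{ass}}_m(\lambda_\bullet)$ as a multiset is the one substantive point worth making, and you have it right.
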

\begin{corollary}
     Let $\lambda_\bullet$ be an $n$-dimensional nested partition with enumeration $\mathbf{u}_0,\dots, \mathbf{u}_{d-1}$ as in \cref{Partition setup}.
    Define 
    \[W_B(\mathbf{u}_m)=\#\{(i,j,k)\in \{1,\dots ,d-1\}^{ 3}\: |\: i<k, \: \mathbf{u}_i+\mathbf{u}_j+\mathbf{u}_k=\mathbf{u}_m\}\]
    for $1\leq m \leq d-1$. Then the trace of $\mathcal{E}_{\operatorname{ass},\lambda_\bullet}^{\operatorname{fix}}$ is
    \[\sum_{m=1}^{d-1} W_B(\mathbf{u}_m)\]
\end{corollary}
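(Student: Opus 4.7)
The plan is to read off $\mathcal{E}_{\operatorname{ass},\lambda_\bullet}^{\operatorname{fix}}$ directly from the explicit monomial expansion of the trace of $\mathcal{E}_{\operatorname{ass},\lambda_\bullet}$ established in the preceding corollary, by identifying the summands of weight zero. That formula says the contribution to the trace of a quadruple $(i,j,k,m)$ is $t^{\mathbf{u}_i+\mathbf{u}_j+\mathbf{u}_k-\mathbf{u}_m}$, so the trivial $T$-characters are in bijection with the quadruples satisfying $1\le i,j,k\le d-1$, $i<k$, $0\le m\le d-1$, $w(j),w(k)\le w(m)$, and $\mathbf{u}_i+\mathbf{u}_j+\mathbf{u}_k=\mathbf{u}_m$. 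The task is therefore to show that the constraints on $m$ collapse exactly to $1\le m\le d-1$ and that the $w$-conditions become redundant, so that the count reduces to $\sum_m W_B(\mathbf{u}_m)$.

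First I would eliminate $m=0$. By the conventions of \cref{Partition setup}, the element $\mathbf{u}_0$ is the unique element of $\lambda_1$ forced to be the origin of $\Z_{\ge 0}^n$ (since $\{\mathbf{u}_0\}$ has to be a partition in its own right, hence contains $0$), whereas $\mathbf{u}_i,\mathbf{u}_j,\mathbf{u}_k$ for $i,j,k\ge 1$ are nonzero vectors in $\Z_{\ge 0}^n$, so their sum is nonzero. Hence only $m\ge 1$ can contribute.

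The only step with any real content is showing that $w(j)\le w(m)$ and $w(k)\le w(m)$ are automatic under $\mathbf{u}_i+\mathbf{u}_j+\mathbf{u}_k=\mathbf{u}_m$. This is the same mechanism used tacitly in the preceding tangent-space corollary: the equation forces $\mathbf{u}_j\le\mathbf{u}_m$ and $\mathbf{u}_k\le\mathbf{u}_m$ coordinatewise, and any Young-type partition containing $\mathbf{u}_m$ must contain every vector dominated by it. Since $\mathbf{u}_m\in\lambda_{w(m)+1}$ and $\lambda_{w(m)+1}$ is a partition, we get $\mathbf{u}_j,\mathbf{u}_k\in\lambda_{w(m)+1}$, which is equivalent to $w(j),w(k)\le w(m)$ by definition of $w$.

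Combining these two reductions, the trace of $\mathcal{E}_{\operatorname{ass},\lambda_\bullet}^{\operatorname{fix}}$ equals
\[\sum_{m=1}^{d-1}\#\{(i,j,k)\in\{1,\ldots,d-1\}^{3}\mid i<k,\ \mathbf{u}_i+\mathbf{u}_j+\mathbf{u}_k=\mathbf{u}_m\}=\sum_{m=1}^{d-1}W_B(\mathbf{u}_m),\]
as claimed. No significant obstacle is expected: the proof is bookkeeping on top of the explicit formula of the preceding lemma, the only subtlety being the down-closedness argument that turns the $w$-inequalities into consequences of the additive identity.
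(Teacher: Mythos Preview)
Your proposal is correct and follows essentially the same approach as the paper: both read off the trivial-weight summands from the explicit trace formula and observe that the conditions $w(j),w(k)\le w(m)$ are automatic via down-closedness of the partition $\lambda_{w(m)+1}$ containing $\mathbf{u}_m$. Your treatment is in fact slightly more careful than the paper's, as you explicitly rule out $m=0$ (which the paper leaves implicit).
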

\begin{proof}
    This follows almost directly from the above formula. The only thing we need to notice is that if $\mathbf{u}_i+\mathbf{u}_j+\mathbf{u}_k=\mathbf{u}_m$ for $1\leq i,j,k\leq d-1$ then $\mathbf{u}_j,\mathbf{u}_k\leq \mathbf{u}_m$ so $i,k\leq m$ hence it already holds that $w(j),w(k)\leq w(m)$.
\end{proof}
\begin{corollary}
    The fixed part of the associativity bundle $\mathcal{E}_{\operatorname{ass},\lambda_\bullet}^{\operatorname{fix}}$ at nested partition $\lambda_\bullet$ depends only on $\lambda_{r+1}$ and not on the nesting.
\end{corollary}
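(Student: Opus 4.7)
The strategy will be to mirror the argument given just above for the tangent space: I will rewrite the single-vector count $W_B(\mathbf{u}_m)$ in a form that refers only to the underlying set $\lambda_{r+1}$ and not to the chosen enumeration. Since each nonzero vector of $\lambda_{r+1}$ appears exactly once as some $\mathbf{u}_k$ with $1\leq k\leq d-1$, summing over $m=1,\ldots,d-1$ will then run exactly once through $\lambda_{r+1}\setminus\{\mathbf{0}\}$, and the total $\sum_{m=1}^{d-1}W_B(\mathbf{u}_m)$ will be manifestly independent of both the enumeration and the nesting.

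To carry this out, fix any enumeration $\mathbf{u}_0,\ldots,\mathbf{u}_{d-1}$ compatible with the nesting. The assignment $k\mapsto\mathbf{u}_k$ is a bijection from $\{1,\ldots,d-1\}$ onto $\lambda_{r+1}\setminus\{\mathbf{0}\}$, with $\mathbf{u}_0=\mathbf{0}$ corresponding to the unit. Under this bijection, a triple $(i,j,k)$ with $i<k$ corresponds to a datum consisting of an unordered pair $\{\mathbf{a},\mathbf{c}\}$ of \emph{distinct} nonzero elements of $\lambda_{r+1}$ together with a middle element $\mathbf{b}\in\lambda_{r+1}\setminus\{\mathbf{0}\}$; the inequality $i<k$ simply selects one representative of the two orderings of $\{\mathbf{a},\mathbf{c}\}$. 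Hence
\[
W_B(\mathbf{u}_m)=\#\bigl\{\bigl(\{\mathbf{a},\mathbf{c}\},\mathbf{b}\bigr):\mathbf{a},\mathbf{b},\mathbf{c}\in\lambda_{r+1}\setminus\{\mathbf{0}\},\ \mathbf{a}\neq\mathbf{c},\ \mathbf{a}+\mathbf{b}+\mathbf{c}=\mathbf{u}_m\bigr\}.
\]
The right hand side depends only on the vector $\mathbf{u}_m$ and the set $\lambda_{r+1}$, not on the chosen enumeration or the nesting, which finishes the proof after summing over $m$.

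The only mildly delicate point, and in this case it is not really an obstacle, is the bookkeeping between the index inequality $i<k$ and the vector inequality $\mathbf{u}_i\neq\mathbf{u}_k$: because the enumeration is injective, $i\neq k$ is automatic and equivalent to $\mathbf{u}_i\neq\mathbf{u}_k$, so the condition $i<k$ unambiguously picks out one of the two orderings of the pair. This is precisely the analogue of the step in the tangent-space corollary where the count $\#\{(i,j):i\leq j,\ \mathbf{u}_i+\mathbf{u}_j=\mathbf{u}_k\}$ was rewritten in terms of unordered pairs $\{v,w\}\subset\lambda_{r+1}$; no additional work is required here.
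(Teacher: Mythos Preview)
Your proof is correct and essentially identical to the paper's: both rewrite $W_B(\mathbf{u}_m)$ as a count that refers only to the set $\lambda_{r+1}\setminus\{\mathbf{0}\}$, the paper via ordered triples $(v,s,w)$ with $v\neq w$ divided by $2$, you via unordered pairs $\{\mathbf{a},\mathbf{c}\}$ together with a middle element $\mathbf{b}$. These are the same count, so there is nothing to add.
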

\begin{proof}
For any enumeration $\mathbf{u}_1,\dots, \mathbf{u}_{d-1}$ we see for $u=\mathbf{u}_m$ that
\begin{align*} W_B(\mathbf{u}_m)&=\#\{(i,j,k)\in \{1,\dots ,d-1\}^{ 3}\: |\: i<k, \: \mathbf{u}_i+\mathbf{u}_j+\mathbf{u}_k=\mathbf{u}_m\}\\
&=\#\{(v,s,w)\in \lambda_{r+1}\times \lambda_{r+1}\times \lambda_{r+1}\: |\: v\neq w, v+s+w=u\}\big{/}2
\end{align*}
and the last expression clearly only depends on $\lambda_{r+1}$.
\end{proof}
\subsection{The fixed virtual class}
The $T$-equivariant perfect obstruction theory $\mathbb{E}$ induces a virtual class on the fixed locus $\operatorname{NHilb}^{\underline{d}}(\A^n)^T$. Since the latter is zero dimensional, reduced and proper we are in the situation of \cref{fixed obstruction theory isolated points}. With that in mind we make the following definition.

\begin{defi}
    Let $\lambda_\bullet$ be a flag of partitions corresponding to a fixed point $\lambda_\bullet\in \operatorname{NHilb}^{\underline{d}}(\A^n)$. We will say that $\lambda_\bullet$ is admissible if $[\lambda_\bullet]^{\operatorname{vir}}\neq [0]$ i.e., if 
    \[\operatorname{rk}\mathcal{E}_{\operatorname{ass},[\lambda_\bullet]}^{\operatorname{fix}}=\operatorname{rk}T_{\operatorname{naNHilb}^{\underline{d}}(\A^n),[\lambda_\bullet]}^{\operatorname{fix}}.\]
\end{defi}
We recall that if we choose an enumeration $\mathbf{u}_0,\dots, \mathbf{u}_{d-1}$ of $\lambda_\bullet$ as in \cref{Partition setup} then we have 
\[\operatorname{rk}\mathcal{E}_{\operatorname{ass},\lambda_\bullet}^{\operatorname{fix}}=\sum_{m=0}^{d-1}W_B(\mathbf{u}_m) \text{ and } \operatorname{rk}T_{\operatorname{naNHilb}^{\underline{d}}(\A^n),\lambda_\bullet}^{\operatorname{fix}}=\sum_{m=0}^{d-1}W_T(\mathbf{u}_m) \]

where
\[W_T(\mathbf{u}_m)=\left\{\begin{array}{cl}
       0  & \mathbf{u}_m=e_i \text{ for some }i=1,\dots,n \\
       \#\{(i,j)\: | \: 1\leq i \leq j\leq d-1, \mathbf{u}_i+\mathbf{u}_j=\mathbf{u}_m\}-1  & \text{else}
    \end{array}\right.\]
and 
\[W_B(\mathbf{u}_m)=\#\{(i,j,k)\in \{1,\dots ,d-1\}^{ 3}\: |\: i<k, \: \mathbf{u}_i+\mathbf{u}_j+\mathbf{u}_k=\mathbf{u}_m\}.\]
We also note that admissibility of $\lambda_{\bullet}$ only depends on $\lambda_{r+1}$.

\begin{lemma}
    Let $\lambda_\bullet$ be an nested partition with chosen enumeration $\mathbf{u}_0,\dots, \mathbf{u}_{d-1}$ as in \cref{Partition setup}. Then 
    \[W_T(\mathbf{u}_m)\leq W_B(\mathbf{u}_m)\]
    for $m=1,\dots, d-1$.
\end{lemma}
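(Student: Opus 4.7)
\emph{Plan for the proof.} The plan is to establish the inequality by constructing an explicit injection from the pairs counted by $W_T(\mathbf{u}_m)+1$, minus one distinguished pair, into the triples counted by $W_B(\mathbf{u}_m)$, after first disposing of two trivial cases. Write $u:=\mathbf{u}_m$ and $\Lambda^\ast:=\lambda_{r+1}\setminus\{0\}=\{\mathbf{u}_1,\dots,\mathbf{u}_{d-1}\}$, the last equality holding because $\lambda_{r+1}$ is down-closed and $\mathbf{u}_0=0$. By definition $W_T(u)=0$ when $u=e_i$ is a unit vector, settling that case. If $|u|=2$ (coordinate sum), then any decomposition $v+w=u$ with $v,w\in\Lambda^\ast$ forces both to be unit vectors, giving a unique unordered multiset and $W_T(u)=1-1=0$; simultaneously, a triple in $(\Lambda^\ast)^{3}$ summing to $u$ would have total coordinate sum $\geq 3$, so $W_B(u)=0$.

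In the main case $|u|\geq 3$, I would fix once and for all a coordinate $s$ with $u_s\geq 1$ and observe that $e_s,\,u-e_s\in\Lambda^\ast$, since $\lambda_{r+1}$ is down-closed and $u\neq e_s$. The distinguished pair is $\{e_s,u-e_s\}$. For any other unordered pair $\{v,w\}$ with $v+w=u$, neither $v$ nor $w$ equals $e_s$ (else the other equals $u-e_s$), so (using $v_s+w_s=u_s\geq 1$) at least one of the two has positive $s$-coordinate. Picking that element canonically (say, the one with larger $s$-coordinate, with ties broken by a fixed linear order on $\Lambda^\ast$) and calling it $v$, we get $v-e_s\in\Lambda^\ast$. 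In the generic subcase $v\neq 2e_s$, the assignment
\[
\phi(\{v,w\})\;:=\;\bigl(\{e_s,\,v-e_s\},\,w\bigr)
\]
is a well-defined element of the triple count, with $\{e_s,v-e_s\}$ genuinely distinct.

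The degenerate subcase $v=2e_s$ would be handled by splitting $w$ instead: using $e_s$ if $u_s\geq 3$, and otherwise some $e_t$ with $t\neq s$ and $u_t\geq 1$, which exists because $|u|\geq 3$ and $u_s=2$. The hard part will be verifying that $\phi$ is injective across all case branches, since the ``split $v$'' and ``split $w$'' constructions could in principle produce the same triple from different pairs. My plan to address this is to fix the canonical splitting rule uniformly so that the output triple, together with the rule, recovers which side was split and hence reconstructs $\{v,w\}$ uniquely: concretely, the unique entry in the distinguished pair of the triple equal to $e_s$ (or $e_t$ in the degenerate branch) marks the split, and the remaining summand $w$ is then read off, allowing unambiguous reconstruction of $\{v,w\}$.
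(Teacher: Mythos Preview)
Your overall strategy is sound and dual to the paper's: you build an injection from the pair set (minus one distinguished pair) into the triple set by splitting off a fixed unit vector $e_s$, while the paper builds a surjection in the opposite direction using the unit vector $\mathbf{u}_{\phi(m)}$ of minimal index below $\mathbf{u}_m$. Your handling of the unit-vector case, the $|u|=2$ case, and the generic branch $v\ne 2e_s$ is correct.

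The gap is in the degenerate subcase $v=2e_s$: ``splitting $w$ instead'' need not land in the codomain. For $u=4e_s$ the only non-distinguished pair is $\{2e_s,2e_s\}$; your rule (use $e_s$ since $u_s\ge 3$) splits $w=2e_s$ into the outer pair $\{e_s,e_s\}$, but $W_B$ requires the outer indices to satisfy $i<k$, hence be distinct. For $u=2e_s+e_t$ the non-distinguished pair $\{2e_s,e_t\}$ has $w=e_t$, and splitting by $e_t$ gives $w-e_t=0\notin\Lambda^\ast$. The clean fix, which also eliminates both the case split and the injectivity headache you anticipate, is to pair $e_s$ with $w$ rather than with $v-e_s$: send $\{v,w\}$ to the triple with outer pair $\{e_s,w\}$ and middle $v-e_s$. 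Since $w\ne e_s$ (otherwise $\{v,w\}$ is the distinguished pair) and $v-e_s\in\Lambda^\ast$ (as $v_s\ge 1$ and $v\ne e_s$), this is always a valid triple, and it visibly determines $w$ and $v$, so injectivity is immediate. This is essentially the one-sided inverse the paper writes down when proving its map is onto.
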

\begin{proof}
We can assume that $\mathbf{u}_m$ is not a unit vector, since in that case we simply get $0\leq 0$. For each $i$ let 
\[\phi(i)=\min \left\{j \big|  \mathbf{u}_j \text{ is a unit vector and }\mathbf{u}_j\leq \mathbf{u}_i\right\}.\]
Note that $\phi(m)<m$ since $\mathbf{u}_m$ is not a unit vector. Let $\kappa$ be the map $\kappa(\mathbf{u}_i)=i$ be the map extracting the index. It will suffice to prove that
\begin{center}
    \begin{tikzcd}
        \left\{(i,j,k)\big|i<k,\mathbf{u}_i+\mathbf{u}_j+\mathbf{u}_k=\mathbf{u}_m \right\}\arrow{r} & \left\{(i,j)|\phi(m)<i\leq j,\mathbf{u}_i+\mathbf{u}_j=\mathbf{u}_m\right\} \\[-20pt]
        (i,j,k) \arrow[mapsto]{r} & \left(\min\{\kappa(\mathbf{u}_i+\mathbf{u}_j),k\},\max\{\kappa(\mathbf{u}_i+\mathbf{u}_j),k\}\right)
    \end{tikzcd}
\end{center}
is a surjection. Indeed, the cardinality of the codomain corresponds to 
\[\#\left\{(i,j)|i\leq j,\mathbf{u}_i+\mathbf{u}_j=\mathbf{u}_m\right\}-1\]
since we have removed the single element of the set corresponding to the pair 
\[(\kappa(\mathbf{u}_{\phi(m)}),\kappa(\mathbf{u}_m-\mathbf{u}_{\phi(m)})).\]
To show surjectivity we let $\mathbf{u}_i+\mathbf{u}_j=\mathbf{u}_m$ with $\phi(m)<i\leq j$. We must have either $\mathbf{u}_{\phi(m)}\leq \mathbf{u}_i$ or $\mathbf{u}_{\phi(m)}\leq \mathbf{u}_j$. In the first case we let $\mathbf{u}_k=\mathbf{u}_i-\mathbf{u}_{\phi(m)}$, then clearly $(i,j)$ is the image of $(\phi(m),k,j)$. Similarly in the second case we can just take $\mathbf{u}_k=\mathbf{u}_j-\mathbf{u}_{\phi(m)}$ then $(i,j)$ is in the image of $(\phi(m),j,i)$.
\end{proof}

\begin{prop}
Let $\lambda_{\bullet}$ be a flag of partitions. Then $\lambda_{\bullet}$ is admissible if and only if the following conditions hold.
        \begin{enumerate}
        \item For all $1\leq k\leq n$ and $\alpha>0$ we have 
        \[\alpha e_k\in \lambda_{r+1} \implies \alpha\leq 4.\]
        \item For all $1\leq k_1<k_2\leq n$ and all $\alpha_1,\alpha_2>0$ we have 
        \[\alpha_1e_{k_1}+\alpha_2e_{k_2}\in \lambda_{r+1} \implies \alpha_1+\alpha_2\leq 3.\]
        \item For all $m\geq 3$ and $1\leq k_1<k_2<\dots <k_m \leq m $ and all $\alpha_1,\dots, \alpha_m>0$ we have
        \[\sum_{i=1}^m \alpha_i e_{k_i}\notin \lambda_{r+1}.\]
    \end{enumerate} 
\end{prop}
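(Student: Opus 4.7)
The plan is first to reduce admissibility to a vector-by-vector combinatorial equality $W_T(\mathbf{u})=W_B(\mathbf{u})$ for $\mathbf{u}\in\lambda_{r+1}\setminus\{0\}$, and then to dispose of both directions by a finite case check. Combining the two preceding corollaries with the termwise bound $W_T(\mathbf{u}_m)\le W_B(\mathbf{u}_m)$ from the preceding lemma, the equality of
\[
\operatorname{rk}T^{\operatorname{fix}}_{\operatorname{naNHilb}^{\underline{d}}(\A^n),\lambda_\bullet}=\sum_{m=1}^{d-1} W_T(\mathbf{u}_m) \qquad\text{and}\qquad \operatorname{rk}\mathcal{E}^{\operatorname{fix}}_{\operatorname{ass},\lambda_\bullet}=\sum_{m=1}^{d-1} W_B(\mathbf{u}_m)
\]
is forced to hold term by term. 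Since $W_T(\mathbf{u})$ and $W_B(\mathbf{u})$ count only decompositions of $\mathbf{u}$ into positive lattice vectors, and any such summand is automatically $\le\mathbf{u}$ and hence in the partition $\lambda_{r+1}$, both quantities depend on the vector $\mathbf{u}$ alone. Admissibility of $\lambda_\bullet$ is therefore equivalent to $W_T(\mathbf{u})=W_B(\mathbf{u})$ for every $\mathbf{u}\in\lambda_{r+1}\setminus\{0\}$.

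The contrapositive of the ``only if'' direction I would handle by exhibiting minimal violators. Since $\lambda_{r+1}$ is downward-closed, failure of any of (1), (2), (3) forces it to contain at least one of
\[
5e_k,\quad 2e_{k_1}+2e_{k_2},\quad 3e_{k_1}+e_{k_2},\quad e_{k_1}+3e_{k_2},\quad e_{k_1}+e_{k_2}+e_{k_3}.
\]
For each of these a short direct enumeration of two- and three-part positive decompositions gives $W_T(\mathbf{u})<W_B(\mathbf{u})$; for instance $W_T(5e_k)=1<2=W_B(5e_k)$ and $W_T(e_{k_1}+e_{k_2}+e_{k_3})=2<3=W_B(e_{k_1}+e_{k_2}+e_{k_3})$, ruling out admissibility. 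For the ``if'' direction, conditions (1), (2), (3) together restrict each $\mathbf{u}\in\lambda_{r+1}$ to the finite list
\[
\{0,\ e_k,\ 2e_k,\ 3e_k,\ 4e_k,\ e_{k_1}+e_{k_2},\ 2e_{k_1}+e_{k_2},\ e_{k_1}+2e_{k_2}\},
\]
and a direct enumeration on each entry verifies $W_T(\mathbf{u})=W_B(\mathbf{u})$ (both sides vanish, except in the two boundary cases $\mathbf{u}=4e_k$ and $\mathbf{u}=2e_{k_i}+e_{k_j}$, where both sides equal $1$).

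The only real obstacle is the careful book-keeping in these finitely many explicit enumerations, in particular tracking when a triple's middle entry is determined and when an unordered pair of outer entries is forced to be distinct. Nothing conceptually new is needed beyond the surjection from the preceding lemma; once the small table above is established, the equivalence with admissibility follows at once from the reduction in the first paragraph.
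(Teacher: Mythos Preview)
Your proposal is correct and follows essentially the same approach as the paper: reduce admissibility to the termwise equality $W_T(\mathbf{u})=W_B(\mathbf{u})$ via the preceding inequality lemma, then verify equality on the finite allowed list and strict inequality on the minimal violators $5e_k$, $\alpha_1e_{k_1}+\alpha_2e_{k_2}$ with $\alpha_1+\alpha_2=4$, and $e_{k_1}+e_{k_2}+e_{k_3}$. Your write-up is in fact slightly more explicit than the paper's in two respects---spelling out that the termwise inequality forces termwise equality, and observing that $W_T(\mathbf{u})$, $W_B(\mathbf{u})$ depend only on $\mathbf{u}$ since any positive summand is automatically in $\lambda_{r+1}$ by downward closure---but the argument is the same.
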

\begin{proof}
    Choose an enumeration $\mathbf{u}_0,\dots \mathbf{u}_{d-1}$ as in \cref{Partition setup}. One can then explicitly check that for
    \[\mathbf{u}_m\in \{\alpha e_k, \alpha_1e_{k_1}+\alpha_2e_{k_2} \: | \: 1\leq k,k_1,k_2\leq n, \: \alpha\leq 4, \: \alpha_1+\alpha_2\leq 3\}\]
    we have $W_T(\mathbf{u}_m)=W_B(\mathbf{u}_m)$. This shows that these partitions are admissible. One can further check that 
    \begin{align*}
        W_T(5e_k)&< W_B(5e_k), \\
        W_T(\alpha_1e_{k_1}+\alpha_2e_{k_2})&< W_B(\alpha_1e_{k_1}+\alpha_2e_{k_2}), \\
        W_T(e_{k_1}+e_{k_2}+e_{k_3}) &< W_B(e_{k_1}+e_{k_2}+e_{k_3})
    \end{align*}
    for $\alpha_1+\alpha_2=4$. It follows that any partition not contained in
    \[\{\alpha e_k, \alpha_1e_{k_1}+\alpha_2e_{k_2} \: | \: 1\leq k,k_1,k_2\leq n, \: \alpha\leq 4, \: \alpha_1+\alpha_2\leq 4\}\]
    is non-admissible.
\end{proof}

\begin{remark}The theorem states that an admissible nested partition is admissible if and only if it does not contain any vector with 3 or more non-zero coordinates and for every choice of two coordinates the projected partition must be contained in the boxes corresponding to figure 2 below.
\begin{figure}[ht]
    \centering
\begin{ytableau}
\none  & & \none& \none& \none& \none  \\
\none  & & \none& \none& \none& \none  \\
\none  & & & \none& \none& \none  \\
\none  & & & & \none& \none  \\
\none  & & & & & \\
\end{ytableau}
\caption{}

\end{figure}
\end{remark}
\subsection{The localization formula}
We now apply the virtual localization formula of \cite{localization}. As explained in \cref{fixed obstruction theory isolated points} the formula yields
\[[\operatorname{NHilb}^{\underline{d}}(\A^n)]^{\operatorname{vir}}=\sum_{\lambda_\bullet}[\lambda_\bullet ]\cap\frac{e^T(\mathcal{E}_{\operatorname{ass},\lambda_\bullet})}{e^T(T_{\operatorname{naNHilb}^{\underline{d}}(\A^n),\lambda_{\bullet}})}\]
where the sum is over the admissible partitions. The above calculations now immediately give us the formula below.
\begin{remark}
    We use the following notation
\[\widetilde{\prod}_{\gamma \in \Gamma} \gamma=\prod_{0\neq \gamma\in \Gamma}\gamma. \]
for non-zero products in the formulas to come. Note that if $V$ is a $T$-representation with weights $\operatorname{Wt}(V)\subset \Z^n\cong \hat{T}$ then 
\[e^T(V^{\operatorname{mov}})=\widetilde{\prod}_{\mathbf{u}\in \operatorname{Wt}(V)}\mathbf{u}(\underline{s})\in A_T^*(\operatorname{pt})\cong \Z[s_1,\dots, s_n]\]
where we for $\mathbf{u}=(u_1,\dots, u_n)$ define
\[\mathbf{u}(\underline{s})=u_1s_1+\cdots+u_ns_n.\]
\end{remark}
\begin{theorem}
    Let $\underline{d}=(d_0,\dots,d_r)\in \Z^{r+1}_{\geq 0}$, $d=d_0+\cdots +d_r$ and $n\in \Z_{\geq 0}$. In the Chow group $A_*^T(\operatorname{NHilb}^{\underline{d}}(\A^n))_{\operatorname{loc}}$ localized in linear functions of $s_1,\dots, s_n$ the virtual fundamental class $[\operatorname{NHilb}^{\underline{d}}(\A^n)]^{\operatorname{vir}}$ equals
    \[\sum_{\lambda_\bullet} [\lambda_\bullet]\cap\frac{\prod_{\substack{1\leq i,j,k\leq d-1 \\ i<k}} \widetilde{\prod}_{\substack{0\leq m\leq d-1\\ w(k),w(j)\leq w(m)}}(\mathbf{u}_i+\mathbf{u}_j+\mathbf{u}_k-\mathbf{u}_m)(\underline{s})\widetilde{\prod}_{j=1}^{d-1}\prod_{\substack{0\leq k\leq d-1\\ w(j)\leq w(k)}}\mathbf{u}_j-\mathbf{u}_k(\underline{s})}{\prod_{i=1}^n\widetilde{\prod}_{k=0}^{d-1}(e_i-\mathbf{u}_k)(\underline{s})\prod_{1\leq i\leq j\leq d-1} \widetilde{\prod}_{\substack{0\leq k\leq d-1\\ w(j)\leq w(k)}}(\mathbf{u}_i+\mathbf{u}_j-\mathbf{u}_k)(\underline{s})}.\]
    Here the sum is over all admissible $(n-1)$-dimensional nested partitions $\lambda_\bullet$ of length $\underline{d}$ and where we for each $\lambda_{\bullet}$ have a chosen enumeration $\mathbf{u}_0,\dots, \mathbf{u}_{d-1}$ as in \cref{Partition setup}.  Alternatively, using the multisets of \cref{recursive tangent weights} and \cref{recursive bundle weights} we have
\[[\operatorname{NHilb}^{\underline{d}}(\A^n)]^{\operatorname{vir}}=\sum_{\lambda_\bullet}[x_{\lambda_\bullet}]\cap \prod_{m=0}^r \prod_{\mathbf{v}\in \lambda_{m+1}\backslash \lambda_m}\frac{\prod_{\substack{\mathbf{u}\in S_m^{ass}(\lambda_\bullet)\\ \mathbf{u}\neq \mathbf{v}}} (\mathbf{u}-\mathbf{v})(\underline{s})}{\prod_{\substack{\mathbf{u}\in S_m^T(\lambda_\bullet)\\ \mathbf{u}\neq \mathbf{v}}} (\mathbf{u}-\mathbf{v})(\underline{s})}\]
where the sum is again over admissible nested partitions $\lambda_\bullet$ of length $\underline{d}$.
\end{theorem}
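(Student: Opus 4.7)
The plan is to apply the virtual localization formula of \cite{localization} to the $T$-equivariant perfect obstruction theory $\mathbb{E}_{\operatorname{ass}}\to \mathbb{L}_{\NHilb^{\underline{d}}(\A^n)}$ constructed earlier. Since $\NHilb^{\underline{d}}(\A^n)$ embeds as $Z(s_{\operatorname{ass}})$ inside the smooth $T$-scheme $\naNHilb^{\underline{d}}(\A^n)$, and since the $T$-fixed locus is zero-dimensional, reduced, and proper (its points are in order-reversing bijection with flags of monomial ideals, i.e.\ $n$-dimensional nested partitions $\lambda_\bullet$ of length $\underline{d}$), we are precisely in the setting of \cref{fixed obstruction theory isolated points}.

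The first step is to invoke that example directly, which yields
\[
[\NHilb^{\underline{d}}(\A^n)]^{\operatorname{vir}}
= \sum_{\lambda_\bullet} [\lambda_\bullet]\cap
\frac{e^T(\mathcal{E}_{\operatorname{ass},\lambda_\bullet}^{\operatorname{mov}})}{e^T(T_{\naNHilb^{\underline{d}}(\A^n),\lambda_\bullet}^{\operatorname{mov}})},
\]
where the sum runs over the fixed points satisfying $\operatorname{rk}\mathcal{E}_{\operatorname{ass},\lambda_\bullet}^{\operatorname{fix}}=\operatorname{rk}T_{\naNHilb^{\underline{d}}(\A^n),\lambda_\bullet}^{\operatorname{fix}}$, that is, the admissible nested partitions as classified in the previous proposition. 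Contributions of non-admissible partitions vanish because their $[\lambda_\bullet]^{\operatorname{vir}}=0$ by the same example.

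Next I would substitute the weight computations already performed: the explicit trace of $T_{\naNHilb^{\underline{d}}(\A^n),\lambda_\bullet}$ computed in \cref{K-theory of tanget space} (and its cancellation form) together with the trace of $\mathcal{E}_{\operatorname{ass},\lambda_\bullet}$. Passing from $K$-theory classes to equivariant Euler classes uses the standard dictionary $t^{\mathbf{u}}\mapsto \mathbf{u}(\underline{s})$ under $K_0^T(\operatorname{pt})\to A^*_T(\operatorname{pt})$. Taking the \emph{moving} part is exactly what introduces the notation $\widetilde{\prod}$ for products omitting zero factors: every trivial weight appearing in the $K$-theory sum is discarded, and by admissibility the number of such zero weights is the same in numerator and denominator, so the quotient is well defined after localization. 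Plugging in the explicit trace formulas yields the closed expression displayed in the theorem.

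The main potential pitfall is bookkeeping: one must be careful that the zero-weight cancellations between $\mathcal{E}^{\operatorname{fix}}$ and $T^{\operatorname{fix}}$ are recorded consistently in the $\widetilde{\prod}$ notation, and that the restrictions $w(j)\le w(k)$ (coming from the filtration-preserving condition in $\operatorname{Hom}^{\operatorname{fil}}$ and $\operatorname{End}^{\operatorname{fil}}$) are tracked correctly when passing from $\Hom^{\operatorname{fil}}(\sym^2 N_\bullet^\lambda,N_\bullet^\lambda)-2\End^{\operatorname{fil}}(N_\bullet^\lambda)$ to the indexed product. Once this is in place, the alternative formulation in terms of the multisets $S_m^T(\lambda_\bullet)$ and $S_m^{\operatorname{ass}}(\lambda_\bullet)$ is obtained simply by regrouping the product over eigenweights according to the recursive description supplied by \cref{recursive tangent weights} and \cref{recursive bundle weights}, so the two formulas for $[\NHilb^{\underline{d}}(\A^n)]^{\operatorname{vir}}$ stated in the theorem are then tautologically equivalent.
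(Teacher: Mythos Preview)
Your proposal is correct and follows the paper's argument essentially verbatim: the paper applies virtual localization via \cref{fixed obstruction theory isolated points}, restricts the sum to admissible partitions, and reads off the explicit formula from the previously computed traces of $T_{\naNHilb^{\underline{d}}(\A^n),\lambda_\bullet}$ and $\mathcal{E}_{\operatorname{ass},\lambda_\bullet}$, with the alternative multiset form following from \cref{recursive tangent weights} and \cref{recursive bundle weights}. Your additional remarks about the $\widetilde{\prod}$ bookkeeping and the equality of fixed ranks under admissibility are exactly the points the paper establishes in the lemmas leading up to the theorem.
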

\begin{exmp}
    Let us consider the (non-nested) Hilbert scheme $\operatorname{Hilb}^{d+1}(\A^3)$ of $d$ points on $\A^n$. Then the formula reads
    \[\sum_{\lambda_\bullet} [x_{\lambda_\bullet}]\cap\frac{\prod_{\substack{1\leq i,j,k\leq d-1 \\ i<k}} \prod_{m=0}^{d-1}(\mathbf{u}_i+\mathbf{u}_j+\mathbf{u}_k-\mathbf{u}_m)(\underline{s})\prod_{j=1}^{d-1}\prod_{k=0}^{d-1}(\mathbf{u}_j-\mathbf{u}_k)(\underline{s})}{\prod_{i=1}^n\prod_{k=0}^{d-1}(e_i-\mathbf{u}_k)(\underline{s})\prod_{1\leq i\leq j\leq d-1} \prod_{k=0}^{d-1}(\mathbf{u}_i+\mathbf{u}_j-\mathbf{u}_k)(\underline{s})}\]
    Let us calculate this explicitly for $d=3$, i.e., the Hilbert scheme of three points. We have two types of partitions. They are either given by $\{0,e_i,2e_i\}$ or $\{0,e_i,e_j\}$ for $i\neq j$ and are both admissible. For the first type of partition, we get a contribution of 
    \[\frac{4s_i3s_i2s_i5s_i4s_i3s_is_i(-s_i)2s_is_i}{2s_is_i3s_i2s_is_i4s_i3s_i2s_i\prod_{k=1}^n s_k(s_k-2s_i)\prod_{k\neq i}(s_k-s_i)}\]
    which simplfies to 
    \[\frac{-10s_i^2}{\prod_{k=1}^n s_k(s_k-2s_i)\prod_{k\neq i}(s_k-s_i)}=\frac{10}{\prod_{k\neq i}s_k(s_k-s_i)(s_k-2s_i)}\]
    For the other type of partition, we get 
    \[\frac{(2s_i+s_j)(s_i+s_j)2s_i(s_i+2s_j)2s_j(s_i+s_j)s_i(s_i-s_j)s_j(s_j-s_i)}{2s_is_i(2s_i-s_j)(s_i+s_j)s_js_i2s_j(2s_j-s_i)s_j\prod_{k=1}^ns_k\prod_{k\neq i}(s_k-s_i)\prod_{k\neq j}(s_k-s_j)}\]
    which can be reduced to 
     \[\frac{(2s_i+s_j)(s_i+2s_j)(s_i+s_j)}{s_i(2s_i-s_j)s_j(2s_j-s_i)\prod_{k=1}^ns_k\prod_{k\neq i,j}(s_k-s_i)(s_k-s_j)}.\]
     We directly see that the virtual dimension is given by $3n-3$. Let us specialize to $n=3$ with a virtual dimension of $6$. Consider the tautological bundle $\oo^{[3]}_{\A^3}$ and let $c_2=c_2^T((\oo^{[3]}_{\A^3})^\vee)$ the second equivariant Chern class of the dual bundle. Let us integrate $c_2^3$ against the virtual class. For $\lambda=\{\mathbf{u}_0,\mathbf{u}_1,\mathbf{u}_3\}$ the Chern roots of $(\oo^{[3]}_{\A^3})^\vee$  are $\mathbf{u}_0(\underline{s}),\mathbf{u}_1(\underline{s}),\mathbf{u}_3(\underline{s})$. It follows that for $\lambda=\{0,e_i,2e_i\}$ we have 
     \[c_2^3|_{\lambda}=(2s_i^2)^3=8s_i^6\]
     while for $\lambda=\{0,e_i,e_j\}$ we have 
     \[c_2|_{\lambda}=(s_is_j)^3=s_i^3s_j^3.\]
     For $\{i,j,k\}=\{1,2,3\}$ the contribution at the partition $\{0,e_i,2e_i\}$ is then 
     \[\frac{80s_i^6}{s_j(s_j-s_i)(s_j-2s_i)s_k(s_k-s_i)(s_k-2s_i)}\]
     while the contribution at $\{0,e_i,e_j\}$ is 
     \[\frac{(2s_i+s_j)(s_i+2s_j)(s_i+s_j)s_i^3s_j^3}{s_i^2(2s_i-s_j)s_j^2(2s_j-s_i)s_k(s_k-s_i)(s_k-s_j)}=\frac{(2s_i+s_j)(s_i+2s_j)(s_i+s_j)s_is_j}{(2s_i-s_j)(2s_j-s_i)s_k(s_k-s_i)(s_k-s_j)}.\]
     Summing over all 3 choices for each of the two types of partitions we get
     \[\int_{[\operatorname{Hilb}^3(\A^3)]^{\operatorname{vir}}}c_2^T(\oo_{\A^3}^{[3]})^3=20\frac{(s_1+s_2+s_3)^3}{s_1s_2s_3}-31\frac{(s_1s_2+s_1s_3+s_2s_3)(s_1+s_2+s_3)}{s_1s_2s_3}+11.\]
     In particular, if we restrict to the Calabi-Yau torus $T_{\operatorname{CY}}=\{(t_1,t_2,t_3)\in T\: |\: t_1t_2t_3=1\}$ the virtual integral evaluates to $11$.
\end{exmp}

\section{Iterated residue formula}
\label{sec:nil-fil}
In this section we choose a dimension vector of the form 
\[\underline{d}=(1,d_1,\dots, d_{r})\in (\Z_{\geq 0})^{r+1}.\]
We let $\hat{d}=(d_1,\dots, d_r)\in (\Z_{\geq 0})^{r}$ so that $\underline{d}=(1,\hat{d})$.
We show how tautological virtual integrals on $\operatorname{NHilb}^{\underline{d}}(\A^n)$ can in some cases be expressed in terms of virtual integrals defined via $\operatorname{naNHilb}^{\underline{d}}_0(\A^n)$. We then develop an iterated residue formula for these latter types of integrals. The key to proving this formula is a partial resolution 
\[\widetilde{\operatorname{naNHilb}^{\underline{d}}_0}(\A^n)\to \operatorname{naNHilb}^{\underline{d}}_0(\A^n)\]
which fibers over the flag variety $\operatorname{Flag}_{\hat{d}}(\C^n)$.
\subsection{Reducing the integral to the nilpotently filtered locus}
Let $(\mathcal{V}_\bullet,\psi_2,s,\psi_1)$ be the universal structure on $\operatorname{naNHilb}^{\underline{d}}(\A^n)$. Let $\mathcal{Q}$ be the quotient of $\mathcal{H}om^{\operatorname{fil}}(\sym^2\mathcal{V}_1,\mathcal{V}_1)$ by the subsheaf of maps which takes $\mathcal{V}_1\otimes \mathcal{V}_k$ to $\mathcal{V}_{k+1}$. We let 
\[\mathcal{E}_{\operatorname{punct}}=\mathcal{H}om((\oo^n)^{\vee},\mathcal{V}_0/\mathcal{V}_1)\oplus \mathcal{Q}\]
and note that $\operatorname{naNHilb}_0^{\underline{d}}(\A^n)\subset \operatorname{naNHilb}^{\underline{d}}(\A^n)$ identifies with the zero locus of a ($T$-equivariant) section of $\mathcal{E}_{\operatorname{punct}}$. Since $\operatorname{naNHilb}_0^{\underline{d}}(\A^n)$ is smooth the embedding is regular with normal bundle $\mathcal{E}_{\operatorname{punct}}|_{\operatorname{naNHilb}_0^{\underline{d}}(\A^n)}$. This immediately allows us to compute the tangent space.
\begin{lemma}
Let $\lambda_\bullet$ be a nested partition such that the corresponding fixed point is in $\operatorname{naNHilb}_{0}^{\underline{d}}(\A^n)$. Let $\mathbf{u}_0,\dots, \mathbf{u}_{d-1}$ be an enumeration of $\lambda_\bullet$ in the sense of \cref{Partition setup}. Then the trace of $T_{\operatorname{naNHilb}_0^{\underline{d}}(\A^n),\lambda_\bullet}$ is given by
   \[(t_1+t_2+\cdots+t_n)\sum_{k=1}^{d-1} t^{-\mathbf{u}_k}+\sum_{1\leq i\leq j\leq d-1} \sum_{\substack{1\leq k\leq d-1\\ w(j)< w(k)}}t^{\mathbf{u}_i+\mathbf{u}_j-\mathbf{u}_k}-\sum_{j=1}^{d-1}\sum_{\substack{1\leq k\leq d-1\\ w(j)\leq w(k)}}t^{\mathbf{u}_j-\mathbf{u}_k}.\]
\end{lemma}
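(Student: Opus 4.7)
The plan is to exploit the regular embedding $\naNHilb^{\underline{d}}_0(\A^n)\hookrightarrow \naNHilb^{\underline{d}}(\A^n)$ cut out by the $T$-equivariant section $\Psi_1\oplus\Psi_2$ of the bundle $\mathcal{E}_{\operatorname{punct}}=\mathcal{H}om((\oo^n)^\vee,\mathcal{V}_0/\mathcal{V}_1)\oplus \mathcal{Q}$. Since both the ambient and the subscheme are smooth (the latter by the tower-of-Grassmannian-fibrations description), this embedding is automatically a regular embedding whose normal bundle is $\mathcal{E}_{\operatorname{punct}}|_{\naNHilb^{\underline{d}}_0(\A^n)}$. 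Consequently, in $K^0_T(\spec\C)$ one has
\[
T_{\naNHilb^{\underline{d}}_0(\A^n),\lambda_\bullet}=T_{\naNHilb^{\underline{d}}(\A^n),\lambda_\bullet}-\mathcal{E}_{\operatorname{punct}}|_{\lambda_\bullet}.
\]
The proof then reduces to computing the two summands of $\mathcal{E}_{\operatorname{punct}}|_{\lambda_\bullet}$ in turn and checking that subtracting them from the already established formula for the ambient tangent character yields the claimed expression.

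For the first summand, because $\lambda_\bullet\in \naNHilb^{\underline{d}}_0(\A^n)$ we have $\lambda_1=\{0\}$, so $\mathbf{u}_0=0$ and $(\mathcal{V}_0/\mathcal{V}_1)|_{\lambda_\bullet}$ carries the trivial $T$-weight. Hence $\mathcal{H}om((\oo^n)^\vee,\mathcal{V}_0/\mathcal{V}_1)|_{\lambda_\bullet}$ contributes $t_1+\cdots+t_n$, which cancels precisely the $k=0$ summand of the first sum of $T_{\naNHilb^{\underline{d}}(\A^n),\lambda_\bullet}$ and leaves $(t_1+\cdots+t_n)\sum_{k=1}^{d-1}t^{-\mathbf{u}_k}$.

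For the second summand I will first determine the character of $\mathcal{H}om^{\operatorname{fil}}(\sym^2 N_1,N_1)$ at the fixed point. The induced filtration on $\sym^2 N_1$ places the product $\mathbf{u}_i\mathbf{u}_j$ in the $\max(w(i),w(j))$-th piece, which equals $w(j)$ under the standing convention $i\leq j\Rightarrow w(i)\leq w(j)$. A filtration-preserving $\phi$ therefore satisfies $\phi(\mathbf{u}_i\mathbf{u}_j)\in N_{w(j)}$, so the character is $\sum_{1\leq i\leq j\leq d-1}\sum_{w(j)\leq w(k)}t^{\mathbf{u}_i+\mathbf{u}_j-\mathbf{u}_k}$. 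The quotiented subsheaf of maps sending $N_1\otimes N_k$ into $N_{k+1}$ imposes the stricter constraint $w(j)<w(k)$, so $\mathcal{Q}|_{\lambda_\bullet}$ has character $\sum_{1\leq i\leq j\leq d-1}\sum_{w(k)=w(j)}t^{\mathbf{u}_i+\mathbf{u}_j-\mathbf{u}_k}$. Subtracting this from the second sum of $T_{\naNHilb^{\underline{d}}(\A^n),\lambda_\bullet}$ converts the constraint $w(j)\leq w(k)$ into $w(j)<w(k)$, matching the lemma; the third sum is unaffected because $w(j)\geq 1$ for $j\geq 1$ already excludes $k=0$ in the constraint $w(j)\leq w(k)$.

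The only delicate point is pinning down the filtration convention on $\mathcal{H}om^{\operatorname{fil}}(\sym^2 N_1,N_1)$ at the fixed point, since the induced filtration on $\sym^2 N_1$ is defined as the image of $N_1\otimes N_k$ rather than diagonally; once this is correctly unpacked, the remaining verification is a clean cancellation and presents no real obstacle.
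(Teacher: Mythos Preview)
Your proof is correct and follows essentially the same approach as the paper: both use the short exact sequence for the regular embedding to write $T_{\naNHilb^{\underline{d}}_0(\A^n),\lambda_\bullet}=T_{\naNHilb^{\underline{d}}(\A^n),\lambda_\bullet}-\mathcal{E}_{\operatorname{punct}}|_{\lambda_\bullet}$, then compute the two summands of $\mathcal{E}_{\operatorname{punct}}|_{\lambda_\bullet}$ exactly as you do, obtaining $t_1+\cdots+t_n$ for the first and $\sum_{1\leq i\leq j\leq d-1}\sum_{w(k)=w(j)}t^{\mathbf{u}_i+\mathbf{u}_j-\mathbf{u}_k}$ for $\mathcal{Q}|_{\lambda_\bullet}$, and subtract. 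Your handling of the filtration on $\sym^2 N_1$ and the observation that $k=0$ is automatically excluded in the third sum when $d_0=1$ match the paper's reasoning.
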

\begin{proof}
    It is clear that
    \[T_{\operatorname{naNHilb}_0^{\underline{d}}(\A^n)}=T_{\operatorname{naNHilb}^{\underline{d}}(\A^n)}-\mathcal{E}_{\operatorname{punct}}.\]
    Since $d_0=1$ we see that
    \[N_0^\lambda/N_1^\lambda=\operatorname{span}_{\C}(x^{\mathbf{u}_0})\]
    which is the trivial $T$-representation. We conclude
    \[\mathcal{H}om((\oo^n)^\vee, \mathcal{V}_0/\mathcal{V}_1)|_{\lambda_\bullet}=\sum_{i=1}^nt_i.\]
    Furthermore, we get 
    \begin{align*}\mathcal{Q}_{\lambda_\bullet}&=\sum_{1\leq i\leq j\leq d-1}\sum_{\substack{1\leq k\leq d-1\\ w(j)\leq w(k)}}t^{\mathbf{u}_i+\mathbf{u}_j-\mathbf{u}_k}-\sum_{1\leq i\leq j\leq d-1}\sum_{\substack{1\leq k\leq d-1 \\ w(j)+1\le w(k)}}t^{\mathbf{u}_i+\mathbf{u}_j-\mathbf{u}_k} \\
    &=\sum_{1\leq i\leq j\leq d-1}\sum_{\substack{1\leq k\leq d-1\\ w(j)= w(k)}}t^{\mathbf{u}_i+\mathbf{u}_j-\mathbf{u}_k}.
    \end{align*}
    Recall that for $d_0=1$ we have that $T_{\operatorname{naNHilb}^{\underline{d}}(\A^n),\lambda_\bullet}$ equals
    \[(t_1+t_2+\cdots+t_n)\sum_{k=0}^{d-1} t^{-\mathbf{u}_k}+\sum_{1\leq i\leq j\leq d-1} \sum_{\substack{1\leq k\leq d-1\\ w(j)\leq w(k)}}t^{\mathbf{u}_i+\mathbf{u}_j-\mathbf{u}_k}-\sum_{j=1}^{d-1}\sum_{\substack{1\leq k\leq d-1\\ w(j)\leq w(k)}}t^{\mathbf{u}_j-\mathbf{u}_k}.\]
    From this, we see that $T_{\operatorname{naNHilb}^{\underline{d}}(\A^n),\lambda_\bullet}-\mathcal{E}_{\operatorname{punct},\lambda_\bullet}$ is equal to 
   \[(t_1+t_2+\cdots+t_n)\sum_{k=1}^{d-1} t^{-\mathbf{u}_k}+\sum_{1\leq i\leq j\leq d-1} \sum_{\substack{1\leq k\leq d-1\\ w(j)< w(k)}}t^{\mathbf{u}_i+\mathbf{u}_j-\mathbf{u}_k}-\sum_{j=1}^{d-1}\sum_{\substack{1\leq k\leq d-1\\ w(j)\leq w(k)}}t^{\mathbf{u}_j-\mathbf{u}_k}.\]
  
\end{proof}

Let us denote by $\operatorname{NHilb}^{\underline{d}}_{\operatorname{nil-fil}}(\A^n)\subset \operatorname{naNHilb}_0^{\underline{d}}(\A^n)$ the intersection of the punctual locus and associativity locus. Note that if the dimension vector is full, then this agrees with the punctual Hilbert scheme of points, but in general it will be a closed subscheme of the latter. Note that $\operatorname{NHilb}_{\operatorname{nil-fil}}^{\underline{d}}(\A^n)$ is cut out of $\operatorname{naNHilb}_0^{\underline{d}}(\A^n)$ via a section of $\mathcal{E}_{\operatorname{ass}}|_{\operatorname{naNHilb}_0^{\underline{d}}(\A^n)}$. We can therefore equip it with a perfect obstruction theory. Note that we are in the situation of \cref{double section zero locus} and so the inclusion 
\[\operatorname{NHilb}^{\underline{d}}_{\operatorname{nil-fil}}(\A^n)\hookrightarrow \operatorname{NHilb}^{\underline{d}}(\A^n)\]
admits a relative perfect obstruction theory from $\mathcal{E}_{\operatorname{punct}}$ compatible with the two previously defined perfect obstruction theories. We naturally want to apply \cref{relative virtual integrals}, so we try to describe the fixed-point locus of $\operatorname{NHilb}^{\underline{d}}_{\operatorname{nil-fil}}(\A^n)$. We note that for $\underline{d}=(1,1,\dots, 1)$ we get the punctual Hilbert scheme, which contains the full fixed-point locus. In general, we get the following.

\begin{lemma} \label{nil-fil monomial}
   Let
    \[I_{r+1}\subset \cdots \subset I_1\subset I_0= \C[x_1,\dots, x_n]\]
be a flag of monomial ideals with corresponding flag of partitions
    \[\lambda_1\subset \lambda_2\subset \cdots \subset \lambda_{r+1} \subset (\Z_{\geq 0})^n.\]
Then the corresponding fixed point of $\operatorname{NHilb}^{\underline{d}}(\A^n)$ factors through $\operatorname{NHilb}_{\operatorname{nil-fil}}^{\underline{d}}(\A^n)$ if and only if 
for all $k\geq 1$, $\mathbf{u}\in \lambda_{k+1}\backslash \lambda_{k}$ and $i=1,\dots, n$ we have
\[\mathbf{u}+e_i\notin \lambda_{k+1}.\]
\end{lemma}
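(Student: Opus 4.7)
The plan is to translate both the nilpotent filtration condition and the proposed combinatorial condition into statements about the monomial ideals $I_k$, and then observe that they coincide.

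First, I would unpack what it means for a $T$-fixed point to lie in $\operatorname{NHilb}^{\underline{d}}_{\operatorname{nil-fil}}(\A^n)$. By the definition at the start of the section, this locus equals the intersection of $\operatorname{naNHilb}^{\underline{d}}_0(\A^n)$ with the associativity locus, so it is the subscheme of $\operatorname{NHilb}^{\underline{d}}(\A^n)$ cut out by the two conditions: $I_1 = \mathfrak{m}$, and $\mathfrak{m}\cdot I_k\subset I_{k+1}$ for all $k = 1,\dots,r$. Since we are in the setting $d_0 = 1$, the quotient $\C[x]/I_1$ has dimension $1$, so $\lambda_1$ is a one-element downward-closed subset of $\Z_{\geq 0}^n$, forcing $\lambda_1 = \{0\}$ and hence $I_1 = \mathfrak{m}$ automatically. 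Thus only the conditions $\mathfrak{m}\cdot I_k\subset I_{k+1}$ for $k\geq 1$ need to be translated.

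Next, for a monomial ideal, $x^{\mathbf w}\in I_k$ iff $\mathbf w\notin \lambda_k$, so the condition $\mathfrak{m}\cdot I_k\subset I_{k+1}$ is equivalent to the combinatorial statement
\[
(\star)\quad \text{for every }\mathbf w\notin \lambda_k\text{ and every }i\in\{1,\dots,n\},\ \mathbf w+e_i\notin \lambda_{k+1}.
\]
It remains to prove that $(\star)$ is equivalent to the condition in the lemma, namely that for every $\mathbf u\in \lambda_{k+1}\setminus \lambda_k$ and every $i$, $\mathbf u+e_i\notin \lambda_{k+1}$. The forward implication is immediate by taking $\mathbf w=\mathbf u$. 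For the converse I would argue by cases on whether $\mathbf w$ lies in $\lambda_{k+1}$: if it does, then $\mathbf w\in \lambda_{k+1}\setminus \lambda_k$ and the assumption applies directly; if it does not, then since $\lambda_{k+1}$ is downward closed (being the complement of the monomials in a monomial ideal) and $\mathbf w\leq \mathbf w+e_i$, the element $\mathbf w+e_i$ cannot lie in $\lambda_{k+1}$ either.

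No serious obstacle is expected here; the whole statement reduces to a bookkeeping exercise once one records that partitions arising from monomial ideals are downward closed and that $d_0=1$ pins down $\lambda_1=\{0\}$. The only point where one must be slightly careful is the equivalence between the two combinatorial conditions: the naive translation of $\mathfrak{m}\cdot I_k\subset I_{k+1}$ quantifies over all $\mathbf w\notin \lambda_k$, not just those on the boundary $\lambda_{k+1}\setminus \lambda_k$, and the passage between the two formulations relies precisely on the downward closure of $\lambda_{k+1}$.
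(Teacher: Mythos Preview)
Your proposal is correct and follows essentially the same route as the paper: both arguments first use $d_0=1$ to pin down $\lambda_1=\{0\}$ (equivalently $I_1=\mathfrak{m}$), then translate $I_1\cdot I_k\subset I_{k+1}$ into the combinatorial condition on $\mathbf{w}\notin\lambda_k$, and finally observe that the case $\mathbf{w}\notin\lambda_{k+1}$ is automatic by downward closure so one may restrict to $\mathbf{w}\in\lambda_{k+1}\setminus\lambda_k$. Your write-up is, if anything, slightly more explicit about why the reduction to the boundary $\lambda_{k+1}\setminus\lambda_k$ uses downward closure, which the paper simply calls ``trivially true.''
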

\begin{proof}
Note first that $\operatorname{dim}_{\C}I_k/I_{k+1}=d_k$. In particular since $\dim_{\C}I_0/I_1=1$ we must have 
    \[I_1=\mathfrak{m}=(x_1,\dots, x_n)\]
    so that
    \[\lambda_1=\{\mathbf{0}\}.\]
    We note that the corresponding fixed point is in $\operatorname{NHilb}_{\operatorname{nil-fil}}^{\underline{d}}(\A^n)$ if and only if 
    \[I_1\cdot I_k\subset I_{k+1}\]
    for all $k$. This means that for all $k\geq 0$ and all $\mathbf{u}\notin \lambda_k$ and $\mathbf{v}\notin \lambda_1$ we have 
    \[\mathbf{u}+\mathbf{v}\notin \lambda_{k+1}.\]
    This statement is trivially true if $\mathbf{u}\notin \lambda_{k+1}$ so we can restrict to $\mathbf{u}\in \lambda_{k+1}\backslash\lambda_k$. Since $\mathbf{v}\notin \lambda_1$ if and only if $\mathbf{v}\neq 0$ we see that it is sufficient to consider $\mathbf{v}=e_i$ for $i=1,\dots, n$.
\end{proof}
We can now apply \cref{relative virtual integrals} to get the following result. 
\begin{prop}
Let $\alpha\in A_{\operatorname{NHilb}_{\operatorname{nil-fil}}^{\underline{d}}(\A^n),T}^*(\operatorname{NHilb}^{\underline{d}}(\A^n))$ be a local $T$-equivariant Chow cohomology class supported on the nilpotently filtered locus. Then 
\[\int_{[\operatorname{NHilb}^{\underline{d}}(\A^n)]^{\operatorname{vir}}} \alpha= \int_{[\operatorname{NHilb}_{\operatorname{nil-fil}}^{\underline{d}}(\A^n)]^{\operatorname{vir}}}\frac{ \alpha|_{\operatorname{NHilb}_{\operatorname{nil-fil}}^{\underline{d}}(\A^n)}}{e^T(\mathcal{E}_{\operatorname{punct}}|_{\operatorname{NHilb}_{\operatorname{nil-fil}}^{\underline{d}}(\A^n)})}.\]
If $\underline{d}=(1,1,\dots, 1)$ then
\[j_*\left(\frac{[\operatorname{NHilb}_0^{r+1}(\A^n)]^{\operatorname{vir}}}{e^T(\mathcal{E}_{\operatorname{punct}}|_{\operatorname{NHilb}_0^{r+1}(\A^n)})}\right)=[\operatorname{NHilb}^{r+1}(\A^n)]^{\operatorname{vir}}\]
and in particular for any $\alpha\in A_T^*(\operatorname{NHilb}^{r+1}(\A^n))$ we get
\[\int_{[\operatorname{NHilb}^{r+1}(\A^n)]^{\operatorname{vir}}} \alpha= \int_{[\operatorname{NHilb}_0^{r+1}(\A^n)]^{\operatorname{vir}}}\frac{ \alpha|_{\operatorname{NHilb}_0^{r+1}(\A^n)}}{e^T(\mathcal{E}_{\operatorname{punct}}|_{\operatorname{NHilb}_0^{r+1}(\A^n)})}.\]

\end{prop}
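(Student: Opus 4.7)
The plan is to deduce both claims from Proposition~\ref{relative virtual integrals} applied to the ambient $X = \operatorname{NHilb}^{\underline{d}}(\A^n)$, the closed subscheme $Z = \operatorname{NHilb}^{\underline{d}}_{\operatorname{nil-fil}}(\A^n)$, and the $T$-equivariant bundle $E = \mathcal{E}_{\operatorname{punct}}|_X$. By construction $Z$ is the scheme-theoretic intersection $X \cap \operatorname{naNHilb}_0^{\underline{d}}(\A^n)$ inside the smooth ambient $Y = \operatorname{naNHilb}^{\underline{d}}(\A^n)$, so the restriction to $X$ of the section of $\mathcal{E}_{\operatorname{punct}}$ cutting out $\operatorname{naNHilb}_0^{\underline{d}}(\A^n)$ gives a $T$-equivariant section $s$ with $Z = Z(s)$. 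The compatible triple of perfect obstruction theories on $X$, $Z$ and $j\colon Z\hookrightarrow X$ is then furnished by Example~\ref{double section zero locus}, applied with $W = \operatorname{naNHilb}_0^{\underline{d}}(\A^n)$ cut out of $Y$ by $\mathcal{E}_{\operatorname{punct}}$ and $X$ cut out of $Y$ by $\mathcal{E}_{\operatorname{ass}}$.

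The remaining hypothesis to verify is that $j'\colon Z^T \hookrightarrow X^T$ is regular of codimension zero. Since $X^T$ is a finite disjoint union of reduced, proper points indexed by nested monomial ideal flags, every subscheme of it is automatically clopen; hence $j'$ is an open immersion, and the first integral identity follows immediately from Proposition~\ref{relative virtual integrals}.

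The main, and essentially only, additional step is to argue, for the full flag case $\underline{d}=(1,\ldots,1)$, that $X^T$ actually equals $Z^T$, which triggers the stronger half of Proposition~\ref{relative virtual integrals}. Here I would argue combinatorially via Lemma~\ref{nil-fil monomial}: for a full flag nested monomial ideal, each step adds a unique new box $\mathbf{u}\in \lambda_{k+1}\setminus \lambda_k$, which must be minimal in $\Z_{\geq 0}^n \setminus \lambda_k$ since $\lambda_{k+1}$ is again a downward-closed Young diagram. If $\mathbf{u}+e_i$ lay in $\lambda_k$, downward closure would force $\mathbf{u}\in \lambda_k$, a contradiction; hence $\mathbf{u}+e_i \notin \lambda_k$, and as $\mathbf{u}+e_i\neq\mathbf{u}$ also $\mathbf{u}+e_i \notin \lambda_{k+1}$. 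Lemma~\ref{nil-fil monomial} then places every full flag monomial fixed point of $X$ in $Z$, so $X^T = Z^T$. Combined with the earlier identification $\operatorname{NHilb}_{\operatorname{nil-fil}}^{\underline{d}}(\A^n) = \operatorname{NHilb}_0^{r+1}(\A^n)$ in the full flag case, the stronger half of Proposition~\ref{relative virtual integrals} then delivers both the virtual class identity and the integral formula for arbitrary $\alpha \in A_T^*(X)$.
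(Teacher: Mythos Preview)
Your proposal is correct and follows exactly the approach the paper intends: the paper states this proposition immediately after setting up the compatible triple via Example~\ref{double section zero locus} and simply says ``We can now apply \cref{relative virtual integrals}'', leaving the verification of hypotheses implicit. Your writeup supplies those verifications cleanly, including the combinatorial check (via Lemma~\ref{nil-fil monomial}) that every full-flag monomial fixed point lies in the nilpotently filtered locus; the paper instead relies on the earlier identification $\operatorname{NHilb}_{\operatorname{nil-fil}}^{(1,\dots,1)}(\A^n)=\operatorname{NHilb}_0^{r+1}(\A^n)$ together with the evident fact that monomial ideals are supported at the origin, but your direct argument is equally valid.
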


\subsection{The partial resolution}

We construct a partial resolution given as a birational map 
\[\tau: \widetilde{\operatorname{naNHilb}_0^{\underline{d}}}(\A^n)\to \operatorname{naNHilb}_0^{\underline{d}}(\A^n) \]
such that the domain fibers over the flag variety
\[\mu:\widetilde{\operatorname{naNHilb}_0^{\underline{d}}}(\A^n)\to \operatorname{Flag}_{\hat{d}}(\C^n). \]
Using Atiyah-Bott localization on the flag variety this allows us to rewrite virtual integrals $ \operatorname{NHilb}_{\operatorname{nil-fil}}^{\underline{d}}(\A^n)$ as an iterated residue formula depending on virtual integrals on the fibers of $\mu$. We first define the variety. 

\begin{defi}
Let $\underline{d}=(1,\hat{d})=(1,d_1,\dots, d_r)\in \Z_{\geq 0}^{r+1}$ and $n\geq d_1+\cdots +d_r$. Let $\Psi_1: (\oo^n)^{\vee} \to \mathcal{V}_1$  be the universal map of $\operatorname{naNHilb}_0^{\underline{d}}(\A^n)$ and let 
\[(\oo^n)^{\vee} \twoheadrightarrow \mathcal{F}_1 \supset \mathcal{F}_2\supset \cdots \supset \mathcal{F}_{r+1}=0\]
be the dualized universal flag on the flag variety $\operatorname{Flag}_{\hat{d}}(\C^n)$. We let $\widetilde{\operatorname{naNHilb}_0^{\underline{d}}}(\A^n)$ be the closed locus of $\operatorname{naNHilb}_0^{\underline{d}}(\A^n)\times \operatorname{Flag}_{\hat{d}}(\C^n)$ where $\Psi_1$ factors through a (necessarily unique) map 
\[\mathcal{F}_1\to \mathcal{V}_1\]
which respects the flag structures. We denote by 
\begin{center}
    \begin{tikzcd}
        \widetilde{\operatorname{naNHilb}_0^{\underline{d}}}(\A^n) \arrow{d}{\tau} \arrow{r}{\mu} & \operatorname{Flag}_{\hat{d}}(\C^n) \\
        \operatorname{naNHilb}_0^{\underline{d}}(\A^n)&
    \end{tikzcd}
\end{center}
the projections to the non-associative Hilbert scheme and the flag variety. 
\end{defi}
\begin{lemma}
    The map $\mu:\widetilde{\operatorname{naNHilb}_0^{\underline{d}}}(\A^n) \to \operatorname{Flag}_{\hat{d}}(\C^n)$ is smooth. 
\end{lemma}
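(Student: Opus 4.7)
The plan is to prove smoothness of $\mu$ by constructing $\widetilde{\operatorname{naNHilb}_0^{\underline{d}}}(\A^n)$ as a tower of relative Grassmannian bundles over $\operatorname{Flag}_{\hat{d}}(\C^n)$, directly mimicking the argument of Proposition \ref{projective tower structure} in a relative setting. Since each stage will be smooth over the previous, $\mu$ will be a composition of smooth Grassmannian bundles and therefore smooth.

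Concretely, for $0 \leq k \leq r$ I would define an intermediate moduli space $\widetilde{Y}_k \to \operatorname{Flag}_{\hat{d}}(\C^n)$ whose $S$-valued points parameterize, relative to the pulled-back universal flag $\mathcal{F}_\bullet$ on $\operatorname{Flag}_{\hat{d}}(\C^n)$, tuples $(\mathcal{N}^{(k)}_\bullet, \psi_2, \widetilde{\psi}_1)$ where $\mathcal{N}^{(k)}_\bullet$ is a flag of type $(1, d_1, \dots, d_k)$ with non-associative algebra structure $\psi_2$, and $\widetilde{\psi}_1$ is a flag-preserving realization of the universal $\psi_1$ in the restricted quotients. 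By construction $\widetilde{Y}_0 = \operatorname{Flag}_{\hat{d}}(\C^n)$ (trivially smooth) and $\widetilde{Y}_r = \widetilde{\operatorname{naNHilb}_0^{\underline{d}}}(\A^n)$, with natural forgetful maps $\widetilde{Y}_k \to \widetilde{Y}_{k-1}$ at each stage.

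The key step is to establish that $\widetilde{Y}_k \to \widetilde{Y}_{k-1}$ is a Grassmannian bundle. Over a point of $\widetilde{Y}_{k-1}$ classified by $(\mathcal{F}_\bullet, \mathcal{N}^{(k-1)}_\bullet, \psi_2, \widetilde{\psi}_1^{<k})$, lifts to $\widetilde{Y}_k$ correspond to extensions by a rank-$d_k$ piece together with a compatible extension of the factoring. Running the pushout argument of Proposition \ref{projective tower structure} with the flag-restricted $\widetilde{\psi}_1^{<k}$ playing the role of $\psi_1$, such extensions are parameterized by rank-$d_k$ quotients of $\ker(\widetilde{\psi}_1^{<k} \oplus \psi_2)$, hence by the Grassmannian bundle $\operatorname{Gr}(d_k, \ker(\widetilde{\psi}_1^{<k} \oplus \psi_2)^\vee)$ over $\widetilde{Y}_{k-1}$, which is smooth.

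The main obstacle I anticipate is bookkeeping: verifying that the flag-preservation condition is consistently propagated through the pushout, so that the fiber of $\widetilde{Y}_k \to \widetilde{Y}_{k-1}$ really is a full Grassmannian and is not cut out by additional equations. Concretely, one must check that the induced map $\mathcal{F}_k/\mathcal{F}_{k+1} \to \mathcal{N}_k$ coming from restricting the extended $\widetilde{\psi}_1$ is automatically well defined from the pushout data, which amounts to a diagram chase parallel to the one in the proof of Proposition \ref{projective tower structure}. Once this verification is complete, smoothness of $\mu$ is immediate, as it factors as the composition of smooth Grassmannian bundles $\widetilde{Y}_r \to \widetilde{Y}_{r-1} \to \cdots \to \widetilde{Y}_0 = \operatorname{Flag}_{\hat{d}}(\C^n)$.
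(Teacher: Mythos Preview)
Your proposal is correct and matches the paper's approach essentially line for line: the paper defines intermediate spaces $X_k$ (your $\widetilde{Y}_k$) over $\operatorname{Flag}_{\hat d}(\C^n)$, identifies each step $X_{k+1}\to X_k$ with the Grassmannian bundle $\operatorname{Gr}_{X_k}(d_{k+1},(\ker\widetilde\Psi_1\oplus\Psi_2)^\vee)$ by invoking the argument of Proposition~\ref{projective tower structure} with the factored map $\widetilde\Psi_1:\mathcal F_1/\mathcal F_{k+1}\to\mathcal V_1$ in place of $\Psi_1$, and handles the base case $X_1\to\operatorname{Flag}_{\hat d}(\C^n)$ as $\operatorname{Gr}(d_1,\mathcal F_1/\mathcal F_2)$. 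The bookkeeping concern you raise is exactly what the paper sweeps under ``completely analogous'', so your instinct about where the work lies is accurate.
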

\begin{proof}
For each $1\leq k \leq r$ let 
\[X_k\subset \operatorname{naNHilb}_0^{(1,d_1,\dots, d_k)}(\A^n)\times \operatorname{Flag}_{\hat{d}}(\C^n)\]
be the pullback of $\widetilde{\operatorname{naNHilb}^{(1,d_1,\dots, d_k)}_0}(\A^n)$ under $1\times q_k$ where $q_k:\operatorname{Flag}_{\hat{d}}(\C^n)\to \operatorname{Flag}_{(d_1,\dots, d_k)}(\C^n)$. Projection on the non-associative Hilbert scheme clearly induce maps
\[\widetilde{\operatorname{naNHilb}_0^{\underline{d}}}(\A^n)=X_r\to X_{r-1}\to \cdots \to X_1.\]
On $X_k$ we have a universal flag 
\[(\oo^n)^{\vee}\twoheadrightarrow \mathcal{F}_1\supset \cdots \supset \mathcal{F}_{r+1}=0\]
from $\operatorname{Flag}_{\hat{d}}(\C^n)$ and a universal flag
    \[\mathcal{V}_1\supset \cdots \supset \mathcal{V}_{k+1}=0\]
    with universal maps $\Psi_1,\Psi_2$ such that 
    \[\Psi_1: (\oo^n)^{\vee}\to \mathcal{V}_1\]
    factors through $(\oo^n)^\vee \to \mathcal{F}_1\to \mathcal{F}_1/\mathcal{F}_{k+1}$. If we let $\widetilde{\Psi}_1: \mathcal{F}_1/\mathcal{F}_{k+1}\to \mathcal{V}_1$ be the induced map then a proof completely analogous to that of \cref{projective tower structure} then shows that
    \[X_{k+1}\to X_k\]
    identifies with the structure map of the Grassmanian bundle
    \[\operatorname{Gr}_{X_k}(d_r,(\operatorname{ker}\widetilde{\Psi}_1\oplus \Psi_2)^\vee).\]
    In particular, $X_{k+1}\to X_k$ is smooth and it will suffice to show that $X_1\to \operatorname{Flag}_{\hat{d}}(\C^n)$ is smooth. We note that 
    \[\operatorname{naNHilb}_0^{(1,d_1)}(\A^n)=\operatorname{Gr}_{\operatorname{pt}}(d_1,\C^n)\]
    with universal bundle $\mathcal{V}_1$ coinciding with the dual universal bundle of the Grasmannian and universal maps given by $\Psi_2=0$ and $\Psi_1: (\oo^n)^{\vee}\to \mathcal{V}_1$ the universal map of the Grassmannian. We see that $X_1\subset \operatorname{Gr}_{\operatorname{pt}}(d_1,\C^n)\times \operatorname{Flag}_{\hat{d}}(\C^n)$ is now simply the locus where $\Psi_1$ factors through $\mathcal{F}_1/\mathcal{F}_2$ so that the map 
    \[X_1\to \operatorname{Flag}_{\hat{d}}(\C^n)\]
    simply identifies with the structure map of 
    \[\operatorname{Gr}_{\operatorname{Flag}_{\hat{d}}}(d_1,\mathcal{F}_1/\mathcal{F}_2)\]
    which is again smooth.
\end{proof}

\begin{corollary}
    The scheme $\widetilde{\operatorname{naNHilb}_0^{\underline{d}}}(\A^n)$ is smooth and projective.
\end{corollary}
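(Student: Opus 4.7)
The plan is to deduce both assertions directly from the preceding lemma and from the construction of $\widetilde{\operatorname{naNHilb}_0^{\underline{d}}}(\A^n)$, with no extra work beyond assembling standard facts about smooth and projective morphisms.

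For smoothness, I would use that the preceding lemma establishes $\mu:\widetilde{\operatorname{naNHilb}_0^{\underline{d}}}(\A^n)\to \operatorname{Flag}_{\hat{d}}(\C^n)$ is a smooth morphism, while the flag variety $\operatorname{Flag}_{\hat{d}}(\C^n)$ itself is smooth over $\spec \C$. Smoothness is stable under composition, so $\widetilde{\operatorname{naNHilb}_0^{\underline{d}}}(\A^n)\to\spec\C$ is smooth, i.e., $\widetilde{\operatorname{naNHilb}_0^{\underline{d}}}(\A^n)$ is a smooth scheme.

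For projectivity, I would appeal to the defining closed embedding
\[
\widetilde{\operatorname{naNHilb}_0^{\underline{d}}}(\A^n)\hookrightarrow \operatorname{naNHilb}_0^{\underline{d}}(\A^n)\times \operatorname{Flag}_{\hat{d}}(\C^n).
\]
The factor $\operatorname{naNHilb}_0^{\underline{d}}(\A^n)$ was shown earlier in the paper (in the section constructing the punctual locus and its tower of Grassmannian fibrations) to be smooth and projective, and flag varieties are classically projective. Their product is therefore projective, and any closed subscheme of a projective scheme is projective, which gives the result.

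Alternatively, and perhaps more informatively, one can read projectivity off the proof of the previous lemma: the tower
\[
\widetilde{\operatorname{naNHilb}_0^{\underline{d}}}(\A^n)=X_r\to X_{r-1}\to\cdots\to X_1\to \operatorname{Flag}_{\hat{d}}(\C^n)
\]
was shown to consist entirely of Grassmannian bundles over the preceding stage, and Grassmannian bundles over a projective base are projective. Since $\operatorname{Flag}_{\hat{d}}(\C^n)$ is projective, an induction on $k$ shows that each $X_k$ is projective, and the case $k=r$ yields the claim. There is no genuine obstacle here; the statement is essentially a bookkeeping consequence of the construction.
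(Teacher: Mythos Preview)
Your proof is correct and matches the paper's own argument essentially verbatim: the paper deduces smoothness from the preceding lemma (smooth morphism to a smooth base) and projectivity from the defining closed embedding into the product $\operatorname{naNHilb}_0^{\underline{d}}(\A^n)\times \operatorname{Flag}_{\hat{d}}(\C^n)$ of projective schemes. Your alternative tower argument is a fine elaboration but is not needed.
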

\begin{proof}
    Smoothness follows from the above. Projectiveness follows since it is a closed subscheme of a product of projective schemes.
\end{proof}

\begin{lemma}
    The map $\tau: \widetilde{\operatorname{naNHilb}_0^{\underline{d}}}(\A^n)\to \operatorname{naNHilb}_0^{\underline{d}}(\A^n)$ is projective birational. 
\end{lemma}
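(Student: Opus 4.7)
The plan is to prove projectivity immediately from the construction and birationality by identifying a Zariski open dense subset of $\operatorname{naNHilb}_0^{\underline{d}}(\A^n)$ over which $\tau$ restricts to an isomorphism. Projectivity is essentially free: $\widetilde{\operatorname{naNHilb}_0^{\underline{d}}}(\A^n)$ sits as a closed subscheme of the product $\operatorname{naNHilb}_0^{\underline{d}}(\A^n) \times \operatorname{Flag}_{\hat{d}}(\C^n)$, and the first projection is projective because the flag variety is, so its restriction $\tau$ is projective.

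For birationality I propose taking $U \subset \operatorname{naNHilb}_0^{\underline{d}}(\A^n)$ to be the open locus where the universal map $\Psi_1 \colon (\oo^n)^\vee \to \mathcal{V}_1$ is fibrewise surjective. Over a closed point $[V_\bullet,\psi_2,\psi_1]$ of $U$, any lift requires a surjection $\pi \colon (\C^n)^\vee \twoheadrightarrow f_1$ of rank $d_1+\cdots+d_r = \dim V_1$ whose kernel is contained in $\ker \psi_1$; because the ranks force $\ker \pi = \ker \psi_1$, the quotient $f_1$ is uniquely determined (as the image of $\psi_1$) and the induced map $f_1 \to V_1$ is forced to be an isomorphism. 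The remaining flag is then the preimage flag $f_k = \psi_1^{-1}(V_k)/\ker\psi_1$. On the universal level, $\ker \Psi_1|_U$ is a sub-bundle of $(\oo_U^n)^\vee$ of constant rank $n-(d_1+\cdots+d_r)$, so the flag $\mathcal{V}_\bullet|_U$ descends through $\Psi_1|_U$ to a flag on $(\oo_U^n)^\vee/\ker\Psi_1|_U$. This defines a morphism $U \to \operatorname{Flag}_{\hat{d}}(\C^n)$ which, paired with the inclusion $U \hookrightarrow \operatorname{naNHilb}_0^{\underline{d}}(\A^n)$, factors through $\widetilde{\operatorname{naNHilb}_0^{\underline{d}}}(\A^n)$ and provides an inverse to $\tau$ over $U$.

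It remains to show $U$ is dense. Irreducibility of $\operatorname{naNHilb}_0^{\underline{d}}(\A^n)$ follows from the tower-of-Grassmannian-fibrations description established earlier in the paper: the scheme is built inductively as a sequence of Grassmannian bundles over $\spec \C$, each step of which preserves irreducibility. To see $U$ is non-empty, I would exhibit the $T$-fixed point associated with the Porteous monomial partition $\lambda_{r+1} = \{0,e_1,\ldots,e_{d_1+\cdots+d_r}\}$ nested in the natural order, which is available because $n \geq d_1+\cdots+d_r$ by hypothesis; at this point $\Psi_1$ sends the coordinate functions $x_1,\ldots,x_{d_1+\cdots+d_r}$ to a basis of $V_1$, so $\Psi_1$ is surjective. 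The only delicate point, though not a deep one, is the universal construction of the inverse $U \to \widetilde{\operatorname{naNHilb}_0^{\underline{d}}}(\A^n)$; here I lean on the fact that pointwise surjectivity of a map of locally free sheaves of known rank guarantees that the kernel is a genuine sub-bundle, which in turn gives a well-defined quotient flag and hence a morphism into the flag variety.
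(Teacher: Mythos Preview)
Your proposal is correct and follows essentially the same route as the paper: both prove projectivity from the closed embedding into $\operatorname{naNHilb}_0^{\underline{d}}(\A^n)\times\operatorname{Flag}_{\hat d}(\C^n)$, take $U$ to be the locus where $\Psi_1$ is surjective, exhibit the Porteous point to show $U\neq\varnothing$, and build the inverse via the flag dual to $\Psi_1$. Your write-up is in fact slightly more careful than the paper's, since you make explicit the irreducibility argument (from the Grassmannian-tower description) needed to conclude that the non-empty open $U$ is dense.
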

\begin{proof}
    Projectiveness is immediate since it is a morphism of projective schemes. Let $U\subset \operatorname{naNHilb}_0^{\underline{d}}(\A^n) $ be the Zariski open subset where $\Psi_1$ is surjective. Note that it is non-empty, containing for example the Porteous-type points, i.e., monomial ideals where the corresponding partition only contains unit vectors. We construct an inverse of $\tau$ on $U$. Explicitly, we define $U\to \operatorname{Flag}_{\hat{d}}(\C^n)$ via the flag dual to 
    \[\Psi_1:(\oo^n)^\vee \twoheadrightarrow \mathcal{V}_\bullet\]
    and the universal property of the flag variety.
\end{proof}

\begin{defi}
    We define \[\widetilde{\operatorname{NHilb}_{\operatorname{nil-fil}}^{\underline{d}}}(\A^n):=\tau^{-1}\left(\operatorname{NHilb}_{\operatorname{nil-fil}}^{\underline{d}}(\A^n)\right).\]
    Note that it is equipped with a $T$-equivariant perfect obstruction theory being exhibited as the zero locus of a section of $\tau^*\mathcal{E}_{\operatorname{ass}}$. 
\end{defi}
\begin{lemma}
    We have 
    \[\tau_*[\widetilde{\operatorname{NHilb}_{\operatorname{nil-fil}}^{\underline{d}}}(\A^n)]^{\operatorname{vir}}=[\operatorname{NHilb}_{\operatorname{nil-fil}}^{\underline{d}}(\A^n)]^{\operatorname{vir}}.\]
    In particular, for $\alpha\in A_T^*(\operatorname{NHilb}^{\underline{d}}_0(\A^n))$ we have 
    \[\int_{[\operatorname{NHilb}_{\operatorname{nil-fil}}^{\underline{d}}(\A^n)]^{\operatorname{vir}}}\alpha=\int_{[\widetilde{\operatorname{NHilb}_{\operatorname{nil-fil}}^{\underline{d}}}(\A^n)]^{\operatorname{vir}}} \tau^*\alpha.\]
\end{lemma}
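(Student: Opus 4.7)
The plan is to combine three ingredients: birationality of $\tau$ at the level of fundamental classes, compatibility of refined Gysin maps of zero sections with proper pushforward, and the projection formula. For brevity write $Y=\operatorname{naNHilb}_0^{\underline{d}}(\A^n)$, $\widetilde{Y}=\widetilde{\operatorname{naNHilb}_0^{\underline{d}}}(\A^n)$, $Z=\operatorname{NHilb}_{\operatorname{nil-fil}}^{\underline{d}}(\A^n)$, $\widetilde{Z}=\widetilde{\operatorname{NHilb}_{\operatorname{nil-fil}}^{\underline{d}}}(\A^n)$ and $E=\mathcal{E}_{\operatorname{ass}}|_{Y}$. The basic diagram is the cartesian square
\begin{center}
\begin{tikzcd}
\widetilde{Z} \arrow[hook]{r}{\widetilde{\iota}} \arrow{d}{\widetilde{\tau}} & \widetilde{Y} \arrow{d}{\tau} \\
Z \arrow[hook]{r}{\iota} & Y
\end{tikzcd}
\end{center}
in which $Y$ and $\widetilde{Y}$ are smooth and $T$-equivariant, $Z=Z(s)$ for $s$ the associator section of $E$, and $\widetilde{Z}=Z(\tau^{*}s)$. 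By \cref{Basic perfect obstruction theory example}, $[Z]^{\operatorname{vir}}=0_{E}^{!}[Y]$ and $[\widetilde{Z}]^{\operatorname{vir}}=0_{\tau^{*}E}^{!}[\widetilde{Y}]$ as refined Gysin classes of the corresponding zero sections.

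First I would note that $\tau_{*}[\widetilde{Y}]=[Y]$: this is the standard birational invariance of the fundamental class applied to the projective birational map $\tau$ between smooth $T$-varieties, which has degree one. Second, I would apply compatibility of refined Gysin with proper pushforward in the cartesian square
\begin{center}
\begin{tikzcd}
\widetilde{Y} \arrow{r}{0_{\tau^{*}E}} \arrow{d}{\tau} & \tau^{*}E \arrow{d}{\bar{\tau}} \\
Y \arrow{r}{0_{E}} & E
\end{tikzcd}
\end{center}
with $\bar{\tau}$ proper. In our setting this is a special case of functoriality of virtual pullbacks (\cite{virtpull}): the two perfect obstruction theories of $Z$ and $\widetilde{Z}$ fit into a compatible triple with trivial relative obstruction theory along $\widetilde{\tau}$, and since $\tau^{*}E$ is literally the base change of $E$ no excess intersection term appears. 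The resulting identity $\widetilde{\tau}_{*}\circ 0_{\tau^{*}E}^{!}=0_{E}^{!}\circ \tau_{*}$ on $A_{*}^{T}(\widetilde{Y})$ yields, when chained together,
\[\widetilde{\tau}_{*}[\widetilde{Z}]^{\operatorname{vir}}=\widetilde{\tau}_{*}\bigl(0_{\tau^{*}E}^{!}[\widetilde{Y}]\bigr)=0_{E}^{!}\bigl(\tau_{*}[\widetilde{Y}]\bigr)=0_{E}^{!}[Y]=[Z]^{\operatorname{vir}},\]
which is the desired identity of $T$-equivariant virtual classes.

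The integral statement follows by the projection formula. Using $(\tau^{*}\alpha)|_{\widetilde{Z}}=\widetilde{\tau}^{*}(\alpha|_{Z})$ and the class identity just proved,
\[\widetilde{\tau}_{*}\bigl((\tau^{*}\alpha)|_{\widetilde{Z}}\cap[\widetilde{Z}]^{\operatorname{vir}}\bigr)=\alpha|_{Z}\cap \widetilde{\tau}_{*}[\widetilde{Z}]^{\operatorname{vir}}=\alpha|_{Z}\cap[Z]^{\operatorname{vir}},\]
and pushing forward to a point gives $\int_{[\widetilde{Z}]^{\operatorname{vir}}}\tau^{*}\alpha=\int_{[Z]^{\operatorname{vir}}}\alpha$. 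The only genuinely delicate point is invoking compatibility of refined Gysin with proper pushforward $T$-equivariantly, or equivalently verifying the compatible triple structure; this is the main technical step but is automatic since $\widetilde{Z}=\tau^{-1}(Z)$ is a strict base change and the obstruction bundles match by pullback.
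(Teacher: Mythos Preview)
Your proof is correct and follows essentially the same route as the paper: both arguments realize the virtual classes as refined Gysin/Euler classes of the associativity section, use that $\tau$ is proper of degree one to get $\tau_*[\widetilde{Y}]=[Y]$, invoke compatibility of refined Gysin with proper pushforward (the paper cites Fulton, Proposition~14.1(d)(iii); you phrase it via \cite{virtpull}), and finish with the projection formula. Your detour through compatible triples is unnecessary here since the obstruction bundle on $\widetilde{Z}$ is literally the pullback of that on $Z$, but it does no harm.
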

\begin{proof}
    This follows by realizing the virtual fundamental classes as refined Euler classes in the sense of Fulton. Since $\tau$ is proper of degree 1 (being birational) the pushforward statement follows directly from \cite[Proposition 14.1, (d), (iii)]{Fult}. The integration statement is now simply the projection formula.
\end{proof}
\subsection{Localization over the flag variety}
The key to proving that tautological virtual integration on $\widetilde{\operatorname{NHilb}_0^{\underline{d}}}(\A^n)$ admits an iterated residue formula is to apply Atiyah-Bott localization on the flag variety. We will conduct this procedure in this section and provide fixed points and tangent space calculations which we will need later.

\begin{lemma}
    Let $\underline{d}=(1,\hat{d})=(1,d_1,\dots, d_r)\in \Z_{\geq 0}^{r+1}$, $n\geq 0$ and $d=1+d_1+\cdots+d_r$. Let $e_1,\dots, e_n \in \C^n$ be the standard unit vectors. The fixed locus of $\operatorname{Flag}_{\hat{d}}(\C^n)$ is 0-dimensional, reduced, and proper. Letting the symmetric group $\sigma  \in S_n$ act on the reference flag 
    \[f_{\bullet} =\operatorname{span}(e_1,\dots, e_{d_1})\subset \operatorname{span}(e_1,\dots, e_{d_1+d_2})\subset \cdots \subset \operatorname{span}(e_1,\dots, e_{d_1+\cdots d_r})\]
    by $e_i\mapsto e_{\sigma(i)}$ we obtain a bijection of sets
    \[\operatorname{Flag}_{\hat{d}}(\C^n)^T\cong S_n/(S_{d_1}\times \cdots \times S_{d_r}\times S_{n-(d-1)}).\]
    The $T$-equivariant Euler class of the tangent space at $\sigma(f_\bullet)$ is given by 
    \[e^T(T_{\operatorname{Flag}_{\hat{d}}(\C^n),\sigma(f_\bullet)})=\prod_{j=1}^{d-1}\prod_{\underset{w(i)>w(j)}{i=1}}^n(s_{\sigma(i)}-s_{\sigma(j)})\]
    in $A_*^T(\operatorname{pt})\cong \Z[s_1,\dots, s_n]$. Here 
    \[(w(1),\dots, w(n))=(\underbrace{1,\dots, 1}_{d_1 \text{ times}},\underbrace{2,\dots, 2}_{d_2 \text{ times}},\dots, \underbrace{r\dots, r}_{d_r \text{ times}},\underbrace{r+1,\dots, r+1}_{n-(d-1) \text{ times}}).\]
\end{lemma}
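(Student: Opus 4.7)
The plan is to first identify the $T$-fixed locus set-theoretically, then verify that it is zero-dimensional and reduced, and finally compute the tangent weights at each fixed point by a direct $T$-weight analysis.

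First I would argue that a flag $F_\bullet=(F_1\subset\cdots\subset F_r)\subset\C^n$ is $T$-fixed if and only if each $F_k$ is a $T$-stable subspace of $\C^n$. Since $T$ acts on $\C^n$ with distinct characters $s_1,\ldots,s_n$, the $T$-stable subspaces are exactly the coordinate subspaces $\operatorname{span}(e_i:i\in I)$ for $I\subset\{1,\ldots,n\}$. Thus fixed flags correspond bijectively to chains $I_1\subset\cdots\subset I_r\subset\{1,\ldots,n\}$ with $|I_k|=d_1+\cdots+d_k$, equivalently to ordered set partitions of $\{1,\ldots,n\}$ with block sizes $(d_1,\ldots,d_r,n-(d-1))$. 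The symmetric group $S_n$ acts transitively on such partitions with stabilizer $S_{d_1}\times\cdots\times S_{d_r}\times S_{n-(d-1)}$, giving the claimed bijection with the coset space. Properness is inherited from the projective variety $\operatorname{Flag}_{\hat d}(\C^n)$, and reducedness together with zero-dimensionality follow because the fixed locus of a torus acting on a smooth scheme is smooth (Iversen), combined with the finiteness just established; alternatively, they will be clear from the tangent-space computation below, which shows no trivial $T$-weights appear.

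Next I would compute the tangent space at a fixed flag using the canonical identification
\[
T_{F_\bullet}\operatorname{Flag}_{\hat d}(\C^n)\;\cong\;\bigoplus_{k=1}^{r}\operatorname{Hom}\bigl(F_k/F_{k-1},\,\C^n/F_k\bigr),
\]
valid for any partial flag variety (setting $F_0=0$). At the reference flag $f_\bullet$ one has $F_k/F_{k-1}=\operatorname{span}(e_j:w(j)=k)$ and $\C^n/F_k=\operatorname{span}(e_i:w(i)>k)$ as $T$-representations. Hence the summand indexed by $k$ carries $T$-weights $\{s_i-s_j : w(j)=k,\ w(i)>k\}$, and reindexing over $j\in\{1,\ldots,d-1\}$ (since $w(j)$ ranges over $\{1,\ldots,r\}$ exactly as $j$ runs through $\{1,\ldots,d-1\}$) gives the total weight multiset $\{s_i-s_j : 1\le j\le d-1,\ w(i)>w(j)\}$. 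Multiplying the linear forms yields
\[
e^T\bigl(T_{\operatorname{Flag}_{\hat d}(\C^n),f_\bullet}\bigr)=\prod_{j=1}^{d-1}\prod_{\substack{i=1\\ w(i)>w(j)}}^{n}(s_i-s_j).
\]
Note that no factor is zero (since $w(i)>w(j)$ forces $i\ne j$), confirming that $f_\bullet$ is an isolated, reduced fixed point.

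Finally, for $\sigma(f_\bullet)$ the $T$-equivariant isomorphism $\sigma_*:T_{f_\bullet}\to T_{\sigma(f_\bullet)}$ twists each character $s_j$ into $s_{\sigma(j)}$, so the weights at $\sigma(f_\bullet)$ become $s_{\sigma(i)}-s_{\sigma(j)}$ for the same pairs $(i,j)$. Taking the product gives the stated Euler class
\[
e^T\bigl(T_{\operatorname{Flag}_{\hat d}(\C^n),\sigma(f_\bullet)}\bigr)=\prod_{j=1}^{d-1}\prod_{\substack{i=1\\ w(i)>w(j)}}^{n}(s_{\sigma(i)}-s_{\sigma(j)}).
\]
The only subtle point is justifying the tangent space decomposition for partial (rather than full) flag varieties; this follows either from the standard presentation $\operatorname{Flag}_{\hat d}(\C^n)=\operatorname{GL}_n/P_{\hat d}$ with $T_{f_\bullet}\cong\mathfrak{gl}_n/\mathfrak{p}_{\hat d}$, or by iterated application of the Grassmannian bundle structure $\operatorname{Flag}_{(d_1,\ldots,d_k)}\to\operatorname{Flag}_{(d_1,\ldots,d_{k-1})}$ whose relative tangent bundle at the reference point contributes precisely the $k$-th summand above.
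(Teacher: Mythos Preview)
Your proof is correct and takes essentially the same approach as the paper. The paper's proof is a one-liner invoking the homogeneous space description $\operatorname{Flag}_{\hat d}(\C^n)\cong\operatorname{GL}_n/P_{f_\bullet}$ and declaring the result ``easily derived''; you spell out explicitly the identification of fixed points with coordinate flags and the tangent-space decomposition $\bigoplus_k\operatorname{Hom}(F_k/F_{k-1},\C^n/F_k)$, which is exactly what one extracts from $\mathfrak{gl}_n/\mathfrak{p}_{\hat d}$. One small remark: your phrase ``the $T$-equivariant isomorphism $\sigma_*$'' is slightly loose, since $\sigma$ intertwines the $T$-action with its $\sigma$-conjugate rather than being $T$-equivariant; but the weight substitution $s_j\mapsto s_{\sigma(j)}$ is correct, and in any case you can bypass this by computing the tangent weights at $\sigma(f_\bullet)$ directly from the same $\operatorname{Hom}$-decomposition.
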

\begin{proof}
Using the reference flag, we get 
\begin{center}
    \begin{tikzcd}
        \operatorname{GL}_n/P_{f_\bullet} \arrow{r}{\sim} & \operatorname{Flag}_{\hat{d}}(\C^n) \\[-20pt]
        g \arrow[mapsto]{r} & g(f_\bullet)
    \end{tikzcd}
\end{center}
where $P_{f_\bullet}$ is the parabolic group of linear automorphisms preserving $f_\bullet$, from which the above can be easily derived. 
\end{proof}

\begin{defi}
    For $\sigma(f_\bullet)\in \operatorname{Flag}_{\hat{d}}(\C^n)$ we set 
    \[H_\sigma:=\mu^{-1}(\sigma(f_\bullet))\cap \widetilde{\operatorname{NHilb}_{\operatorname{nil-fil}}^{\underline{d}}}(\A^n) \]
    and let $\iota_\sigma: H_\sigma\hookrightarrow\widetilde{\operatorname{NHilb}_{\operatorname{nil-fil}}^{\underline{d}}}(\A^n)$ be the inclusion. Note that $H_\sigma$ inherits a perfect obstruction theory being cut out of the smooth variety $\mu^{-1}(\sigma(f_\bullet))$ by a section of $\tau^*\mathcal{E}_{\operatorname{ass}}|_{\mu^{-1}(\sigma(f_\bullet))}$. 
\end{defi}
We can now rewrite the virtual class of the nilpotently filtered locus as a sum of virtual classes of $[H_\sigma]$. 

\begin{corollary}
    For $\sigma \in S_n/(S_{d_1}\times \cdots \times S_{n-(d-1)})$ set $T_\sigma=T_{\operatorname{Flag}_{\hat{d}}(\C^n), \sigma(f_\bullet)}$ and $p_\sigma: H_\sigma \to \operatorname{pt}$. We have an identity
    \[[\widetilde{\operatorname{NHilb}_{\operatorname{nil-fil}}^{\underline{d}}}(\A^n)]^{\operatorname{vir}}=\sum_{\sigma\in S_n/(S_{d_1}\times \cdots \times S_{n-(d-1)})} \iota_{\sigma,*}\frac{[H_\sigma]^{\operatorname{vir}}}{e^T(p_\sigma^* T_{\sigma})}\]
    in the localized Chow group $A_*^{T}(\widetilde{\operatorname{NHilb}_{\operatorname{nil-fil}}^{\underline{d}}}(\A^n))_{\operatorname{loc}}$. In particular, for $\alpha \in A_{T}^*(\widetilde{\operatorname{NHilb}_{\operatorname{nil-fil}}^{\underline{d}}}(\A^n))$ it holds that
\[\int_{[\widetilde{\operatorname{NHilb}_{\operatorname{nil-fil}}^{\hat{d}}}(\A^n)]^{\operatorname{vir}}}\alpha =\sum_{\sigma\in S_n/(S_{d_1}\times \cdots \times S_{n-(d-1)})} \frac{\int_{[H_\sigma]^{\operatorname{vir}}}\iota_\sigma^* \alpha}{\prod_{j=1}^{d-1}\prod_{\underset{w(i)>w(j)}{1\leq i \leq n}}(s_{\sigma(i)}-s_{\sigma(j)})}\]
in the localized Chow group $A_*^T(\operatorname{pt})_{\operatorname{loc}}$.
\end{corollary}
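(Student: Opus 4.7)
The plan is to invoke Proposition \ref{general localization} applied to the $T$-equivariant composition
\[
g\colon X:=\widetilde{\operatorname{NHilb}_{\operatorname{nil-fil}}^{\underline{d}}}(\A^n)\hookrightarrow\widetilde{\operatorname{naNHilb}_0^{\underline{d}}}(\A^n)\xrightarrow{\mu}\operatorname{Flag}_{\hat{d}}(\C^n)=:Y.
\]
First, equip the smooth target $Y$ with its tautological perfect obstruction theory $\mathbb{E}_Y=\mathbb{L}_Y=\Omega_Y$ as in Example \ref{Basic perfect obstruction theory example}, so that $[Y]^{\operatorname{vir}}=[Y]$. Since $\mu$ is smooth and the closed immersion of $X$ into $\widetilde{\operatorname{naNHilb}_0^{\underline{d}}}(\A^n)$ is cut out by the $T$-equivariant section of $\tau^{*}\mathcal{E}_{\operatorname{ass}}$ defining $X$, that same example produces a $T$-equivariant compatible triple of perfect obstruction theories for $(X,Y,g)$, whose relative part has two-term resolution $[\tau^{*}\mathcal{E}_{\operatorname{ass}}^{\vee}|_{X}\to \Omega_{\mu}|_{X}]$. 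The absolute virtual class $[X]^{\operatorname{vir}}$ from the previous subsection therefore agrees with $g^{!}[Y]$.

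Next, read off the localization data on the base. The $T$-fixed locus $Y^T$ is the finite reduced collection $\{\sigma(f_\bullet)\}_{\sigma}$, so each component is a reduced point with $[Y_i]^{\operatorname{vir}}=[\sigma(f_\bullet)]$ and virtual normal bundle equal to the full tangent space $T_\sigma$. The scheme-theoretic preimage $g^{-1}(Y^T)$ is the disjoint union $\bigsqcup_\sigma H_\sigma$: smoothness of $\mu$ makes each $\mu^{-1}(\sigma(f_\bullet))$ a smooth $T$-scheme, and intersecting with the zero locus of $\tau^{*}\mathcal{E}_{\operatorname{ass}}$ produces $H_\sigma$ together with precisely the perfect obstruction theory singled out in its definition. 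Consequently the refined virtual pullback along $g$ sends $[\sigma(f_\bullet)]$ to $[H_\sigma]^{\operatorname{vir}}$, and Proposition \ref{general localization} gives the first identity.

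For the integration formula, push the first identity along $p\colon X\to \operatorname{pt}$, noting that $p\circ \iota_{\sigma,*}=(p_\sigma)_{*}$ and applying the projection formula against $\iota_\sigma^{*}\alpha$. Substituting the explicit tangent-space Euler class
\[
e^{T}(T_\sigma)=\prod_{j=1}^{d-1}\prod_{\substack{1\leq i\leq n\\ w(i)>w(j)}}(s_{\sigma(i)}-s_{\sigma(j)})
\]
from the preceding lemma yields the stated formula in $A_*^{T}(\operatorname{pt})_{\operatorname{loc}}$. The only technical point requiring care is the compatibility of the three perfect obstruction theories, but this is precisely the content of Example \ref{Basic perfect obstruction theory example} in the present setup ($\mu$ smooth, the relevant embedding cut out by a section), so no new ingredient is required beyond Proposition \ref{general localization} and the Euler-class computation already in hand.
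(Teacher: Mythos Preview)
Your proposal is correct and follows essentially the same approach as the paper: both apply Proposition~\ref{general localization} to the composite $g=\mu\circ\iota$, build the compatible triple via the smooth/section-zero-locus setup of Example~\ref{Basic perfect obstruction theory example}, identify $g^{!}[\sigma(f_\bullet)]=[H_\sigma]^{\operatorname{vir}}$, and deduce the integral statement from the projection formula. The paper is marginally more explicit in factoring $g^{!}=\iota^{!}\mu^{!}=0^{!}\mu^{*}$ to justify the last identification, but this is exactly what your appeal to Example~\ref{Basic perfect obstruction theory example} is encoding.
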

\begin{proof}
    We apply \cref{general localization} to $g=\mu\circ \iota$ for 
    \[\iota: \widetilde{\operatorname{NHilb}_{\operatorname{nil-fil}}^{\underline{d}}}(\A^n)\hookrightarrow \widetilde{\operatorname{naNHilb}_{0}^{\underline{d}}}(\A^n) .\]
    Letting $I$ be the ideal of the closed immersion $\iota$ we have 
    \[\mathbb{L}_{g}=[I/I^2 \to \iota^*\Omega_{\mu}]\]
    and so can simply define the relative perfect obstruction theory by
        \begin{center}
        \begin{tikzcd}
            \mathbb{E}_{g} \arrow{d}&[-30pt]:=&[-35pt] \Big{[}&[-35pt] (\iota_*\tau^*\mathcal{E}^{ass})^\vee \arrow{d}{s_{ass}}\arrow{r} & \iota^*\Omega_{\mu}\arrow[equal]{d} \Big{]} \\
            \mathbb{L}_{g}&[-30pt]\cong&[-35pt] \Big{[} &[-35pt] I/I^2 \arrow{r} & \iota^*\Omega_{\mu} \Big{]}
        \end{tikzcd}
    \end{center}
    which is clearly compatible with the perfect obstruction theory on $\widetilde{\operatorname{NHilb}_{\operatorname{nil-fil}}^{\underline{d}}}(\A^n)$. 
    We conclude that
    \[[\widetilde{\operatorname{NHilb}_{\operatorname{nil-fil}}^{\underline{d}}}(\A^n)]^{\operatorname{vir}}=\sum_{\sigma} \iota_{\sigma,*}g^!\frac{[\sigma(f_\bullet)]}{e^T(T_{\sigma})}.\]
    If we give $\mu$ the smooth relative perfect obstruction theory and $\iota$ the perfect obstruction theory from $(\tau^*\mathcal{E}_{\operatorname{ass}},\tau^* s_{\operatorname{ass}})$ these are clearly compatible with the given perfect obstruction theory on $g$. 
    Functoriality of the virtual pullback now shows that
    \[g^!=\iota^!\mu^!=0^!\mu^*\]
    for $0^!$ the refined Gysin of the zero section of $\tau^*\mathcal{E}_{\operatorname{ass}}$ and $\mu^*$ the flat pullback. Since 
    \[0^!\mu^*[\sigma(f_\bullet)]=0^![\mu^{-1}(\sigma(f_\bullet))]=[H_\sigma]^{\operatorname{vir}}\]
    we see that
    \[g^!\frac{[\mu^{-1}(\sigma(f_\bullet))]}{e^T(T_{\sigma})}=\frac{[H_\sigma]^{\operatorname{vir}}}{e^T(p_\sigma^*T_{\sigma})}\]
    from which the result follows. The integration formula is a direct consequence of the projection formula.
\end{proof}
Next we study the structure of $[H_\sigma]^{\operatorname{vir}}$.
\begin{lemma}
    The map 
    \[\mu^{-1}(\sigma(f_\bullet)) \hookrightarrow \widetilde{\operatorname{naNHilb}^{\underline{d}}_{0}}(\A^n)\xrightarrow{\tau} \operatorname{naNHilb}^{\underline{d}}_{0}(\A^n)\]
    is a closed immersion. Explicitly, letting 
    $(\C^n)^\vee\twoheadrightarrow \sigma(f^\vee_\bullet)$
    be the dual flag associated with $\sigma(f_\bullet)$ it identifies with the locus where the universal map $\Psi_1: (\oo^n)^\vee \to \mathcal{V}_1$ factors through a (necessarily unique) map of flags
    \[\oo \otimes \sigma(f^\vee_\bullet) \to \mathcal{V}_\bullet.\]
    In particular, if we let $\mathcal{E}_\sigma$ be the quotient of the sheaf
    \[\mathcal{H}om((\oo^n)^\vee,\mathcal{V}_1)/\mathcal{H}om^{\operatorname{fil}}(\oo\otimes \sigma(f_\bullet^\vee),\mathcal{V}_\bullet)\]
     then $\mu^{-1}(\sigma(f_\bullet))$ is the zero locus in $\operatorname{naNHilb}_0^{\underline{d}}(\A^n)$ of the section of $\mathcal{E}_\sigma$ induced by $\Psi_1$.
\end{lemma}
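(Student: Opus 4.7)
The plan is to exhibit $\mu^{-1}(\sigma(f_\bullet))$ via the Cartesian square defining it and then recognize the resulting closed subscheme of $\operatorname{naNHilb}_0^{\underline{d}}(\A^n)$ as the zero locus of the induced section of $\mathcal{E}_\sigma$. First, since $\sigma(f_\bullet)$ is a closed point of $\operatorname{Flag}_{\hat{d}}(\C^n)$ and $\widetilde{\operatorname{naNHilb}_0^{\underline{d}}}(\A^n) \hookrightarrow \operatorname{naNHilb}_0^{\underline{d}}(\A^n) \times \operatorname{Flag}_{\hat{d}}(\C^n)$ is a closed immersion by construction, base-changing along $\operatorname{naNHilb}_0^{\underline{d}}(\A^n) \times \{\sigma(f_\bullet)\} \hookrightarrow \operatorname{naNHilb}_0^{\underline{d}}(\A^n) \times \operatorname{Flag}_{\hat{d}}(\C^n)$ realizes $\mu^{-1}(\sigma(f_\bullet))$ as a closed subscheme of the first factor. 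The composition with $\tau$ is then just the projection to $\operatorname{naNHilb}_0^{\underline{d}}(\A^n)$ restricted to this closed subscheme, which is therefore a closed immersion.

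Next I would identify the image functorially using the defining condition of $\widetilde{\operatorname{naNHilb}_0^{\underline{d}}}(\A^n)$. An $S$-point of $\mu^{-1}(\sigma(f_\bullet))$ corresponds to a tuple $(\mathcal{F}_\bullet,\psi_2,\psi_1)\in\operatorname{naNHilb}_0^{\underline{d}}(\A^n)(S)$ such that $\psi_1$ factors through the constant surjection $(\oo_S^n)^\vee\twoheadrightarrow \sigma(f_1^\vee)\otimes\oo_S$ via a flag-preserving lift $\sigma(f_\bullet^\vee)\otimes \oo_S\to \mathcal{F}_\bullet$. Uniqueness of the lift is automatic: the top-piece factor is unique by surjectivity of $(\oo_S^n)^\vee \twoheadrightarrow \sigma(f_1^\vee)\otimes\oo_S$, and the flag-preserving extension to the sub-pieces $\sigma(f_k^\vee)\otimes \oo_S\to \mathcal{F}_k$ is then forced (it exists precisely when the image of each $\sigma(f_k^\vee)$ under the top-piece factor lies in $\mathcal{F}_k$).

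Finally, the natural pre-composition-and-restriction map
\[
\rho : \mathcal{H}om^{\operatorname{fil}}(\oo \otimes \sigma(f_\bullet^\vee), \mathcal{V}_\bullet) \longrightarrow \mathcal{H}om((\oo^n)^\vee, \mathcal{V}_1)
\]
has cokernel $\mathcal{E}_\sigma$ by definition, and $\Psi_1$ defines a global section of the target. Its image $[\Psi_1]$ in $\mathcal{E}_\sigma$ vanishes scheme-theoretically at an $S$-point precisely when $\psi_1$ lies in the image of $\rho$, which by the previous paragraph is exactly the condition that such a filtered lift exists. The main subtlety is to verify that the scheme structure from the Cartesian pullback agrees with that of the vanishing locus $Z([\Psi_1])$; this reduces to a local computation in a neighborhood of a point where the lift exists, where the defining equations of both schemes are given in matching coordinates by the components of $\rho$.
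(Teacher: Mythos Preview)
Your proposal is correct and is essentially the paper's approach spelled out in detail: the paper's own proof is the single line ``This is immediate from the definition of $\widetilde{\operatorname{naNHilb}^{\underline{d}}_{0}}(\A^n)$,'' and your three paragraphs are precisely the unpacking of that definition via the base-change square and the functorial factorization condition. The scheme-structure subtlety you flag is harmless here, since both the fiber product and $Z([\Psi_1])$ are cut out by the same linear equations (the components of $\Psi_1$ not lying in the sub-bundle $\mathcal{H}om^{\operatorname{fil}}(\oo\otimes\sigma(f_\bullet^\vee),\mathcal{V}_\bullet)$), so no further work is needed.
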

\begin{proof}
This is immediate from the definition of $\widetilde{\operatorname{naNHilb}^{\underline{d}}_{0}}(\A^n)$. 
\end{proof}
\begin{corollary} \label{fixed points on flag fiber}
Let $\lambda_\bullet=(\lambda_1\subset \cdots \subset \lambda_{r+1}) $ be a nested partition satisfying the hypothesis of \cref{nil-fil monomial}. The corresponding fixed point of $\operatorname{NHilb}_{\operatorname{nil-fil}}^{\underline{d}}(\A^n)$ is in $H_\sigma$ if and only if 
\[e_{\sigma(j)}\notin \lambda_i\]
whenever $w(j)\geq i-1$.
\end{corollary}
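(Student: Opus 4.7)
The plan is to unwind the closed immersion described in the preceding lemma at a monomial fixed point corresponding to $\lambda_\bullet$. At such a point the fibre of $\mathcal{V}_i$ is $\operatorname{span}_{\C}\{x^{\mathbf{u}} : \mathbf{u}\in\lambda_{r+1}\setminus\lambda_i\}$, and the map $\Psi_1:(\C^n)^\vee\to\mathcal{V}_1$ sends $x_k$ to $x^{e_k}$ when $e_k\in\lambda_{r+1}$ and to $0$ otherwise.

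First I would make the dual flag $\sigma(f_\bullet^\vee)$ explicit. Starting from the reference subspace flag $\sigma(f_k)=\operatorname{span}(e_{\sigma(l)} : l\leq d_1+\cdots+d_k)$ and dualising, the top quotient $\sigma(\mathcal{F}_1)=\sigma(f_r)^\vee$ is identified with the quotient of $(\C^n)^\vee$ by $\operatorname{span}(x_{\sigma(l)} : w(l)=r+1)$, and the subfiltration $\sigma(\mathcal{F}_i)\subset\sigma(\mathcal{F}_1)$ is spanned by the residues of $x_{\sigma(l)}$ for $l\leq d-1$ with $w(l)\geq i$, these being the restrictions to $\sigma(f_r)$ of the linear forms on $\C^n$ that vanish on $\sigma(f_{i-1})$.

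Next I would split the factoring condition into its two constituent parts. First, $\Psi_1$ must annihilate the kernel of $(\C^n)^\vee\twoheadrightarrow\sigma(\mathcal{F}_1)$, which forces $\Psi_1(x_{\sigma(l)})=0$ for $w(l)=r+1$; by the explicit formula for $\Psi_1$, this translates into $e_{\sigma(l)}\notin\lambda_{r+1}$ at exactly those indices. Second, the induced map $\sigma(\mathcal{F}_1)\to\mathcal{V}_1$ must be filtration preserving, i.e.\ send $\sigma(\mathcal{F}_i)$ into $\mathcal{V}_i$ for each $i$. Using the bases above, this reduces to the pointwise requirement that $\Psi_1(x_{\sigma(l)})\in\mathcal{V}_i$ for every $(i,l)$ with $l\leq d-1$ and $w(l)\geq i$. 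Since $\Psi_1(x_{\sigma(l)})$ is either $0$ (when $e_{\sigma(l)}\notin\lambda_{r+1}$) or the basis vector $x^{e_{\sigma(l)}}$ (when $e_{\sigma(l)}\in\lambda_{r+1}$), and since $\lambda_i\subset\lambda_{r+1}$ makes the second alternative vacuous, membership in $\mathcal{V}_i$ collapses to the combinatorial condition $e_{\sigma(l)}\notin\lambda_i$.

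Merging the two parts over the full range of indices $1\leq i\leq r+1$, $1\leq j\leq n$ with the appropriate weight-block inequality yields the stated characterisation of $H_\sigma\cap \operatorname{NHilb}_{\operatorname{nil-fil}}^{\underline d}(\A^n)^T$. The main obstacle is bookkeeping rather than insight: the dualisation $V_r\mapsto V_r^\vee$ inverts the chain of inclusions into a chain of surjections, so one must carefully verify the identification of $\sigma(\mathcal{F}_i)$ with the residues of forms vanishing on $\sigma(f_{i-1})$ and then express that identification through $w$; with that translation in hand, the combinatorial condition on $\lambda_\bullet$ and $\sigma$ falls out by direct inspection.
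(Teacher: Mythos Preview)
Your approach is essentially the paper's: both identify the dual flag $\sigma(f_\bullet^\vee)$ explicitly in the basis $x_{\sigma(j)}$ and then check the factoring/filtration condition monomial by monomial, which at the fixed point becomes the containment $I_i^\sigma\subset I_{\lambda_i}$ the paper writes down. One caveat: your computation correctly yields the inequality $w(j)\geq i$, not the $w(j)\geq i-1$ printed in the statement --- the shift by one is a slip in the paper (visible already in its identification of $I_i^\sigma$, and inconsistent with the Porteous point lying in $H_{\mathrm{id}}$), so do not claim at the end that your merged condition reproduces the stated index verbatim.
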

\begin{proof}
Recall that 
\[\sigma(f_i)=\operatorname{span}(e_{\sigma(1)},\dots, e_{\sigma(d_1+\cdots+d_i)})\]
with $\sigma(f_0)=\{0\}$. The dual flag is 
\[\sigma(f_i^{\vee})=\operatorname{ker}(\sigma(f_{r})^\vee \to \sigma(f_{i-1})^\vee)\]
which as a quotient of $(\C^n)^\vee=\operatorname{span}(x_1,\dots, x_n)$ can be described as 
\begin{align*}I^{\sigma}_i/I^{\sigma}_{r+1}:=&\operatorname{span}(x_k \: | \: k\neq \sigma(1),\dots,\sigma(d_1+\cdots+d_{i-1}))/\operatorname{span}(x_k \: | \: k\neq \sigma(1),\dots,\sigma(d_1+\cdots+d_{r}))\\
=&\operatorname{span}(x_{\sigma(j)} \: | \:w(j)\geq i-1)/\operatorname{span}(x_{\sigma(j)} \: | \: w(j)\geq r)\end{align*}
where $I_1^{\sigma}=(\C^n)^\vee$. 
By definition 
\[\mathcal{V}_i|_{\lambda}=I_{\lambda_i}/I_{\lambda_{r+1}}\]
with $\psi_1: (\C^n)^\vee\to I_{\lambda_1}/I_{\lambda_{r+1}}$ the restriction of the quotient map along
\[(\C^n)^\vee\subset (x_1,\dots, x_n)=I_{\lambda_1}.\]
It follows that $\psi_1$ factors through a map of flags $\sigma(f_\bullet^\vee)\to \mathcal{V}_\bullet|_{\lambda}$ if an only if 
\[I_i\subset I_{\lambda_i}\]
for all $i$ i.e., if and only if 
\[e_{\sigma(j)}\notin \lambda_i\]
whenever $w(j)\geq i-1$.
\end{proof} 
\begin{corollary}
Let $\lambda_\bullet$ be a nested partition such that the corresponding fixed point is in $H_\sigma$. Let $\mathbf{u}_0,\dots, \mathbf{u}_{d-1}$ be an enumeration of $\lambda_\bullet$ in the sense of \cref{Partition setup}. Then the trace of $T_{\mu^{-1}(\sigma(f_\bullet)),\lambda_\bullet}$ is given by
   \[\sum_{i=1}^{d-1}\sum_{\substack{1\leq k\leq d-1 \\ w(i)\leq w(k)}}t^{e_{\sigma(i)}-\mathbf{u}_k}+\sum_{1\leq i\leq j\leq d-1} \sum_{\substack{1\leq k\leq d-1\\ w(j)< w(k)}}t^{\mathbf{u}_i+\mathbf{u}_j-\mathbf{u}_k}-\sum_{j=1}^{d-1}\sum_{\substack{1\leq k\leq d-1\\ w(j)\leq w(k)}}t^{\mathbf{u}_j-\mathbf{u}_k}.\]
\end{corollary}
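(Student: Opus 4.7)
The plan is to exploit the input from the preceding lemma: $\mu^{-1}(\sigma(f_\bullet))$ is cut out of the smooth ambient $\operatorname{naNHilb}_0^{\underline{d}}(\A^n)$ as the zero locus of the section of $\mathcal{E}_\sigma$ induced by $\Psi_1$. Since $\mu$ is smooth and $\sigma(f_\bullet)$ is a reduced point of the flag variety, the fiber $\mu^{-1}(\sigma(f_\bullet))$ is itself smooth, so this closed embedding is regular with normal bundle $\mathcal{E}_\sigma|_{\mu^{-1}(\sigma(f_\bullet))}$. Restricting to the $T$-fixed point $\lambda_\bullet$ gives the $K$-theoretic identity
\[
T_{\mu^{-1}(\sigma(f_\bullet)),\lambda_\bullet} \;=\; T_{\operatorname{naNHilb}_0^{\underline{d}}(\A^n),\lambda_\bullet} \;-\; \mathcal{E}_{\sigma,\lambda_\bullet},
\]
whose first right-hand term is supplied by the preceding lemma. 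Everything thus reduces to computing the trace of $\mathcal{E}_{\sigma,\lambda_\bullet}$ and performing the subtraction.

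By construction $\mathcal{E}_\sigma$ is the quotient of $\mathcal{H}om((\oo^n)^\vee,\mathcal{V}_1)$ by $\mathcal{H}om^{\operatorname{fil}}(\sigma(f_\bullet^\vee),\mathcal{V}_\bullet)$, so at $\lambda_\bullet$
\[
\mathcal{E}_{\sigma,\lambda_\bullet} \;=\; \operatorname{Hom}((\C^n)^\vee, N_1^\lambda) \;-\; \operatorname{Hom}^{\operatorname{fil}}(\sigma(f_\bullet^\vee), N_\bullet^\lambda).
\]
The first summand has trace $(t_1+\cdots+t_n)\sum_{k=1}^{d-1}t^{-\mathbf{u}_k}$, which is exactly the first sum appearing in the formula for $T_{\operatorname{naNHilb}_0^{\underline{d}}(\A^n),\lambda_\bullet}$. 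Substitution thus cancels this pair, and the desired identity reduces to proving
\[
\operatorname{Hom}^{\operatorname{fil}}(\sigma(f_\bullet^\vee), N_\bullet^\lambda) \;=\; \sum_{j=1}^{d-1}\sum_{\substack{1\leq k\leq d-1\\ w(j)\leq w(k)}} t^{e_{\sigma(j)}-\mathbf{u}_k}
\]
in $K_T^0(\operatorname{pt})$.

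To compute the filtered Hom I will choose $T$-weight splittings of both filtrations: $\sigma(f_\bullet^\vee)$ has a weight basis formed by (the images of) $x_{\sigma(j)}$ for $j=1,\dots,d-1$, where $x_{\sigma(j)}$ sits in the deepest piece $\sigma(f_{w(j)}^\vee)$, and $N_\bullet^\lambda$ has weight basis $x^{\mathbf{u}_k}$ for $k=1,\dots,d-1$ with $x^{\mathbf{u}_k}$ in the deepest piece $N_{w(k)}^\lambda$. A $T$-equivariant filtration-preserving homomorphism decomposes into its weight components $x_{\sigma(j)} \mapsto x^{\mathbf{u}_k}$, and the constraint $\phi(\sigma(f_i^\vee))\subset N_i^\lambda$ for all $i$ collapses, at the deepest filtration piece containing $x_{\sigma(j)}$, to the single condition $\phi(x_{\sigma(j)})\in N_{w(j)}^\lambda$, i.e.\ $w(k)\geq w(j)$. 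Each admissible component carries the weight $e_{\sigma(j)}-\mathbf{u}_k$, producing precisely the claimed trace.

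The main technical obstacle is the careful bookkeeping of the two filtration indexings---the one on $\sigma(f_\bullet^\vee)$ tracked by the flag-variety $w$-function on $\{1,\dots,n\}$ and the one on $N_\bullet^\lambda$ tracked by the partition enumeration $\mathbf{u}_0,\dots,\mathbf{u}_{d-1}$---and the verification that these two conventions coincide on the overlap $\{1,\dots,d-1\}$ so that the joint condition $w(j)\leq w(k)$ is well-defined. Once this is set up correctly, the cancellation with $\operatorname{Hom}((\C^n)^\vee, N_1^\lambda)$ and the formula of the previous lemma assemble to yield the stated expression without further work.
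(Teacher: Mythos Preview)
Your proposal is correct and follows essentially the same approach as the paper: both use the regular embedding into $\operatorname{naNHilb}_0^{\underline{d}}(\A^n)$ to write $T_{\mu^{-1}(\sigma(f_\bullet)),\lambda_\bullet}=T_{\operatorname{naNHilb}_0^{\underline{d}}(\A^n),\lambda_\bullet}-\mathcal{E}_{\sigma,\lambda_\bullet}$, compute the trace of $\mathcal{E}_{\sigma,\lambda_\bullet}$ as the difference of the full $\operatorname{Hom}$ and the filtered $\operatorname{Hom}$, and subtract from the formula of the preceding lemma. Your write-up is more explicit about the weight-basis computation of $\operatorname{Hom}^{\operatorname{fil}}(\sigma(f_\bullet^\vee),N_\bullet^\lambda)$ and about reconciling the two $w$-functions, but the argument is the same.
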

\begin{proof}
We note that
\[T_{\mu^{-1}(\sigma(f_\bullet))}=T_{\operatorname{naNHilb}^{\underline{d}}_0(\A^n)}|_{\sigma(f_\bullet))}-\mathcal{E}_{\sigma}|_{\sigma(f_\bullet))}\]
which follows directly from the above. We directly see that
\[\mathcal{E}_\sigma=(t_1+\cdots +t_n)\sum_{k=1}^{d-1} t^{-\mathbf{u}_k}-\sum_{i=1}^{d-1}\sum_{\substack{1\leq k\leq d-1 \\ w(i)\leq w(k)}}t^{e_{\sigma(i)}-\mathbf{u}_k}\]
which combined with the previous calculation of 
\[T_{\operatorname{naNHilb}^{\underline{d}}_0(\A^n),\lambda_\bullet}\]
yields the result.
\end{proof}
Again, the cancellation afterwards is trickier. We instead add a recursive definition of the multiset of weights similar to that of \cref{recursive tangent weights}. 
\begin{lemma} \label{recursive fiber weights}
    Let $\lambda_\bullet$ be a nested partition such that the associated fixed point is in $H_\sigma$. For $0\leq m\leq r$ we define multisets
    \[S^{T,\sigma}_{m}(\lambda_\bullet)\subset S^{T}_{m}(\lambda_\bullet)\]
    recursively as follows. Pick an enumeration $\mathbf{u}_0,\dots, \mathbf{u}_{d-1}$ of $\lambda_\bullet$. Set
    \[S^{T,\sigma}_{0}(\lambda_\bullet)=\varnothing \]
    and define $S^{T,\sigma}_{m}(\lambda_\bullet)$ recursively as 
    \[\left(S^{T,\sigma}_{m-1}(\lambda_\bullet)\cup \{e_{\sigma(i)}\: | \: w(i)=m\}\cup\{\mathbf{u}_i+\mathbf{u}_j \: |\: 1\leq i\leq j, \: w(j)=m-1\}\right) \Big{\backslash}\{\mathbf{u}_j\: | \: 1\leq j, \: w(j)=m\}.\]
    Then the trace of $T_{\mu^{-1}(\sigma(f_\bullet)),\lambda_\bullet}$ is given by
        \[\sum_{m=0}^r \sum_{\mathbf{v}\in \lambda_{m+1}\backslash \lambda_m} \sum_{\mathbf{u}\in S_m^{T,\sigma}(\lambda_\bullet)} t^{\mathbf{u}-\mathbf{v}}.\]
\end{lemma}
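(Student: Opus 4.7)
The plan is to verify the asserted trace formula by a direct combinatorial comparison with the formula from the previous corollary. The argument parallels that of Lemma~\ref{recursive tangent weights}: the multisets $S_m^{T,\sigma}(\lambda_\bullet)$ are designed to collect, level by level in the weight $w(\cdot)$, precisely the weights $\mathbf{u}$ for which $t^{\mathbf{u}-\mathbf{u}_k}$ contributes to the trace of $T_{\mu^{-1}(\sigma(f_\bullet)),\lambda_\bullet}$ as $k$ varies with $w(k)=m$. The only difference from Lemma~\ref{recursive tangent weights} is that the full ambient set $\{e_1,\dots,e_n\}$ of unit weights is replaced by the smaller set $\{e_{\sigma(i)}:w(i)\le m\}$, introduced incrementally; this reflects the constraint that $\Psi_1$ factors through the reference flag $\sigma(f_\bullet^\vee)$.

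First, I would establish by induction on $m\in\{0,\dots,r\}$ the closed-form identity of (virtual) multisets
\[
S_m^{T,\sigma}(\lambda_\bullet) = \{e_{\sigma(i)}: 1\le i\le d-1,\, w(i)\le m\}\sqcup\{\mathbf{u}_i+\mathbf{u}_j: 1\le i\le j\le d-1,\, w(j)\le m-1\}\setminus\{\mathbf{u}_j: 1\le j\le d-1,\, w(j)\le m\}.
\]
The base case $m=0$ is immediate since each of the three index sets is empty ($d_0=1$ forces $w(j)=0\iff j=0$). The inductive step is a routine bookkeeping: the three contributions introduced at stage $m$ by the recursion, namely $\{e_{\sigma(i)}:w(i)=m\}$, $\{\mathbf{u}_i+\mathbf{u}_j:w(j)=m-1\}$, and $-\{\mathbf{u}_j:w(j)=m\}$, precisely upgrade the three index ranges $w(i)\le m-1\mapsto w(i)\le m$, $w(j)\le m-2\mapsto w(j)\le m-1$, and $w(j)\le m-1\mapsto w(j)\le m$ in the closed form.

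Second, I would substitute this closed form into the asserted sum and swap the order of summation, using the bijection $\mathbf{v}\in\lambda_{m+1}\setminus\lambda_m\leftrightarrow\mathbf{v}=\mathbf{u}_k$ with $w(k)=m$. The three pieces of the closed form then yield, respectively,
\[
\sum_{i=1}^{d-1}\sum_{\substack{1\le k\le d-1\\ w(k)\ge w(i)}}t^{e_{\sigma(i)}-\mathbf{u}_k},\quad \sum_{1\le i\le j\le d-1}\sum_{\substack{1\le k\le d-1\\ w(k)>w(j)}}t^{\mathbf{u}_i+\mathbf{u}_j-\mathbf{u}_k},\quad -\sum_{j=1}^{d-1}\sum_{\substack{1\le k\le d-1\\ w(k)\ge w(j)}}t^{\mathbf{u}_j-\mathbf{u}_k},
\]
which are exactly the three sums in the formula from the corollary. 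The constraint $k\ge 1$ appearing in the inner sums is automatic: the hypotheses $i,j\ge 1$ together with $d_0=1$ force $w(i),w(j)\ge 1$, whence $w(k)\ge 1$ and thus $k\ge 1$.

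The only mildly subtle point, and the main potential obstacle, is justifying the multiset manipulations, since the recursion involves a setminus that could in principle remove elements with higher multiplicity than they occur at intermediate stages. I would circumvent this by reading the entire lemma at the level of formal characters in $K_T^0(\mathrm{pt})\cong\Z[\hat T]$, so that $S_m^{T,\sigma}$ is interpreted as a virtual multiset, i.e.\ an element of $\Z[\hat T]$, and the recursion becomes a sequence of signed polynomial identities. This renders both the closed-form identity and the subsequent exchange of summations entirely formal, with no positivity obligations along the way.
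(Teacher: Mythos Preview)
Your proposal is correct and follows essentially the same approach as the paper: the paper does not give an explicit proof of this lemma, but it is meant to be read as the direct analogue of Lemma~\ref{recursive tangent weights}, i.e.\ establish the closed form for $S_m^{T,\sigma}$ by induction and then compare term by term with the trace formula from the preceding corollary via the bijection $\mathbf{v}\in\lambda_{m+1}\setminus\lambda_m\iff w(k)=m$. Your remark about interpreting the multisets virtually in $K_T^0(\operatorname{pt})$ is a reasonable way to sidestep any sign/multiplicity bookkeeping and is fully in the spirit of how the paper handles these computations.
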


\subsection{The residue formula}
\begin{set} \label{tautological integral setup}
     Let $\underline{d}=(1,\hat{d})=(1,d_1,\dots, d_r)\in \Z_{\geq 0}^{r+1}$, $n\geq 0$ and $d=1+d_1+\cdots+d_r$. Let $W$ be a $q$-dimensional $T$-representation, let $E=W\otimes_{\C}\oo_{\A^n}$ be the associated $T$-equivariant bundle on $\A^n$. Let $E^{[\underline{d}]}$ be the associated tautological bundle on $\operatorname{NHilb}^{\underline{d}}(\A^n)$ of rank $m=d\cdot q$. We note that
    \[E^{[\underline{d}]}=W\otimes_{\C} \oo_{\A^n}^{{[\underline{d}]}}.\]
    We let $\theta_1,\dots, \theta_q$ be the Chern roots of $W$ and $\upsilon_0,\upsilon_1,\dots, \upsilon_{d-1}$ be the Chern roots of $\oo^{[\underline{d}]}$ ordered such that $\oo^{[d_0+d_1+\cdots+d_i]}$ has Chern roots $\upsilon_0, \upsilon_1,\dots, \upsilon_{d_1+\cdots d_i}$. Then $E^{[\underline{d}]}$ has Chern roots 
    \[\theta_i+\upsilon_j\]
    for $1\leq i\leq k$ and $0\leq j\leq d-1$. Any polynomial in the Chern classes of $E^{[\underline{d}]}$ is therefore a bisymmetric polynomial in $\theta_1,\dots ,\theta_q$ and $\upsilon_0,\dots, \upsilon_{d-1}$. When we develop the tautological integral formulas we will therefore just pick $\theta_1,\dots, \theta_q\in A_T^1(\operatorname{pt})$ and consider such bisymmetric polynomials. As it is more convenient, we will also substitute $\upsilon_i$ with $\eta_i:=-\upsilon_i$. Note that we can then choose $\eta_i$ such that 
    \[\eta_i|_{\lambda_\bullet}=\mathbf{u}_i(\underline{s})\]
    for a fitting enumeration $\mathbf{u}_0,\dots, \mathbf{u}_{d-1}$ of a partition $\lambda_\bullet$. Note also that $\eta_0=\upsilon_0=0$, as it corresponds to the first Chern class of the trivial summand coming from the unit of the algebra. 
\end{set}

\begin{prop}
     Let $\theta_1,\dots, \theta_q\in A_T^1(\operatorname{pt})$ and let
     \[P=P(\theta_1,\dots, \theta_q,\eta_1,\dots, \eta_{d-1})\]
     be a bisymmetric polynomial as in \cref{tautological integral setup}. Let 
     \[A_*^T(\operatorname{pt})\cong \Z[s_1,\dots, s_n].\]
     Then there exists a polynomial $Q$ in $k+(d-1)$ variables with coefficients in $\Z$ such that for all 
    \[\sigma \in S_n/(S_{d_1}\times \cdots \times S_{d_r}\times S_{n-(d-1)})\]
    we have
     \[\int_{[H_{\sigma}]^{\operatorname{vir}}}P=Q(\theta_1,\dots ,\theta_q,s_{\sigma(1)},\dots,s_{\sigma(d-1)}). \]
\end{prop}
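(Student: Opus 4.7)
The plan is to combine equivariant virtual localization on $H_\sigma$ with the coordinate-permutation symmetry relating the various $T$-fixed flags $\sigma(f_\bullet)$. By the virtual localization formula applied to $H_\sigma$ with its induced perfect obstruction theory from $\tau^*\mathcal{E}_{\operatorname{ass}}$ on the smooth projective fiber $\mu^{-1}(\sigma(f_\bullet))$,
\[
\int_{[H_\sigma]^{\operatorname{vir}}} P \;=\; \sum_{\lambda_\bullet \in H_\sigma^T} \frac{P\big|_{\lambda_\bullet}}{e^T\!\bigl(N^{\operatorname{vir}}_{\lambda_\bullet/H_\sigma}\bigr)},
\]
where the virtual normal class is assembled from the moving parts of $T_{\mu^{-1}(\sigma(f_\bullet)),\lambda_\bullet}$ and $\tau^*\mathcal{E}_{\operatorname{ass}}|_{\lambda_\bullet}$. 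Projectivity of $H_\sigma$ forces the sum to lie in $A^*_T(\operatorname{pt}) \cong \Z[s_1,\ldots,s_n]$.

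The first ingredient is a support observation. By \cref{fixed points on flag fiber}, a monomial nested partition $\lambda_\bullet \in H_\sigma^T$ cannot contain any unit vector $e_{\sigma(j)}$ with $j \ge d$; since partitions are down-closed, this forces every $\mathbf{u} \in \lambda_{r+1}$ to be supported in the index set $\{\sigma(1),\ldots,\sigma(d-1)\}$. In the weight formulas of \cref{recursive fiber weights} and \cref{recursive bundle weights}, each appearing $e_{\sigma(i)}$ also satisfies $i \le d-1$. Hence every weight of $T_{\mu^{-1}(\sigma(f_\bullet)),\lambda_\bullet}$ and of $\tau^*\mathcal{E}_{\operatorname{ass}}|_{\lambda_\bullet}$, and equally $P|_{\lambda_\bullet}$, is a linear combination of $\theta_1,\ldots,\theta_q$ and $s_{\sigma(1)},\ldots,s_{\sigma(d-1)}$. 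Each term in the localization sum is therefore a rational function in these variables alone.

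The second ingredient is a comparison across $\sigma$. Coordinate permutation induces a bijection $\Phi_\sigma : H_{\operatorname{id}}^T \xrightarrow{\sim} H_\sigma^T$, $\mu_\bullet \mapsto \sigma(\mu_\bullet)$, because the defining inequalities of \cref{fixed points on flag fiber} transform covariantly. With the induced enumerations $\mathbf{u}_k = \sigma(\mathbf{u}_k')$, a direct termwise inspection shows that the triple $\bigl(P|_{\Phi_\sigma(\mu_\bullet)},\, T_{\mu^{-1}(\sigma(f_\bullet)),\Phi_\sigma(\mu_\bullet)},\, \tau^*\mathcal{E}_{\operatorname{ass}}|_{\Phi_\sigma(\mu_\bullet)}\bigr)$ is obtained from the corresponding triple at $\mu_\bullet \in H_{\operatorname{id}}$ by the substitution $s_i \mapsto s_{\sigma(i)}$ for $i = 1, \ldots, d-1$. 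Defining
\[
Q(\theta_1,\ldots,\theta_q, y_1, \ldots, y_{d-1}) \;:=\; \int_{[H_{\operatorname{id}}]^{\operatorname{vir}}} P
\]
(with $s_i$ renamed to $y_i$) then yields the formula $\int_{[H_\sigma]^{\operatorname{vir}}} P = Q(\theta_1,\ldots,\theta_q, s_{\sigma(1)},\ldots,s_{\sigma(d-1)})$. That this expression is a polynomial with integer coefficients follows because the $\sigma=\operatorname{id}$ integral lies in $\Z[s_1,\ldots,s_n]$ by properness of $H_{\operatorname{id}}$, and by the support observation cannot involve $s_d,\ldots,s_n$. Well-definedness on cosets follows because the stabiliser $S_{d_1}\times\cdots\times S_{d_r}\times S_{n-(d-1)}$ acts on $H_{\operatorname{id}}$, and the integral is invariant under the corresponding within-block permutations of $(y_1,\ldots,y_{d-1})$.

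The main obstacle is the bookkeeping in the second ingredient: one must verify that every piece of the local contribution transforms uniformly under $s_i \mapsto s_{\sigma(i)}$. The recursive descriptions in \cref{recursive tangent weights}, \cref{recursive bundle weights}, and \cref{recursive fiber weights} are tailored for a termwise comparison, so each case reduces to a mechanical check; nonetheless, tracking the multisets $S_m^{T,\sigma}(\lambda_\bullet)$ and $S_m^{\operatorname{ass}}(\lambda_\bullet)$ consistently under the bijection $\Phi_\sigma$ is where the care is required.
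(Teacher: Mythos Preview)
Your proposal is correct and follows essentially the same approach as the paper: apply virtual localization on $H_\sigma$, use the support constraint from \cref{fixed points on flag fiber} to confine all weights to $s_{\sigma(1)},\dots,s_{\sigma(d-1)}$, and transport along the bijection $\mu_\bullet\mapsto\sigma(\mu_\bullet)$ from $H_{\operatorname{id}}^T$ to $H_\sigma^T$ via the substitution $s_i\mapsto s_{\sigma(i)}$. You are in fact slightly more careful than the paper in two places: you explicitly invoke properness of $H_{\operatorname{id}}$ to justify that the localization sum lands in $\Z[s_1,\dots,s_n]$ rather than its localization (the paper defines $Q$ as a sum of rational functions and leaves polynomiality implicit), and you address well-definedness on cosets, which the paper does not mention.
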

\begin{proof}
We first note that
\[\oo_{\A^n}^{[\underline{d}]}=\mathcal{V}_\bullet|_{\operatorname{NHilb}^{\underline{d}}(\A^n)}\]
for $\mathcal{V}_\bullet$ the universal flag on the non-associative nested Hilbert scheme. We apply virtual localization
\[\int_{[H_{\id}]^{\operatorname{vir}}}P=\sum_{\lambda_\bullet\in H_{\operatorname{id}}^T} P_{\lambda_\bullet}\cdot \frac{e^T(\mathcal{E}_{ass,\lambda_{\bullet}})}{e^T(T_{\mu^{-1}(\sigma(f_\bullet)),\lambda_\bullet})}.\]
We can choose an enumeration $\mathbf{u}_0,\dots, \mathbf{u}_{d-1}$ of $\lambda_\bullet$ satisfying the properties of \cref{Partition setup} and such that
\[\eta_i|_{\lambda_\bullet}=\mathbf{u}_i(\underline{s}).\]
Note that for $\lambda_\bullet \in H_{\operatorname{id}}^T$ we have that $e_j\notin \lambda_\bullet$ for $j>d-1$. It follows that $P_{\lambda_\bullet}$ is a polynomial in $\theta_1,\dots, \theta_q,s_1,\dots, s_{d-1}$. From the explicit calculation of their $K$-theory class above, we can choose polynomials $Q_{1,\lambda_\bullet},Q_{2,\lambda_\bullet}$ such that
\[e^T(\mathcal{E}_{ass,\lambda_{\bullet}})=Q_{1,\lambda_\bullet}(\theta_1,\dots, \theta_q,s_1,\dots, s_{d-1})\]
and 
\[e^T(T_{\mu^{-1}(\sigma(f_\bullet)),\lambda_\bullet})=Q_{2,\lambda_\bullet}(\theta_1,\dots, \theta_q,s_1,\dots, s_{d-1}).\]
We now simply define 
\[Q:=\sum_{\lambda_\bullet\in H_{\operatorname{id}}^T}P_{\lambda_\bullet}\frac{Q_{1,\lambda_\bullet}}{Q_{2,\lambda_\bullet}}.\]
For  
\[\sigma \in S_n/(S_{d_1}\times \cdots \times S_{d_r}\times S_{n-(d-1)})\]
and a nested partition $\lambda_\bullet$ we define $\sigma(\lambda_\bullet)$ by simply substituting $e_i\mapsto e_{\sigma(i)}$. The assignment $\lambda_\bullet \mapsto \sigma(\lambda_\bullet)$ defines a bijection 
\[H_{\operatorname{id}}^T\to H_{\sigma}^T\]
and one directly see from the concrete formulas that
\[e^T(\mathcal{E}_{ass,\sigma(\lambda_{\bullet})})=Q_{1,\lambda_\bullet}(\theta_1,\dots, \theta_{k},s_{\sigma(1)},\dots, s_{\sigma(d-1)})\]
and 
\[e^T(T_{\mu^{-1}(\sigma(f_\bullet)),\sigma(\lambda_\bullet)})=Q_{2,\lambda_\bullet}(\theta_1,\dots, \theta_q,s_{\sigma(1)},\dots, s_{\sigma(d-1)}).\]
Applying virtual localization to $[H_\sigma]^{\operatorname{vir}}$ and comparing gives the result.
\end{proof}
We can now obtain a iterated residue formula for virtual tautological integrals on $\operatorname{NHilb}_{\operatorname{nil-fil}}^{\underline{d}}(\A^n)$. The main input is the following iterated residue formula for integration on the flag variety $\operatorname{Flag}_{\hat{d}}(\C^n)$ generalizing \cite[Proposition 5.4]{ThomMorin}.

\begin{prop}\label{prop:weighted-residue}
Let $\hat d=(d_1,\dots,d_r)\in\Z_{\ge0}^r$ and set $k = |\hat d| := d_1+\cdots+d_r\le n$.
Let $Q$ be a polynomial on $\C^k$ and let $s_1,\dots,s_n$ be parameters.

Partition $\{1,\dots,n\}$ into blocks
\[
B_1=\{1,\dots,d_1\},\;
B_2=\{d_1+1,\dots,d_1+d_2\},\dots,
B_r=\{k-d_r+1,\dots,k\},\;
B_{r+1}=\{k+1,\dots,n\},
\]
and define a weight function
\[
w:\{1,\dots,n\}\to\{1,\dots,r+1\},\qquad
w(i)=p\ \text{if and only if } i\in B_p.
\]

Then
\begin{equation}\label{eq:weighted-residue}
\sum_{\sigma\in S_n/(S_{d_1}\times\cdots\times S_{d_r}\times S_{n-k})}
\frac{Q\bigl(s_{\sigma(1)},\dots,s_{\sigma(k)}\bigr)}
     {\displaystyle\prod_{j=1}^{k}\prod_{\substack{1\le i\le n\\w(i)>w(j)}}
      \bigl(s_{\sigma(i)}-s_{\sigma(j)}\bigr)}
=
\underset{\bz=\infty}{\operatorname{Res}}
\frac{\displaystyle\prod_{1\le m\neq \ell\le k}(z_m-z_\ell)\,Q(\bz)\,dz_1\cdots dz_k}
     {\displaystyle\prod_{\substack{1\le \ell<m\le k\\w(\ell)<w(m)}}(z_m-z_\ell)
       \prod_{i=1}^n\prod_{\ell=1}^k(s_i-z_\ell)}.
\end{equation}
\end{prop}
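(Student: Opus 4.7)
The plan is to compute the iterated residue on the right-hand side directly by successive application of the residue theorem, one variable at a time, and to match the result termwise with the Atiyah--Bott localization formula on the left-hand side. Conceptually the identity equates the equivariant pushforward from $\operatorname{Flag}_{\hat d}(\C^n)$ to a point --- whose fixed-point formula is exactly the LHS, since $\prod_{i:\, w(i)>w(j)}(s_{\sigma(i)}-s_{\sigma(j)})$ is the $T$-equivariant Euler class of the tangent space at $\sigma(f_\bullet)$ --- with an iterated residue representation in the spirit of \cite{bsz}.

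The key algebraic observation is that the numerator $\prod_{m\ne\ell}(z_m-z_\ell)$ contains \emph{both} $(z_m-z_\ell)$ and $(z_\ell-z_m)$ for every unordered pair $\{\ell,m\}$, whereas the denominator only contains $(z_m-z_\ell)$ for $\ell<m$ with $w(\ell)<w(m)$. After cancellation, the integrand has no poles of the form $z_\ell=z_m$ in any variable, and the only finite poles of each $z_\ell$ lie at $z_\ell=s_i$ for $i\in\{1,\dots,n\}$. The residue theorem then gives $\operatorname{Res}_{z_\ell=\infty}=-\sum_{i}\operatorname{Res}_{z_\ell=s_i}$ on each variable. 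Iterating this across $z_k,z_{k-1},\dots,z_1$ produces a sum over tuples $(i_1,\dots,i_k)\in\{1,\dots,n\}^k$; any tuple with a repeated entry is killed because the surviving copies of $(z_m-z_\ell)$ in the numerator vanish on substitution $z_\ell,z_m\mapsto s_{i_\ell}=s_{i_m}$. Hence only injective tuples contribute.

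For injective tuples I would exploit the symmetry of the integrand within each block $\{z_\ell\,:\,w(\ell)=p\}$: both the full numerator and the partial denominator are $S_{d_p}$-invariant inside each block, so the $d_1!\cdots d_r!$ tuples differing by a within-block permutation give identical contributions, while the $(n-k)!$ permutations of the unused indices account for the $S_{n-k}$ factor. This collapses the sum over injective tuples to a sum over cosets $\sigma\in S_n/(S_{d_1}\times\cdots\times S_{d_r}\times S_{n-k})$. One then checks that, for a fixed coset, the summand equals $Q(s_{\sigma(1)},\dots,s_{\sigma(k)})$ divided by $\prod_{j=1}^{k}\prod_{i:\,w(i)>w(j)}(s_{\sigma(i)}-s_{\sigma(j)})$: the factors with $i>k$ come from the denominator $\prod_i(s_i-z_\ell)$ after residue substitution; the factors with $i\le k$ and $w(i)>w(j)$ come from the uncancelled part of the numerator $\prod_{m\ne\ell}(z_m-z_\ell)$ after the partial denominator $\prod_{\ell<m,\,w(\ell)<w(m)}(z_m-z_\ell)$ has removed the duplicated copies.

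The main obstacle is a disciplined sign and combinatorial bookkeeping: each single-variable residue at $z_\ell=s_{i_\ell}$ introduces a sign from differentiating $(s_{i_\ell}-z_\ell)^{-1}$, and one must verify that these signs combine with the alternating factor $(-1)^{\binom{k}{2}}\prod_{\ell<m}(z_m-z_\ell)^2$ arising from the full product $\prod_{m\ne\ell}(z_m-z_\ell)$ to give exactly the Atiyah--Bott signs on the LHS. I would split the analysis by the three cases $w(\ell)=w(m)$, $w(\ell)<w(m)$, and $\ell$ unpaired (corresponding to $i>k$), and confirm that in each case the net power of $(s_{\sigma(i)}-s_{\sigma(j)})$ surviving in the denominator matches the tangent Euler class prescription $w(i)>w(j)$. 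Once these matchings are verified, the identity follows.
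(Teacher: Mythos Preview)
Your proposal is correct and follows essentially the same approach as the paper: both iterate the residue theorem on $\mathbb{P}^1$ in the variables $z_k,z_{k-1},\dots,z_1$, observe that after cancelling numerator and denominator factors the only finite poles lie at $z_\ell=s_i$, and then identify the resulting sum over injective $k$-tuples with the Atiyah--Bott sum over cosets. The paper phrases the distinctness of indices as ``the pole at $z_\ell=s_j$ disappears after substituting $z_k=s_j$,'' whereas you phrase it as ``the numerator vanishes on repeated substitutions''; these are two sides of the same cancellation, and both arguments leave the final sign and within-block bookkeeping as the residual verification.
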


\begin{proof}
Consider the meromorphic $k$-form on $(\mathbb{P}^1)^k$
\[
\omega_k(\bz)
=
\frac{\displaystyle\prod_{1\le m\neq \ell\le k}(z_m-z_\ell)\,
Q(\bz)\,dz_1\cdots dz_k}
     {\displaystyle\prod_{\substack{1\le \ell<m\le k\\w(\ell)<w(m)}}(z_m-z_\ell)
       \prod_{i=1}^n\prod_{\ell=1}^k(s_i-z_\ell)},
\qquad
\bz=(z_1,\dots,z_k).
\]
We will compute the iterated residue
\[
\sires_{\bz=\infty}\omega_k
:=\sires_{z_1=\infty}\cdots\sires_{z_k=\infty}\omega_k
\]
by successively applying the Residue Theorem on $\mathbb{P}^1$ in the variables
$z_k,z_{k-1},\dots,z_1$, as in \cite{bsz}.\\
\noindent \textbf{Step 1: Isolating the dependence on $z_k$.}
Write the full Vandermonde product as
\[
\prod_{1\le m\neq \ell\le k}(z_m-z_\ell)
=
\Bigl(\prod_{\substack{1\le m\neq \ell\le k-1}}(z_m-z_\ell)\Bigr)
\Bigl(\prod_{\ell<k}(z_k-z_\ell)(z_\ell-z_k)\Bigr).
\]
Among the cross-block factors in the denominator
\[
\prod_{\substack{1\le \ell<m\le k\\w(\ell)<w(m)}}(z_m-z_\ell),
\]
the only ones involving $z_k$ are those with $m=k$ and $\ell<k$,
namely
\[
\prod_{\substack{\ell<k\\w(\ell)<w(k)}}(z_k-z_\ell).
\]
Thus, we can factor $\omega_k$ as
\[
\omega_k
=
F_{k-1}(\bz')\;
\frac{\displaystyle
\prod_{\ell<k}(z_k-z_\ell)(z_\ell-z_k)}
     {\displaystyle\prod_{\substack{\ell<k\\w(\ell)<w(k)}}(z_k-z_\ell)
       \prod_{i=1}^n(s_i-z_k)}\,dz_k,
\]
where $\bz'=(z_1,\dots,z_{k-1})$ and
\[
F_{k-1}(\bz')
=
\frac{\displaystyle\prod_{1\le m\neq \ell\le k-1}(z_m-z_\ell)\,
Q(\bz)\,dz_1\cdots dz_{k-1}}
     {\displaystyle\prod_{\substack{1\le \ell<m\le k-1\\w(\ell)<w(m)}}(z_m-z_\ell)
       \prod_{i=1}^n\prod_{\ell=1}^{k-1}(s_i-z_\ell)}.
\]

For each $\ell<k$ we have:
\begin{itemize}
\item If $w(\ell)<w(k)$, then a denominator factor $(z_k-z_\ell)$ cancels one
  factor in $(z_k-z_\ell)(z_\ell-z_k)$, leaving $(z_\ell-z_k)$.
\item If $w(\ell)=w(k)$, there is no denominator factor, and we simply keep
  $(z_k-z_\ell)(z_\ell-z_k)=-(z_k-z_\ell)^2$.
\end{itemize}
Hence, as a function of $z_k$,
\[
\omega_k
=
\Phi_k(\bz')\;
\frac{R_k(z_k;\bz')}{\prod_{i=1}^n(s_i-z_k)}\,dz_k,
\]
where $\Phi_k(\bz')$ is independent of $z_k$ and $R_k(z_k;\bz')$ is a polynomial
in $z_k$. In particular, the only poles in the $z_k$ variable are at
$z_k=s_1,\dots,s_n$.

\medskip

\noindent\textbf{Step 2: Residue in $z_k$.}
By the Residue Theorem on $\mathbb{P}^1$,
\[
\sires_{z_k=\infty}\omega_k = -\sum_{j=1}^n \sires_{z_k=s_j}\omega_k.
\]
For a simple pole in $s_j-z_k$, we have
\[
\sires_{z_k=s_j}
\frac{R_k(z_k;\bz')}{\prod_{i=1}^n(s_i-z_k)}\,dz_k
=
-\frac{R_k(s_j;\bz')}{\prod_{i\ne j}(s_i-s_j)}.
\]
Thus, the minus sign from the Residue Theorem cancels this minus sign, giving
\[
\sires_{z_k=\infty}\omega_k
=
\sum_{j=1}^n
\Phi_k(\bz')\,
\frac{R_k(s_j;\bz')}{\prod_{i\ne j}(s_i-s_j)}.
\]

Now track the factors involving $z_k$ more explicitly. As observed above,
for each $\ell<k$ we obtain after substituting $z_k=s_j$ at least one factor
$(z_\ell-s_j)$ in the numerator. On the other hand, in $F_{k-1}(\bz')$ we still
have the denominator
\[
\prod_{i=1}^n(s_i-z_\ell),
\qquad \ell=1,\dots,k-1.
\]
Hence for each $\ell<k$ the factor $(z_\ell-s_j)$ cancels the factor
$(s_j-z_\ell)$ in this product, leaving
\[
\prod_{i\ne j}(s_i-z_\ell)
\]
in the denominator and at most one $(z_\ell-s_j)$ factor remaining in the
numerator. In particular, after the $z_k$-residue, the pole at $z_\ell=s_j$
disappears for all $\ell<k$.

Collecting all factors, we obtain (up to an overall sign depending only on $k$)
\begin{equation}\label{eq:after-first-res}
\sires_{z_k=\infty}\omega_k
=
\sum_{j=1}^n
\frac{\displaystyle
\prod_{1\le m\neq \ell\le k-1}(z_m-z_\ell)\,
Q(z_1,\dots,z_{k-1},s_j)\,
\prod_{\ell=1}^{k-1}(z_\ell-s_j)\,dz_1\cdots dz_{k-1}}
     {\displaystyle
\prod_{\substack{1\le \ell<m\le k-1\\w(\ell)<w(m)}}(z_m-z_\ell)
\prod_{\ell=1}^{k-1}\prod_{i\ne j}(s_i-z_\ell)
\prod_{i\ne j}(s_i-s_j)}.
\end{equation}
This has the same structure as the original integrand, but with one fewer
variable and with $j$ excluded from the set of indices appearing in the
denominators.

\medskip

\noindent\textbf{Step 3: Iteration.}
We now repeat the same procedure with $z_{k-1}$, then $z_{k-2}$, and so on.

Inductively, after having taken residues in $z_k,\dots,z_{m+1}$ we obtain a sum
over ordered $(k-m)$-tuples of distinct indices $(j_{m+1},\dots,j_k)$, and for
each such choice an integrand in the remaining variables $z_1,\dots,z_m$ of the
same form:
\[
\frac{\displaystyle\prod_{1\le a\neq b\le m}(z_a-z_b)\,
Q(\dots)\,dz_1\cdots dz_m}
     {\displaystyle\prod_{\substack{1\le \ell<m\le m\\w(\ell)<w(m)}}(z_m-z_\ell)
       \prod_{\ell=1}^m\prod_{i\in I_m}(s_i-z_\ell)}
\cdot (\text{constant depending on } s_{j_{m+1}},\dots,s_{j_k}),
\]
where $I_m\subset\{1,\dots,n\}$ is the set of indices not yet used. In
particular, the only poles in $z_m$ are at $z_m=s_i$ with $i\in I_m$.

The same argument as in Step~2 shows that:

\begin{itemize}
\item At the $z_m$ step, we pick an index $j_m\in I_m$ and obtain a factor
  $1/\prod_{i\in I_m\setminus\{j_m\}}(s_i-s_{j_m})$.
\item The pole at $z_\ell=s_{j_m}$ disappears for every $\ell<m$; hence
  $j_m$ cannot reappear at later steps. Thus, the indices $j_1,\dots,j_k$
  are all distinct.
\item Tracking the cross-block factors, one checks that for the chosen
  index $j_m$ with weight $w(j_m)$, the only factors that survive in the
  final denominator are those with $w(i)>w(j_m)$. The factors with
  $w(i)\le w(j_m)$ are cancelled against factors arising from the
  Vandermonde numerator at earlier steps.
\end{itemize}

Proceeding inductively down to $z_1$, we thus obtain a sum over ordered
$k$-tuples of distinct indices $(j_1,\dots,j_k)$, or equivalently over
cosets $\sigma\in S_n/(S_{d_1}\times\cdots\times S_{d_r}\times S_{n-k})$,
of terms of the form
\[
\frac{Q(s_{j_1},\dots,s_{j_k})}
     {\displaystyle\prod_{m=1}^k\prod_{\substack{i\in\{1,\dots,n\}\\w(i)>w(j_m)}}
      (s_i-s_{j_m})}
=
\frac{Q\bigl(s_{\sigma(1)},\dots,s_{\sigma(k)}\bigr)}
     {\displaystyle\prod_{j=1}^k\prod_{\substack{1\le i\le n\\w(i)>w(j)}}
      (s_{\sigma(i)}-s_{\sigma(j)})}.
\]

\medskip

\noindent\textbf{Step 4: signs.}
At each step there is
a sign coming from exchanging factors $(z_m-z_\ell)$ with $(z_\ell-z_m)$, and
a global sign from expanding the Vandermonde product
$\prod_{1\le m\neq \ell\le k}(z_m-z_\ell)$. The total sign is
$(-1)^{\sum_p\binom{d_p}{2}}$ both from these exchanges and from the
Vandermonde, so these sign contributions cancel in the final expression.

Thus, the iterated residue $\sires_{\bz=\infty}\omega_k$ is exactly the
left-hand side of~\eqref{eq:weighted-residue}, which proves the identity.
\end{proof}

\begin{corollary}
    Let $\theta_1,\dots, \theta_q\in A_T^1(\operatorname{pt})$ and let
    \[P=P(\theta_1,\dots, \theta_q,\eta_1,\dots, \eta_{d-1})\]
    be a bisymmetric polynomial as in \cref{tautological integral setup}. Then 
    \[\int_{[\operatorname{NHilb}^{\underline{d}}_{\operatorname{nil-fil}}(\A^n)]^{\operatorname{vir}}}P=\underset{\underline{z}=\infty}{\operatorname{Res}}\frac{\prod_{1\leq m\neq l\leq d-1}(z_m-z_l)Q(\theta_1,\dots, \theta_q,z_1,\dots, z_{d-1})d\underline{z}}{\prod_{\substack{1\leq l<m\leq d-1 \\ w(l)<w(m)}}(z_m-z_l)\prod_{i=1}^n \prod_{l=1}^{d-1}(s_i-z_l)}\]
    where $\underline{z}=(z_1,\dots, z_{d-1})$ and
    \[Q(\theta_1,\dots, \theta_q,s_1,\dots, s_{d-1})=\int_{[H_{\id}]^{\operatorname{vir}}}P.\]
    Explicitly, one can calculate $Q$ as
    \[\sum_{\lambda_\bullet} P_{\lambda_\bullet}\frac{\prod_{\substack{1\leq i,j,k\leq d-1 \\ i<k}} \widetilde{\prod}_{\substack{0\leq m\leq d-1\\ w(k),w(j)\leq w(m)}}(\mathbf{u}_i+\mathbf{u}_j+\mathbf{u}_k-\mathbf{u}_m)(\underline{s})\widetilde{\prod}_{j=1}^{d-1}\prod_{\substack{1\leq k\leq d-1\\ w(j)\leq w(k)}}(\mathbf{u}_j-\mathbf{u}_k)(\underline{s})}{\prod_{i=1}^n\widetilde{\prod}_{\substack{1\leq k\leq d-1 \\ w(i)\leq w(k)}}(e_i-\mathbf{u}_k)(\underline{s})\prod_{1\leq i\leq j\leq d-1} \widetilde{\prod}_{\substack{1\leq k\leq d-1\\ w(j)<w(k)}}(\mathbf{u}_i+\mathbf{u}_j-\mathbf{u}_k)(\underline{s})}\]
    or using the notation of \cref{recursive bundle weights} and \cref{recursive fiber weights}
    \[\sum_{\lambda_\bullet}P_{\lambda_\bullet}\prod_{m=1}^r \prod_{\mathbf{v}\in \lambda_{m+1}\backslash \lambda_m}\frac{\prod_{\substack{\mathbf{u}\in S_m^{ass}(\lambda_\bullet)\\ \mathbf{u}\neq \mathbf{v}}} (\mathbf{u}-\mathbf{v})(\underline{s})}{\prod_{\substack{\mathbf{u}\in S_m^{T,\id}(\lambda_\bullet)\\ \mathbf{u}\neq \mathbf{v}}} (\mathbf{u}-\mathbf{v})(\underline{s})}\]
    Here the sums are taken over all admissible nested partitions $\lambda_{\bullet}$ satisfying $\mathbf{u}+e_i\notin \lambda_{k+1}$ for all $\mathbf{u}\in \lambda_{k+1}\backslash \lambda_k$ and all $k\geq 1$ and that $e_j\notin \lambda_{i}$ whenever $w(j)\geq i-1$ and for $\mathbf{u}_0,\dots, \mathbf{u}_{d-1}$ an enumeration of $\lambda_\bullet$ as in \cref{Partition setup} we define 
    \[P_{\lambda_\bullet}=P(\theta_1,\dots,\theta_q,\lambda_\bullet(\underline{z}))=P(\theta_1,\dots, \theta_q,\mathbf{u}_1(\underline{s}),\dots, \mathbf{u}_{d-1}(\underline{s})).\]
\end{corollary}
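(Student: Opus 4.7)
The plan is to chain together four ingredients developed earlier in the section: (i) the birational push-forward identity reducing the integral on $\operatorname{NHilb}^{\underline{d}}_{\operatorname{nil-fil}}(\A^n)$ to one on its partial resolution $\widetilde{\operatorname{NHilb}^{\underline{d}}_{\operatorname{nil-fil}}}(\A^n)$; (ii) Atiyah-Bott type virtual localization along the flag fibration $\mu$; (iii) Proposition~\ref{prop:weighted-residue} rewriting the resulting weighted sum over $S_n/(S_{d_1}\times\cdots\times S_{n-(d-1)})$ as an iterated residue at infinity; and (iv) virtual localization on $[H_{\id}]^{\operatorname{vir}}$ to extract the explicit expression for $Q$.

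First I would write
\[
\int_{[\operatorname{NHilb}^{\underline{d}}_{\operatorname{nil-fil}}(\A^n)]^{\operatorname{vir}}} P
=
\int_{[\widetilde{\operatorname{NHilb}^{\underline{d}}_{\operatorname{nil-fil}}}(\A^n)]^{\operatorname{vir}}} \tau^* P
\]
using the birational push-forward lemma proved just above, and then apply the flag-fiber localization identity
\[
[\widetilde{\operatorname{NHilb}^{\underline{d}}_{\operatorname{nil-fil}}}(\A^n)]^{\operatorname{vir}}
=
\sum_{\sigma} \iota_{\sigma,*} \frac{[H_\sigma]^{\operatorname{vir}}}{e^T(p_\sigma^* T_\sigma)}
\]
obtained from the compatible relative obstruction theory on $g=\mu\circ\iota$. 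By the preceding proposition, $\int_{[H_\sigma]^{\operatorname{vir}}}\iota_\sigma^*\tau^* P = Q(\theta_1,\dots,\theta_q,s_{\sigma(1)},\dots,s_{\sigma(d-1)})$ for a single $\sigma$-independent polynomial $Q$, and $e^T(T_\sigma)=\prod_{j=1}^{d-1}\prod_{w(i)>w(j)}(s_{\sigma(i)}-s_{\sigma(j)})$. Extending the weight function $w$ to $\{1,\dots,n\}$ by $w(i)=r+1$ for $i\geq d$ (matching the trivial last block of $\operatorname{Flag}_{\hat{d}}(\C^n)$), the sum over $\sigma$ is in exactly the shape of the left-hand side of Proposition~\ref{prop:weighted-residue} with $k=d-1$; applying that proposition directly converts it to the claimed iterated residue in $\underline{z}=(z_1,\dots,z_{d-1})$.

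For the explicit formula for $Q$, I would compute $\int_{[H_{\id}]^{\operatorname{vir}}} P$ by virtual localization on $H_{\id}$. The $T$-fixed points in $H_{\id}^T$ are the admissible monomial nested ideals $\lambda_\bullet$ for which $e_j\notin\lambda_i$ whenever $w(j)\geq i-1$ (Corollary~\ref{fixed points on flag fiber}, taking $\sigma=\id$) and for which $\mathbf{u}+e_i\notin\lambda_{k+1}$ whenever $\mathbf{u}\in\lambda_{k+1}\setminus\lambda_k$ and $k\geq 1$ (\cref{nil-fil monomial}); non-admissible contributions vanish by the rank comparison of the fixed parts of $\mathcal{E}_{\operatorname{ass}}$ and of the tangent. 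The local contribution at each such $\lambda_\bullet$ is $P_{\lambda_\bullet}$ multiplied by the ratio $e^T(\mathcal{E}_{\operatorname{ass},\lambda_\bullet})/e^T(T_{\mu^{-1}(\id(f_\bullet)),\lambda_\bullet})$. Using the enumeration $\mathbf{u}_0,\dots,\mathbf{u}_{d-1}$ of \cref{Partition setup}, these Euler classes were computed above and yield precisely the two equivalent product expressions in the statement, the second one being a repackaging via the multisets $S_m^{\operatorname{ass}}(\lambda_\bullet)$ and $S_m^{T,\id}(\lambda_\bullet)$ of \cref{recursive bundle weights} and \cref{recursive fiber weights}.

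The only delicate point, and what I expect to be the main obstacle, is the bookkeeping that identifies the restrictions of the tautological Chern roots $\eta_i$ under $\tau^*$ and then along $\iota_\sigma^*$ with the enumeration weights $\mathbf{u}_i(\underline{s})$: one must verify that at each $\lambda_\bullet\in H_{\id}^T$ there exists an enumeration $\mathbf{u}_0,\dots,\mathbf{u}_{d-1}$ of $\lambda_{r+1}$ that is simultaneously compatible with the nesting $\lambda_1\subset\cdots\subset\lambda_{r+1}$ in the sense of \cref{Partition setup} and with the reference flag $\id(f_\bullet)$, so that the weights of $\tau^*\mathcal{V}_\bullet|_{\lambda_\bullet}$ match the chosen enumeration. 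Once this compatibility (which follows from the constraint $e_j\notin\lambda_i$ for $w(j)\geq i-1$) is established, the rest of the proof is an assembly of the explicit weight formulas with Proposition~\ref{prop:weighted-residue}.
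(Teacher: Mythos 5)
Your proposal is correct and follows essentially the same route the paper takes: the corollary is obtained by chaining the birational pushforward identity, the flag-fiber localization formula, the $\sigma$-independence of $Q$, and Proposition~\ref{prop:weighted-residue}, with the explicit formula for $Q$ coming from virtual localization on $H_{\id}$ using the fixed-point description of Corollary~\ref{fixed points on flag fiber} and the weight computations. The compatibility of the enumeration with the reference flag that you flag as the delicate point is indeed the only bookkeeping issue, and it is resolved exactly as you indicate.
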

\subsection{Residue vanishing}
The above formula shows that the virtual integral can be exhibited as a sum of iterated residues, i.e., we have that
\[\sum_{\lambda_{\bullet}}\underset{\underline{z}=\infty}{\operatorname{Res}}\frac{P(\theta_1,\dots,\theta_q,\lambda_\bullet(\underline{z}))\prod_{1\leq m\neq l\leq d-1}(z_m-z_l)\prod_{m=1}^r \prod_{\mathbf{v}\in \lambda_{m+1}\backslash \lambda_m}\prod_{\substack{\mathbf{u}\in S_m^{ass}(\lambda_\bullet)\\ \mathbf{u}\neq \mathbf{v}}} (\mathbf{u}-\mathbf{v})(\underline{z})d\underline{z}}{\prod_{\substack{1\leq l<m\leq d-1 \\ w(l)<w(m)}}(z_m-z_l)\prod_{i=1}^n \prod_{l=1}^{d-1}(s_i-z_l)\prod_{m=1}^r \prod_{\mathbf{v}\in \lambda_{m+1}\backslash \lambda_m}\prod_{\substack{\mathbf{u}\in S_m^{T,\id}(\lambda_\bullet)\\ \mathbf{u}\neq \mathbf{v}}} (\mathbf{u}-\mathbf{v})(\underline{z})} \]
calculates 
\[\int_{[\operatorname{NHilb}^{\underline{d}}_{\operatorname{nil-fil}}(\A^n)]^{\operatorname{vir}}}P(\theta_1,\dots, \theta_q,\eta_1,\dots, \eta_{d-1}).\]
In this section we prove that most of these residues vanish. In fact, we prove that the only non-vanishing residue is the one related to the Porteous point
\[\lambda_k=\{0,e_1,e_2,\dots, e_{d_1+\dots +d_{k-1}}\}.\]
To prove the vanishing of an iterated residue we use the following criterion from \cite{ThomMorin}.
\begin{prop} \label{General residue vanishing}
\cite[Proposition 6.3]{ThomMorin} Let $p(\underline{z})$ and $q(\underline{z})$ be polynomials in the variables $z_1,\dots, z_r$ and assumme that $q(\underline{z})=\prod_{i=1}^NL_i(\underline{z})$ is a product of linear factors. Let $d\underline{z}=d_{z_1}\cdots d_{z_r}$. The iterated residue
\[\underset{\underline{z}=\infty}{\operatorname{Res}}\frac{p(\underline{z})d\underline{z}}{q(\underline{z})}\]
vanishes if there exist $1\leq M\leq r$ satisfying either of the following conditions
\begin{itemize}
    \item $\operatorname{deg}(p(\underline{z});\{r,r-1,\dots, M\})+r-M+1<\operatorname{deg}(q(\underline{z});\{r,r-1,\dots, M\})
    $
\end{itemize}
or 
\begin{itemize}
    \item $\operatorname{deg}(q(\underline{z});\{M\})=\operatorname{lead}(q(\underline{z}),M)$ and $\operatorname{deg}(p(\underline{z});\{M\})+1<\operatorname{deg}(q(\underline{z});\{M\}).$
\end{itemize}
\end{prop}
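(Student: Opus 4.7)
The plan is to reduce both cases to elementary Laurent-series estimates, using the standard interpretation of $\operatorname{Res}_{\underline z=\infty}$ as the coefficient of $(z_1\cdots z_r)^{-1}$ in the Laurent expansion of $p(\underline z)/q(\underline z)$ valid in the nested domain $|z_1|\ll\cdots\ll|z_r|$, or equivalently as an iterated contour integral over circles of radii $R_1\ll\cdots\ll R_r$, all chosen large enough to enclose the finite poles of the integrand.

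For the first condition I would perform the residues in the outermost variables $z_r,z_{r-1},\dots,z_M$ first, treating $z_1,\dots,z_{M-1}$ as parameters. Parametrising these contours by $z_j=R_j e^{i\theta_j}$, the integrand is bounded in absolute value by a constant times $\prod_{j=M}^{r}R_j^{a_j}$ with $\sum_j a_j\le\operatorname{deg}(p;\{M,\dots,r\})-\operatorname{deg}(q;\{M,\dots,r\})$, while the measure $|dz_M\cdots dz_r|$ contributes $\prod_{j=M}^{r}R_j$. Under the hypothesis
\[
\operatorname{deg}(p;\{M,\dots,r\})+(r-M+1)<\operatorname{deg}(q;\{M,\dots,r\}),
\]
the whole quantity scales like $\prod_{j=M}^{r}R_j^{a_j+1}$ with $\sum_j(a_j+1)<0$ and therefore tends to zero as $R_M,\dots,R_r\to\infty$. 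Hence the inner iterated residue vanishes identically in $z_1,\dots,z_{M-1}$, which forces the full residue to vanish.

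For the second condition I would instead isolate the variable $z_M$ and apply Fubini, reducing to a computation of $\operatorname{Res}_{z_M=\infty}$ of $p/q$ viewed as a rational function of $z_M$ alone, with coefficients in the field of rational functions in the remaining variables. The equality $\operatorname{deg}(q;\{M\})=\operatorname{lead}(q,M)$ says precisely that the leading $z_M$-coefficient of $q$ is a nonzero scalar, so $p/q$ is already in reduced form as a function of $z_M$ and its Laurent expansion at $z_M=\infty$ starts at order $z_M^{\operatorname{deg}(p;\{M\})-\operatorname{deg}(q;\{M\})}$. The hypothesis $\operatorname{deg}(p;\{M\})+1<\operatorname{deg}(q;\{M\})$ forces this leading exponent to be strictly less than $-1$, so the coefficient of $z_M^{-1}$, which is $\operatorname{Res}_{z_M=\infty}$, vanishes, and the full iterated residue vanishes by Fubini.

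The main technical subtlety, and the only delicate step, is to justify that the iterated residue operators commute with one another and with the single-variable residue extraction used in the second argument; this is standard once one anchors the definition in the convergent Laurent expansion on the nested polyannulus and checks that the expansion of $p/q$ really converges in $|z_1|\ll\cdots\ll|z_r|$. Once that bookkeeping is in place, the substantive content of both bullets is simply the polynomial-degree counting above.
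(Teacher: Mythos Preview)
The paper does not prove this proposition; it is quoted from the cited reference without argument, so there is no in-paper proof to compare against and I comment on your sketch directly.

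Your first-bullet argument is essentially right, but the phrase ``tends to zero as $R_M,\dots,R_r\to\infty$'' hides a choice: in the nested regime the individual exponents $a_j+1$ need not all be negative, and $\prod R_j^{a_j+1}$ can diverge along some paths to infinity even when $\sum(a_j+1)<0$. You should fix the ratios, e.g.\ $R_j=K^{j-M}R$ with $K$ large and $R\to\infty$, so that the bound scales as $R^{\sum(a_j+1)}\to 0$. The second bullet has a more serious gap. Your reading of $\deg(q;\{M\})=\operatorname{lead}(q,M)$ as ``the leading $z_M$-coefficient of $q$ is a nonzero scalar'' is incorrect: that coefficient is a constant times $\prod_{L_i\not\ni z_M}L_i$, which generally involves the other variables. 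What the hypothesis actually says is that no linear factor of $q$ contains both $z_M$ and any $z_j$ with $j>M$, so $q=q_1q_2$ with $q_1$ involving only $z_1,\dots,z_M$ and $q_2$ free of $z_M$. This factorisation is precisely what legitimises your ``Fubini'' step: in the nested expansion $|z_1|\ll\cdots\ll|z_r|$ the $z_M$-expansion of $1/q_1$ is the honest expansion at $z_M=\infty$, while $p/q_2$ is polynomial in $z_M$, so every term of $p/q$ has $z_M$-exponent at most $\deg(p;\{M\})-\deg(q;\{M\})<-1$. Without this factorisation iterated residues do \emph{not} commute: a denominator factor such as $z_{M+1}-z_M$ expands in nonnegative powers of $z_M$ in the nested domain but in negative powers when $z_{M+1}$ is frozen as a parameter. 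So the commutation is not ``standard bookkeeping'' but the substantive content of the second criterion, and your proof should make this explicit.
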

Recall that 
\[\operatorname{lead}(q(\underline{z}),M):=\# \left\{i; \operatorname{max}\{l;\operatorname{coef}(L_i,z_l)\neq0\}=M\right\}\]
i.e., the number of linear terms $L_i$ where the highest indexed variable appearing is $z_M$.
\begin{prop}
    Let $\theta_1,\dots, \theta_q\in A_T^1(\operatorname{pt})$ and let
    \[P=P(\theta_1,\dots, \theta_q,\eta_1,\dots, \eta_{d-1})\]
    be a bisymmetric polynomial as in \cref{tautological integral setup}. Let $\lambda_{\bullet}$ be a nested partition not of Porteuos type and assume that the corresponding fixed point is in $H_{\id}$. Then
    \[\underset{\underline{z}=\infty}{\operatorname{Res}}\frac{P(\theta_1,\dots,\theta_q,\lambda_\bullet(\underline{z}))\prod_{1\leq m\neq l\leq d-1}(z_m-z_l)\prod_{m=1}^r \prod_{\mathbf{v}\in \lambda_{m+1}\backslash \lambda_m}\prod_{\substack{\mathbf{u}\in S_m^{ass}(\lambda_\bullet)\\ \mathbf{u}\neq \mathbf{v}}} (\mathbf{u}-\mathbf{v})(\underline{z})d\underline{z}}{\prod_{\substack{1\leq l<m\leq d-1 \\ w(l)<w(m)}}(z_m-z_l)\prod_{i=1}^n \prod_{l=1}^{d-1}(s_i-z_l)\prod_{m=1}^r \prod_{\mathbf{v}\in \lambda_{m+1}\backslash \lambda_m}\prod_{\substack{\mathbf{u}\in S_m^{T,\id}(\lambda_\bullet)\\ \mathbf{u}\neq \mathbf{v}}} (\mathbf{u}-\mathbf{v})(\underline{z})} =0.\]
\end{prop}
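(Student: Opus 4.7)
The plan is to apply Proposition~\ref{General residue vanishing}. Fix a non-Porteous nested partition $\lambda_\bullet$ whose fixed point lies in $H_{\id}$, with enumeration $\mathbf{u}_0, \mathbf{u}_1, \dots, \mathbf{u}_{d-1}$ as in Setup~\ref{Partition setup}. Being non-Porteous means some $\mathbf{u}_j$ with $j\ge 1$ is not a standard unit vector; let $M$ be the smallest such index. By minimality, $\mathbf{u}_1, \dots, \mathbf{u}_{M-1}$ are distinct unit vectors $e_{i_1}, \dots, e_{i_{M-1}}$, and since $\lambda_{r+1}$ is downward-closed we may write $\mathbf{u}_M = \mathbf{u}_a + \mathbf{u}_b$ for suitable $1 \le a, b < M$ with $\mathbf{u}_a, \mathbf{u}_b$ unit vectors.

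The crucial observation is the substitution rule: each weight $\mathbf{u} \in \Z^n$ appearing in $S_m^{\operatorname{ass}}$ or $S_m^{T,\id}$ enters the iterated residue as $\mathbf{u}(\underline{z}) = \sum_{i=1}^n u_i z_i$, namely $\mathbf{u}(\underline{s})$ with $s_i \mapsto z_i$. Under this rule, $\mathbf{u}_j(\underline{z}) = z_{i_j}$ for $j < M$, while $\mathbf{u}_M(\underline{z}) = \sum_{i} u_{M,i}\,z_i$ is a genuine $\Z$-linear combination — not a single variable — because $\mathbf{u}_M$ is not a unit vector. In particular, if $e_M \notin \operatorname{supp}(\mathbf{u}_M)$, then $\mathbf{u}_M(\underline{z})$ does not involve $z_M$ at all.

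I would then apply the second criterion of Proposition~\ref{General residue vanishing} at index $M$. The factors of the numerator coming from $S^{\operatorname{ass}}_{w(M)}$ paired with $\mathbf{v} = \mathbf{u}_M$ have the form $(\mathbf{u}_i + \mathbf{u}_j + \mathbf{u}_k - \mathbf{u}_M)(\underline{z})$. In the Porteous baseline each such factor contains the term $-\mathbf{u}_M(\underline{z}) = -z_M$ and therefore contributes $1$ to the $z_M$-degree; in the non-Porteous case with $e_M \notin \operatorname{supp}(\mathbf{u}_M)$, the term $-\mathbf{u}_M(\underline{z})$ carries no $z_M$, and since the indices $i, j, k < M$ contribute $z_{i_\bullet}$'s with $i_\bullet < M$ as well, the factor has strictly smaller $z_M$-degree. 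The denominator retains its $z_M$-degree through $\prod_i(s_i - z_M)$, the Vandermonde factors $(z_m - z_M)$, and the $S^{T,\id}$-factors whose $\mathbf{u} \ne \mathbf{v}$ substitutes give generic $z_M$-dependence. Moreover, each denominator factor involving $z_M$ has $z_M$ as its highest-indexed variable, so the leading condition $\deg(q;\{M\}) = \operatorname{lead}(q,M)$ of Proposition~\ref{General residue vanishing} is automatic. The resulting strict inequality $\deg_{z_M}(\text{num}) + 1 < \deg_{z_M}(\text{denom})$ yields vanishing.

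The main obstacle is the exceptional case where $e_M \in \operatorname{supp}(\mathbf{u}_M)$, so that $\mathbf{u}_M(\underline{z})$ does involve $z_M$ but with a coefficient $\ge 2$ or accompanied by further $z_i$-terms. In this case I would pass to the range version (first criterion) of Proposition~\ref{General residue vanishing}, using the block $\{z_M, z_{M+1}, \dots, z_{d-1}\}$, and argue that the cumulative drop in numerator degree across the block exceeds $d - M$. An inductive bookkeeping on the number of non-unit entries in $\lambda_{r+1}$ (or on the $L^1$-norm of $\mathbf{u}_M$) can make this precise, using the Porteous case as an equality baseline: each additional non-unit entry strictly tightens the degree inequality in favor of vanishing via the relation $\mathbf{u}_M(\underline{z}) = \mathbf{u}_a(\underline{z}) + \mathbf{u}_b(\underline{z})$, which expresses $\mathbf{u}_M(\underline{z})$ in terms of lower-indexed $z$'s.
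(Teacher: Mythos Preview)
Your proposal applies the right vanishing criterion, but the choice of $M$ is wrong and this invalidates the degree count.

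You take $M$ to be the smallest \emph{enumeration index} with $\mathbf u_M$ not a unit vector, and then reason about the variable $z_M$. But the $z$-variables are indexed by \emph{coordinate directions}: since $\mathbf u(\underline z)=\sum_i u_i z_i$, the variable $z_M$ occurs in $\mathbf u_j(\underline z)$ exactly when the $M$-th coordinate of $\mathbf u_j$ is nonzero, which has nothing to do with the enumeration slot $j$. Your assertion that the unit vectors $\mathbf u_1,\dots,\mathbf u_{M-1}$ contribute only $z_{i_\bullet}$ with $i_\bullet<M$ is not justified. More seriously, the claim that ``each denominator factor involving $z_M$ has $z_M$ as its highest-indexed variable'' is false: the cross-block factors $(z_m-z_M)$ with $m>M$ and $w(m)>w(M)$ have $z_m$ as leading variable. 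For a concrete instance take $d_1=d_2=d_3=1$ and $\lambda_4=\{0,e_1,2e_1,3e_1\}$; then your $M=2$, yet the denominator contains $(z_3-z_2)$. The fallback to the range criterion for the ``exceptional case'' is too vague to repair any of this.

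The paper makes the opposite choice: $M$ is the \emph{largest} index with $e_M\notin\lambda_{r+1}$. This is the key step. By downward-closedness of partitions, $e_M\notin\lambda_{r+1}$ forces every $\mathbf u_j$ to have vanishing $M$-th coordinate, so $z_M$ does not appear at all in $P(\theta,\lambda_\bullet(\underline z))$ nor in any $(\mathbf u-\mathbf v)(\underline z)$ from the $S^{\mathrm{ass}}$-product. Maximality of $M$ further allows one to choose the enumeration with $\mathbf u_{M+1}=e_{M+1},\dots,\mathbf u_{d-1}=e_{d-1}$; the residual denominator factors in which $z_M$ is not leading then cancel against matching Vandermonde factors $\prod_{m\neq l}(z_m-z_l)$ in the numerator, after which the second criterion of Proposition~\ref{General residue vanishing} applies directly.
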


\begin{proof}
        For the proof we let $p(\underline{z})$ and $q(\underline{z})$ be the numerator and the denominator of the expression in the iterated residue formula. Since $\lambda_\bullet$ is not Porteous we can pick the largest $1\leq M \leq d-1$ such that $e_M\notin \lambda_{\bullet}$. We first note that $e_M\notin \lambda_{\bullet}$ means that $z_M$ does not appear in the term 
        \[P(\theta_1,\dots,\theta_q,\lambda_\bullet(\underline{z}))\]
        nor in the term 
        \[\prod_{m=1}^r \prod_{\mathbf{v}\in \lambda_{m+1}\backslash \lambda_m}\prod_{\substack{\mathbf{u}\in S_m^{ass}(\lambda_\bullet)\\ \mathbf{u}\neq \mathbf{v}}} (\mathbf{u}-\mathbf{v})(\underline{z}).\]
        We proceed by reverse induction on $M$. Suppose first that $M=d-1$. Since $z_{d-1}$ is the variable with the largest index it follows by definition that
    \[\operatorname{deg}(q(\underline{z});\{d-1\})=\operatorname{lead}(q(\underline{z}),d-1).\]
    We note that
    \[e_{d-1}\notin S_m^{T,\id}(\lambda_\bullet)\]
    for $m<r$ and that $e_{d-1}$ appears exactly once in $S_r^{T,\id}(\lambda_\bullet)$. It follows that it appears in exactly $|\lambda_{r+1}\backslash \lambda_r|=d_r$ linear terms of 
    \[\prod_{m=1}^r \prod_{\mathbf{v}\in \lambda_{m+1}\backslash \lambda_m}\prod_{\substack{\mathbf{u}\in S_m^{T,\id}(\lambda_\bullet)\\ \mathbf{u}\neq \mathbf{v}}} (\mathbf{u}-\mathbf{v})(\underline{z}).\]
    Clearly it appears in $n$ linear terms of 
    \[\prod_{i=1}^n \prod_{l=1}^{d-1}(s_i-z_l)\]
    and $d_1+\cdots+d_{r-1}$ linear terms of 
    \[\prod_{\substack{1\leq l<m\leq d-1 \\ w(l)<w(m)}}(z_m-z_l).\]
    We conclude that
    \[\operatorname{deg}(q(\underline{z});\{d-1\})=d-1+n.\]
    For $p(\underline{z})$ we see that $z_{d-1}$ only appears in
    \[\prod_{1\leq m\neq l\leq d-1}(z_m-z_l)\]
    where it appears in $2(d-1)-2$ terms. From this we see that
    \[\operatorname{deg}(p(\underline{z});\{d-1\})+1=2(d-1)-1<d-1+n=\operatorname{deg}(q(\underline{z});\{d-1\})\]
    since $n\geq d-1$ by assumption. We conclude the vanishing from \cref{General residue vanishing}. For general $M$ we can pick an enumeration $\mathbf{u}_0,\dots, \mathbf{u}_{d-1}$ of $\lambda_\bullet$ in such a way that
    \[\mathbf{u}_M\neq e_M,\: \mathbf{u}_{M+1}=e_{M+1},\dots,\: \mathbf{u}_{d-1}=e_{d-1}\]
    We note that $e_{M}\in S_m^{T,\operatorname{id}}(\lambda_\bullet)$ if and only if $m\geq w(M)$ and in this case it appears exactly once. It follows that $z_M$ appears in $d_{w(M)}+d_{w(M)+1}+\cdots +d_{r}$ linear terms of
     \[\prod_{m=1}^r \prod_{\mathbf{v}\in \lambda_{m+1}\backslash \lambda_m}\prod_{\substack{\mathbf{u}\in S_m^{T,\id}(\lambda_\bullet)\\ \mathbf{u}\neq \mathbf{v}}} (\mathbf{u}-\mathbf{v})(\underline{z}).\]
    These terms are exactly of the form 
    \[z_{M}-\mathbf{u}_i(\underline{z})\]
    for $d_1+\cdots d_{w(M)-1}+1\leq i\leq d-1$ and we see that $z_M$ is a leading term if and only if $i\leq M$. We note that the terms, where $z_M$ is not leading, are of the form 
    \[z_M-z_i\]
    for $i>M$. From 
     \[\prod_{i=1}^n \prod_{l=1}^{d-1}(s_i-z_l)\]
     we have $n$ linear terms containing $z_M$ and $z_M$ is the leading term in all of them. At last 
     \[\prod_{\substack{1\leq l<m\leq d-1 \\ w(l)<w(m)}}(z_m-z_l)\]
     has $d_1+\cdots +d_{w(M)-1}+d_{w(M)+1}+\cdots +d_{r}$ linear terms containing $z_{M}$. The terms where it is not leading are of the form 
     \[z_m-z_M\]
     for $m>M$. From this we see that
     \[\operatorname{deg}(q(\underline{z});\{M\})=d-1+n+d_{w(M)+1}+\cdots d_{r}\geq d-1+n.\]
     For $p(\underline{z})$ we again see that $z_M$ only appears in
    \[\prod_{1\leq m\neq l\leq d-1}(z_m-z_l)\]
    where it appears in $2(d-1)-2$ terms so that
     \[\operatorname{deg}(p(\underline{z});\{M\})+1=2(d-1)-1<d-1+n\leq\operatorname{deg}(q(\underline{z});\{M\}).\]
     We see that the terms $z_M-z_i$ and $z_m-z_M$ for $i,m>M$ where $z_M$ is not leading each cancels with a unique term of 
    \[\prod_{1\leq m\neq l\leq d-1}(z_m-z_l).\]
    Let
    \[\frac{p(\underline{z})}{q(\underline{z})}=\frac{p'(\underline{z})}{q'(\underline{z})}\]
    be the result of canceling those term. We now see that 
     \[\operatorname{deg}(q'(\underline{z});\{M\})=\operatorname{lead}(q'(\underline{z}),M)\]
     and since $q'$ and $p'$ is obtained from $q$ and $p$ by removing the same number of linear terms with $z_M$ it still holds that
     \[\operatorname{deg}(p'(\underline{z});\{M\})+1<\operatorname{deg}(q'(\underline{z});\{M\})\]
     so that the residue vanishes by \cref{General residue vanishing}.
\end{proof}
Writing out the terms in the residue formula explicitly for the Porteous point, we arrive at the following formula.
\begin{corollary} \label{iterated residue 1}
    Let $\theta_1,\dots, \theta_q\in A_T^1(\operatorname{pt})$ and let
    \[P=P(\theta_1,\dots, \theta_q,\eta_1,\dots, \eta_{d-1})\]
    be a bisymmetric polynomial as in \cref{tautological integral setup}. Then 
    \[\int_{[\operatorname{NHilb}^{\underline{d}}_{\operatorname{nil-fil}}(\A^n)]^{\operatorname{vir}}}P \]
    can be calculated via the iterated residue formula
    \[\underset{\underline{z}=\infty}{\operatorname{Res}}\frac{P(\theta_1,\dots,\theta_q,\underline{z})\prod_{1\leq m\neq l\leq d-1}(z_m-z_l)\prod_{\substack{1\leq i,j,k\leq d-1 \\ i<k}} \widetilde{\prod}_{\substack{0\leq m\leq d-1\\ w(k),w(j)\leq w(m)}}(z_i+z_j+z_k-z_m)d\underline{z}}{\prod_{\substack{1\leq l<m\leq d-1 \\ w(l)<w(m)}}(z_m-z_l)\prod_{i=1}^n \prod_{l=1}^{d-1}(s_i-z_l)\prod_{1\leq i\leq j\leq d-1} \widetilde{\prod}_{\substack{1\leq k\leq d-1\\ w(j)< w(k)}}(z_i+z_j-z_k)}. \]
\end{corollary}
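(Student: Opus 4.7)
The plan is to obtain this formula by combining the sum-of-residues expression from the previous corollary with the residue vanishing proposition just proved. First I would invoke the preceding corollary, which writes
\[
\int_{[\operatorname{NHilb}^{\underline{d}}_{\operatorname{nil-fil}}(\A^n)]^{\operatorname{vir}}}P
\]
as a sum over admissible nested partitions $\lambda_\bullet$ (satisfying the nil-fil and $H_{\operatorname{id}}$ constraints) of iterated residues in $\underline{z}=(z_1,\dots,z_{d-1})$. The residue-vanishing proposition then kills every contribution except the Porteous partition $\lambda_k=\{0,e_1,\dots,e_{d_1+\cdots+d_{k-1}}\}$, so the integral collapses to a single iterated residue.

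Next I would check that the Porteous contribution is precisely the expression in the statement. For the Porteous point one has $\mathbf{u}_i=e_i$ with $\mathbf{u}_0=0$, so the substitution $\mathbf{u}_k(\underline{s})\rightsquigarrow z_k$ (and $\mathbf{u}_0(\underline{s})\rightsquigarrow 0$) translates the combinatorial multisets $S_m^{\operatorname{ass}}(\lambda_\bullet)$ and $S_m^{T,\operatorname{id}}(\lambda_\bullet)$ into exactly the products $\widetilde{\prod}(z_i+z_j+z_k-z_m)$ and $\widetilde{\prod}(z_i+z_j-z_k)$ appearing in the statement, with the weight function $w$ induced from $\underline{d}$ playing the same role on both sides. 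The factor $P_{\lambda_\bullet}$ becomes $P(\theta_1,\dots,\theta_q,\underline{z})$, and the numerator factor $\widetilde{\prod}_{j,k}(\mathbf{u}_j-\mathbf{u}_k)(\underline{s})$ from $Q$ merges with the full Vandermonde $\prod_{m\neq l}(z_m-z_l)$ and cancels against part of $\prod_{w(l)<w(m)}(z_m-z_l)$, leaving exactly the restricted Vandermonde denominator displayed in the corollary. This bookkeeping — matching linear factors and verifying that the exclusions \textquotedblleft$\widetilde{\prod}$\textquotedblright{} line up on both sides — is the step I expect to be the main obstacle, since one must carefully track which $z_k$'s cancel and which survive when $\mathbf{u}_k=e_k$.

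Finally, the statement is asserted for arbitrary $n\in\Z_{\ge 0}$, whereas the partial resolution over $\operatorname{Flag}_{\hat d}(\C^n)$ only exists when $d-1=d_1+\cdots+d_r\le n$. For the remaining range $d>n+1$ I would use the embedding
\[
\operatorname{NHilb}^{\underline{d}}_{\operatorname{nil-fil}}(\A^n)\hookrightarrow \operatorname{NHilb}^{\underline{d}}_{\operatorname{nil-fil}}(\A^N)
\]
for some $N\ge d-1$, together with the comparison of virtual structures already set up in the section introduction. Extending $W$ trivially to an $\A^N$-bundle pulls back $E^{[\underline{d}]}$, so tautological integrals on $\A^n$ are recovered from those on $\A^N$ by specializing $s_{n+1},\dots,s_N\mapsto 0$. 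Since both sides of the iterated residue formula on $\A^N$ are rational in $s_1,\dots,s_N$ and the right-hand side depends on $n$ only through the denominator factor $\prod_{i=1}^n\prod_l(s_i-z_l)$, this specialization yields exactly the claimed formula for $\A^n$, completing the proof in all ranges of $(n,\underline{d})$.
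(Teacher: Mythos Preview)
Your first two steps are exactly the paper's argument: the preceding corollary writes the integral as a sum of iterated residues indexed by nested partitions in $H_{\operatorname{id}}^T$, the vanishing proposition kills all but the Porteous term, and then one just writes out that term using $\mathbf{u}_i=e_i$. The paper records this as a one-line remark (``Writing out the terms in the residue formula explicitly for the Porteous point''), so your outline of the bookkeeping is adequate.

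Your third step, however, is both unnecessary and incorrect. Corollary~\ref{iterated residue 1} is stated inside the subsection on the partial resolution, where the standing hypothesis $d-1=d_1+\cdots+d_r\le n$ is in force; the extension to arbitrary $n$ is carried out separately afterwards as Theorem~\ref{iterated residue 2}. So you do not need to argue anything for $d>n+1$ here. Moreover, the mechanism you propose for that extension is wrong: the integral over $\operatorname{NHilb}^{\underline d}_{\operatorname{nil-fil}}(\A^n)$ is \emph{not} obtained from the one over $\A^N$ by specializing $s_{n+1},\dots,s_N\to 0$. The correct relation (proved in the paper via the regular embedding $\operatorname{naNHilb}^{\underline d}_0(\A^n)\hookrightarrow\operatorname{naNHilb}^{\underline d}_0(\A^N)$) is
\[
\int_{[\operatorname{NHilb}^{\underline d}_{\operatorname{nil-fil}}(\A^n)]^{\operatorname{vir}}}P
=
\int_{[\operatorname{NHilb}^{\underline d}_{\operatorname{nil-fil}}(\A^N)]^{\operatorname{vir}}}e^T(\mathcal{E}_{n,0})\cdot P,
\qquad
e^T(\mathcal{E}_{n,0})=\prod_{i=n+1}^N\prod_{l=1}^{d-1}(s_i-\eta_l),
\]
and it is the cancellation of the extra numerator $\prod_{i=n+1}^N\prod_l(s_i-z_l)$ against the corresponding denominator factors in the $\A^N$ residue formula that yields the $\A^n$ formula --- not a limit $s_i\to 0$.
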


\subsection{Removing the restriction on the number of points}
We recall that for all the machinery with the partial resolution to work we had to assume that $d_1+\cdots+d_r=d-1\leq n$. However, the iterated residue formula of \cref{iterated residue 1} makes sense for any $\underline{d},n$. In this section we will prove that this is actually the case. The key is to increase $n$ sufficiently and compare the two integrals
\begin{set}
    Let $n\leq N$ and set
    \[\A^n\hookrightarrow \A^n\times \A^{N-n}\simeq \A^N.\]
    In this section we set $T=(\C^*)^N$. We note that the above immersion is $T$-invariant and that there is a natural $T$-action on $\operatorname{naNHilb}^{\underline{d}}(\A^n)$ by composing with the projection $(\C^*)^N\to (\C^*)^n$.
\end{set}
\begin{lemma}
    There exist a $T$-equivariant extension of the immersion 
    \[\operatorname{NHilb}^{\underline{d}}(\A^n)\hookrightarrow \operatorname{NHilb}^{\underline{d}}(\A^N)\]
    to 
    \[\operatorname{naNHilb}^{\underline{d}}(\A^n)\to \operatorname{naNHilb}^{\underline{d}}(\A^N).\]
    The latter exhibits $\operatorname{naNHilb}^{\underline{d}}(\A^n)$ as the zero locus of a section of the bundle
    \[\mathcal{H}om(\mathcal{I},\mathcal{V}_0)\]
    where $\mathcal{V}_0$ is the universal flag bundle and $\mathcal{I}$ is the kernel of the map 
    \[(\oo^N)^\vee \twoheadrightarrow (\oo^n)^\vee\]
    given by projection onto the first $n$ summands. When $d_0=1$ the immersion restricts to an immersion
    \[\operatorname{naNHilb}_0^{\underline{d}}(\A^n)\hookrightarrow \operatorname{naNHilb}_0^{\underline{d}}(\A^n)\]
    which exhibits the domain as the zero locus of a section of 
    \[\mathcal{H}om(\mathcal{I},\mathcal{V}_1).\]
\end{lemma}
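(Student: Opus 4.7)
The plan is to construct the extension at the level of moduli functors and then identify its image with the stated zero locus, with $T$-equivariance following automatically from the naturality of the construction.

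First, I would define the map on $S$-valued points by precomposition. Given $[\mathcal{F}_\bullet,\psi,s,\pi]\in \operatorname{naNHilb}^{\underline{d}}(\A^n)(S)$ with $\pi\colon \oo_S\{x_1,\dots,x_n\}_c\twoheadrightarrow \mathcal{F}_0$, set $\pi'$ to be the composite
\[
\oo_S\{x_1,\dots,x_N\}_c \twoheadrightarrow \oo_S\{x_1,\dots,x_n\}_c \xrightarrow{\pi} \mathcal{F}_0,
\]
where the first surjection is the algebra map induced by $x_i\mapsto x_i$ for $i\le n$ and $x_i\mapsto 0$ for $i>n$. Equivalently, using the equivalence $\pi \leftrightarrow (\psi_2,s,\psi_1)$ of Lemma~\ref{algebra determined from epi from free}, it replaces $\psi_1\colon (\oo_S^n)^\vee\to\mathcal{F}_0$ by its composite with the canonical projection $(\oo_S^N)^\vee\twoheadrightarrow (\oo_S^n)^\vee$. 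Since $\pi$ is surjective, so is $\pi'$, so this is a well-defined natural transformation. Unwinding the definition of $\iota$ shows that on the associativity locus this is exactly the extension-by-zero embedding $\operatorname{NHilb}^{\underline{d}}(\A^n)\hookrightarrow\operatorname{NHilb}^{\underline{d}}(\A^N)$.

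Second, I would identify this map with the zero locus of a section of $\mathcal{H}om(\mathcal{I},\mathcal{V}_0)$ on $\operatorname{naNHilb}^{\underline{d}}(\A^N)$. Restricting the universal map $\Psi_1\colon (\oo^N)^\vee\to \mathcal{V}_0$ along the inclusion $\mathcal{I}\hookrightarrow (\oo^N)^\vee$ gives a canonical section $\Psi_1|_{\mathcal{I}}$ of $\mathcal{H}om(\mathcal{I},\mathcal{V}_0)$. An $S$-point $[\mathcal{F}_\bullet,\psi,s,\psi_1]$ lies in its zero locus precisely when $\psi_1$ vanishes on $\mathcal{I}$, and hence factors uniquely through a map $\widetilde{\psi}_1\colon (\oo_S^n)^\vee\to\mathcal{F}_0$. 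Since $\widetilde{\psi}_1$ has the same image as $\psi_1$, stability of the resulting tuple $(\mathcal{F}_\bullet,\psi_2,s,\widetilde{\psi}_1)$ is inherited from stability of the original tuple, and we obtain an $S$-point of $\operatorname{naNHilb}^{\underline{d}}(\A^n)$. This assignment is clearly inverse to the construction of the previous paragraph on $S$-points, which gives the claimed closed-immersion description. For $T$-equivariance, observe that both $(\oo^N)^\vee$ and its subsheaf $\mathcal{I}$ are $T$-subrepresentations, the projection $(\oo^N)^\vee\twoheadrightarrow (\oo^n)^\vee$ is $T$-equivariant (via the projection $T=(\C^*)^N\twoheadrightarrow (\C^*)^n$), and the universal $\Psi_1$ is $T$-equivariant; consequently every construction above is $T$-equivariant.

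Third, for the punctual version, I would observe that the two conditions carving out $\operatorname{naNHilb}_0^{\underline{d}}\subset \operatorname{naNHilb}^{\underline{d}}$ — namely, that $\Psi_1$ factor through $\mathcal{V}_1$ and that $\Psi_2(\mathcal{V}_1\otimes\mathcal{V}_k)\subset\mathcal{V}_{k+1}$ — only concern the algebra structure and the factorization of $\psi_1$, both of which are manifestly preserved under the construction above. Consequently, the extension restricts to $\operatorname{naNHilb}_0^{\underline{d}}(\A^n)\hookrightarrow \operatorname{naNHilb}_0^{\underline{d}}(\A^N)$, and the same bijection-on-$S$-points argument, combined with the fact that on the punctual locus $\Psi_1$ takes values in $\mathcal{V}_1\subset\mathcal{V}_0$, identifies it with the zero locus of the induced section of $\mathcal{H}om(\mathcal{I},\mathcal{V}_1)$.

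The only substantive point is verifying that stability is preserved under factorization — this is the single content-bearing step, and is immediate from the equality of images; everything else is unwinding the functor-of-points definitions.
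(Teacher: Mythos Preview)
Your proof is correct and follows essentially the same approach as the paper: both construct the map by precomposing $\psi_1$ with the projection $q\colon(\oo^N)^\vee\twoheadrightarrow(\oo^n)^\vee$ and identify the image with the vanishing locus of $\Psi_1|_{\mathcal{I}}$. The only cosmetic difference is that the paper phrases the construction on the prequotient $M_{n,\underline{d}}\hookrightarrow M_{N,\underline{d}}$ and then descends via $P_{\underline{d}}$-equivariance, whereas you work directly with the moduli functor; given the paper's earlier identification of these two descriptions, the arguments are equivalent.
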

\begin{proof}
    We let $M_{n,\underline{d}}$ and $M_{N,\underline{d}}$ be as in \cref{definition of prequotient space} both defined using the same reference flag $N_\bullet$. We define 
    \begin{center}
        \begin{tikzcd}
            M_{n,\underline{d}} \arrow[hook]{r} & M_{N,\underline{d}} \\[-20pt]
            (\psi_1,\psi_2,v) \arrow[mapsto]{r} & (\psi_1\circ q, \psi_2 ,v)
        \end{tikzcd}
    \end{center}
    where $q:(\C^N)^\vee \twoheadrightarrow (\C^n)^\vee$ is projection onto the first $n$ summands. One easily checks that this is $T$- and $P_{\underline{d}}$-equivariant, thus given us the desired closed immersion. The description of a zero locus of a section of the above vector bundle also follows directly from the definition, as an $S$-valued point $(\mathcal{F}_\bullet, \psi_1,\psi_2,s)$ of $\operatorname{naNHilb}^{\underline{d}}(\A^N)$ factors through $\operatorname{naNHilb}^{\underline{d}}(\A^n)$ if and only if $\psi_1$ factors through $(\oo^N)^\vee \twoheadrightarrow (\oo^n)^\vee$. Note that on $\operatorname{naNHilb}_0^{\underline{d}}(\A^N)$ we have that $\psi_1$ factors through $\mathcal{V}_1$, so we can restrict to this bundle.
\end{proof}

\begin{lemma}
    Let
    $i:\operatorname{NHilb}^{\underline{d}}(\A^n)\hookrightarrow \operatorname{NHilb}^{\underline{d}}(\A^N)$
    be the inclusion, let $\mathcal{I}$ be the kernel of
    \[(\oo^N)^\vee \twoheadrightarrow (\oo^n)^\vee \]
    on $\operatorname{NHilb}^{\underline{d}}(\A^N)$ and set
    \begin{align*}\mathcal{E}_{n}=\mathcal{H}om(\mathcal{I},\mathcal{V}_0)|_{\operatorname{NHilb}^{\underline{d}}(\A^n)}, && \mathcal{E}_{n,0}=\mathcal{H}om(\mathcal{I},\mathcal{V}_1)|_{\operatorname{NHilb}_{\operatorname{nil-fil}}^{\underline{d}}(\A^n)}.\end{align*}
    We have 
    \[i_*[\operatorname{NHilb}^{\underline{d}}(\A^n)]^{\operatorname{vir}}=e^T(\mathcal{E}_n))\cap [\operatorname{NHilb}^{\underline{d}}(\A^N)]^{\operatorname{vir}}\]
    and
    \[i_*[\operatorname{NHilb}_{\operatorname{nil-fil}}^{\underline{d}}(\A^n)]^{\operatorname{vir}}=e^T(\mathcal{E}_{n,0})\cap [\operatorname{NHilb}_{\operatorname{nil-fil}}^{\underline{d}}(\A^N)]^{\operatorname{vir}}.\]
    In particular, 
    \[\int_{[\operatorname{NHilb}^{\underline{d}}(\A^n)]^{\operatorname{vir}}}\alpha=\int_{[\operatorname{NHilb}^{\underline{d}}(\A^N)]^{\operatorname{vir}}} e^T(\mathcal{E}_n)\cdot\alpha\]
    and
    \[\int_{[\operatorname{NHilb}_{\operatorname{nil-fil}}^{\underline{d}}(\A^n)]^{\operatorname{vir}}}\alpha=\int_{[\operatorname{NHilb}_{\operatorname{nil-fil}}^{\underline{d}}(\A^N)]^{\operatorname{vir}}} e^T(\mathcal{E}_{n,0})\cdot\alpha.\]
\end{lemma}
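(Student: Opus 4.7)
The plan is to exploit the Cartesian square of closed immersions
\[
\begin{tikzcd}
\operatorname{NHilb}^{\underline{d}}(\A^n) \arrow[hook]{r}{i} \arrow[hook]{d}{\iota_n} & \operatorname{NHilb}^{\underline{d}}(\A^N) \arrow[hook]{d}{\iota_N} \\
\operatorname{naNHilb}^{\underline{d}}(\A^n) \arrow[hook]{r}{j} & \operatorname{naNHilb}^{\underline{d}}(\A^N)
\end{tikzcd}
\]
sitting inside the smooth $T$-variety $Y:=\operatorname{naNHilb}^{\underline{d}}(\A^N)$. On $Y$ we have two $T$-equivariant vector bundles with $T$-equivariant sections: the associativity bundle $\mathcal{E}_{\operatorname{ass}}$ with section $s_{\operatorname{ass}}$, cutting out $\operatorname{NHilb}^{\underline{d}}(\A^N)=Z(s_{\operatorname{ass}})$, and the bundle $\mathcal{H}om(\mathcal{I},\mathcal{V}_0)$ with the section $s_n$ produced in the previous lemma, cutting out $\operatorname{naNHilb}^{\underline{d}}(\A^n)=Z(s_n)$. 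Their common zero locus is $\operatorname{NHilb}^{\underline{d}}(\A^n)$, and restricting $s_n$ to $\operatorname{NHilb}^{\underline{d}}(\A^N)$ gives a section of $\mathcal{E}_n$ whose zero locus is again $\operatorname{NHilb}^{\underline{d}}(\A^n)$, so we are exactly in the situation of \cref{double section zero locus}.

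The heart of the argument is to view $\operatorname{NHilb}^{\underline{d}}(\A^n)$ as the zero locus of the combined section
\[
(s_{\operatorname{ass}},s_n)\in H^0\bigl(Y,\;\mathcal{E}_{\operatorname{ass}}\oplus\mathcal{H}om(\mathcal{I},\mathcal{V}_0)\bigr),
\]
and then factor the associated perfect obstruction theory in the two possible orders. Imposing $s_n$ first recovers the smooth ambient $\operatorname{naNHilb}^{\underline{d}}(\A^n)$, and then imposing $s_{\operatorname{ass}}$ yields the canonical POT on $\operatorname{NHilb}^{\underline{d}}(\A^n)$ used throughout the paper. Imposing $s_{\operatorname{ass}}$ first yields the absolute POT on $\operatorname{NHilb}^{\underline{d}}(\A^N)$, and then imposing $s_n|_{\operatorname{NHilb}^{\underline{d}}(\A^N)}$ equips $i$ with a relative POT whose obstruction bundle is exactly $\mathcal{E}_n$. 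The verification that both factorizations give compatible triples in the sense of \cite{virtpull} — hence the two POTs on $\operatorname{NHilb}^{\underline{d}}(\A^n)$ agree — is the main obstacle; however, it is an instance of the general principle that imposing two independent sections yields the same virtual class regardless of order, and unwinds to the commutativity of refined Gysin maps $0^!_{\mathcal{E}_{\operatorname{ass}}}\circ 0^!_{\mathcal{E}_n}=0^!_{\mathcal{E}_{\operatorname{ass}}\oplus\mathcal{E}_n}$ on $A^T_\ast(Y)$.

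Granting this compatibility, functoriality of virtual pullback gives
\[
[\operatorname{NHilb}^{\underline{d}}(\A^n)]^{\operatorname{vir}}=0^!_{\mathcal{E}_n}\bigl[\operatorname{NHilb}^{\underline{d}}(\A^N)\bigr]^{\operatorname{vir}},
\]
where $0^!_{\mathcal{E}_n}$ is the refined Gysin of the zero section of $\mathcal{E}_n$ along $s_n|_{\operatorname{NHilb}^{\underline{d}}(\A^N)}$. The first formula then follows by pushing forward along $i$ and using the standard identity $i_\ast\circ 0^!_{E,s}=e^T(E)\cap(-)$ for the refined Gysin of a section, which itself reduces (by deformation to the zero section in the vector bundle $\mathcal{E}_n$) to the fact that pushing forward the top Chern class of the conormal bundle of the zero section equals the equivariant Euler class. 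The second identity, for $\operatorname{NHilb}_{\operatorname{nil-fil}}^{\underline{d}}$, is obtained by running the same argument with $Y$ replaced by the smooth $T$-variety $\operatorname{naNHilb}_0^{\underline{d}}(\A^N)$ and $\mathcal{E}_n$ replaced by $\mathcal{E}_{n,0}=\mathcal{H}om(\mathcal{I},\mathcal{V}_1)|_{\operatorname{NHilb}_{\operatorname{nil-fil}}^{\underline{d}}(\A^n)}$, again invoking the second cut-out statement of the preceding lemma. Finally, the integration formulas follow from the two cycle-level identities by the projection formula applied to the pushforward $i_\ast$.
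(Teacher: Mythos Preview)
Your proposal is correct, but the paper's argument is organized differently and is somewhat more elementary. Rather than setting up a compatible triple and invoking functoriality of virtual pullback, the paper works directly with the ambient smooth spaces and uses only Fulton-level properties of refined Gysin maps. Concretely, writing $j:\operatorname{naNHilb}^{\underline{d}}(\A^n)\hookrightarrow\operatorname{naNHilb}^{\underline{d}}(\A^N)$ for the regular immersion of smooth varieties, the paper computes
\[
i_*[\operatorname{NHilb}^{\underline{d}}(\A^n)]^{\operatorname{vir}}
= i_*\,0_{\operatorname{ass}}^!\,[\operatorname{naNHilb}^{\underline{d}}(\A^n)]
= 0_{\operatorname{ass}}^!\,j_*[\operatorname{naNHilb}^{\underline{d}}(\A^n)]
= 0_{\operatorname{ass}}^!\bigl(e^T(\mathcal{H}om(\mathcal{I},\mathcal{V}_0))\cap[\operatorname{naNHilb}^{\underline{d}}(\A^N)]\bigr),
\]
using only that refined Gysin commutes with proper pushforward and that $j_*$ of the fundamental class of a regularly embedded smooth subvariety is the Euler class of the normal bundle capped with the ambient fundamental class. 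Pulling the Euler class out of $0_{\operatorname{ass}}^!$ then gives $e^T(\mathcal{E}_n)\cap[\operatorname{NHilb}^{\underline{d}}(\A^N)]^{\operatorname{vir}}$ directly.

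Your route instead first argues (via the direct-sum obstruction and the two possible factorizations) that $[\operatorname{NHilb}^{\underline{d}}(\A^n)]^{\operatorname{vir}}=0^!_{\mathcal{E}_n}[\operatorname{NHilb}^{\underline{d}}(\A^N)]^{\operatorname{vir}}$, and then applies $i_*\circ 0^!_{\mathcal{E}_n}=e^T(\mathcal{E}_n)\cap(-)$. This is fine, and what you identify as ``the main obstacle'' (the compatible-triple check) is genuinely a verification one has to do in your framing; it is exactly what the paper sidesteps by never leaving the smooth ambient level. In effect, the paper's proof is the commutativity $0^!_{\operatorname{ass}}\circ j_*=i_*\circ 0^!_{\operatorname{ass}}$ applied once, whereas yours packages the same commutativity into the language of virtual pullback functoriality. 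Both are valid; the paper's avoids the Manolache machinery entirely.
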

\begin{proof}
    Let $j$ be the extension of $i$ to the non-associative Hilbert scheme. Note that $j^*\mathcal{V}_\bullet$ is still the universal bundle on $\operatorname{naNHilb}^{\underline{d}}(\A^n)$. It follows that the associativity bundle also pullback to the associativity bundle. We can therefore form the following pullback squares.
    \begin{center}
        \begin{tikzcd}
            \operatorname{NHilb}^{\underline{d}}(\A^n) \arrow{d}{i} \arrow{r} & \operatorname{naNHilb}^{\underline{d}}(\A^n) \arrow{r}{j} \arrow{d}{j} & \operatorname{naNHilb}^{\underline{d}}(\A^N) \arrow{d}{0_n}\\
            \operatorname{NHilb}^{\underline{d}}(\A^N) \arrow{d} \arrow{r} & \operatorname{naNHilb}^{\underline{d}}(\A^N) \arrow{r}{s_n}\arrow{d}{s_{\operatorname{ass}}} & \mathcal{H}om(\mathcal{I},\mathcal{V}_\bullet) \\
            \operatorname{naNHilb}^{\underline{d}}(\A^N)  \arrow{r}{0_{\operatorname{ass}}} & \mathcal{E}_{\operatorname{ass}} & 
        \end{tikzcd}
    \end{center}
    and from this we get
\begin{align*}
    i_*[\operatorname{NHilb}^{\underline{d}}(\A^n)]^{\operatorname{vir}}&= i_* 0_{\operatorname{ass}}^![\operatorname{naNHilb}^{\underline{d}}(\A^n)] \\
    &=  0_{\operatorname{ass}}^!j_*[\operatorname{naNHilb}^{\underline{d}}(\A^n)]  \\
    &= 0^!_{ass}(e^T(\mathcal{H}om(\mathcal{I},\mathcal{V}_\bullet))\cap[\operatorname{naNHilb}^{\underline{d}}(\A^N)]) \\
    &=e^T(\mathcal{H}om(\mathcal{I},\mathcal{V}_\bullet)|_{\operatorname{NHilb}^{\underline{d}}(\A^N)})\cap0^!_{ass}[\operatorname{naNHilb}^{\underline{d}}(\A^N)] \\
    &=e^T(\mathcal{E}_n)\cap[\operatorname{NHilb}^{\underline{d}}(\A^N)]^{vir} 
\end{align*}
where we have used that $j$ is regular so that 
\[[\operatorname{naNHilb}^{\underline{d}}(\A^n)]=0_n^![\operatorname{naNHilb}^{\underline{d}}(\A^N)]\]
is a refined Euler class of $\mathcal{H}om(\mathcal{I},\mathcal{V}_0)$ i.e., $j_*[\operatorname{naNHilb}^{\underline{d}}(\A^n)]=e^T(\mathcal{H}om(\mathcal{I},\mathcal{V}_\bullet))\cap[\operatorname{naNHilb}^{\underline{d}}(\A^N)]$. The proof for the nilpotently filtered locus is exactly the same. 
\end{proof}

\begin{theorem}
  \label{iterated residue 2}
    Let $\theta_1,\dots, \theta_q\in A_T^1(\operatorname{pt})$ and let
    \[P=P(\theta_1,\dots, \theta_q,\eta_1,\dots, \eta_{d-1})\]
    be a bisymmetric polynomial as in \cref{tautological integral setup}. Then 
    \[\int_{[\operatorname{NHilb}^{\underline{d}}_{\operatorname{nil-fil}}(\A^n)]^{\operatorname{vir}}}P \]
    can be calculated via the iterated residue formula
    \[\underset{\underline{z}=\infty}{\operatorname{Res}}\frac{P(\theta_1,\dots,\theta_q,\underline{z})\prod_{1\leq m\neq l\leq d-1}(z_m-z_l)\prod_{\substack{1\leq i,j,k\leq d-1 \\ i<k}} \widetilde{\prod}_{\substack{0\leq m\leq d-1\\ w(k),w(j)\leq w(m)}}(z_i+z_j+z_k-z_m)d\underline{z}}{\prod_{\substack{1\leq l<m\leq d-1 \\ w(l)<w(m)}}(z_m-z_l)\prod_{i=1}^n \prod_{l=1}^{d-1}(s_i-z_l)\prod_{1\leq i\leq j\leq d-1} \widetilde{\prod}_{\substack{1\leq k\leq d-1\\ w(j)< w(k)}}(z_i+z_j-z_k)} \]
\end{theorem}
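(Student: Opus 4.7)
The plan is to reduce the general case to the case $d-1\le n$ already settled by Corollary~\ref{iterated residue 1}. Set $N=\max(n,d-1)$ and let $T=(\C^*)^N$ act on $\A^n$ via projection onto the first $n$ coordinates, so that the torus inclusion is $T$-equivariant. By the pushforward lemma from the previous subsection,
\[
\int_{[\operatorname{NHilb}^{\underline{d}}_{\operatorname{nil-fil}}(\A^n)]^{\operatorname{vir}}}P
=\int_{[\operatorname{NHilb}^{\underline{d}}_{\operatorname{nil-fil}}(\A^N)]^{\operatorname{vir}}} e^T(\mathcal{E}_{n,0})\cdot P,
\]
where $\mathcal{E}_{n,0}=\mathcal{H}om(\mathcal{I},\mathcal{V}_1)$ and $\mathcal{I}=\ker\bigl((\oo^N)^\vee\twoheadrightarrow(\oo^n)^\vee\bigr)$ has $T$-weights $s_{n+1},\dots,s_N$ (dually). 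Since $\mathcal{V}_1$ has Chern roots $-\eta_1,\dots,-\eta_{d-1}$, this gives
\[
e^T(\mathcal{E}_{n,0})=\prod_{i=n+1}^{N}\prod_{l=1}^{d-1}(s_i-\eta_l),
\]
and the integrand $P\cdot e^T(\mathcal{E}_{n,0})$ remains a polynomial symmetric in the $\eta$-variables, with the extra parameters $s_{n+1},\dots,s_N$ absorbed into the list of equivariant parameters alongside $\theta_1,\dots,\theta_q$.

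Next I would apply Corollary~\ref{iterated residue 1} on $\A^N$ (whose hypothesis $d-1\le N$ is satisfied by construction) to the enlarged integrand. Substituting $\eta_l\rightsquigarrow z_l$ in $e^T(\mathcal{E}_{n,0})$ produces the factor $\prod_{i=n+1}^{N}\prod_{l=1}^{d-1}(s_i-z_l)$ in the numerator of the residue kernel, which cancels exactly the corresponding factors inside $\prod_{i=1}^{N}\prod_{l=1}^{d-1}(s_i-z_l)$ in the denominator, leaving $\prod_{i=1}^{n}\prod_{l=1}^{d-1}(s_i-z_l)$. All the remaining factors in the kernel (the full Vandermonde in the numerator, the cross-block product $\prod_{w(l)<w(m)}(z_m-z_l)$, the triple-sum associator product, and the product over $(z_i+z_j-z_k)$) depend only on $\underline{d}$, not on the ambient dimension; they are therefore unaffected by the enlargement. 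This delivers precisely the claimed formula.

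The only delicate point is verifying that Corollary~\ref{iterated residue 1} genuinely applies to the enlarged integrand $P\cdot e^T(\mathcal{E}_{n,0})$. Its proof proceeds by equivariant localization on $\operatorname{Flag}_{\hat{d}}(\C^N)$ followed by the weighted iterated-residue identity of Proposition~\ref{prop:weighted-residue}, and both steps only use symmetry in the $\eta$-variables together with arbitrary polynomial dependence on the equivariant parameters in $A^1_T(\operatorname{pt})$. Since $s_{n+1},\dots,s_N\in A^1_T(\operatorname{pt})$ and the extra factor $\prod_{i,l}(s_i-\eta_l)$ is symmetric in the $\eta_l$'s, the hypotheses are met verbatim. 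The residue vanishing at non-Porteous fixed points still applies, because the only change to the rational integrand is the additional numerator polynomial of $\eta$-degree $(N-n)(d-1)$, whose contribution to the degree count in $z_m$ is bounded by $(N-n)$ and is exactly compensated by the extra $(N-n)$ linear denominator factors at each $z_m$. Hence none of the vanishing arguments from Proposition~\ref{General residue vanishing} are spoiled, and the single surviving residue at the Porteous point yields the stated identity.
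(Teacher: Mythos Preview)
Your proof is correct and follows essentially the same route as the paper: enlarge to $\A^N$ with $N\ge d-1$, invoke the pushforward identity to insert $e^T(\mathcal{E}_{n,0})=\prod_{i=n+1}^N\prod_{l=1}^{d-1}(s_i-\eta_l)$, apply Corollary~\ref{iterated residue 1} on $\A^N$, and cancel. Your final paragraph re-verifying the residue vanishing for the enlarged integrand is superfluous: Corollary~\ref{iterated residue 1} is already a closed statement about the integral (not about individual residues), so once its hypotheses are met for $P'=e^T(\mathcal{E}_{n,0})\cdot P$---which you correctly checked---the formula follows directly and the cancellation takes place in the residue expression itself.
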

\begin{proof}
    If $n\geq d-1$ this is simply the content of \cref{iterated residue 1}. If not, we can find $N\geq  d-1$. Then 
        \[\int_{[\operatorname{NHilb}_{\operatorname{nil-fil}}^{\underline{d}}(\A^n)]^{\operatorname{vir}}}P=\int_{[\operatorname{NHilb}_{\operatorname{nil-fil}}^{\underline{d}}(\A^N)]^{\operatorname{vir}}} e^T(\mathcal{E}_{n,0})\cdot P.\]
        We note that
        \[e^T(\mathcal{E}_{n,0})=\prod_{i=n+1}^N\prod_{l=1}^{d-1}(s_i-\eta_l).\]
       If we apply \cref{iterated residue 1} to calculate $\int_{[\operatorname{NHilb}_{\operatorname{nil-fil}}^{\underline{d}}(\A^N)]^{\operatorname{vir}}} P'$ for $P'=e^T(\mathcal{E}_{n,0})\cdot\alpha$  we get
    \[\underset{\underline{z}=\infty}{\operatorname{Res}}\frac{P'(\theta_1,\dots,\theta_q,\underline{z})\prod_{1\leq m\neq l\leq d-1}(z_m-z_l)\prod_{\substack{1\leq i,j,k\leq d-1 \\ i<k}} \widetilde{\prod}_{\substack{0\leq m\leq d-1\\ w(k),w(j)\leq w(m)}}(z_i+z_j+z_k-z_m)d\underline{z}}{\prod_{\substack{1\leq l<m\leq d-1 \\ w(l)<w(m)}}(z_m-z_l)\prod_{i=1}^N \prod_{l=1}^{d-1}(s_i-z_l)\prod_{1\leq i\leq j\leq d-1} \widetilde{\prod}_{\substack{1\leq k\leq d-1\\ w(j)< w(k)}}(z_i+z_j-z_k)} \]
    but 
    \[P'(\theta_1,\dots,\theta_q,\underline{z})=P(\theta_1,\dots,\theta_q,\underline{z})\cdot \prod_{i=n+1}^N\prod_{l=1}^{d-1}(s_i-z_l)\]
    and we get the cancellation
    \[\frac{\prod_{i=n+1}^N\prod_{l=1}^{d-1}(s_i-z_l)}{\prod_{i=1}^N \prod_{l=1}^{d-1}(s_i-z_l)}=\frac{1}{\prod_{i=1}^n \prod_{l=1}^{d-1}(s_i-z_l)}\]
    giving us the desired formula.
\end{proof}

\bibliographystyle{abbrv}
\bibliography{ref}
\end{document}